\def\displayallproof{t} 
\def\displayrating{f}
\def\verbose{t}
\def\tikzfig#1#2#3{%
\begin{figure}[htb]%
  \centering
\begin{tikzpicture}#3
\end{tikzpicture}
  \caption{#2}
  \label{fig:#1}%
\end{figure}%
}
\par\phantom{!}\endgroup\bigskip}
\def\indy#1#2{\index{index/#1}{#2}\relax}
\def\hide#1{}
\def\swallowed{\relax}
\def\swallow#1\swallowed{}
\newenvironment{iproved}{}{}
\def\hideproof{\renewenvironment{iproved}{%
   \centerline{\it -- Proof Proofed --}
  
  \renewenvironment{enumerate}{}{}
  \def\item{\relax}
  \catcode13=12
  \swallow
}{}}
\def\showproof{\renewenvironment{iproved}{\begin{proof}}{\end{proof}}}
\def\resetproved{\if\displayallproof t\showproof\else\hideproof\fi}
\def\rating#1{\if\displayrating t%
  {{\textsc {[rating={\ensuremath {#1}}].\ }}}\else{}\fi}
\def\oldrating#1{\if\displayrating t%
  {{\textsc {[former rating={\ensuremath {#1}}].\ }}}\else{}\fi}
\def\ifcverbose#1#2{\if\verbose t{{#1}}\else{#2}\fi}
\def\dcg#1#2{{\if\verbose t%
  {{\tt{[DCG-#1]}}\indy{References}{ZC{#2 #1}@{DCG-#1}|page{#2}}}\else{}\fi}}
\def\tlabel#1{\label{#1}\if\verbose t{{\tt [#1].\ }%
   \indy{References}{#1|itt}}\else{}\fi}
\def\formal#1{\relax }
\def\mar#1{}
\def\emptyset{\varnothing}
\def\|{\hbox{\ensuremath{\hspace{0.1em}|\hspace{-0.1em}|\hspace{0.1em}}}}
\def\mid{\ :\ }
\def\norm#1#2{\hbox{\ensuremath{\|#1\unskip-\unskip{#2}\|}}}
\def\normo#1{{\|#1\|}}
\def\Ra {\Rightarrow}
\def\=#1{\accent"16 #1}
\newcommand{\ring}[1]{\mathbb{#1}}
\def\v{{\mathbf v}}
\def\u{{\mathbf u}}
\def\w{{\mathbf w}}
\def\p{{\mathbf p}}
\def\q{{\mathbf q}}
\def\op#1{{\operatorname{#1}}}
\def\arc{\operatorname{arc}}
\def\card{\op{card}}
\def\lll{\resetlinenumber[1]}
\def\lll\input{#}1{\lll\input{#1}}
\theoremstyle{plain}
\newtheorem{theorem}[equation]{Theorem}
\newtheorem{theorem*}[equation]{Theorem$^*$}
\newtheorem{lemma}[equation]{Lemma}
\newtheorem{lemma*}[equation]{Lemma$^*$}
\newtheorem{corollary}[equation]{Corollary}
\theoremstyle{definition}
\newtheorem{definition}[equation]{Definition}
\theoremstyle{remark}
\newtheorem{remark}[equation]{Remark}
\renewcommand\footnotemark{}
\begin{document}
\title
    {Packings of Regular Pentagons in the Plane}
\author{Thomas Hales and W\"oden Kusner}
%\date{September 11, 2016}
\thanks{Research supported by NSF grant 1104102}

\begin{abstract}  
We show that every packing of congruent regular pentagons in the Euclidean plane has
density at most $(5-\sqrt5)/3\approx 0.92$.     More specifically,
this article proves the pentagonal ice-ray
conjecture of Henley (1986), and Kuperberg and Kuperberg (1990), 
which asserts that an optimal packing of congruent regular pentagons in the plane is a double lattice,
formed by aligned vertical columns of upward pointing pentagons alternating
with aligned vertical columns of downward pointing pentagons.  The strategy is based on estimates
of the areas of Delaunay triangles.  Our strategy reduces the pentagonal ice-ray conjecture to
area minimization problems that involve at most four Delaunay triangles.  These minimization problems
are solved by computer.  The computer-assisted portions of the proof use techniques such as
interval arithmetic, automatic differentiation, and a meet-in-the-middle algorithm.
\end{abstract}

 \lhead{Hales and Kusner}
\rhead{Pentagon Packings}

\parskip=\baselineskip

 \maketitle

%%%

%%%%%%%%%%%%%%%%%%%%%%%%%%%%%%%%%%%%%%%%%
%%% FRONT

% The Optimal Packing of Regular Pentagons in the Plane, I
% Tex file started June 23, 2015.

% 

%Title: Packings of Regular Pentagons in the Plane,
% draft Nov, 2015.
% version 2, June 2016
%Authors: Thomas Hales, Woden Kusner.

% global spellcheck (done Aug 27 2016).
% Notation: kappa = c = rho = cos pi /5.
% global check of calculations in calcs.ml completed 2016/9/11.

\def\area{\op{area}}
\def\areta{\op{area}_\eta}
\def\ao{a_\text{min}}
\def\acrit{a_\text{crit}}
\def\loc{\op{loc}}

\def\Ra{\Rightarrow}
\def\nRa{\nRightarrow}
\def\rab{\Ra_{b}}

\def\C{\mathcal C}
\def\S{\mathcal S}
\def\N{\mathcal N}
\def\M{\mathcal M}
\def\T{\mathcal T}
\def\PD{\Psi D}.
\def\cong{\equiv}
\def\r{{\mathbf r}}
\def\s{{\mathbf s}}
\def\bl{\ell}
\def\epsM{\epsilon_{\M}}
\def\epsN{\epsilon_{\N}}
\def\c{{\mathbf c}}
\def\K{\op{K}}

\def\dx#1{d_{#1}} 
\def\dd#1#2{\norm{\c_{#1}}{\c_{#2}}}
\def\tb{\underline{b}}
\def\tn{\bar n}

%for debugging:
%\def\libel#1{{\text{\sc [#1]~}}\label{#1}}
%\def\rif#1{(\ref{#1}-{\text{\sc #1})}}
\def\libel#1{\label{#1}}
\def\rif#1{\ref{#1}}

% tikz.
\def\smalldot#1{\draw[fill=black] (#1) node [inner sep=0.8pt,shape=circle,fill=black] {}}
\def\graydot#1{\draw[fill=gray] (#1) node [inner sep=1.3pt,shape=circle,fill=gray] {}}
\def\whitedot#1{\draw[fill=gray] (#1) node [inner sep=1.3pt,shape=circle,fill=white,draw=black] {}}
\tikzset{dartstyle/.style={fill=black,rotate=-90,inner sep=0.7pt,dart,shape border uses incircle}}
\tikzset{grayfatpath/.style={line width=1ex,line cap=round,line join=round,draw=gray}}

% parameters (x,y) center coords, theta vertex angle, rho radius in cm.
\def\pent#1#2#3#4{%
\draw[red] (#1,#2) + (#3:#4cm) -- + (#3+72:#4cm) -- 
+(#3+144:#4cm) -- +(#3+216:#4cm) -- + (#3+288:#4cm) -- cycle
}

\def\pentpink#1#2#3#4#5{%
\draw[#5] (#1,#2) + (#3:#4cm) -- + (#3+72:#4cm) -- 
+(#3+144:#4cm) -- +(#3+216:#4cm) -- + (#3+288:#4cm) -- cycle
}

% parameters (x,y) center coords, theta vertex angle.
\def\pen#1#2#3{%
\draw[red] (#1,#2) + (#3:1cm) -- + (#3+72:1cm) -- 
  +(#3+144:1cm) -- +(#3+216:1cm) -- + (#3+288:1cm) -- cycle
}

% parameters (x,y) center coords, theta vertex angle.
\def\penp#1#2{%
\draw[red] (#1) + (#2:1cm) -- + (#2+72:1cm) -- 
  +(#2+144:1cm) -- +(#2+216:1cm) -- + (#2+288:1cm) -- cycle
}

\def\threepent#1#2#3#4#5#6#7#8#9{%
\pen{#1}{#2}{#3};
\pen{#4}{#5}{#6};
\pen{#7}{#8}{#9};
\draw[blue] (#1,#2) -- (#4,#5) -- (#7,#8) -- cycle
}

\def\threepentnoD#1#2#3#4#5#6#7#8#9{%
\pen{#1}{#2}{#3};
\pen{#4}{#5}{#6};
\pen{#7}{#8}{#9}
}

\def\blankfig#1{\tikzfig{#1}{Insert graphic}
{
[scale=1]
\draw(0,) circle(1cm);
}}

\centerline{\it We dedicate this article to W. Kuperberg.}

\section{Introduction}\label{sec:intro} 

A fundamental problem in discrete geometry is to determine the highest
density of a packing in Euclidean space by congruent copies of a
convex body $C$.  For example, when $C$ is a ball of given radius,
this problem reduces to the sphere-packing problem in Euclidean space.
Besides a sphere, the simplest shape to consider is a regular polygon
$C$ in the plane.  If the regular polygon is an equilateral
triangle, square, or hexagon, then copies of $C$ tile the plane.  In
these cases, the packing problem is trivial.  The first nontrivial
case is the packing problem for congruent regular pentagons.  This
article solves that problem.

Henley and Kuperberg and Kuperberg have conjectured that the densest
packing of congruent regular pentagons in the plane is achieved by a
double-lattice arrangement: verticals column of aligned pentagons
pointing upward, alternating with vertical columns of aligned
pentagons pointing downward (Figure~\rif{fig:double-lattice})
\cite{Kup} \cite[p.801]{henley}.  This packing of pentagons is called
the {\it pentagonal ice-ray} in Dye's book on Chinese lattice designs
\cite{dye}.  Two plates (Y3b and Y3c) in Dye's book depict the
pentagonal ice-ray, originating from Chengdu, China around 1900 CE.
% also Y, symmetry Ice-ray.

 We call this the {\it pentagonal ice-ray  conjecture}.
This packing has density
\[
\frac{5 - \sqrt{5}}3 \approx 0.921311.
\] % Checked 2016/2/18. In Vallentin, p.3.  calcs.ml.
Before our work, the best known bound on the density of packings of
regular pentagons was $0.98103$, obtained through the representation
theory of the group of isometries of the plane \cite{Val}.  Our
research is a continuation of W. Kusner's thesis \cite{Kus}, which
proves the local optimality of the pentagonal ice-ray.
% 0.98103 page 27 of Vallentin. Checked 2016/2/18.

As far as we know, our methods are adequate for the solution of other
related problems in geometric optimization.  The limiting factor seems
to be the availability of sufficient computer resources.

\tikzfig{double-lattice}{The pentagonal ice-ray.  All figures show
pentagons in red and Delaunay triangles in blue.}
{
\pent{0.0}{0.0}{-90}{0.4};  % s = 0.4
\pent{0.0}{0.724}{-90}{0.4};  % (1+kappa)*s
\pent{0.0}{2*0.724}{-90}{0.4};  
\pent{0.57}{-0.3854}{90}{0.4};  % 3kappa*sigma*s,(3sigma^2-2)*s
\pent{0.57}{-0.3854+0.724}{90}{0.4}; 
\pent{0.57}{-0.3854+2*0.724}{90}{0.4};  
\pent{0.57}{-0.3854+3*0.724}{90}{0.4};  
\pent{-0.57}{-0.3854}{90}{0.4};  % 3kappa*sigma*s,(3sigma^2-2)*s
\pent{-0.57}{-0.3854+0.724}{90}{0.4}; 
\pent{-0.57}{-0.3854+2*0.724}{90}{0.4};  
\pent{-0.57}{-0.3854+3*0.724}{90}{0.4};  
\pent{2*0.57}{0.0}{-90}{0.4};  % s = 0.4
\pent{2*0.57}{0.724}{-90}{0.4};  % (1+kappa)*s
\pent{2*0.57}{2*0.724}{-90}{0.4}; 
\pent{3*0.57}{-0.3854}{90}{0.4};  % 3kappa*sigma*s,(3sigma^2-2)*s
\pent{3*0.57}{-0.3854+0.724}{90}{0.4}; 
\pent{3*0.57}{-0.3854+2*0.724}{90}{0.4};  
\pent{3*0.57}{-0.3854+3*0.724}{90}{0.4};   
}

%\section{Introduction}

This article gives a computer-assisted proof of the pentagonal ice-ray
conjecture.  The proof appears at the end of Section~\rif{sec:nonobtuse}.

\begin{theorem}\libel{thm:main}  
  No packing of congruent regular pentagons in the Euclidean plane has
  density greater than that of the pentagonal ice-ray.  The pentagonal
  ice-ray is the unique periodic packing of congruent regular
  pentagons that attains optimal density.
\end{theorem}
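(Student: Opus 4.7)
The plan is to bound the density of an arbitrary saturated packing of congruent regular pentagons by a local argument on the Delaunay triangles whose vertices are pentagon centers. With $\rho = (5-\sqrt5)/3$ the target density, it suffices to establish a local inequality of the form $P(T) \le \rho\, \areta(T)$ for every Delaunay triangle $T$, where $P(T)$ denotes the total area of pentagon fragments lying inside $T$ and $\areta$ is a modified (weighted) area satisfying $\sum_{T} \areta(T) \le \area(\Omega)$ for every sufficiently large bounded region $\Omega$. A naive pointwise bound $P(T)/\area(T) \le \rho$ fails, since very flat or obtuse Delaunay triangles can beat the ice-ray density locally; the whole scheme therefore depends on choosing $\areta$ to transfer charge from such triangles to neighbouring ones without losing mass globally.

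After saturating so that no further pentagon fits anywhere, I would classify the Delaunay triangles by geometric type: acute versus obtuse, and further according to the relative orientations of the three incident pentagons and whether a vertex of one pentagon protrudes into another's territory. In each class the desired inequality reduces to a smooth optimization over a finite-dimensional compact domain, parameterized by the three edge lengths of $T$ together with the three rotation angles of the incident pentagons, subject to pairwise non-overlap. Many configurations will yield to clean geometric estimates on pentagon–triangle intersection; the residual cases must be verified numerically.

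For these residual cases, each inequality becomes a bound on a smooth function over a compact semi-algebraic set, amenable to branch-and-bound verification using interval arithmetic, aided by automatic differentiation to obtain gradient bounds that prune subboxes. When a single triangle does not suffice because $\areta$ redistributes charge across neighbours, the optimization naturally expands to a cluster of up to four Delaunay triangles; there a meet-in-the-middle algorithm splits the joint search along a common edge and matches certified half-certificates obtained on the two sides. Uniqueness then follows by tracking equality: $P(T) = \rho\, \areta(T)$ holds only at the two congruence classes of triangle present in the ice-ray, and propagation across shared edges forces any extremal periodic packing to coincide globally with the ice-ray up to isometry.

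The main obstacle, in my view, is the design of $\areta$ itself. The function must simultaneously (i) under-approximate $\area$ globally so the density bound on $\Omega$ transfers back to the packing, (ii) make the local inequality tight precisely on the ice-ray triangles so that uniqueness survives, (iii) admit a decomposition into only a handful of geometric cases, and (iv) keep the per-case parameter spaces small enough that interval-arithmetic verification terminates. Balancing these four requirements against one another — rather than any individual geometric estimate or computer run — is where I would expect the real difficulty to lie.
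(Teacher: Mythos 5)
Your proposal follows essentially the same route as the paper: Delaunay triangulation on pentagon centers, a locally redistributed area function, clusters of at most four triangles, classification of triangles by contact type, and certified verification by interval arithmetic, automatic differentiation, and a meet-in-the-middle search. One simplification the paper makes that you would want to adopt is to replace each pentagon's mass by an equal point mass at its center, so that every Delaunay triangle holds exactly half a pentagon of mass; this turns your varying numerator $P(T)$ into a constant and reduces the local inequality to a pure lower bound on (modified) triangle area, namely that cluster averages of $b(T)$ are at least $\acrit$.
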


In this article, we consider packings of congruent regular pentagons
in the Euclidean plane.  Density is scale-invariant.  Without loss of
generality, we may assume that all pentagons are regular pentagons of
fixed circumradius $1$.  The inradius of each pentagon is $\kappa:=
\cos (\pi/5) = (1+\sqrt{5})/4 \approx 0.809$. We set $\sigma :=
\sin(\pi/5) \approx 0.5878$.  The length of each pentagon edge is
$2\sigma$.  (All of the numerical calculations in this article have
been checked in a file {\tt calcs.ml}, which is available for download
from the project code repository \cite{Git}.)
% checked 2016/2/18 in calcs.ml.

All pentagon packings will be assumed to be saturated; that is, no
further regular pentagons can be added to the packing without overlap.
The assumption of saturation can be made without loss of generality,
because our ultimate aim is to give upper bounds on the density of
pentagon packings, and the saturation of a packing cannot decrease its
density.

We form the Delaunay triangulations of the pentagon packings.  The
vertices of the triangles are alway taken to be the centers of the
pentagons.  The saturation hypothesis implies that no Delaunay
triangle has circumradius greater than $2$.  This property of Delaunay
triangles is crucial.   Every edge of a Delaunay
triangle has length at most $4$.

A Delaunay triangle in a pentagon packing has edge lengths at least
$2\kappa$.  This minimum Delaunay edge length is attained exactly when
the the two pentagons have a full edge in common.

For most of the article, we consider a fixed saturated packing and its
Delaunay triangulation.  Generally, unless otherwise stated,
every triangle is a  Delaunay triangle.
Statements of lemmas and theorems implicitly
assume this fixed context.

Initially, pentagons in a packing play two roles: they constrain the
shapes of the Delaunay triangles and they carry mass for the density.
We prefer to we change our model slightly so pentagons are only used
to constrain the shapes of triangles.  We replace the mass of each
pentagon by a small, massive, circular disk (each of the same small
radius) centered at the center of the pentagon, and of uniform density
and the same total mass as the pentagon.  In this model, each Delaunay
triangle contains exactly one-half the mass of a pentagon.  By
distributing the pentagon mass uniformly among the Delaunay triangles,
we may replace density maximization with Delaunay triangle area
minimization.

We write 
\[
\acrit := \frac{3}{2}{ \sigma \kappa(1+\kappa)} \approx 1.29036
\] % checked 2016/2/18 in calcs.ml
for the common (critical) area of every Delaunay triangle in the 
pentagonal ice-ray.

\section{Clusters of Delaunay Triangles}

The area of a Delaunay triangle in a saturated pentagon packing can be
smaller than $\acrit$.  Our strategy for proving the pentagonal
ice-ray conjecture is to collect triangles into finite clusters such
that the average area over each cluster is at least $\acrit$.

We say that a Delaunay triangle (in any pentagon packing) is {\it
  subcritical} if its area is at most $\acrit$.  We will obtain a
lower bound $\ao := 1.237$ on the area of a nonobtuse Delaunay
triangle (Lemma~\rif{lemma:a0}).  This is a very good bound.  It is
very close to the numerically smallest achievable area, which is
approximately $1.23719$.\footnote{In the notation of the appendix, the
  numerical minimum is achieved by a pinwheel with parameters
  $\alpha=\beta=0$ and $x_\gamma\approx 0.16246$.}  We write $\epsN :=
\acrit - \ao \approx 0.05336$, for the difference between the desired
bound $\acrit$ on averages of triangles and the bound $\ao$ for a
single nonobtuse triangle.  Let $\epsM = 0.008$.
% Numerical page 8. of nonobtuse notes.
% numerical min checked 2016/2/18 in Mathematica.

\subsection{examples}

While reading this article, it is useful to carry along a series of examples,
illustrated in Figures~\rif{fig:KKtriangle} -- \rif{fig:obtuse}.
Otherwise, later definitions such as the modified area function $b(T)$
and the construction of clusters might appear to be unmotivated.  Some
of these examples serve as counterexamples to naive approaches to this
problem.  These different examples can interact with one another in
potentially complicated ways.  The proof of the main theorem must sort
through these interactions.

(Another essential ingredient in the understanding of the proof is a
rather large body of computer code that is used to carry out the
computer-assisted portions of the proof.  We will have more to say
about this later.)

% adapted from lattice figure.
\tikzfig{KKtriangle}{Ice-ray triangles and ice-ray dimers.
  All Delaunay triangles in the pentagonal ice-ray are congruent and
  have area $\acrit$.  We call them ice-ray triangles. 
  The pentagonal ice-ray can be partitioned into pairs of Delaunay
  triangles (called ice-ray dimers) in which the two triangles in each
  dimer share their common longest edge.  
}
{
\begin{scope}
\pentpink{0.0}{0.0}{-90}{0.4}{red!30};  % s = 0.4
\pentpink{0.0}{0.724}{-90}{0.4}{red!30};  % (1+kappa)*s
\pentpink{0.0}{2*0.724}{-90}{0.4}{red!30};  
\pentpink{0.57}{-0.3854}{90}{0.4}{red!30};  % 3kappa*sigma*s,(3sigma^2-2)*s
\pentpink{0.57}{-0.3854+0.724}{90}{0.4}{red!30}; 
\pentpink{0.57}{-0.3854+2*0.724}{90}{0.4}{red!30};  
\pentpink{0.57}{-0.3854+3*0.724}{90}{0.4}{red!30};  
\pentpink{-0.57}{-0.3854}{90}{0.4}{red!30};  % 3kappa*sigma*s,(3sigma^2-2)*s
\pentpink{-0.57}{-0.3854+0.724}{90}{0.4}{red!30}; 
\pentpink{-0.57}{-0.3854+2*0.724}{90}{0.4}{red!30};  
\pentpink{-0.57}{-0.3854+3*0.724}{90}{0.4}{red!30};  
\pentpink{2*0.57}{0.0}{-90}{0.4}{red!30};  % s = 0.4
\pentpink{2*0.57}{0.724}{-90}{0.4}{red!30};  % (1+kappa)*s
\pentpink{2*0.57}{2*0.724}{-90}{0.4}{red!30}; 
\pentpink{3*0.57}{-0.3854}{90}{0.4}{red!30};  % 3kappa*sigma*s,(3sigma^2-2)*s
\pentpink{3*0.57}{-0.3854+0.724}{90}{0.4}{red!30}; 
\pentpink{3*0.57}{-0.3854+2*0.724}{90}{0.4}{red!30};  
\pentpink{3*0.57}{-0.3854+3*0.724}{90}{0.4}{red!30};   
\coordinate (T) at (0.5706,-0.38541);
\coordinate (U) at (0,0.7236);
\coordinate (T') at ($(T)+(U)$);
\draw[blue] (0,0) -- ++ ($2*(U)$);
\draw[blue] ($-1.0*(T')$) -- ++ ($3*(U)$);
\draw[blue] (T) -- ++ ($3*(U)$);
\draw[blue] ($(T)+(T')$) -- ++ ($2*(U)$);
\draw[blue] ($(T)+(T')+(T)$) -- ++ ($3*(U)$);
\draw[blue] ($-1.0*(T)$) -- ++ ($2.0*(T)$);
\draw[blue] ($-1.0*(T) + (U)$) -- ++ ($4.0*(T)$);
\draw[blue] ($-1.0*(T) + 2.0*(U)$) -- ++ ($4.0*(T)$);
\draw[blue] ($1.0*(T) + 3.0*(U)$) -- ++ ($2.0*(T)$);
\draw[blue] ($(T)$) -- ++ ($2.0*(T')$);
\draw[blue] ($1.0*(T) +1.0*(U) -2.0 *(T')$) -- ++ ($4.0*(T')$);
\draw[blue] ($1.0*(T) +2.0*(U) -2.0 *(T')$) -- ++ ($4.0*(T')$);
\draw[blue] ($1.0*(T) +3.0*(U) -2.0 *(T')$) -- ++ ($2.0*(T')$);
\end{scope}
\begin{scope}[xshift=5cm]
\pentpink{0.0}{0.0}{-90}{0.4}{red!30};  % s = 0.4
\pentpink{0.0}{0.724}{-90}{0.4}{red!30};  % (1+kappa)*s
\pentpink{0.0}{2*0.724}{-90}{0.4}{red!30};  
\pentpink{0.57}{-0.3854}{90}{0.4}{red!30};  % 3kappa*sigma*s,(3sigma^2-2)*s
\pentpink{0.57}{-0.3854+0.724}{90}{0.4}{red!30}; 
\pentpink{0.57}{-0.3854+2*0.724}{90}{0.4}{red!30};  
\pentpink{0.57}{-0.3854+3*0.724}{90}{0.4}{red!30};  
\pentpink{-0.57}{-0.3854}{90}{0.4}{red!30};  % 3kappa*sigma*s,(3sigma^2-2)*s
\pentpink{-0.57}{-0.3854+0.724}{90}{0.4}{red!30}; 
\pentpink{-0.57}{-0.3854+2*0.724}{90}{0.4}{red!30};  
\pentpink{-0.57}{-0.3854+3*0.724}{90}{0.4}{red!30};  
\pentpink{2*0.57}{0.0}{-90}{0.4}{red!30};  % s = 0.4
\pentpink{2*0.57}{0.724}{-90}{0.4}{red!30};  % (1+kappa)*s
\pentpink{2*0.57}{2*0.724}{-90}{0.4}{red!30}; 
\pentpink{3*0.57}{-0.3854}{90}{0.4}{red!30};  % 3kappa*sigma*s,(3sigma^2-2)*s
\pentpink{3*0.57}{-0.3854+0.724}{90}{0.4}{red!30}; 
\pentpink{3*0.57}{-0.3854+2*0.724}{90}{0.4}{red!30};  
\pentpink{3*0.57}{-0.3854+3*0.724}{90}{0.4}{red!30};   
\coordinate (T) at (0.5706,-0.38541);
\coordinate (U) at (0,0.7236);
\coordinate (T') at ($(T)+(U)$);
%\draw[blue] (0,0) -- ++ ($2*(U)$);
%\draw[blue] ($-1.0*(T')$) -- ++ ($3*(U)$);
%\draw[blue] (T) -- ++ ($3*(U)$);
%\draw[blue] ($(T)+(T')$) -- ++ ($2*(U)$);
%\draw[blue] ($(T)+(T')+(T)$) -- ++ ($3*(U)$);
\draw[blue] ($-1.0*(T)$) -- ++ ($2.0*(T)$);
\draw[blue] ($-1.0*(T) + (U)$) -- ++ ($4.0*(T)$);
\draw[blue] ($-1.0*(T) + 2.0*(U)$) -- ++ ($4.0*(T)$);
\draw[blue] ($1.0*(T) + 3.0*(U)$) -- ++ ($2.0*(T)$);
\draw[blue] ($(T)$) -- ++ ($2.0*(T')$);
\draw[blue] ($1.0*(T) +1.0*(U) -2.0 *(T')$) -- ++ ($4.0*(T')$);
\draw[blue] ($1.0*(T) +2.0*(U) -2.0 *(T')$) -- ++ ($4.0*(T')$);
\draw[blue] ($1.0*(T) +3.0*(U) -2.0 *(T')$) -- ++ ($2.0*(T')$);
\draw[blue] ($-1.0*(T) + 2.0*(U)$) -- ++ ($-1.0*(T')$) -- ++ (T)
  -- ++ ($-1.0*(T')$) -- ++ (T)
  -- ++ ($-1.0*(T')$) -- ++ (T);
\draw[blue] ($3.0*(T) + 4.0*(U)$) -- ++ (T) -- ++ ($-1.0*(T')$) 
  -- ++ (T)
  -- ++ ($-1.0*(T')$) 
  -- ++ (T)
  -- ++ ($-1.0*(T')$);
\end{scope}
}

\tikzfig{subcritical}{Subcritical triangles.
Experimentally, the first triangle has the smallest area among all
acute Delaunay triangles.  The second triangle is a cloverleaf that
has two edges of minimal length $2\kappa$.  Experimentally, the
third triangle minimizes the longest edge among subcritical acute Delaunay triangles.
It is equalateral with edge length about $1.72256$.}
{
\begin{scope}[scale=0.5]
\threepent{0.00}{0.00}{85.23}{0.93}{1.33}{265.23}{1.86}{0.00}{229.23};
\draw (0.5,-1.5) node {$\area\approx 1.23719$};
\end{scope}
\begin{scope}[scale=0.5,xshift=5cm]
\threepent{0.00}{0.00}{90.00}{0.95}{1.31}{270.00}{1.90}{0.00}{234.00};
\draw (0.5,-1.5) node {$\area\approx 1.24$};
\end{scope}
\begin{scope}[scale=0.5,xshift=10cm]
\threepent{0.00}{0.00}{96.03}{0.86}{1.49}{264.03}{1.72}{0.00}{216.03};
\draw (0.5,-1.5) node {$\area\approx 1.285$};
\end{scope}
\begin{scope}[scale=0.5,xshift=15cm] %example4
\threepent{0.00}{0.00}{109.36}{0.83}{1.59}{235.36}{1.62}{0.00}{145.36};
\draw (0.5,-1.5) node {$\area\approx 1.286$};
\end{scope}
}
% format_pinwheelAB zero zero (m 0.16246);;
% example4: format_ljedgeAC (ratpi 3 10) zero (m 0.68);;

\tikzfig{KK2min}{Ice-ray triangle deformation.
The ice-ray triangle is not a local minimum of the area function.
The  ice-ray triangle (left) can be continuously deformed
along an area decreasing curve to the subcritical acute triangle of 
numerically minimum area (right). }
{
\begin{scope}[scale=0.5]
\threepent{0.00}{0.00}{72.00}{0.96}{1.43}{252.00}{1.81}{0.00}{216.00};
\draw (0.5,-1.5) node {$\area=\acrit$};
\end{scope}
\begin{scope}[scale=0.5,xshift=6cm]
\threepent{0.00}{0.00}{79.49}{0.93}{1.37}{259.49}{1.82}{0.00}{223.49};
\draw (0.5,-1.5) node {$\area\approx 1.248$};
\end{scope}
\begin{scope}[scale=0.5,xshift=12cm]
\threepent{0.00}{0.00}{85.23}{0.93}{1.33}{265.23}{1.86}{0.00}{229.23};
\draw (0.5,-1.5) node {$\area\approx 1.23719$};
\end{scope}
}
%format_pinwheelAB zero zero sigma;;
%format_pinwheelAB zero zero (m 0.35);;
%format_pinwheelAB zero zero (zero);;

\tikzfig{Gamma}{Ice-ray dimer deformation.  The ice-ray
  dimer (center) admits a shear motion that preserves all edges of
  contact.  This deformation increases the area of the dimer.  It is
  obvious by the symmetry of the figures on the left and right that
  the area function along this deformation has a critical point at the
  ice-ray dimer.  It is known that the ice-ray dimer is a local
  minimimum of the area function \cite{Kus}.  }
{
\begin{scope}[scale=0.5]
\threepent{0.00}{0.00}{72.00}{0.96}{1.43}{252.00}{1.81}{0.00}{216.00};
\threepent{0.00}{0.00}{-72.00}{0.96}{-1.43}{-252.00}{1.81}{0.00}{-216.00};
\draw (0.5,-3.5) node {$\area=2\acrit$};
\end{scope}
\begin{scope}[scale=0.5,xshift=6cm]
\threepent{0.00}{0.00}{79.49}{0.93}{1.37}{259.49}{1.82}{0.00}{223.49};
\threepent{0.00}{0.00}{-64.51}{1.03}{-1.50}{-244.51}{1.82}{0.00}{-208.51};
\draw (0.5,-3.5) node {$\area\approx 2\acrit+0.03$};
\end{scope}
\begin{scope}[scale=0.5,xshift=-6cm]
\threepent{0.00}{0.00}{-79.49}{0.93}{-1.37}{-259.49}{1.82}{0.00}{-223.49};
\threepent{0.00}{0.00}{64.51}{1.03}{1.50}{244.51}{1.82}{0.00}{208.51};
\draw (0.5,-3.5) node {$\area\approx 2\acrit+0.03$};
\end{scope}
}
% format_pinwheelAB zero zero (m 0.35);;
% {0.00}{0.00}{79.49}{0.93}{1.37}{259.49}{1.82}{0.00}{223.49};

\tikzfig{pseudo-dimer}{Pseudo-dimer.  There exist pairs of acute
  triangles $(T_1,T_0)$ such that the sum of their two areas is at
  most $2\acrit$ and such that (1) $T_1$ is subcritical and is
  adjacent to $T_0$ along the longest edge of $T_1$, but such that (2)
  the longest edge of $T_0$ is not the edge shared with $T_1$.  We
  call such pairs {\it pseudo-dimers}.    The illustrated
  pseudo-dimer has area about $2\acrit - 10^{-5}$, and the triangle $T_0$
  has longest edge length about $1.84$.  The correction
  term $\epsM = 0.008$ is based on this and closely related examples
  of pseudo-dimers.  Pseudo-dimers add
  significant complications to the proof. }
{
\begin{scope}[scale=1.2,rotate=90]
\threepent{0.00}{0.00}{93.42}{0.90}{1.48}{259.93}{1.73}{0.00}{214.93};
\threepent{0.00}{0.00}{-50.58}{1.10}{-1.49}{-181.07}{1.73}{0.00}{-217.07};
\draw (1.3,-0.1) node {$T_1\Rightarrow T_0$};
\draw (0.8,-1) node {$\Searrow$};
\end{scope}
}
%format_shared_pinwheelAC (m 0.157) (m 0.2359) (m 0.2376);;
%format_shared_lj2edgeAC zero (pi25-(m 0.2359)) (two*sigma  - (m 0.2376));;

\tikzfig{obtuse}{Obtuse Delaunay triangles can have small area.  A
  triangle with circumradius about $2$ and area about $0.98$
  is shown. The adjacent Delaunay triangle cannot have a vertex
  inside the circumcircle of the first.  The neighbor of a subcritical
  obtuse Delaunay triangle tends to have large area.}
{
\begin{scope}[scale=0.5]
\threepent{0}{0}{90}
{1.486}{-0.661}{-90}
{-1.486}{-0.661}{-90};
\end{scope}
\begin{scope}[scale=0.5,xshift=6cm]
\draw[blue] (0,0)-- (1.486,-0.661) -- (-1.486,-0.661) -- cycle;
\draw[blue] (1.486,-0.661) -- ($(0,-2) + (-6:2)$) -- (-1.486,-0.661);
\draw (0,-2) circle (2cm);
\end{scope}
}
% let double_slider x = 
%  let theta = pi45 - two * sin_I (x / (two * kappa)) in
%  let ell = sqrt_I (four*kappa*kappa + x*x) in
%  let ell' = iloc ell ell theta in
%  area_I ell ell ell';;
%
% double_slider (m 0.17);;

\subsection{attachment, modified area, and clusters}

As mentioned above, our strategy for proving the pentagonal ice-ray
conjecture is to collect triangles into finite clusters such that the
average area over each cluster is at least $\acrit$.  The clusters are
defined by an equivalence relation.  The equivalence relation, in
turn, is defined as the reflexive, symmetric, transitive closure of a
further relation on the set of Delaunay triangles in a pentagon
packing.

A second strategy is to replace the area function $\area(T)$ on
Delaunay triangles with a modified area function $b(T)$.  The
modification steals area from nearby triangles that have area to spare
and gives to triangles in need.  It will be sufficient to prove that
the average of $b(T)$ over each cluster is at least $\acrit$.

In more detail, below, we define a particular relation $(\rab)$,
viewing a relation in the usual way as a set of ordered pairs.  For
this relation $(\rab)$, we write ${(\equiv_{b})}$ for the equivalence
relation obtained as the reflexive, symmetric, transitive closure of
$(\rab)$.  We call a corresponding equivalence classes $\C$ a {\it
  cluster}.  In other words, the relation $(\rab)$ defines a directed
graph whose nodes are the Delaunay triangles of a pentagon packing,
with directed edges given as arrows $T_1\rab T_0$.  A cluster is the
set of nodes in a connected component of the underlying undirected
graph.

We say that Delaunay triangle $T_1$ {\it attaches to Delaunay
  triangle} $T_0$ when the following condition holds: $T_0$ is the
adjacent triangle to $T_1$ along the longest edge of $T_1$.  (If the
triangle $T_1$ has more than one equally longest edge, fix once and
for all a choice among them, and use this choice to determine the
triangle that $T_1$ attaches to.  Thus, $T_1$ always attaches to
exactly one triangle $T_0$.  We can assume that the tie-breaking
choices are made according to a translation invariant rule.)  We write
$T_1 \Ra T_0$ for the attachment relation {\it $T_1$ attaches to
  $T_0$}.  Although it is not always possible to adhere to the
convention, note our general convention to use descending subscripts
$i > j$ for attachment $T_i\Ra T_j$ of triangles.  We also follow a
general convention of letting $T_0$ denote a triangle that is the
target of other triangles in the same cluster.

If $\T$ is a finite set of Delaunay triangles, we set
$\area(\T):=\sum_{T\in\T} \area(T)$.  The following are key
definitions of this article: dimer pair, pseudo-dimer, $\N$, $\M$,
$n_\pm$, $m_\pm$, $b(T)$, $(\rab)$, and cluster.

\begin{definition}[dimer pair]
  We define a dimer pair to be an ordered pair $(T_1,T_0)$ of Delaunay
  triangles such that
\begin{enumerate}
\item $T_0$ and $T_1$ are both nonobtuse.
\item $T_1\Ra T_0$, and $T_1$ is subcritical.
\item $T_0 \Ra T_1$.
\item $\area\{T_1,T_0\} \le 2\acrit$.
\end{enumerate}
We write $DP$ for the set of dimer pairs.
\end{definition}

\begin{definition}[pseudo-dimer]
  We define a pseudo-dimer to be an ordered pair $(T_1,T_0)$ of
  Delaunay triangles such that
\begin{enumerate}
\item $T_0$ and $T_1$ are both nonobtuse;
\item $T_1\Ra T_0$, and  $T_1$ is subcritical.
\item $T_0 \nRightarrow T_1$;
\item $\area\{T_1,T_0\} \le 2\acrit$;
\end{enumerate}
We write $\PD$ for the set of pseudo-dimers.
\end{definition}

We observe that dimer pairs differ from pseudo-dimers in the third
defining condition, which is a condition on the location of the
longest edge of $T_0$.  Every pseudo-dimer determines a third triangle
$T_-$ by the condition $T_0\Ra T_-$.  The shared edge of $T_0$ and
$T_-$, leading out of the pseudo-dimer is called the {\it egressive} edge of
the pseudo-dimer or of the triangle $T_0$.

For any set $\S$ of ordered pairs, and any Delaunay triangles $T_+$
and $T_-$, let
\[
n_+(T_+,\S) = \card \{T_-\mid (T_+,T_-)\in \S\}
\quad\text{ and }\quad
n_-(T_-,\S)
= \card \{T_+\mid (T_+,T_-)\in \S\}.
\]

We define $\N$ as the disjoint union of two sets of ordered pairs: $\N
= \N_{\text{obtuse}} \sqcup \N_{\text{nonobtuse}}$.  The set
$\N_{\text{obtuse}}$ consists of those pairs with obtuse target and
$\N_{\text{nonobtuse}}$ are those pairs with nonobtuse target $T_-$,
as follows.

Define $\N_{\text{obtuse}}$ to be the set of pairs $(T_+,T_-)$ of
Delaunay triangles such that
\begin{enumerate}
\item $T_-$ is obtuse;
\item $T_+\Ra T_-$.
\item $T_+$ is nonobtuse and the longest edge of $T_+$ has length at
  least $1.72$;
\end{enumerate}

Define  $\N_{\text{nonobtuse}}$ to 
be the set of pairs $(T_+,T_-)$ of Delaunay triangles such that
\begin{enumerate}
\item $T_-$ is nonobtuse;
\item $T_+\Ra T_-$;
\item $T_+$ is nonobtuse and the longest edge of $T_+$ has length at
  least $1.72$;
\item there exists an obtuse triangle $T$ such that $T\Ra T_-$ and 
\[
\area(T) -   n_-(T,\N_{\text{obtuse}})\epsN\le \acrit.
\]
\end{enumerate}

Let $\M$ be the set of pairs $(T_+,T_-)$ of Delaunay triangles such that
\begin{enumerate}
\item $(T_+,T_-)\not\in \N$;
\item $T_+\Ra T_-$;
\item The longest edge of $T_+$ has length  at least $1.72$.
\item There exists a unique $T_1$ such that $(T_1,T_+)\in \PD$.
\end{enumerate}

We abbreviate 
\[
m_+(T) = n_+(T,\M), 
\quad m_-(T) = n_-(T,\M), 
\quad n_+(T) =n_+(T,\N), 
\ \text{ and } \ 
n_-(T) = n_-(T,\N).
\]

\begin{remark}
  Note  that $n_+(T)\le 1$, because each triangle attaches to
  exactly one other triangle.  Also, $n_-(T)\le 3$, because each
  attachment forms along an edge of the triangle $T$.  Similarly,
  $m_+(T)\le 1$ and $m_-(T)\le 3$.
\end{remark}

We define the modified area function
\begin{equation}
b(T) := \op{area}(T) + \epsN (n_+(T) - n_-(T)) + \epsM (m_+(T) - m_-(T)).
\end{equation}
We say that $T$ is {\it $b$-subcritical} if $b(T) \le \acrit$.  We write
$T_1\rab T_2$, if $T_1\Ra T_2$ and $T_1$ is $b$-subcritical.  An
equivalence classes of triangles under the corresponding equivalence
relation $(\equiv_b)$ is called a {\it cluster}.

We note that $(\rab)$ is given by a translation-invariant rule.  The
function $b(T)$ is also translation invariant and depends only on
local information in the pentagon packing near the triangle $T$.

The intuitive basis of using $b(T)\le \acrit$ as the condition for
cluster formation with $(\rab)$ is the following.  Eventually, we wish
to show that the average of the modified areas $b(T)$ over each
cluster is greater than $\acrit$.  (See Lemma~\rif{lemma:main}.)  If
$b(T)\le \acrit$, then this means that its modified area $b(T)$ is
less than our desired goal for the average over the cluster, so that
$T$ needs to be part of a larger cluster.  This suggests we should
define $(\rab)$ in such a way that a further triangle is added
whenever $b(T)\le \acrit$. That is what our definition of $(\rab)$
does.

We remark that the modification $b(T)$ of the area function has two
correction terms.  The first term $\epsN (n_+(T) - n_-(T))$ takes away
from obtuse triangles (and their neighbors) and gives to nonobtuse
triangles.  The intuition behind this correction term is that the
triangle adjacent to an obtuse triangle along its long edge has a very
large surplus area that can be beneficially redistributed
(Figure~\rif{fig:obtuse}).  It allows us to make a clean separation of
the proof of the main inequality into two cases: clusters that contain
an obtuse triangle and clusters that do not.

The second term $\epsM (m_+(T) - m_-(T))$ augments the area of a
pseudo-dimer, by taking from a neighboring triangle.  The intuition
behind this correction term is that a pseudo-dimer can have area
strictly less than the ice-ray dimer, and we need to boost its area
with a correction term to make it satisfy the main inequality
(Figure~\rif{fig:pseudo-dimer}).  A calculation given below shows that
the neighbor of the pseudo-dimer has area to spare
(Corollary~\rif{lemma:egress'}).

The correction terms allow us to keep the size of each cluster small.
Eventually, we show that each cluster contains at most four triangles
(Lemma~\rif{lemma:card4}).  This small size will be helpful when we
turn to the computer calculations.  If $(T_+,T_-)$ is a member of $\N$
or $\M$, the rough expectation is that there should not be an arrow
$T_+\rab T_-$ and that $T_+$ and $T_-$ should belong to different
clusters.  That is, $\N$ and $\M$ are designed to mark cluster
boundaries.  Some lemmas in this article make this expectation more
precise (for example, Lemma~\rif{lemma:Nb}).

We give some simple consequences of our definitions.

\begin{lemma} 
  If $(T_+,T_-)\in \M$, then both $T_+$ and $T_-$ are nonobtuse.
\end{lemma}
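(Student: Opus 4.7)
The proof is essentially a direct unpacking of the definitions, so my plan is to verify each claim by matching conditions.

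First I would handle $T_+$. By condition (4) in the definition of $\M$, there exists (uniquely) a triangle $T_1$ with $(T_1,T_+)\in \PD$. Looking at condition (1) of the pseudo-dimer definition, both triangles in a pseudo-dimer are nonobtuse; in particular $T_+$ is nonobtuse. This uses nothing beyond the two definitions.

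Next I would handle $T_-$ by contradiction. Assume $T_-$ is obtuse. The membership $(T_+,T_-)\in \M$ supplies three facts: $T_+\Ra T_-$ (condition (2) of $\M$), the longest edge of $T_+$ has length at least $1.72$ (condition (3) of $\M$), and $T_+$ is nonobtuse (just proved). But these are exactly the three defining conditions of $\N_{\text{obtuse}}$, so $(T_+,T_-)\in \N_{\text{obtuse}}\subseteq \N$. This contradicts condition (1) of the definition of $\M$, which requires $(T_+,T_-)\notin\N$. Hence $T_-$ must be nonobtuse.

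There is no real obstacle here; the lemma is a sanity check that the sets $\M$ and $\N$ partition the relevant edges according to whether the target is obtuse, and the argument is purely a definition chase. The only thing to be careful about is to use the pseudo-dimer (rather than the dimer pair) to extract nonobtuseness of $T_+$, since only $\PD$ appears in condition (4) of $\M$.
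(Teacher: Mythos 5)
Your proof is correct and matches the paper's argument: extract nonobtuseness of $T_+$ from condition (4) of $\M$ via the pseudo-dimer definition, then show that an obtuse $T_-$ would force $(T_+,T_-)\in\N_{\text{obtuse}}$, contradicting the disjointness of $\M$ and $\N$. The only difference is that you spell out the three conditions of $\N_{\text{obtuse}}$ explicitly, which the paper leaves implicit.
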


\begin{proof} 
  By the definition of pseudo-dimer, $T_+$ is nonobtuse, because
  $(T_1,T_+)\in\PD$ for some $T_1$.  If $T_-$ were obtuse, then we
  would satisfy all the membership conditions for $(T_+,T_-)\in \N$
  (obtuse target), which is impossible because $\N\cap \M =
  \emptyset$.
\end{proof}

\begin{lemma}[obtuse $b$]\libel{lemma:obtuse-b} 
  If $T$ is an obtuse Delaunay triangle, then
  $m_+(T)=m_-(T)=n_+(T)=0$.  Thus, $b(T) = \area(T) - \epsN n_-(T)$.
\end{lemma}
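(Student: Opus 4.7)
The plan is a direct unpacking of the three definitions ($\N_{\text{obtuse}}$, $\N_{\text{nonobtuse}}$, $\M$) against the hypothesis that $T$ is obtuse, followed by substitution into the formula defining $b$. There is no real obstacle here; the work is entirely bookkeeping.

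First, I would show $n_+(T)=0$. By definition, $n_+(T)=\card\{T_-:(T,T_-)\in\N\}$, and $\N=\N_{\text{obtuse}}\sqcup\N_{\text{nonobtuse}}$. In both defining lists, the third clause insists that the first component $T_+$ be nonobtuse (with longest edge at least $1.72$). Since $T$ is obtuse, no pair in $\N$ has $T$ as its first coordinate, so $n_+(T)=0$.

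Next, I would show $m_+(T)=0$. By definition of $\M$, membership $(T,T_-)\in\M$ requires the existence of some $T_1$ with $(T_1,T)\in\PD$. But the pseudo-dimer definition forces $T_0$ and $T_1$ to be nonobtuse, so in particular $T$ would have to be nonobtuse, contradicting our hypothesis. Hence $m_+(T)=0$. For $m_-(T)=0$, I would invoke the immediately preceding lemma: any $(T_+,T_-)\in\M$ has both coordinates nonobtuse, so again no such pair can have $T$ as its second coordinate.

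Finally, substituting $m_+(T)=m_-(T)=n_+(T)=0$ into
\[
b(T)=\area(T)+\epsN(n_+(T)-n_-(T))+\epsM(m_+(T)-m_-(T))
\]
yields $b(T)=\area(T)-\epsN\, n_-(T)$, as claimed. The only subtlety worth flagging is simply making sure to quote the previous lemma (both components of a pair in $\M$ are nonobtuse) rather than rederiving it inline.
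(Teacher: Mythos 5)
Your proposal is correct and takes essentially the same route as the paper: the paper's proof simply cites the immediately preceding lemma (both components of any pair in $\M$ are nonobtuse) to get $m_+(T)=m_-(T)=0$, and observes that the first component of any pair in $\N$ is nonobtuse to get $n_+(T)=0$, then substitutes into the definition of $b$. Your slightly more detailed unpacking of the $\M$ definition for $m_+(T)$ is just an inline rederivation of that same prior lemma, as you yourself note.
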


\begin{proof} 
  The previous lemma gives $m_+(T)=m_-(T)=0$.  The first component of
  $\N$ is nonobtuse by definition, so $n_+(T)=0$.
\end{proof}

\begin{corollary}\libel{lemma:obtuse-b-arrow} 
  If $(T_+,T_-)\in\N$ with nonobtuse target $T_-$, then there exists
  an obtuse triangle $T$ such that $T\rab T_-$.
\end{corollary}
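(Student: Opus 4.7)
The plan is to unpack the definitions and verify the two conditions for $T\rab T_-$, namely $T\Ra T_-$ and $b(T)\le\acrit$.

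Since $T_-$ is nonobtuse, the pair $(T_+,T_-)$ lies in $\N_{\text{nonobtuse}}$. By the defining condition (4) of $\N_{\text{nonobtuse}}$, there exists an obtuse triangle $T$ with $T\Ra T_-$ and
\[
\area(T)-n_-(T,\N_{\text{obtuse}})\,\epsN \le \acrit.
\]
This is the obtuse triangle we take as the witness.

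It remains to show $b(T)\le\acrit$. Since $T$ is obtuse, Lemma~\rif{lemma:obtuse-b} gives $b(T)=\area(T)-\epsN\, n_-(T)$, where $n_-(T)=n_-(T,\N)=n_-(T,\N_{\text{obtuse}})+n_-(T,\N_{\text{nonobtuse}})$. The first component of any pair in $\N_{\text{nonobtuse}}$ is nonobtuse (condition (3): the longest edge of $T_+$ has length $\ge 1.72$, and $T_+$ is nonobtuse); in fact every first component of a pair in $\N$ is nonobtuse. Since $T$ is obtuse, $T$ cannot be the first component of any pair in $\N$, so $n_-(T,\N_{\text{nonobtuse}})$ contributes nothing problematic here — actually the relevant observation is that pairs in $\N_{\text{nonobtuse}}$ with second component $T$ would require $T$ to be nonobtuse, contradicting that $T$ is obtuse. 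Hence $n_-(T,\N_{\text{nonobtuse}})=0$ and $n_-(T)=n_-(T,\N_{\text{obtuse}})$.

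Combining, $b(T)=\area(T)-\epsN\, n_-(T,\N_{\text{obtuse}})\le\acrit$ by the inequality supplied by the definition of $\N_{\text{nonobtuse}}$. Thus $T$ is $b$-subcritical and $T\Ra T_-$, which is exactly $T\rab T_-$. There is no real obstacle here; the whole content is bookkeeping with the definitions of $\N_{\text{obtuse}}$, $\N_{\text{nonobtuse}}$, and $b(T)$, with the one small point being that $n_-(T,\N)$ reduces to $n_-(T,\N_{\text{obtuse}})$ when $T$ itself is obtuse.
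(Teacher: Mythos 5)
Your proof is correct and takes the same route as the paper: invoke condition (4) of $\N_{\text{nonobtuse}}$ to produce the obtuse witness $T$, apply Lemma~\ref{lemma:obtuse-b} to get $b(T)=\area(T)-\epsN n_-(T)$, and identify $n_-(T)$ with $n_-(T,\N_{\text{obtuse}})$ because an obtuse $T$ cannot be the target of any pair in $\N_{\text{nonobtuse}}$. The paper states that last identification without comment; you supply the justification explicitly (after a brief digression about first components, which you correctly recognize is not the relevant point and then fix).
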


\begin{proof} 
  Let $T$ be the obtuse triangle such that $T\Ra T_-$ given by $\N$
  (nonobtuse target), condition 4.  Then $n_-(T,\N_{\text{obtuse}}) =
  n_-(T)$, and by Lemma~\rif{lemma:obtuse-b}, we have $b(T) =
  \area(T)-\epsN n_-(T)\le\acrit$.  The result follows from the
  definition of $(\rab)$.
\end{proof}

\begin{lemma}\libel{lemma:n1m1}  
  Suppose that $n_+(T)>0$. Then $m_+(T)=0$.
\end{lemma}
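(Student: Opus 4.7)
The plan is to use the fact that attachment is single-valued: each triangle $T$ attaches to exactly one other triangle. This single-valuedness, combined with the disjointness built into the definition of $\M$, will force the conclusion.

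Suppose $n_+(T) > 0$. Then by definition of $n_+(T) = n_+(T,\N)$, there exists some $T_-$ with $(T,T_-) \in \N$. Inspecting the defining conditions of both $\N_{\text{obtuse}}$ and $\N_{\text{nonobtuse}}$, the pair $(T,T_-) \in \N$ requires $T \Ra T_-$, so $T_-$ is the unique triangle to which $T$ attaches. Now suppose for contradiction that $m_+(T) > 0$. Then there exists $T_-'$ with $(T,T_-') \in \M$; the second defining condition of $\M$ gives $T \Ra T_-'$. By uniqueness of attachment, $T_- = T_-'$. But condition (1) in the definition of $\M$ requires $(T,T_-') \notin \N$, contradicting $(T,T_-) \in \N$. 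Hence $m_+(T) = 0$.

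The main step is the observation from the remark immediately preceding the lemma that $n_+(T), m_+(T) \le 1$, since $T$ attaches to exactly one triangle; this is what makes the argument essentially trivial. There is no real obstacle here: the lemma is a direct bookkeeping consequence of the disjointness $\N \cap \M = \emptyset$ (hard-wired into $\M$'s first defining condition) together with the single-valuedness of $(\Ra)$.
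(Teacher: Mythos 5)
Your proof is correct and is an expanded version of the paper's own one-line proof, which simply cites the disjointness of $\M$ and $\N$; you additionally spell out the role of the single-valuedness of $(\Ra)$ in identifying the two targets, which the paper leaves implicit.
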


\begin{proof} 
  This follows directly from the disjointness of $\M$ and $\N$.
\end{proof}

\begin{lemma}\libel{lemma:n2m2}  
  Suppose $n_-(T_-)>0$.  Then $m_-(T_-)=0$.
\end{lemma}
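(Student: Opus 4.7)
The plan is to prove the contrapositive by assuming both $n_-(T_-) > 0$ and $m_-(T_-) > 0$ and deriving a contradiction. Pick $T_+$ with $(T_+,T_-)\in\N$ and $T_+'$ with $(T_+',T_-)\in\M$. I will split on whether $T_-$ is obtuse or nonobtuse, which is the same case split used in the definition $\N = \N_{\text{obtuse}}\sqcup\N_{\text{nonobtuse}}$.

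In the first case, $(T_+,T_-)\in\N_{\text{obtuse}}$, so $T_-$ is obtuse. But the previous lemma applied to $(T_+',T_-)\in\M$ forces $T_-$ to be nonobtuse, a contradiction. In the second case, $(T_+,T_-)\in\N_{\text{nonobtuse}}$, so $T_-$ is nonobtuse and, crucially, condition~4 of the definition of $\N_{\text{nonobtuse}}$ gives an obtuse triangle $T$ with $T\Ra T_-$ and $\area(T)-n_-(T,\N_{\text{obtuse}})\epsN\le\acrit$. The key observation is that this condition~4 depends only on $T_-$, not on the first coordinate of the pair. I will then verify that $(T_+',T_-)$ satisfies each of the four defining conditions of $\N_{\text{nonobtuse}}$: $T_-$ is nonobtuse (already known); $T_+'\Ra T_-$ holds since $(T_+',T_-)\in\M$ satisfies condition~2 of $\M$; $T_+'$ is nonobtuse (from the previous lemma, since $(T_+',T_-)\in\M$) and its longest edge has length at least $1.72$ (from condition~3 of $\M$); and condition~4 holds via the same witness $T$. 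Hence $(T_+',T_-)\in\N$, which contradicts condition~1 of $\M$.

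There is no significant obstacle here: the argument is essentially bookkeeping against the definitions. The only point that requires a moment's care is noticing that condition~4 of $\N_{\text{nonobtuse}}$ is a predicate on $T_-$ alone, which is precisely why membership of \emph{some} pair $(T_+,T_-)$ in $\N_{\text{nonobtuse}}$ propagates to \emph{every} pair $(T_+',T_-)$ for which conditions~1--3 hold, and conditions~1--3 are implied by membership in $\M$ together with the lemma proved immediately before.
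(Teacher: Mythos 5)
Your proposal is correct and follows essentially the same route as the paper: split on whether $T_-$ is obtuse, dispose of the obtuse case by the earlier lemma that $\M$-pairs have nonobtuse components (the paper invokes Lemma~\ref{lemma:obtuse-b}, which it derives from that same lemma), and in the nonobtuse case verify that the $\M$-pair satisfies the four conditions of $\N_{\text{nonobtuse}}$, contradicting $\N\cap\M=\emptyset$. The only difference is that you spell out the verification of conditions 1--4 in detail (correctly, and the observation that condition 4 is a predicate on $T_-$ alone is exactly the right point), whereas the paper leaves that check to the reader.
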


\begin{proof}
  If $T_-$ is obtuse, then $m_-(T_-)=0$ by Lemma~\rif{lemma:obtuse-b}.
  We may assume that $T_-$ is nonobtuse.  By the definition of $\N$
  (nonobtuse target), the inequality $n_-(T_-) >0$ implies the
  existence of an obtuse $T$ with $T\Ra T_-$ (by $\N$ condition 4).
  If (for a contradiction) $m_-(T_-)>0$, then there exists
  $(T_+,T_-)\in\M$.  We complete the proof by checking that
  $(T_+,T_-)$ satisfies each membership condition of $\N$ (nonobtuse
  target), so that $(T_+,T_-)\in\N$.  This contradicts disjointness:
  $(T_+,T_-)\in \N\cap\M = \emptyset$.
\end{proof}

\subsection{the main inequality}

\begin{lemma}\libel{lemma:main}  
If  every cluster $\C$ in every saturated packing of regular
  pentagons is finite, and if for some $a$ every cluster average
  satisfies
\begin{equation}\libel{eqn:main}
\frac{\sum_{T\in \C} b(T)}{\card(\C)} \ge a,
\end{equation}
then the density of a packing of regular pentagons never exceeds 
\[
\frac{\area_P} {2 a},
\]
where $\area_P = 5\kappa\sigma$ is the area of a regular pentagon of
circumradius $1$.  In particular, if the inequality holds for
$a=\acrit$, then the density never exceeds
\[
\frac{\area_P}{2 \acrit} = \frac{5 - \sqrt{5}}{3},
\] % checked 2016/2/18 in calcs.ml
the density of the pentagonal ice-ray.
\end{lemma}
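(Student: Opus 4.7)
The plan is to compute the density in a large ball $B_r$ as the ratio of total pentagon mass to total Delaunay triangle area, and then leverage the cluster averaging hypothesis after observing that the correction terms in $b(T)-\area(T)$ telescope up to boundary terms. I would begin by verifying the mass accounting in the uniform-disk model: replacing each pentagon by a disk of total mass $\area_P$ at its center, the disk at each vertex is distributed to incident triangles in proportion to interior angles, and since the angles of any triangle sum to $\pi$ while the full angle at each vertex is $2\pi$, each Delaunay triangle carries exactly $\area_P/2$ units of mass. Consequently, for any bounded region $R$ containing the set $\T_R$ of triangles entirely inside it with total area $A_R=\sum_{T\in\T_R}\area(T)$, the mean density in $R$ equals $|\T_R|\area_P/(2A_R)$ up to a boundary error of order $|\partial R|$.

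Next I would exploit the telescoping identity valid for any finite collection $\T$:
\[
\sum_{T\in\T}b(T)=\sum_{T\in\T}\area(T)+\epsN\sum_{T\in\T}(n_+(T)-n_-(T))+\epsM\sum_{T\in\T}(m_+(T)-m_-(T)).
\]
Each correction sum counts arrows of the relevant relation ($\N$ or $\M$) with tail in $\T$ minus those with head in $\T$, and so is bounded in magnitude by a constant times the number of triangles of $\T$ adjacent to triangles outside $\T$. Taking $\T=\T_r$ to be the triangles with all vertices in $B_r$, the number of such boundary-adjacent triangles is $O(r)$, so the total correction is $O(r)$. I would then partition $\T_r$ into interior clusters (fully contained in $\T_r$) and straddling boundary clusters. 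Since $b(T)$ is uniformly bounded on all Delaunay triangles, and by the forthcoming Lemma~\rif{lemma:card4} each cluster has at most four triangles, the boundary clusters contribute $O(r)$ to $\sum b$. For interior clusters the hypothesis gives $\sum_{T\in\C}b(T)\ge a\,\card(\C)$, and summing these bounds together with the telescoping identity yields
\[
A_r\ge\sum_{T\in\T_r}b(T)-O(r)\ge a|\T_r|-O(r),
\]
so the density in $B_r$ is at most $|\T_r|\area_P/(2A_r)\to\area_P/(2a)$ as $r\to\infty$. Substituting $a=\acrit=\tfrac32\sigma\kappa(1+\kappa)$, $\area_P=5\sigma\kappa$, and $\kappa=(1+\sqrt5)/4$ then reduces the bound to $(5-\sqrt5)/3$, as claimed.

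The main obstacle I anticipate is the rigorous control of the boundary term. The hypothesis only asserts that each cluster is finite rather than that cluster sizes are uniformly bounded, so a priori a single straddling cluster could have enormous diameter and spoil the $O(r)$ estimate. I plan to break this by invoking the eventually-proved bound $\card(\C)\le4$, which keeps every boundary cluster within an $O(1)$-neighborhood of $\partial B_r$ and forces the total count of $\N$- and $\M$-arrows crossing $\partial B_r$ to be $O(r)$, negligible against the $\Theta(r^2)$ bulk area. An alternative, more self-contained route would be to strengthen the hypothesis to include a uniform bound on cluster size, which the later development in fact supplies.
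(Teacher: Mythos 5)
Your argument is genuinely different from the paper's. The paper first invokes the standard fact that the supremum of densities is attained as a limit of densities of \emph{periodic} saturated packings, then passes to the quotient flat torus $\ring{R}^2/\Lambda$. There, everything is finite: the Euler formula gives exactly $2p$ Delaunay triangles for $p$ pentagons, the correction terms cancel \emph{exactly} because $\sum_T n_+(T)=\sum_T n_-(T)=\card(\N)$ and likewise for $\M$, and the density bound falls out of $\sum_T\area(T)=\sum_T b(T)\ge 2pa$ with no error term at all. Crucially, the finiteness hypothesis is used only to guarantee that no cluster contains a triangle together with a nontrivial $\Lambda$-translate of itself (else translation invariance would force the cluster to be infinite), which is exactly what is needed for clusters in $\ring{R}^2$ to descend bijectively to clusters on the torus. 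No boundary estimate and no uniform cardinality bound are needed.

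Your large-ball argument is a legitimate alternative, and the mass accounting (angles at a vertex summing to $2\pi$, angles of a triangle summing to $\pi$, so each triangle carries $\area_P/2$) matches the paper's setup. But as you yourself flag, your boundary estimate genuinely requires a uniform bound on cluster size, not mere finiteness: a single straddling cluster of diameter $\Theta(r)$ would already ruin the $O(r)$ boundary control, even though it is finite. Quoting Corollary~\ref{lemma:card4} here is a forward reference within the paper's development, and although it is logically consistent (nothing in its proof depends on the present lemma), it makes the lemma non-self-contained and obscures that the statement as written is proved from finiteness alone. If you want your route to match the strength of the stated lemma, the cleaner fix is to first reduce to periodic packings as the paper does — then compactness of the torus automatically upgrades finiteness to a uniform bound — at which point the ball argument and the torus argument are essentially the same computation. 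As written, your route either needs that reduction or the stronger hypothesis; the paper's torus route avoids both the boundary terms and the need for a uniform bound.
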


For any finite set $\C$ of triangles, we will call the inequality
(\rif{eqn:main}) with the constant $a=\acrit$ the {\it main inequality}
(for $\C$).  We call the strict inequality,
\begin{equation}\libel{eqn:strict-main}
\frac{\sum_{T\in \C} b(T)}{\card(\C)} > \acrit,
\end{equation}
the {\it strict main inequality} (for $\C$).

\begin{proof} The maximum density can be obtained as the limit of the
  densities of a sequence of saturated periodic packings.  Thus, it is
  enough to consider the case when the packing is periodic.  A
  periodic packing descends to a packing on a flat torus
  $\ring{R}^2/\Lambda$, for some lattice $\Lambda$.  The rule defining
  $\N$ is translation invariant, and $\N$ descends to the torus.  On
  the torus, the set of pentagons, the set of triangles, and the set
  $\N$ are finite.  The equivalence relation $(\equiv_b)$ defining
  clusters is translation invariant, and each cluster is finite, so
  that no cluster contains both a triangle and a translate of the
  triangle under a nonzero element of $\Lambda$.  Thus, each cluster
  $\C$ in $\ring{R}^2$ maps bijectively to a cluster $\C$ in the flat
  torus.  The functions $b$, $n_\pm$, $m_\pm$ are the same whether
  computed on $\ring{R}^2$ or $\ring{R}^2/\Lambda$.  Let $p$ be the
  number of pentagons in the torus.  By the Euler formula for a torus
  triangulation, the number of Delaunay triangles is $2p$.  We have
\[
\sum_{T} n_-(T) =  \sum_{T} n_+(T) = \card(\N);\qquad
\sum_{T} m_-(T) =  \sum_{T} m_+(T) = \card(\M).
\]
Thus, the terms in $b(T)$ involving $n_+(T)$, $n_-(T)$, $m_+(T)$,
and $m_-(T)$ cancel:
\[
\area(\ring{R}^2/\Lambda) = \sum_T \area(T) = \sum_T b(T).
\]    
Let $\area_P$ be the area of a regular pentagon.  Making use of the
hypothesis of the lemma, we see that the density is
\[
\frac{p\, \area_P}{\sum_T \area(T)} 
=\frac{p\, \area_P}{\sum_T b(T)} \le\frac{p\, \area_P}{2 p\, a} 
= \frac{\area_P}{2\, a}.
\]
When $a=\acrit$, the term on the right is the density of the
pentagonal ice-ray, as desired.
\end{proof}

This article gives a proof of the following theorem. In view of
Lemma~\rif{lemma:main}, it implies the main result,
Theorem~\rif{thm:main}.  The proof of this result appears at the end
of Section~\rif{sec:nonobtuse}.

\begin{theorem}\libel{thm2:main}
  Let $\C$ be a cluster of Delaunay triangles in a saturated packing
  of regular pentagons.  Then $\C$ is finite and the average of $b(T)$
  over the cluster is at least $\acrit$.  That is, $\C$ satisfies the
  main inequality.  Equality holds exactly when $\C$ consists of two
  adjacent Delaunay triangles from the pentagonal ice-ray, attached
  along their common longest edge, forming an ice-ray dimer pair.
\end{theorem}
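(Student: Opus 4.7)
The plan is to reduce the theorem to a finite case analysis on the combinatorial structure of a cluster, with each case settled by a geometric area-minimization problem on at most four Delaunay triangles. Cluster finiteness, concretely $\card(\C)\le 4$, should be delivered by the forthcoming Lemma~\rif{lemma:card4}, which I would take as given and concentrate on the inequality and the equality statement. The key structural observation is that the attachment map $T\mapsto T_0$ is a function, so the underlying undirected graph of $\rab$-arrows restricted to $\C$ has out-degree at most one at every node; a connected component is therefore an oriented tree with at most one extra back-edge. Combined with the in-degree bounds $n_-(T),m_-(T)\le 3$ and with $b$-subcriticality required at every source, the admissible cluster shapes shrink to a short catalogue: singletons, dimer pairs, pseudo-dimers, and a handful of three- and four-triangle topologies (with or without an obtuse triangle).

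\textbf{Case analysis of the main inequality.} For a singleton $\{T\}$, no other triangle attaches to $T$ from within $\C$ and $T$ itself is not $b$-subcritical (otherwise its attachment target would lie in $\C$), so $b(T)>\acrit$ strictly. For $\C=\{T_1,T_0\}$ with $T_1\rab T_0$, the pair is either a dimer pair or a pseudo-dimer. In the dimer-pair case I would invoke the local optimality of the ice-ray dimer (Figure~\rif{fig:Gamma}, from \cite{Kus}) together with vanishing of the correction terms on a genuine dimer — verified from Lemma~\rif{lemma:n1m1} and the disjointness of $DP$ from the relevant projections of $\M$ and $\N$ — to reduce to $\area\{T_1,T_0\}\ge 2\acrit$, with equality exactly at the ice-ray dimer. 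In the pseudo-dimer case, the definition of $\M$ produces an $\epsM$ boost to $b(T_0)$ matched by a deficit on the egress triangle $T_-$ \emph{outside} $\C$; this is precisely the correction needed to offset the near-$10^{-5}$ shortfall of Figure~\rif{fig:pseudo-dimer}, reducing the inequality to a bounded area minimization on two Delaunay triangles that is handled by interval arithmetic.

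\textbf{Obtuse and larger clusters.} If $\C$ contains an obtuse triangle $T$ then Lemma~\rif{lemma:obtuse-b} gives $b(T)=\area(T)-\epsN\,n_-(T)$, and Corollary~\rif{lemma:obtuse-b-arrow} pairs this deficit against an $\epsN$ surplus on every nonobtuse target of $T$ inside $\C$. Summing over $\C$ telescopes the $\N$-corrections away and reduces the inequality to an area estimate on the small list of nonobtuse triangles adjacent to an obtuse one; this is comfortable since, by Figure~\rif{fig:obtuse}, such neighbors carry a substantial area surplus. For the remaining three- and four-triangle clusters with no obtuse triangle, the catalogue yields a small list of specific attachment topologies, each of which reduces to an area-minimization on three or four Delaunay triangles constrained by the pentagon saturation condition, to be handled by interval arithmetic, automatic differentiation, and the meet-in-the-middle algorithm announced in the abstract. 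The equality characterization is then read off: only the two-triangle ice-ray dimer saturates the bound, every other topology giving strict inequality.

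\textbf{Main obstacle.} The delicate part is the pseudo-dimer and its interaction with $\N$-pairs inside multi-triangle clusters, because $\epsM=0.008$ is finely calibrated to the near-extremal configuration of Figure~\rif{fig:pseudo-dimer} and essentially no slack is left in the budget. Showing that every cluster topology correctly absorbs its $\pm\epsN$ and $\pm\epsM$ contributions without double-counting or leaking the $\M$- and $\N$-deficits outside $\C$ is the main book-keeping burden, and it is what Lemmas such as \rif{lemma:n1m1}, \rif{lemma:n2m2} are being set up to support. The equality case tightens the challenge further: the interval-arithmetic area minimizations must be proven sharp precisely at the ice-ray dimer and strict at every other two-, three-, or four-triangle configuration, so the computer-assisted certificates must distinguish a single extremal shape from a continuous deformation family.
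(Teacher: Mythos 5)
Your overall architecture matches the paper's: take $\card(\C)\le4$ from Lemma~\rif{lemma:card4}, split into obtuse versus nonobtuse clusters, telescope the $\pm\epsN,\pm\epsM$ corrections across internal edges, and reduce the residue to area-minimization problems on at most four triangles handled by interval arithmetic and MITM. Your functional-graph observation is correct as far as it goes, but the paper proves something sharper (Lemma~\rif{lemma:sequence} and Theorem~\rif{lemma:c-weak}: no chain $T_2\rab T_1\rab T_0$ of three distinct triangles, so every cluster is a star centered at one $T_0$), and it is this star structure, not the generic ``tree with one back-edge,'' that both yields $\card(\C)\le4$ and keeps the case catalogue tractable.

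The genuine gap is in your treatment of the dimer-pair case, which you dispose of in one sentence by invoking ``the local optimality of the ice-ray dimer (Figure~\rif{fig:Gamma}, from \cite{Kus}).'' Local optimality is not enough. What the theorem requires is the global statement of Theorem~\rif{thm:dimer-area}: \emph{every} pair of nonobtuse Delaunay triangles sharing their common longest edge with $T_1$ subcritical and $\area\{T_0,T_1\}\le2\acrit$ is congruent to the ice-ray dimer. The configuration space of dimer pairs is four-dimensional after reduction to triple contact and includes configurations nowhere near the ice-ray dimer; a local second-order certificate at the ice-ray point says nothing about them. The paper spends all of Section~\rif{sec:dimer-pair} on this --- successive area-decreasing deformations to $O2C$, to long isosceles, to triple contact via the squeeze motion, then an explicit neighborhood reduction, a path to the curve $\Gamma$, and finally a one-variable convexity argument along $\Gamma$ using automatic differentiation --- and explicitly calls it ``the principal optimization problem of this article.'' Your proposal substitutes a citation for this entire argument, and as a result the equality characterization (that the ice-ray dimer is the \emph{unique} cluster achieving $2\acrit$) is asserted rather than obtained. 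Everything else you sketch would survive, but this step needs the full global machinery, not a local-minimality reference.
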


\begin{remark}\libel{rem:equal}
Analyzing the proof of Lemma~\rif{lemma:main}, we see that for a
periodic packing, the maximum density is achieved exactly when each
 cluster in the packing gives exact equality in the main inequality.
Thus, the theorem implies that the pentagonal ice-ray  is the
unique periodic packing that achieves maximal density.
\end{remark}

\section{Pentagons in Contact}\libel{sec:contact}

\subsection{notation}

By way of general notation, we use uppercase $A,B,C,\ldots$ for
pentagons; $\v_A,\v_B,\ldots$ for the vertices of pentagons;
$\c_A,\c_B,\ldots$ for centers of pentagons; $\p,\q,\ldots$ for
general points in the plane; $\normo{\p}$ for the Euclidean norm;
$\dx{AB}=\dd{A}{B}$ for center-to-center distances;
$\alpha,\beta,\gamma,\phi,\psi,\ldots$ for angles; and
$T,T',T_+,T_-,T_0,T_1,T_2,\ldots$ for Delaunay triangles.

We let $\eta(T) = \eta(d_1,d_2,d_3)$ be the circumradius of a triangle
$T$ with edge lengths $d_1,d_2$, and $d_3$.

Let $\angle(\p,\q,\r)$ be the angle at $\p$ of the triangle with
vertices $\p$, $\q$, and $\r$.  Let $\arc(d_1,d_2,d_3)$ be the angle
of a triangle (when it exists) with edge lengths $d_1$, $d_2$, and
$d_3$, where $d_3$ is the edge length of the edge opposite the
calculated angle.  We write $\area(T)= \area(d_1,d_2,d_3)$ for the
area of triangle $T$ with edge lengths $d_1,d_2,d_3$.

\subsection{triple contact}

In this subsection, we describe possible contacts between pentagons.

% nonobtuse.
We consider a single Delaunay triangle and the three nonoverlapping
pentagons centered at the triangle's vertices
(Figure~\rif{fig:3C-type}).  We call such a configuration a {\it
  $P$-triangle}.  A $P$-triangle is determined up to congruence by six
parameters: the lengths of the edges of the Delaunay triangle and the
rotation angles of the regular pentagons.  When we refer to the area
or edges of a $P$-triangle, we mean the area or edges of the
underlying Delaunay triangle.  More generally, we allow $P$-triangles
to inherit properties from Delaunay triangles, such as obtuseness or
nonobtuseness, the relation $(\rab)$, clusters, and so forth. In
clusters of $P$-triangles it is to be understood that the pentagons
agree at coincident vertices of the triangles.  When there is a
fixed backdrop of a Delaunay triangulation of a pentagon packing, it
is not necessary to make a careful distinction between a $P$-triangle
and its underlying Delaunay triangle.

When two pentagons touch each other, some vertex of one meets an edge
of the other.  We call the pentagon with the vertex contact the {\it
  pointer} pentagon, and the pentagon with the edge contact the {\it
  receptor} pentagon (Figure~\rif{fig:receptor}).  We also call the
vertex in contact the {\it pointer vertex} of the pointer
pentagon. There are degenerate cases, when the contact set between two
pentagons contains of a vertex of both pentagons.  In these degenerate
cases, the designation of one pentagon as a pointer and the other as a
receptor is ambiguous.

\tikzfig{receptor}{Pointer and receptor pairs of pentagons.  
In each pair, the pentagon on the
left can be considered a pointer pentagon, 
with receptor on the right.  The first pair is nondegenerate,
and the other two pairs are degenerate.}{
[scale=0.5]
\pent{0}{0}{0}{1};
\pent{1.8}{0}{0}{1};
\pent{5}{0}{0}{1};
\pent{7}{0}{180}{1};
\pent{10+0.176}{0.243}{0}{1};  % 0.3 {Cos [54^o],Sin[54^o]}
\pent{12-0.172}{-0.243}{180}{1};
}

We say that a $P$-triangle is $3C$ (triple contact),
if each of the three pentagons contacts the other two.

We may direct the edges of a $3C$ triangle by drawing an arrow from
the pointer pentagon to the receptor pentagon.  We may classify $3C$
triangles according to the types of triangles with directed edges.
There are two possibilities for the directed graph.
\begin{enumerate}
\item Some vertex of the triangle is a source of two directed edges
  and another vertex is the target of two directed edges
  ($LJ$-junction, $TJ$-junction or $\Delta$-junction).
\item Every vertex of the triangle is both a source and a target
  (pinwheel, pin-$T$).
\end{enumerate}

As indicated in parentheses, we have named each of the various contact
types.  An example of each of the contact types is shown in
Figure~\rif{fig:3C-type}.  An exact description of these contact types
appears later.  The name $LJ$-junction is suggested by the $L$-shaped
region bounded by the three pentagons.  Similarly, the name
$TJ$-junction is suggested by the $T$-shaped region bounded by the
three pentagons.  Similarly, for $\Delta$-junctions.  This section
shows that the types in the figure exhaust the geometric types of
$3C$ contact.

\tikzfig{3C-type}{Types of $3C$-contact from left-to-right:
a pinwheel, a pin-$T$ junction, 
a $\Delta$-junction, an $LJ$-junction, and a $TJ$-junction.}
{
[scale=0.6]
\threepentnoD{0.00}{0.00}{46.69}{0.82}{1.53}{218.09}{1.73}{0.00}{163.28};
\coordinate (A) at (0,0);
\coordinate (B) at (0.82,1.53);
\coordinate (C) at (1.73,0);
\draw[->] ($0.8*(A) + 0.2*(B)$) -- ($0.2*(A) + 0.8*(B)$);
\draw[->] ($0.8*(B) + 0.2*(C)$) -- ($0.2*(B) + 0.8*(C)$);
\draw[->] ($0.8*(C) + 0.2*(A)$) -- ($0.2*(C) + 0.8*(A)$);
\begin{scope}[xshift=4.5cm,yshift=1.5cm]
\threepentnoD{0.00}{0.00}{90}{1.40}{-1.377}{0}{-0.35}{-1.628}{-18.89};
\coordinate (A) at (0,0);
\coordinate (B) at (1.40,-1.377);
\coordinate (C) at (-0.35,-1.628);
\draw[<-] ($0.8*(A) + 0.2*(B)$) -- ($0.2*(A) + 0.8*(B)$);
\draw[<-] ($0.8*(B) + 0.2*(C)$) -- ($0.2*(B) + 0.8*(C)$);
\draw[<-] ($0.8*(C) + 0.2*(A)$) -- ($0.2*(C) + 0.8*(A)$);
\end{scope}
\begin{scope}[xshift=8cm]
\threepentnoD{0.00}{0.00}{-5.16}{0.99}{1.70}{235.38}{1.98}{0.00}{183.43};
\coordinate (A) at (0,0);
\coordinate (B) at (0.99,1.7);
\coordinate (C) at (1.98,0);
\draw[<-] ($0.8*(A) + 0.2*(B)$) -- ($0.2*(A) + 0.8*(B)$);
\draw[->] ($0.8*(B) + 0.2*(C)$) -- ($0.2*(B) + 0.8*(C)$);
\draw[->] ($0.8*(C) + 0.2*(A)$) -- ($0.2*(C) + 0.8*(A)$);
\end{scope}
\begin{scope}[xshift=12cm]
\threepentnoD{0.00}{0.00}{80.86}{0.97}{1.58}{232.21}{1.82}{0.00}{223.24};
\coordinate (A) at (0,0);
\coordinate (B) at (0.97,1.58);
\coordinate (C) at (1.82,0);
\draw[<-] ($0.8*(A) + 0.2*(B)$) -- ($0.2*(A) + 0.8*(B)$);
\draw[->] ($0.8*(B) + 0.2*(C)$) -- ($0.2*(B) + 0.8*(C)$);
\draw[<-] ($0.8*(C) + 0.2*(A)$) -- ($0.2*(C) + 0.8*(A)$);
\end{scope}
\begin{scope}[xshift=16cm]
\threepentnoD{0.00}{0.00}{114.48}{0.90}{1.59}{237.18}{1.66}{0.00}{219.24};
\coordinate (A) at (0,0);
\coordinate (B) at (0.9,1.59);
\coordinate (C) at (1.66,0);
\draw[<-] ($0.8*(A) + 0.2*(B)$) -- ($0.2*(A) + 0.8*(B)$);
\draw[->] ($0.8*(B) + 0.2*(C)$) -- ($0.2*(B) + 0.8*(C)$);
\draw[<-] ($0.8*(C) + 0.2*(A)$) -- ($0.2*(C) + 0.8*(A)$);
\end{scope}
}
%
% formatpent3deg (delToPent3 (delAf pinwheeldelA (0.15) (0.15) (0.5)));;  
% formatpent3deg (delToPent3 (delAf pinwheeldelA (0.15) (0.15) (0.0)));;  

% formatpent3deg (delToPent3 (delAf deltajdelA (0.2) (0.15) (0.05)));;  
% formatpent3deg (delToPent3 (delAf ljdelA (0.5) (0.6) (0.90)));;  
% formatpent3deg (delToPent3 (delAf tjdelA (1.0) (1.2) (1.3*. ee)));;  

% graphics for pin-T brute forced in Mathematica.

%Note that the figures give two different geometric types
%associated to the combinatorial structure of a vertex that
%is the target of two edges.  

A {\it cloverleaf} arrangement is a $3C$ triangle that has a point at
which vertices from all three pentagons meet
(Figure~\rif{fig:clover}).  This is degenerate because this shared
vertex can be considered as a pointer or receptor.

\tikzfig{clover}{A cloverleaf (degenerate pinwheel).  The Delaunay triangle
in this particular example is not subcritical.}
{
[scale=0.6]
\begin{scope}[xshift=4cm]
\threepentnoD{0.00}{0.00}{31.70}{0.76}{1.52}{203.11}{1.70}{0.00}{148.30};
\end{scope}
}

In general, in this article, a {\it non-anomaly} lemma refers to a
geometrical lemma that shows that certain geometric configurations are
impossible.  Generally, it is obvious from the informal pictures that
various configurations cannot exist.  The non-anomaly lemmas then
translate the intuitive impossibilities into mathematically precise
statements.  We give a few non-anomaly lemmas as follows.
They are expressed as separation results, asserting that
two pentagons $A$ and $C$ do not touch.

\begin{lemma}\libel{lemma:sep1} 
Let $T$ be a $3C$-triangle with pentagons $A$, $B$, and $C$ such that
$B$ is a pointer to both of the other pentagons $A$ and $C$.  
Assume that $T$ is not a cloverleaf.
Then the two pointer vertices $\v_B$ and $\v_B'$ are adjacent
vertices of $B$.
\end{lemma}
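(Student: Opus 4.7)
The plan is by contradiction: assume $\v_B$ and $\v_B'$ are non-adjacent vertices of $B$, and derive that the configuration must be a cloverleaf. Normalize by placing $\c_B$ at the origin and $\v_B=(0,1)$; by the pentagonal symmetry of $B$ we may further assume $\v_B'=(\sigma,-\kappa)$, so the unique vertex of $B$ strictly between $\v_B$ and $\v_B'$ along the shorter arc is $\v_B^*=(\cos(\pi/10),\sin(\pi/10))$.

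First I would parametrize the admissible positions of $\c_A$ and $\c_C$. The pointer condition determines $\c_A$ by two parameters: the angle $\theta_A$ of the inward normal to the edge of $A$ containing $\v_B$, and the offset $s_A\in[-\sigma,\sigma]$ of $\v_B$ from the midpoint of that edge. Non-overlap of $A$ with $B$ forces $\theta_A\in[\pi/2-\pi/5,\pi/2+\pi/5]$ (otherwise the edge line through $\v_B$ cuts the interior cone of $B$ at $\v_B$). The extremal corner $s_A=\sigma$ with $\theta_A=\pi/2-\pi/5$ corresponds exactly to $A$ sharing the whole edge $\v_B\v_B^*$ with $B$, so that $\v_B^*$ becomes a vertex of $A$; a symmetric parametrization constrains $\c_C$ near $\v_B'$, with analogous extremal corner in which $C$ shares the edge $\v_B^*\v_B'$.

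The crux is to classify how the contact $A\cap C$ can be realized. After swapping roles if needed, write the contact point as a vertex $P$ of $A$ on an edge $e$ of $C$; I would split according to the pair of edges of $C$ containing $\v_B'$ and $P$. Case (i): same edge $e$, in which case $P=\v_B^*$ forces $e$ to coincide with the segment $\v_B'\v_B^*$ (because $\|\v_B^*-\v_B'\|=2\sigma$ equals $C$'s edge length), so $\v_B^*$ is simultaneously a vertex of $B$, $A$, and $C$ --- a cloverleaf; the alternative $P\ne\v_B^*$ is ruled out by a short circumradius computation showing that $\v_B$ on an edge and a second distant vertex both on $A$'s boundary, at distance equal to no admissible pentagon-chord length, forces $\c_A$ into the inscribed disk of $B$. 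Case (ii): adjacent edges of $C$ meeting at a vertex $v$ with $v\ne\v_B^*$; a direct trigonometric computation in the triangle $v\v_B^*\v_B'$ (sides $a,b,2\sigma$, interior angle $3\pi/5$ at $v$) places $\c_C$ --- at distance $1$ from $v$ along the interior angle bisector --- strictly inside the open $\kappa$-disk around $\c_B$, so $C$ overlaps $B$. Case (iii): non-adjacent edges of $C$, ruled out because the minimum distance between two non-adjacent edges of a regular pentagon equals $2\sigma$ and is attained only at the pair of nearest vertices, which collapses into case (i).

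The main obstacle is the one-parameter check in case (ii): one must verify along the full ellipse $\{(a,b):a^2+b^2+2ab\cos(2\pi/5)=4\sigma^2,\ a,b\in(0,2\sigma)\}$ that the resulting $\c_C$ lies strictly inside the open $\kappa$-disk around $\c_B$. This is handled by an explicit parametric computation (the bisector direction and the ellipse admit elementary formulas), or by the interval-arithmetic machinery used elsewhere in the paper. Once the three cases are discharged, only the cloverleaf survives --- and it is excluded by hypothesis --- so $\v_B$ and $\v_B'$ must be adjacent vertices of $B$.
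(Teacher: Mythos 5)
The paper proves Lemmas~\ref{lemma:sep1}, \ref{lemma:sep2}, and~\ref{lemma:sep3} together with a single one-paragraph argument: under the negated hypothesis, the line $\ell$ through $\c_B$ and the ``in-between'' vertex $\v=\v_B^*$ of $B$ separates $A$ from $C$, so the configuration cannot be $3C$ unless $A$ and $C$ both touch $\ell$ exactly at $\v_B^*$, which is the cloverleaf degeneracy. Your proposal aims at the same conclusion, and it does correctly identify $\v_B^*$ as the crucial point, but it takes a much heavier route (a case split on the pair of edges of $C$ carrying $\v_B'$ and $P$) and, as written, that route does not close.

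Concretely, there are genuine gaps in the case analysis. In case~(i) with $P\ne\v_B^*$, the ``short circumradius computation'' is only gestured at; the distance $\|\v_B-P\|$ is bounded above by the pentagon diameter $2\kappa$, which does not by itself produce a contradiction, and I cannot reconstruct why $\c_A$ would be forced into the inscribed disk of $B$. In case~(ii), the triangle $v\,\v_B^*\,\v_B'$ has no established geometric meaning: $\v_B^*$ is a vertex of $B$, not of $C$, and nothing in the setup constrains $v$ (a vertex of $C$) to be incident to $\v_B^*$ or to lie at a controlled distance from it; the claimed placement of $\c_C$ strictly inside the $\kappa$-disk about $\c_B$ is therefore unsupported. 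Case~(iii) likewise does not close: the $2\sigma$ lower bound on the distance between non-adjacent edges of $C$ says nothing about $\|\v_B'-P\|$ without a matching upper bound, and none is given. More structurally, after imposing the single contact constraint the configuration space is three-dimensional, and each case is being discharged by a claim that only tracks one or two parameters.

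The quicker observation, which is what the paper uses, is that the pointer condition $B\to A$ at $\v_B$ forces the edge of $A$ through $\v_B$ to be a supporting line of $B$ at $\v_B$, and hence $A$ lies in the closed half-plane bounded by $\ell$ containing $\v_B$ and meets $\ell$ only possibly at $\v_B^*$; symmetrically for $C$. Since $\v_B$ and $\v_B'$ are on opposite sides of $\ell$ when they are non-adjacent, $A\cap C\subseteq\{\v_B^*\}$, and contact there is exactly the cloverleaf. This replaces your entire three-way case split with one separating-hyperplane inequality and is the step I would recommend you adopt.
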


\begin{lemma}\libel{lemma:sep2}  
Let $T$ be a $3C$-triangle with pentagons $A$, $B$, and $C$.
Suppose that pentagon
$A$ is a pointer to $B$ at $\v_A$ and that $B$ is a pointer to $C$
at $\v_B$.  Then on $B$, the vertex
$\v_B$ is not opposite to the edge of $B$ containing
$\v_A$.
\end{lemma}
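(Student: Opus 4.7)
The plan is to argue by contradiction: assume $\v_B$ lies opposite the edge $e$ of $B$ containing $\v_A$, and derive that pentagons $A$ and $C$ cannot meet, contradicting the hypothesis that $T$ is $3C$.

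First I would fix coordinates so that $e$ lies along the $x$-axis with the interior of $B$ in the half-plane $y<0$. Then $\c_B=(0,-\kappa)$ and the vertex of $B$ opposite to $e$ is $\v_B=(0,-1-\kappa)$. Pentagon $A$ sits in the upper half-plane with its pointer vertex $\v_A$ on the $x$-axis. The key geometric fact, which I would isolate as an elementary sub-lemma about pointer-receptor contact, is: because the internal angle of the regular pentagon at any vertex equals $108^\circ$, the segment $\v_A\c_A$ can make an angle of at most $(180^\circ-108^\circ)/2=36^\circ$ with the outward normal to any supporting line of $A$ through $\v_A$. Consequently the $y$-coordinate of $\c_A$ is at least $\cos 36^\circ=\kappa$. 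Applying the identical argument to $B$ as pointer and $C$ as receptor at $\v_B$ (so that $C$ lies on the far side from $B$ of its own edge through $\v_B$), the $y$-coordinate of $\c_C$ is at most $-(1+\kappa)-\kappa=-(1+2\kappa)$.

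Combining, $|\c_A-\c_C|$ is bounded below by the vertical gap $1+2\kappa=(3+\sqrt 5)/2\approx 2.618$. But any two non-overlapping pentagons of circumradius $1$ that touch must have centers at distance at most $2$ (two circumradii), so $A$ and $C$ cannot be in contact. This contradicts the $3C$ hypothesis on $T$.

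The only substantive step is establishing the $36^\circ$ angular bound on the pointer center's direction relative to the supporting line, and the corresponding $\kappa$ bound on its height. This is routine once the internal angle is pinned down, but I would need to treat the degenerate cases carefully: when $\v_A$ coincides with an endpoint of edge $e$ (so that the designation of pointer/receptor is ambiguous), and when $\v_B$ coincides with a vertex of $C$. In both degenerate cases, whichever supporting line one chooses still lies within the admissible $72^\circ$ cone of supporting directions at the pointer vertex, so the inequalities $(\c_A)_y\ge\kappa$ and $(\c_C)_y\le-(1+2\kappa)$ persist and the argument goes through unchanged.
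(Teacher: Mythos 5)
Your overall strategy (put coordinates on $B$, then show $\c_A$ and $\c_C$ are forced too far apart for $A$ and $C$ to touch) is a genuinely different route from the paper's, which instead exhibits a single separating line through $\c_B$ and a vertex of $B$. Both routes can be made to work, but your write-up has two concrete gaps.

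The first is the bound $(\c_C)_y \le -(1+2\kappa)$. The ``identical argument'' does not apply: for $\c_A$ you used that $\v_A$ is a \emph{vertex} of $A$, at distance exactly $1$ from $\c_A$, with direction to $\c_A$ within $36^\circ$ of the inward normal to the horizontal line. But $\v_B$ is \emph{not} a vertex of $C$; it sits at an unknown point of an edge $f$ of $C$, and the line through $f$ can be tilted up to $36^\circ$ from horizontal. The center $\c_C$ is at perpendicular distance $\kappa$ from the line through $f$, but the foot of that perpendicular (the midpoint of $f$) may be offset from $\v_B$ by as much as $\sigma$ along $f$. Carrying this out gives
\[
(\c_C)_y-(\v_B)_y\ \le\ \sigma\sin\tfrac{\pi}{5}-\kappa\cos\tfrac{\pi}{5}\ =\ \sigma^2-\kappa^2\ \approx\ -0.309,
\]
which is strictly weaker than your claimed $-\kappa\approx -0.809$, and equality is approached by a configuration in which $f$ lies along an edge of $B$ at $\v_B$. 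The argument survives the correction, since then $(\c_A)_y-(\c_C)_y\ge \kappa+(1+\kappa)+(\kappa^2-\sigma^2)=2\kappa(1+\kappa)\approx 2.93>2$, but as written the intermediate inequality is false and the needed slack is not the one you compute.

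The second gap is the handling of degeneracies, which the one-sentence dismissal does not actually cover. If $\v_A$ coincides with an endpoint of $e$, the supporting line separating $A$ from $B$ at $\v_A$ need not be the $x$-axis, and then $A\not\subset\{y\ge 0\}$; in the extreme position $(\c_A)_y$ can drop to $-\sin(\pi/10)\approx -0.31$, not $\kappa$. Taking this together with the extreme $B$-$C$ contact above, one can place $\c_A\approx(1.54,-0.31)$ and $\c_C\approx(0.95,-2.12)$, giving $\|\c_A-\c_C\|\approx 1.90<2$; in fact the three pentagons then meet at a common vertex (a cloverleaf). So the distance bound genuinely fails there, and the lemma is only saved because in this slider-contact situation the phrase ``the edge of $B$ containing $\v_A$'' must be read as the edge along which contact actually occurs (not the other edge of $B$ having $\v_A$ as an endpoint), which changes the opposite vertex and removes the apparent counterexample. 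Your proposal neither makes this interpretive choice explicit nor proves the estimate persists, so the degenerate case is left genuinely open.
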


\begin{lemma}\libel{lemma:sep3} 
  Let $T$ be a $3C$-triangle with pentagons $A$, $B$, and $C$ such
  that $B$ is a receptor of both of the other pentagons.  Then the two
  pointer vertices $\v_A$ and $\v_C$ lie on the same edge or adjacent
  pentagon edges of $B$.
\end{lemma}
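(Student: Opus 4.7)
The plan is to argue by contradiction: assume the pointer vertices $\v_A$ and $\v_C$ lie on non-adjacent edges $e_i,e_j$ of $B$, and deduce $\dd{A}{C}>2$. Since $T$ is a $3C$-triangle, $A$ and $C$ are in contact, so for a shared boundary point $\p$ the triangle inequality gives $\dd{A}{C}\le \normo{\p-\c_A}+\normo{\p-\c_C}\le 2$. The two conclusions are incompatible, yielding the contradiction.

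By the fivefold symmetry of $B$, I relabel so that $\v_A\in e_0$ and $\v_C\in e_2$, with $e_1$ the single edge of $B$ between $e_0$ and $e_2$ on the short side. Place $\c_B$ at the origin, send the outward normal $\hat n_0$ of $e_0$ along the positive $x$-axis, and let $\hat t_0$ point toward the $e_0$--$e_1$ vertex of $B$. I write $\v_A=\kappa\hat n_0+s_A\hat t_0$ with $s_A\in[-\sigma,\sigma]$, and $\c_A=\v_A+\hat u_A$ where $\hat u_A$ makes angle $\alpha_A$ with $\hat n_0$. The two edges of $A$ incident at $\v_A$ spread $\pm\pi/5$ off the direction $-\hat u_A$ (since the interior angle of $A$ at $\v_A$ is $3\pi/5$), and they must lie in the exterior half-plane of $e_0$; this forces $|\alpha_A|\le\pi/5$, after which convexity of $A$ keeps all of $A$ on the correct side. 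I parameterize $(s_C,\alpha_C)$ analogously on $e_2$, taking $\hat t_2$ to point toward the $e_1$--$e_2$ vertex so that positive $s_C,\alpha_C$ correspond to tilting $C$ toward $e_1$.

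Since $\c_A-\c_C$ is an affine function of $(s_A,\alpha_A,s_C,\alpha_C)$, the squared distance $\dd{A}{C}^2=|\c_A-\c_C|^2$ is a convex quadratic on the parameter box, of the form $u^2+v^2$ with $u,v$ the Cartesian components of $\c_A-\c_C$. I compute the four partial derivatives at the candidate corner $(s_A,\alpha_A,s_C,\alpha_C)=(\sigma,\pi/5,\sigma,\pi/5)$, where both pentagons are maximally tilted toward $e_1$, and check that each partial has the sign that forces any feasible excursion from the corner to increase the objective; this makes the corner the (unique) minimizer on the box. The sign verifications reduce to elementary inequalities such as $2\kappa>1$ and $\kappa\cos(2\pi/5)-\sigma\sin(2\pi/5)=\cos(3\pi/5)<0$. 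At the corner, $\c_A$ and $\c_C$ both lie at distance $2$ from $\c_B$ with angular gap $2\pi/5$, so the law of cosines gives
\[
\dd{A}{C}^2 = 8\bigl(1-\cos(2\pi/5)\bigr) = 2(5-\sqrt5) > 4,
\]
using $\cos(2\pi/5)=(\sqrt5-1)/4$. Hence $\dd{A}{C}>2$, contradicting the contact of $A$ and $C$.

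The main technical step is the corner identification, i.e.\ writing out the four partials explicitly and confirming the correct signs uniformly at the candidate corner; this is where some care is needed, because the convex quadratic has a two-dimensional set of configurations with equal $\dd{A}{C}$, so one cannot appeal to strict convexity alone. Everything after the corner is identified is routine trigonometry.
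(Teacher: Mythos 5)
Your argument is correct, but it takes a genuinely different route from the paper's. The paper dispatches the three separation lemmas together with a short, purely qualitative separating-hyperplane argument: if the pointer vertices $\v_A$ and $\v_C$ lie on non-adjacent edges of $B$, then a line through $\c_B$ and an appropriate vertex of $B$ strictly separates the convex bodies $A$ and $C$, so they cannot touch and the triangle cannot be $3C$. You instead bound the center distance directly: you parameterize the two pointer contacts by $(s_A,\alpha_A,s_C,\alpha_C)$ on the box $[-\sigma,\sigma]\times[-\pi/5,\pi/5]\times[-\sigma,\sigma]\times[-\pi/5,\pi/5]$, observe that $\c_A-\c_C$ is affine in these parameters so that $\dd{A}{C}^2$ is a (weakly) convex quadratic on a box, verify the KKT condition at the corner $(\sigma,\pi/5,\sigma,\pi/5)$, and evaluate $\dd{A}{C}^2=8\bigl(1-\cos(2\pi/5)\bigr)=2(5-\sqrt5)>4$ there, contradicting the bound $\dd{A}{C}\le 2$ for two touching unit-circumradius pentagons. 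The paper's proof is shorter and conceptual but leans on a figure for the exhibition of the separating line; yours is longer but fully explicit and quantitative, giving the separation margin $\sqrt{2(5-\sqrt5)}-2\approx 0.35$ as a byproduct. I checked the four partials at the corner and they are indeed all strictly negative: with your sign conventions they come out to $\partial_{s_A}=\partial_{s_C}=-4\sigma(2\kappa-1)$ and $\partial_{\alpha_A}=\partial_{\alpha_C}=-4\sin(3\pi/5)$, so the sign check does reduce to $2\kappa>1$ together with one more elementary pentagon identity. One small wobble in exposition: the edge directions of $A$ emanating from $\v_A$ make angles $\pm 3\pi/10$ with $+\hat u_A$ (not $\pm\pi/5$ with $-\hat u_A$); the constraint $|\alpha_A|\le\pi/5$ comes from requiring both edge directions to have nonnegative $\hat n_0$-component, i.e.\ $|\alpha_A\pm 3\pi/10|\le\pi/2$, and then convexity of $A$ (each convex body is contained in the tangent cone at any of its boundary points) does give the claimed half-plane containment. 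The constraint you derive is correct; only the one sentence justifying $\pm\pi/5$ is garbled.
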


\tikzfig{sep}{A line through the center 
of the middle pentagon $B$
through one of its vertices 
separates the two extremal 
pentagons $A$ and $C$.}{
[scale=0.5]
\pen{0}{0}{90};
\pen{1.72}{0.56}{90};  % (1+kappa) {Cos pi/10 , sin pi/10}
\pen{-1.72}{0.56}{90};  
\draw (0,-1.5) -- (0,1.5);
\begin{scope}[xshift=6cm]
\pen{0}{0}{90};
\pen{1.72}{0.56}{90};  
\pen{-1.72}{-0.56}{90};  
\draw (0,-1.5) -- (0,1.5);
\end{scope}
\begin{scope}[xshift=12cm]
\pen{0}{0}{90};
\pen{1.72}{-0.56}{90};  
\pen{-1.72}{-0.56}{90};  
\draw (0,-1.5) -- (0,1.5);
\end{scope}
}

\begin{proof} The Lemmas~\rif{lemma:sep1}, \rif{lemma:sep2},
  and~\rif{lemma:sep3} can be proved in the same way.  In each case,
  we prove the contrapositive, assuming the negation of the geometric
  conclusion, and proving that the configuration is not $3C$.  We show
  that the configuration is not $3C$ by constructing a separating
  hyperplane between the pentagons $A$ and $C$.  In each case, the
  separating hyperplane is a line through the center of the middle
  pentagon $B$ and passing through a vertex $\v$ of that pentagon.
  See Figure~\rif{fig:sep}.  In the case of Lemma~\rif{lemma:sep1},
  there is a degenerate case of a cloverleaf, where all three
  pentagons meet at the vertex $\v$ on the separating line.
\end{proof}

\begin{definition}[$\Delta$]
  We say that a $3C$-triangle has type $\Delta$ if we are in the first
  case of Lemma~\rif{lemma:sep3} (the two pointer vertices $\v_A$ and
  $\v_C$ of $A$ and $C$ lie on the same edge of $B$) provided the line
  $\lambda$ through that edge of $B$ separates $B$ from $A$ and
  $C$. (See Figure~\rif{fig:delta}.)
\end{definition}

\tikzfig{delta}{In type $\Delta$, a line separates pentagon 
$B$ from
the other two pentagons. 
The second figure (which is degenerate of type $LJ$) does not have
type $\Delta$.}{
[scale=0.5]
\pen{0}{0}{90};
\draw (0,0) node {$B$};
\draw (-2.0,-0.809) node[above] {$\lambda$} --  (2.0,-0.809);
\pen{-1.05}{-2*0.809}{-90};
\pen{1.9 - 1.05}{-2*0.809}{-90};
\begin{scope}[xshift=6cm]
\pen{0}{0}{90};
\draw (0,0) node {$B$};
\draw (-2.0,-0.809) node[above] {$\lambda$} --  (2.0,-0.809);
\pen{-0.428}{-2*0.809}{-90};
\pen{1.36}{ -1.44}{ 212.704};
\end{scope}
}

In type $\Delta$, say $A$ is a pointer into $C$ at $\v$.  Then $\v_A$
and $\v$ are the two endpoints of some edge of $A$.  Also, $\v_C$ and
$\v$ lie on the same edge of $C$.  If the line $\lambda$ does not
separate $B$ from $A$ and $C$, then $\v_C$ is a shared vertex of $B$
and $C$, and we have a degeneracy that can also be viewed as $\v_A$
and $\v_C$ on adjacent pentagon edges of $B$.  This case will be
classified as a degenerate $LJ$-junction below.

\begin{definition} Let $T$ be a $P$-triangle with pentagons $A$, $B$,
  and $C$.  Assume that $A$ points to $B$ at $\v_{AB}$, and $B$ points
  to $C$ at $\v_{BC}$.  An {\it inner vertex} $\v$ of $B$ is a vertex
  $\v\ne \v_{BC}$ of $B$ such that $\v$ lies between $\v_{AB}$ and
  $\v_{BC}$ (along the short run of the perimeter of $B$ from
  $\v_{AB}$ to $\v_{BC}$).  We allow the degeneracy $\v=\v_{AB}$.
  See Figure~\rif{fig:pin1}.
\end{definition}

\begin{definition} Let $T$ be a $3C$-triangle.  FIx the pointer
  directions on $T$, if ambiguous. We say $T$ has type {\it pin}-$k$,
  for $k\in \{0,1,2,3\}$ if $A$ points into $B$, $B$ points into $C$,
  $C$ points into $A$, and if there are exactly $k$ pentagons among
  $A,B,C$ that have an inner vertex.  We use {\it pinwheel} as a
  synonym for pin-$0$ and {\it pin-T} as a synonym for pin-$2$.
\end{definition}

% page 23-nonobtuse.
\begin{lemma} There does not exist a $3C$-triangle $T$ of type pin-$1$
  with pentagons $A$, $B$, and $C$.
\end{lemma}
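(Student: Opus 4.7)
The plan is a contradiction argument centered on an angular non-overlap inequality at the contact point $\v_{CA}$. Assume that a $3C$-triangle $T$ of type pin-$1$ exists. After cyclic relabeling, let $B$ be the unique pentagon with an inner vertex. Unfolding the pin-$1$ conditions: because $B$ has exactly one inner vertex on the short arc from $\v_{AB}$ to $\v_{BC}$, the vertex $\v_{BC}$ of $B$ is two vertices around the perimeter from the edge $e^B$ of $B$ containing $\v_{AB}$; because $A$ has no inner vertex, $\v_{CA}$ lies on one of the two edges of $A$ incident to the vertex $\v_{AB}$; and because $C$ has no inner vertex, $\v_{CA}$ is an endpoint (vertex of $C$) of the edge $e^C$ of $C$ that contains $\v_{BC}$.

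First I would normalize: place $\c_B$ at the origin with $e^B$ horizontal, and by reflection symmetry fix $\v_{BC}$ on one side. Parameterize the configuration by $x_0$ (position of $\v_{AB}$ on $e^B$), $\theta_A'$ (direction from $\v_{AB}$ to $\c_A$; constrained by $A$--$B$ non-overlap to a closed interval of width $2\pi/5$), $\theta_e$ (direction of the line carrying $e^C$; constrained by $B$--$C$ non-overlap at the vertex $\v_{BC}$ to a closed interval of width $2\pi/5$), and $d$ (signed distance along $e^C$ from its midpoint to $\v_{BC}$, together with a choice of endpoint $P_\pm$ for $\v_{CA}$). A short sum-to-product computation gives $\c_C-P_\pm = (\sin(\theta_e\pm\pi/5),\,-\cos(\theta_e\pm\pi/5))$, so the direction from $\v_{CA}$ to $\c_C$ is $\theta_e - 3\pi/10$ or $\theta_e - 7\pi/10$, independent of $d$.

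The heart of the argument is the non-overlap inequality at $\v_{CA}$. Because $\v_{CA}$ is a vertex of $C$ at an interior point of an edge of $A$, the half-plane of $A$'s interior and the $3\pi/5$-sector of $C$'s interior at $\v_{CA}$ must be disjoint, which is equivalent to
\[
\angle \c_A\v_{CA}\c_C \ \ge\ \pi/2 + 3\pi/10 \ =\ 4\pi/5.
\]
Equivalently, letting $\theta_C$ denote the direction from $\v_{CA}$ to $\c_C$ and $\theta_A^\perp$ the inward normal of $A$'s edge at $\v_{CA}$, one needs $\theta_C\in[\theta_A^\perp+4\pi/5,\theta_A^\perp+6\pi/5]$. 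The closed-form expressions above reduce this to a comparison of two angular intervals indexed by $(\theta_A',\theta_e)$, subject to the explicit edge-incidence equation that pins $\v_{CA}$ simultaneously to $e^C$ and to an edge of $A$ incident to $\v_{AB}$. A finite trigonometric verification shows that these intervals are disjoint throughout the open interior of the parameter region, with equality only at two kinds of boundary degeneracies: $d=\pm\sigma$, in which $\v_{BC}$ becomes a vertex of $C$ (a cloverleaf-type degeneracy), and $\theta_A'\to -\pi/2\mp\pi/5$, in which an edge of $A$ becomes collinear with $e^B$ (a shared-edge degeneracy between $A$ and $B$); neither is a genuine $3C$ pin-$1$. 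The main obstacle is the book-keeping of the two cases $\v_{CA}=P_\pm$ together with the reflection symmetry of the pin-$1$ condition, which splits the verification into a manageable but non-trivial case analysis well-suited to the interval-arithmetic methodology used later in the paper.
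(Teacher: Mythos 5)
Your proposal takes a genuinely different route from the paper --- a coordinate parametrization plus a proposed trigonometric/interval verification, versus the paper's short synthetic argument --- and it contains both an error and a critical gap. The error is the asserted inequality $\angle \c_A \v_{CA} \c_C \ge 4\pi/5$: the direction from $\v_{CA}$ to $\c_A$ is \emph{not} the inward normal of $A$'s edge at $\v_{CA}$ (it sweeps over an interval of width $2\pi/5$ centered at that normal, equaling it only when $\v_{CA}$ is the midpoint), so the correct lower bound is only $3\pi/5$, attained in the limit as $\v_{CA}$ approaches an endpoint of $A$'s edge with $C$'s cone flush against that edge on the near side. Your interval form $\theta_C \in [\theta_A^\perp + 4\pi/5,\theta_A^\perp + 6\pi/5]$ is correct, but it is a statement about $\theta_A^\perp$ rather than about $\c_A$, so the asserted ``Equivalently'' does not hold. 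The critical gap is that the entire content of the lemma is deferred to ``a finite trigonometric verification shows that these intervals are disjoint,'' which is never carried out; that is exactly where all the work would lie, and in light of the corrected bound it is not clear the disjointness even holds as you have stated it.

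By contrast the paper's proof is synthetic and computation-free. Taking $\v$ to be the inner vertex of $B$, the pin-$1$ hypotheses (neither $A$ nor $C$ has an inner vertex) together with the interior angle $3\pi/5$ of a regular pentagon give $\angle(\v,\v_{BC},\v_{CA}) \ge 2\pi/5 \ge \angle(\v_{CA},\v_{BC},\v)$ in the triangle $(\v,\v_{BC},\v_{CA})$. The law of sines then forces $\norm{\v_{BC}}{\v_{CA}}\ge\norm{\v}{\v_{BC}}=2\sigma$; yet $\v_{BC}$ lies on an edge of $C$ incident to $\v_{CA}$, so $\norm{\v_{BC}}{\v_{CA}}\le 2\sigma$. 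Equality holds throughout, forcing $\v_{BC}$ to be a vertex of $C$, hence a degenerate inner vertex of $C$, which makes the type pin-$k$ with $k\ge 2$. In other words, the step you hoped to delegate to a trigonometric/interval check is packaged in the paper into one exact angle identity and the law of sines, with no case analysis and no computer.
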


\tikzfig{pin1}{A distorted pin-$1$ configuration.}{
\begin{scope}[scale=1.0]
\pen{0}{0}{-90};
\pen{-0.974}{ 1.543}{ 32.70422};
\draw[red] ++ (-0.974,1.543) 
  ++ (0.842,0.532) node[black,anchor=west] {$\v_{BC}$} -- 
  ++ (115:0.2) -- ++ (-65:1.6) node[black,anchor=south west] {$\v_{CA}$}
   -- ++ (7:1.17);
\draw (0,0) node {$A$};
\draw (0,0) + (3*72-90:1) node[anchor=north west] {$\v_{AB}$};
\draw (-0.974,1.543) node {$B$};
\draw (1.1,1.543) node {$C$};
\smalldot{3*72-90:1}; %vAB
\draw (-0.974,1.543)+(32.7-72:1) node[anchor=south west]{$\v$};
\smalldot{$(-0.974,1.543)+(32.7-72:1)$}; %v
\smalldot{$(-0.974,1.543)+(32.7:1)$}; %vBC
\smalldot{$(-0.974,1.543)+(0.842,0.532)+(115:0.2)+(-65:1.6)$}; %vCA
%\draw (0.57,0.809) -- (2.1,0.809) node[anchor=south] {$\gamma$};
\end{scope}
}

\begin{proof} 
  For a contradiction, we draw a (distorted) picture of a pin-$1$
  configuration (Figure~\rif{fig:pin1}).  We let $\v$ be the inner
  vertex of $B$; that is, the vertex that is interior to the triangle
  $(\v_{CA},\v_{AB},\v_{BC})$ with vertices at the pointers $X$ into
  $Y$.  It is an endpoint of the edge of the pentagon $B$ containing
  $\v_{AB}$.  We have
\[
\angle (\v,\v_{AB},\v_{CA}) \le \pi,\quad 
\angle(\v,\v_{BC},\v_{AB}) = 3\pi/5,\quad
\angle(\v_{CA},\v_{BC},\v_{AB})\le 2\pi/5.
\]
(The last inequality uses the fact that $T$ is not a degenerate
pin-$2$, so that $\v_{CA}$ is not a vertex of $A$.)  We also have
\[
\angle(\v,\v_{BC},\v_{CA}) \ge 2\pi /5 
\ge \angle(\v_{CA},\v_{BC},\v_{AB}) \ge \angle(\v_{CA},\v_{BC},\v).
\]
The law of sines applied to the triangle $(\v,\v_{BC},\v_{CA})$ then
gives
\[
2\sigma =\norm{\v_{BC}}{\v}\le\norm{\v_{BC}}{\v_{CA}}\le 2\sigma.
\]
Thus, we have equality everywhere.  In particular,
$\angle(\v,\v_{AB},\v_{CA})=\pi$, and $\v_{BC}$ is a vertex of $C$.
Hence $\v_{BC}$ is a degenerate inner vertex of $C$, and $T$ has type
pin-$k$, for some $k\ge 2$.
%.  This has type $\Delta$.
\end{proof}

\begin{lemma}  The type pin-$3$ does not exist.  
\end{lemma}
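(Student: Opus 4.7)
My plan is to argue by contradiction using the angle-sum formula for a simple polygon, applied to the ``hole'' $\Omega$ enclosed between the three pentagons.

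Suppose for contradiction that $T$ is a pin-$3$ $3C$-triangle with pentagons $A$, $B$, $C$ and pointer contacts $\v_{AB}$, $\v_{BC}$, $\v_{CA}$. Let $\Omega$ be the simple closed polygonal region bounded by the three short perimeter arcs of the pentagons: on $\partial A$ from $\v_{CA}$ to $\v_{AB}$, on $\partial B$ from $\v_{AB}$ to $\v_{BC}$, and on $\partial C$ from $\v_{BC}$ to $\v_{CA}$. Its vertices consist of the three pointer contacts $\v_{XY}$ together with all pentagon corners (the inner vertices) lying strictly in the interior of these arcs. By the pin-$3$ hypothesis each of $A$, $B$, $C$ contributes at least one inner vertex; writing $K$ for the total count we have $K\ge 3$ and $\Omega$ has $n=K+3$ vertices.

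At each inner vertex the convex pentagon occupies its interior angle $3\pi/5$ on the side opposite to $\Omega$, so $\Omega$ has the reflex interior angle $2\pi-3\pi/5 = 7\pi/5$ there. At each contact $\v_{XY}$ the interior angle $\theta_{XY}$ of $\Omega$ is positive. The standard formula $(n-2)\pi$ for the angle sum of a simple polygon then yields
\[
\theta_{AB}+\theta_{BC}+\theta_{CA} \;=\; (K+1)\pi - K\cdot \frac{7\pi}{5} \;=\; \frac{(5-2K)\pi}{5} \;\le\; -\frac{\pi}{5},
\]
which contradicts positivity of the $\theta_{XY}$.

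The main technical obstacle is the bookkeeping of degenerate configurations in which a contact $\v_{XY}$ coincides with a vertex of the receptor $Y$ (the degeneracy $\v=\v_{AB}$ permitted in the definition of inner vertex), so that an inner vertex and a contact merge onto one point. At such a merged vertex the interior angle of $\Omega$ is $2\pi-2\cdot 3\pi/5 = 4\pi/5$ in place of $7\pi/5$, and repeating the arithmetic for each possible pattern of coincidences still forces a negative sum for the remaining contact angles---except when all three contacts collapse to one shared point, giving the cloverleaf, which is classified as a degenerate pinwheel rather than a genuine pin-$3$. Once one verifies that $\Omega$ is a simple closed curve in every admissible configuration, the angle-sum contradiction completes the argument.
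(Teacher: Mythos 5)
Your argument is essentially the same as the paper's: treat the hole bounded by the three pentagons as a simple polygon, observe that each inner vertex contributes a reflex angle of $7\pi/5$, and apply the polygon angle-sum formula to force the sum of the three contact angles to be $-\pi/5$ (or less), which is impossible. The paper states this directly as the hexagon case ($K=3$); your extra bookkeeping for $K>3$ and for contacts coinciding with inner vertices is harmless additional care (in fact Lemma~\ref{lemma:sep2} implies each pentagon has at most one inner vertex, so $K=3$ always), but the underlying idea and the computation are identical.
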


\begin{proof} 
  Suppose for a contradiction that a $P$-triangle $T$ of type pin-$3$
  exists.  The region $X$ bounded by the three pentagons is a
  nonconvex star-shaped hexagon, with interior angles $\alpha'$,
  $7\pi/5$, $\beta'$, $7\pi/5$, $\gamma'$, and $7\pi/5$.  The vertices
  of $X$ with angles $7\pi/5$ are the inner vertices of the three
  pentagons of $T$.  The sum of the interior angles in a hexagon is
  $4\pi$:
\[
4\pi = \alpha'+\beta'+\gamma' + 3 (7\pi/5),
\]
which implies that $\alpha'+\beta'+\gamma' = -\pi/5$, which is
impossible.
\end{proof}

\begin{definition}[$TJ$ and $LJ$-junction]
  We say that a $3C$-triangle is a type $TJ$- or $LJ$-junction if it
  is not type $\Delta$ and if we are in the second case of
  Lemma~\rif{lemma:sep3} (both $A$ and $C$ point into $B$, and the two
  pointer vertices $\v_A$ and $\v_C$ lie on adjacent edges of $B$).
  Say $A$ is a pointer into $C$ at $\v$.  We say that it has type {\it
    $LJ$-junction} if $\v_C$ and $\v$ lie on the same pentagon edge of
  $C$, and otherwise we say it has type {\it $TJ$-junction}.
\end{definition}

We can be more precise about the structure of a $TJ$-junction.  In the
context of the definition, Lemma~\rif{lemma:sep2} implies that $\v$
and $\v_C$ lie on adjacent pentagon edges of $C$.

This completes the classification of $3C$-triangles: $\Delta$,
pinwheel, pin-$T$, $LJ$, and $TJ$.  Useful coordinate systems for the
various types can be found in the appendix
(Section~\rif{sec:appendix}).

\section{Delaunay Triangle Areas}

As an application of the classification from the previous section,
this section makes a computer calculation of a lower bound on the
longest edge length of a subcritical triangle.  We also obtain a lower
bound on the area of a nonobtuse Delaunay triangle.

\begin{lemma}\libel{lemma:21} 
  A nonobtuse subcritical Delaunay triangle has edge lengths at most $2.1$.
\end{lemma}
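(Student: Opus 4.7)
The plan is to argue by contradiction. Suppose $T$ is a nonobtuse subcritical Delaunay triangle with longest edge $d_3 > 2.1$. Since $d_1, d_2 \ge 2\kappa$ (by the Delaunay edge bound from the setup) and $d_3^2 \le d_1^2 + d_2^2$ (by nonobtuseness), the pair $(d_1, d_2)$ ranges over a compact feasible polygon once $d_3$ is fixed. Heron's formula gives
\[
16\,\area(T)^2 = 4 d_1^2 d_2^2 - (d_1^2 + d_2^2 - d_3^2)^2.
\]
The pentagon-packing geometry enters only through the edge bound $d_i \ge 2\kappa$, so the whole problem reduces to a purely elementary minimization of $\area(T)^2$ in $(d_1, d_2)$.

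Substituting $a = d_1^2$, $b = d_2^2$, $c = d_3^2$, the right-hand side becomes $-(a-b)^2 + 2c(a+b) - c^2$, whose gradient in $(a,b)$ vanishes only when both $b + c - a = 0$ and $a + c - b = 0$, which is infeasible for $c > 0$. Hence the minimum is on the boundary. A short case analysis on the boundary pieces ($a$ or $b$ equal to $4\kappa^2$ or to $c$, or $a+b = c$) shows that the minimum is attained either at the corner $a = b = 4\kappa^2$ (when $c \le 8\kappa^2$, so this corner lies in the nonobtuse region) or at the right-triangle corner $a = 4\kappa^2, b = c - 4\kappa^2$ (when $c \ge 8\kappa^2$). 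This yields the explicit piecewise expression
\[
g(d_3) = \begin{cases} \tfrac{1}{4}\sqrt{64\kappa^4 - (8\kappa^2 - d_3^2)^2}, & d_3 \in [2\kappa, 2\kappa\sqrt{2}], \\ \kappa\sqrt{d_3^2 - 4\kappa^2}, & d_3 \ge 2\kappa\sqrt{2}, \end{cases}
\]
continuous at $d_3 = 2\kappa\sqrt{2}$ and strictly increasing on $[2\kappa, \infty)$.

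It remains to verify $g(2.1) > \acrit$. Using $\kappa^2 = (3+\sqrt{5})/8$ and $\sigma^2 = (5-\sqrt{5})/8$, both sides become closed-form algebraic numbers: one computes $g(2.1)^2 = (4.41/16)(6 + 2\sqrt{5} - 4.41)$ and $\acrit^2 = (9/4)\sigma^2\kappa^2(1+\kappa)^2$, and the comparison $g(2.1)^2 > \acrit^2$ holds strictly. Monotonicity of $g$ then forces $\area(T) \ge g(d_3) > \acrit$ for every $d_3 \ge 2.1$, contradicting subcriticality.

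The main obstacle is that the margin $g(2.1) - \acrit$ is only about $3\times 10^{-3}$, so the final numerical comparison should not be done in floating-point arithmetic; the interval-arithmetic framework already used for Lemma~\rif{lemma:a0} is the natural tool. In the spirit of the paper I would expect the actual proof to bundle the check into a uniform computer-assisted area bound over the parameterized families of $3C$-triangles from Section~\rif{sec:contact}, rather than factor through the elementary two-variable reduction sketched above.
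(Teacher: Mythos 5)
Your proof is correct and takes essentially the same approach as the paper. The paper's proof is a one-line appeal to the monotonicity of the area of a nonobtuse triangle in each edge length, reducing directly to the comparison $\area(2.1,2\kappa,2\kappa)>\acrit$; your Heron-based concavity/corner analysis in $(a,b)=(d_1^2,d_2^2)$ is a careful, explicit justification of precisely that monotonicity step (handling the issue that naively shrinking edges one at a time can leave the nonobtuse region), and it lands on the same extremal triangle $(2\kappa,2\kappa,2.1)$. Your closing speculation is off, though: this particular lemma is \emph{not} proved via a computer-assisted bound over $3C$-triangles or the MITM interval machinery --- it is one of the few purely elementary lemmas in the paper, and the heavy machinery first enters at Lemma~\ref{lemma:172} and Lemma~\ref{lemma:a0}.
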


\begin{proof} By the monotonicity of area as a function of edge length
  for nonobtuse triangles, a triangle with an edge length at least
  $2.1$ has area at least
\[
\area(2.1,2\kappa,2\kappa) > \acrit,
\] % checked 2016/2/18 in Mathematica.
which is not subcritical.
\end{proof}

\begin{lemma}\libel{lemma:right} 
  A nonobtuse subcritical triangle has edge lengths less than
  $\kappa\sqrt8$.  In particular, a right-angled Delaunay triangle is
  not subcritical.
\end{lemma}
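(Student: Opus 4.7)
The plan is to prove the contrapositive: any nonobtuse Delaunay triangle whose longest edge $c$ satisfies $c \geq \kappa\sqrt{8}$ has area strictly greater than $\acrit$. The ``in particular'' clause will then be immediate, because a right triangle's hypotenuse satisfies $c^2 = a^2+b^2 \geq 8\kappa^2$, so $c \geq \kappa\sqrt 8$.

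Let $T$ be nonobtuse with edges $a,b,c \geq 2\kappa$, $c$ longest, and $c \geq 2\sqrt{2}\kappa$. Nonobtuseness of the angle opposite $c$ gives $a^2+b^2 \geq c^2$. By the same monotonicity of area for nonobtuse triangles used in Lemma~\ref{lemma:21} (area is non-decreasing in each edge inside the nonobtuse regime), the minimum of $\area(T)$ over the feasible region
\[
\{a,b \geq 2\kappa,\ c \geq 2\sqrt{2}\kappa,\ a^2+b^2 \geq c^2\}
\]
is attained on the boundary $c = 2\sqrt{2}\kappa$, $a = b = 2\kappa$ -- the right isoceles Delaunay triangle with legs $2\kappa$. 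One concrete way to see this is via the Heron identity
\[
16\,\area^2 \;=\; 4a^2b^2 - (a^2+b^2-c^2)^2,
\]
which is decreasing in $c^2$ on the nonobtuse side (so push $c^2 \downarrow 8\kappa^2$), and which, with $c^2 = 8\kappa^2$ fixed, is minimized at $a = b = 2\kappa$ via the substitution $s = a^2+b^2$, $p = ab$ and the constraints $s \geq 8\kappa^2$, $p \geq 4\kappa^2$.

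The extremal triangle has area $\tfrac{1}{2}(2\kappa)(2\kappa) = 2\kappa^2$, so the first assertion reduces to the numerical inequality $2\kappa^2 > \acrit$, equivalently $4\kappa > 3\sigma(1+\kappa)$. With $\kappa = \cos(\pi/5) \approx 0.80902$ and $\sigma = \sin(\pi/5) \approx 0.58779$ this reads $3.236\ldots > 3.191\ldots$, comfortably true and easily certified in \texttt{calcs.ml}. The ``in particular'' clause then follows since a right-angled Delaunay triangle has longest edge $\geq \kappa\sqrt{8}$ (and indeed one can short-circuit: its area equals $\tfrac{1}{2}ab \geq 2\kappa^2 > \acrit$). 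There is no serious obstacle here; the only delicate point is that the threshold $\kappa\sqrt 8$ is close to sharp, so one must apply the monotonicity argument with the Delaunay lower bound $2\kappa$ exactly saturated.
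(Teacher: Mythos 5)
Your proposal takes a genuinely different route from the paper's. In the paper, Lemma~\ref{lemma:right} is a one-line corollary of Lemma~\ref{lemma:21}: since a nonobtuse subcritical triangle already has edges at most $2.1$ and $2.1 < \kappa\sqrt{8}\approx 2.288$, the bound follows immediately. You instead give a self-contained argument aiming directly at the sharp threshold $\kappa\sqrt{8}$, identifying the right isoceles triangle with legs $2\kappa$ (area $2\kappa^2$) as the extremal case. That is a cleaner and tighter statement, and the endpoint you reach is correct.

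However, the concrete justification via Heron has two problems, one cosmetic and one genuine. Cosmetically: $16\,\area^2 = 4a^2b^2 - (a^2+b^2-c^2)^2$ is \emph{increasing}, not decreasing, in $c^2$ while $a^2+b^2\geq c^2$ (increasing $c^2$ shrinks the subtracted square). Your instruction ``push $c^2\downarrow 8\kappa^2$'' is the right move for minimizing area, but for the opposite reason from the one you state. The genuine gap is the $(s,p)$ minimization. Over the set $\{s\geq 8\kappa^2,\ p\geq 4\kappa^2\}$ the objective $4p^2 - (s-8\kappa^2)^2$ is unbounded below: fix $p=4\kappa^2$ and let $s\to\infty$. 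The constraints you list do not encode $a,b\geq 2\kappa$ in $(s,p)$ coordinates. Writing $u=a^2$, $v=b^2$ (so $u+v=s$, $uv=p^2$), the condition $\min(u,v)\geq 4\kappa^2$ rewrites as $p^2\geq 4\kappa^2(s-4\kappa^2)$, and one also has $s\leq 2c^2 = 16\kappa^2$ because $c$ is the longest edge. With the corrected constraint,
\[
16\,\area^2 \ \geq\ 16\kappa^2(s-4\kappa^2) - (s-8\kappa^2)^2 \ =\ 128\kappa^4 - (s-16\kappa^2)^2,
\]
which over $s\in[8\kappa^2,16\kappa^2]$ is minimized at $s=8\kappa^2$ with value $64\kappa^4$, giving $\area\geq 2\kappa^2$. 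So the result and the extremizer are as you claim, but the constraint set needs that extra relation before the optimization argument is valid.
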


\begin{proof}  
  This is a corollary of the previous lemma, because $2.1 <
  \kappa\sqrt8 \approx 2.288$.
\end{proof}

\begin{remark}
  A motion of a pentagon in the plane can be described by an element
  of the isometry group of the plane, which is a semidirect product of
  a translation group and an orthogonal group.  Because of the
  dihedral symmetries of the regular pentagon, each motion can be
  realized as a translation followed by a rotation by angle between
  $0$ and $2\pi/5$.  In particular, a translation of a pentagon is a
  motion of a pentagon such that the rotational part is the identity.
\end{remark}

\begin{definition}
In a $P$-triangle, we say that a pentagon $A$ has {\it primary
  contact} if one or more of the following three conditions hold:
\begin{enumerate}
\item (slider contact) The pentagon $A$ and one $B$ of the other two
   share a positive length edge segment;
\item (midpointer contact) A vertex of one of the other two pentagons
  is the midpoint of one of the edges of the pentagon $A$; or
\item (double contact) The pentagon $A$ is in contact with both of the
  other pentagons.
\end{enumerate}
\end{definition}

The next lemma is used to give area estimates when an edge has length
at most $\kappa\sqrt{8}$.  We give two forms of the lemma.  We prove
them together.

\begin{lemma}\libel{lemma:primary} 
  Let $A$ be a pentagon in a nonobtuse $P$-triangle.  Assume that the
  triangle edge opposite $\c_A$ has length at most $\kappa\sqrt{8}$.
  Then the $P$-triangle can be continuously deformed until $A$ is in
  primary contact, while preserving the following constraints: the
  deformation (1) maintains nonobtuseness, (2) is non-increasing in
  the edge lengths, and (3) keeps fixed the other two pentagons $B$
  and $C$.
\end{lemma}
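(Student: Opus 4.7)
The plan is a constrained descent argument on the configuration space of the pentagon $A$. Fix $B$ and $C$, and regard $A$ as a point in its three-dimensional configuration space (two translational parameters and one rotational parameter, after quotienting by the dihedral symmetry of $A$ as in the preceding remark). Let $\mathcal{F}$ be the closed feasible region consisting of those configurations for which $A$ overlaps neither $B$ nor $C$ and the triangle $(\c_A,\c_B,\c_C)$ is nonobtuse, and let $f(A) := d_{AB}^2 + d_{AC}^2$. I will construct a continuous path in $\mathcal{F}$ along which each of $d_{AB}$ and $d_{AC}$ is non-increasing, terminating at a configuration in primary contact.

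The first observation is that the nonobtuseness constraint at $\c_A$ never binds nontrivially: by the universal lower bound $d_{AB},d_{AC}\ge 2\kappa$ and the hypothesis $d_{BC}\le\kappa\sqrt{8}$,
\[
d_{AB}^2 + d_{AC}^2 \;\ge\; 8\kappa^2 \;\ge\; d_{BC}^2,
\]
with equality forcing $d_{AB}=d_{AC}=2\kappa$ and $d_{BC}=\kappa\sqrt{8}$; in that extremal case $A$ already shares a full edge with each of $B$ and $C$, so primary (double slider) contact already holds. Moreover, the nonobtuseness equalities at $\c_B$ and at $\c_C$ cannot both be active, since together they would force $d_{BC}=0$; hence at any feasible configuration at most one of those equalities is active.

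The core step is to verify that every non-primary configuration in $\mathcal{F}$ admits a feasible tangent direction along which both $d_{AB}$ and $d_{AC}$ are non-increasing, with at least one strictly decreasing. Since $A$ has three degrees of freedom, a single pointer-receptor contact with $B$ or $C$ imposes one linear equality on tangent motions and one nonobtuseness equality imposes another, leaving at least one free tangent direction. When $A$ touches neither $B$ nor $C$, translating $\c_A$ toward $\c_B$ (or toward $\c_C$) strictly decreases $f$; when $A$ touches $B$ only at a contact that is neither slider nor midpointer, the contact locus is a single point in the relative interior of an edge of the receptor, and the two-parameter family of motions preserving this contact---sliding the pointer vertex along the receptor edge, and rotating the pointer pentagon about that vertex---contains a direction that decreases $d_{AC}$ without increasing $d_{AB}$.

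Compactness of the sublevel set $\{A \in \mathcal{F} \mid f(A) \le f(A_0)\}$ then guarantees that a continuous descent terminates at a configuration from which no feasible descent exists, and by the preceding step such a configuration must be in primary contact. The main obstacle will be the case analysis of the third paragraph: when a non-slider, non-midpointer contact coexists with an active nonobtuseness equality at $\c_B$ or $\c_C$, one must explicitly exhibit a slide-and-rotate combination strictly decreasing at least one of $d_{AB}, d_{AC}$ without violating any remaining nonoverlap or nonobtuseness constraint. Here the rotational degree of freedom of $A$ is essential, as translational descent alone can be blocked by the nonobtuseness equality combined with the non-overlap constraint on the neighbor not currently in contact.
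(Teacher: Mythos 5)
Your approach is in the same spirit as the paper's---a continuous descent deforming $A$ with $B$, $C$ fixed and terminating at primary contact---but the way you set it up creates a case analysis that the paper's deformation deliberately sidesteps, and you leave that case unresolved, so there is a genuine gap. The paper's deformation alternates two specific motions: (i) a translation of $\c_A$ that shrinks both $d_{AB}$ and $d_{AC}$ (the perpendicular direction towards the line $\c_B\c_C$ always does this), and (ii) a rotation of $A$ about its own center $\c_A$ to escape a non-primary contact. Both motions leave the orthogonal projection of $\c_A$ onto the line $\c_B\c_C$ unchanged---the rotation fixes $\c_A$ outright, and the translation moves $\c_A$ only along that perpendicular---and nonobtuseness at $\c_B$ and at $\c_C$ is exactly the condition that this projection lies on the segment from $\c_B$ to $\c_C$. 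So the nonobtuseness constraints at $\c_B$ and $\c_C$ can never become active under the paper's deformation, and the only thing to check is that the angle at $\c_A$ cannot reach $\pi/2$, which $d_{BC}\le\kappa\sqrt8$ together with $d_{AB},d_{AC}\ge 2\kappa$ rules out (as you also observe). Your generic descent on $f=d_{AB}^2+d_{AC}^2$ permits tangent directions that do move the projection and hence can land on the $\c_B$- or $\c_C$-nonobtuseness boundary; you then need a descent direction simultaneously feasible for a single-point contact, a right-angle equality, and nonoverlap with the remaining pentagon. You flag this as ``the main obstacle'' and do not resolve it, and the dimension count you offer (three degrees of freedom, two equalities, one free tangent line) does not by itself produce a feasible descent direction along that line.

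A secondary difference: your contact-escaping move (slide along the receptor edge and rotate about the contact point) changes $d_{AB}$ and $d_{AC}$ in a coupled way, and the claim that some such direction decreases $d_{AC}$ without increasing $d_{AB}$ is asserted without verification. The paper's escape move---rotation about $\c_A$---fixes the triangle $(\c_A,\c_B,\c_C)$ entirely, so edge lengths and angles are preserved exactly, and the non-increasing-edge-lengths and nonobtuseness conclusions of the lemma are automatic during that half of the motion; all the real work is pushed into the translation step.
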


\begin{lemma}\libel{lemma:primary2} 
  Let $A,B,C$ be pentagons in a nonobtuse $P$-triangle.  Assume that
  the triangle edges opposite $\c_A$ and $\c_C$ have length at most
  $\kappa\sqrt{8}$.  Then the $P$-triangle can be continuously
  deformed until $A$ is in primary contact, while preserving the
  following constraints: the deformation (1) maintains nonobtuseness,
  (2) fixes the edge length $\dx{AB}$ and does not increasing the area
  of the triangle, and (3) keeps fixed the other two pentagons $B$ and
  $C$.
\end{lemma}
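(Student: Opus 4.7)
\emph{Plan.} I would prove both lemmas simultaneously by a continuous ``deformation until stuck'' argument, moving pentagon $A$ in a direction that respects all stated constraints and halting only at a configuration where no further admissible motion exists; I will then show that such a halting configuration must be a primary contact. In the setting of Lemma \ref{lemma:primary2} the constraints that $\dx{AB}$ is fixed and that $B$ and $C$ stay fixed force $\c_A$ to move on the circle $\Gamma$ of radius $\dx{AB}$ centered at $\c_B$; together with the rotation angle of $A$ about $\c_A$, this gives a two-dimensional moduli space of admissible positions of $A$. Because $\dx{AB}$ and $\dx{BC}$ are both fixed, the area of the $P$-triangle equals $\tfrac12\dx{AB}\dx{BC}\sin(\angle B)$, which in the nonobtuse regime ($\angle B\le\pi/2$) is a strictly increasing function of $\dx{AC}$. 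So the whole problem reduces to driving $\dx{AC}$ down while respecting nonobtuseness and pentagon non-overlap.

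The deformation is organized by counting nondegenerate vertex--edge contacts between $A$ and $\{B,C\}$. If $A$ has no such contact, it enjoys an open neighborhood of admissible configurations, and I would rotate $\c_A$ along $\Gamma$ toward $\c_C$ until a first vertex--edge contact forms. If $A$ already has exactly one nondegenerate vertex--edge contact with $B$ or $C$, preserving this contact imposes a single scalar equation on the two-dimensional moduli space and so leaves a one-parameter family of admissible motions along which $\c_A$ still traces a curve on $\Gamma$; follow this family in the direction that decreases $\dx{AC}$. The motion terminates when (a) a second contact forms (double contact), (b) the pointer vertex reaches an endpoint of the incident edge, producing a shared edge segment (slider), or (c) the pointer vertex lands on the midpoint of the incident edge (midpointer). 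Each of these is primary contact in the sense of the definition.

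The main obstacle is verifying that nonobtuseness is preserved throughout, so that area is indeed monotone in $\dx{AC}$ all the way up to the first primary contact. The angle at $B$ decreases with $\dx{AC}$ and hence stays at most $\pi/2$. The angle at $C$ (resp.\ at $A$) becomes right-angled precisely when
\[
\dx{AC}^2=\dx{AB}^2-\dx{BC}^2 \quad (\text{resp.}\ \dx{AC}^2=\dx{BC}^2-\dx{AB}^2).
\]
The hypotheses $\dx{AB},\dx{BC}\le\kappa\sqrt 8$, together with the universal minimal Delaunay edge length $\dx{AB},\dx{BC}\ge 2\kappa$, give $|\dx{AB}^2-\dx{BC}^2|\le 8\kappa^2-4\kappa^2=4\kappa^2$, so either right-angle threshold requires $\dx{AC}\le 2\kappa$. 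But pentagon non-overlap forces $\dx{AC}\ge 2\kappa$, with equality exactly when $A$ and $C$ share a full edge---itself a slider (primary) contact. Thus nonobtuseness can be lost only simultaneously with the creation of primary contact, and the deformation terminates safely at a primary contact.

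For Lemma \ref{lemma:primary} the argument is strictly easier: the constraint $\dx{AB}$-fixed is dropped, so $\c_A$ may be translated directly toward $\c_B$ or toward $\c_C$, and ``area nonincreasing'' is replaced by the weaker ``all edge lengths nonincreasing.'' The same contact-creation and rolling phases apply, and the same numerical estimate controls the angles: the angle at $A$ opposite the short fixed edge $\dx{BC}\le\kappa\sqrt 8$ stays acute because $\dx{AB}^2+\dx{AC}^2\ge 8\kappa^2\ge\dx{BC}^2$, while the angles at $B$ and $C$, opposite the shrinking edges $\dx{AC}$ and $\dx{AB}$, can only decrease.
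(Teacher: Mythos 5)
Your proof is correct and follows essentially the same path as the paper's: move $\c_A$ along the circle of radius $\dx{AB}$ centered at $\c_B$ toward $\c_C$ so the area $\tfrac12\dx{AB}\dx{BC}\sin(\angle B)$ decreases, manage vertex--edge contacts as they form, and use the $2\kappa$ Delaunay floor together with the $\kappa\sqrt{8}$ hypothesis on $\dx{AB}$ and $\dx{BC}$ to show that the nonobtuseness constraint (at $\c_A$, $\c_C$, or $\c_B$) can bind only simultaneously with the creation of a primary contact. The one cosmetic difference is that the paper, upon meeting a non-primary single contact, rotates $A$ about $\c_A$ to break it and then retranslates, whereas you preserve the contact and slide along the resulting one-parameter family; both variants terminate at a primary contact, and your explicit check of all three angles fills in detail that the paper's proof text states only for $\c_A$.
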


\begin{proof} Fixing $B$ and $C$, we translate $A$ to contract the two
  edges of the triangle at $\c_A$, where we keep $d_{AB}$ fixed in
  Lemma~\rif{lemma:primary2}.  For a contradiction, assume that none
  of the primary contact conditions occur throughout the deformation.
  Continue the contractions, until $A$ contacts another pentagon, then
  continue by rotating $A$ about its center $\c_A$ to break the
  contact and continue.  Eventually, the assumption of nonobtuseness
  must be violated.  However, this triangle cannot be obtuse at
  $\c_A$, because the triangle edge lengths are at least $2 \kappa$,
  $2 \kappa$ with opposite edge at most $\kappa\sqrt{8}$. This is a
  contradiction.

  In the second lemma, after $A$ is rotated to break the contact, we
  translate $A$ along the circle such that $\c_A$ stays at fixed
  distance from $\c_B$.
\end{proof}

% page 30.
\begin{lemma}\libel{lemma:delta} 
  A subcritical $3C$-triangle does not have type $\Delta$.  In fact,
  such a $P$-triangle $T$ has area greater than $1.5$.
\end{lemma}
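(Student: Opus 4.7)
The plan is to set up explicit coordinates (as described in the appendix, Section~\rif{sec:appendix}), reduce to a compact parameter space, and verify the area bound by direct computation. Place the separating line $\lambda$ of the $\Delta$-junction on the $x$-axis, with the edge $e_B$ of the receptor pentagon $B$ lying on $\lambda$ and $B$ above, so that $\c_B = (0,\kappa)$. The two pointer vertices $\v_A = (a,0)$ and $\v_C = (c,0)$ then lie on $e_B$, so $-\sigma \le a < c \le \sigma$. Since $A$ and $C$ lie below $\lambda$ with pointer vertices on $\lambda$, each is determined by a rotation parameter $\theta_A,\theta_C \in [-\pi/5,\pi/5]$ (outside this range the pentagon would cross $\lambda$), giving $\c_A = (a-\sin\theta_A,-\cos\theta_A)$ and $\c_C = (c-\sin\theta_C,-\cos\theta_C)$.

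The third contact in the definition of type $\Delta$ (namely, that the vertex $\v$ of $A$ adjacent to $\v_A$ lies on the edge of $C$ adjacent to $\v_C$) yields a single trigonometric equation. A short calculation using sum-to-product identities gives
\[
c - a \;=\; \frac{2\sigma\sin(72^\circ + \theta_A - \theta_C)}{\sin(36^\circ - \theta_C)},
\]
and combining this with $c - a \le 2\sigma$ (since $\v_A,\v_C$ lie on $e_B$, of length $2\sigma$) restricts the valid parameter space to a compact subset of $\ring{R}^3$.

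Let $u_\bullet$ denote the $x$-coordinate of $\c_\bullet$ and $h_\bullet = \kappa - v_\bullet$ the vertical distance from $\c_\bullet$ to the horizontal line through $\c_B$; note $h_A,h_C \ge 2\kappa$. The area of the Delaunay triangle then equals $\tfrac12|u_A h_C - u_C h_A|$. I would verify $\tfrac12|u_A h_C - u_C h_A| > 1.5$ over the entire compact parameter region by interval arithmetic, consistent with the computer-assisted methodology of this paper. As a sanity check, the infimum is attained in the degenerate slider-contact limit $(\theta_A,\theta_C) = (0,-\pi/5)$, $(a,c) = (-\sigma,\sigma)$, where the area equals $\sin 36^\circ + \sin 72^\circ \approx 1.539$, just above $1.5$. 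The first assertion of the lemma (subcritical $3C$-triangles are not of type $\Delta$) follows at once because $\acrit \approx 1.29 < 1.5$.

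The main obstacle is the multidimensional optimization with a nonlinear contact constraint, together with the tight numerical gap (about $0.04$) between the infimum and the claimed bound; this leaves little room for loose rigorous estimates but is well within the scope of the interval arithmetic and automatic differentiation tools employed elsewhere in the paper.
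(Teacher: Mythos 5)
Your proposal is correct and follows essentially the same computer-assisted approach as the paper: parametrize the three-dimensional configuration space of type-$\Delta$ $3C$-triangles by explicit coordinates and verify $\area(T)>1.5$ over the resulting compact domain by interval arithmetic. The paper uses the coordinates $(x_\alpha,\alpha,\beta)$ of Section~\rif{sec:appendix}, which parametrize the domain directly rather than via four variables subject to a residual nonlinear contact equation as in your setup, but this is an implementation choice rather than a genuinely different proof.
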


\begin{proof} The proof is computer-assisted.  The $3C$-triangles of
  type $\Delta$ form a three-dimensional configuration space.  The
  appendix (Section~\rif{sec:appendix}) introduces good coordinate
  systems for each of the various $3C$-triangle types.  We make a
  computer calculation of the area of the Delaunay triangle as a
  function of these coordinates.  We use interval arithmetic to
  control the computer error.  The lemma follows from these computer
  calculations.
\end{proof}

\begin{lemma}\libel{lemma:mid-172}  
  If a pentagon $A$ has midpointer contact with a pentagon $B$, then
  $\dx{AB} > 1.72$.
\end{lemma}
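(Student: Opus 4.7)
The plan is a direct one-parameter optimization. I would set up coordinates with $\c_A$ at the origin so that the edge of $A$ whose midpoint receives the vertex of $B$ lies along the line $x=\kappa$, with midpoint $v=(\kappa,0)$ and endpoints $(\kappa,\pm\sigma)$. Let $\v_B$ denote the vertex of $B$ sitting at $v$. Since $\v_B$ is a vertex of $B$, the center $\c_B$ lies on the unit circle about $v$; write $\c_B=v+(\cos\theta,\sin\theta)$, and by reflection across the $x$-axis assume $\theta\in[0,\pi]$. A direct computation gives $\dx{AB}^2 = 1 + 2\kappa\cos\theta + \kappa^2$, which is strictly decreasing in $\theta$ on $[0,\pi]$, so minimizing $\dx{AB}$ is the same as maximizing the admissible $\theta$.

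Next I would pin down the non-overlap constraint. The two edges of $B$ emanating from $\v_B$ go in directions making angles $\theta\pm 3\pi/10$ with the positive $x$-axis, because the interior angle at a regular-pentagon vertex is $3\pi/5$ and the two edges are symmetric about the ray from $\v_B$ to $\c_B$ (the interior angle bisector). Since $v$ is an interior point of an edge of $A$, the pentagon $A$ locally fills the half-plane $\{x\le\kappa\}$ near $v$; any edge of $B$ whose direction at $\v_B$ has negative $x$-component would cross into the interior of $A$ immediately. Non-overlap therefore forces $|\theta\pm 3\pi/10|\le\pi/2$, i.e.\ $|\theta|\le\pi/5$. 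Substituting $\theta=\pi/5$ into the distance formula, and using $\cos(\pi/5)=\kappa$ together with $\kappa^2=(3+\sqrt{5})/8$, gives
\[
\dx{AB}^2 \;\ge\; 1+3\kappa^2 \;=\; \frac{17+3\sqrt{5}}{8}\;\approx\;2.9635 \;>\; 2.9584 \;=\; 1.72^2,
\]
hence $\dx{AB}>1.72$ as required.

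The main thing I would want to double-check is that the local tangency condition $|\theta|\le\pi/5$ is not weaker than the genuine global non-overlap condition. At the extremal value $\theta=\pi/5$ one has $\c_B=(2\kappa,\sigma)$; the five vertices of $B$ are $\c_B+(\cos(\theta+\pi+2\pi k/5),\sin(\theta+\pi+2\pi k/5))$, and a quick check of the inequality $\cos\theta\ge\cos(\theta+2\pi k/5)$ for $k=0,\ldots,4$ and $\theta\in[0,\pi/5]$ shows they all lie in $\{x\ge\kappa\}$. By convexity, $B\subset\{x\ge\kappa\}$, and since $A\subset\{x\le\kappa\}$ the two pentagons meet only along the half-edge slider segment from $(\kappa,0)$ to $(\kappa,\sigma)$ and have disjoint interiors. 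For $\theta\in[0,\pi/5)$ the vertex inequalities are strict, so $B\cap A=\{v\}$. Thus the minimum $\dx{AB}=\sqrt{(17+3\sqrt{5})/8}\approx 1.7215$ is realized exactly at the half-edge slider configuration, and sits comfortably above $1.72$.
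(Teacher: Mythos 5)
Your proof is correct and follows essentially the same line as the paper: in both cases the distance $\dx{AB}$ is minimized over rotations of the contacting pentagon about the vertex sitting at the edge-midpoint, the non-overlap condition reduces the admissible rotations to the range bounded by slider contact, and the extremal distance comes out to $\sqrt{3\kappa^2+1}=\sqrt{(2\kappa)^2+\sigma^2}\approx 1.7215>1.72$. The paper simply asserts the reduction to slider contact in one sentence and invokes the Pythagorean theorem, whereas you derive the admissibility constraint $|\theta|\le\pi/5$ explicitly from the tangent-cone condition and then check that the extremal slider configuration is actually overlap-free; this extra verification is not needed for the lower bound itself but does show the bound is sharp.
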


\begin{proof} Suppose a pointer vertex $\v_A$ of $A$ is the midpoint of an
  edge of pentagon $B$.  Rotating $A$ about the vertex $\v_A$, keeping
  $B$ fixed, we may decrease $\dx{AB}$ until $A$ and $B$
  have slider contact.  This determines the configuration of $A$ and
  $B$ up to rigid motion.  By the Pythagorean theorem, the
  distance between pentagon centers is
\[
\dx{AB} = \sqrt{(2\kappa)^2 + \sigma^2} \approx 1.72149 > 1.72.
\] % checked 2016/2/18 in calcs.ml
\end{proof}

\begin{lemma}\libel{lemma:172}
  If every edge of a $P$-triangle $T$ is at most $1.72$, then the
  triangle is not subcritical.
\end{lemma}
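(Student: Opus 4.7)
The plan is to reduce the statement to a finite-parameter computer verification via the $3C$-triangle classification of Section~\rif{sec:contact}. The first observation is that the hypothesis forces $T$ to be nonobtuse: since every edge lies in $[2\kappa,1.72]$, the angle $\theta$ opposite the longest edge satisfies, by the law of cosines, $\cos\theta \ge 1 - 1.72^2/(8\kappa^2) > 0$, so Lemma~\rif{lemma:primary} applies to every pentagon of $T$.

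Assume for contradiction that $T$ is subcritical. Since $1.72 < \kappa\sqrt{8}$, I would iteratively apply Lemma~\rif{lemma:primary} to each pentagon of $T$, producing a continuous deformation that preserves nonobtuseness and does not increase any edge length (so the bound $\le 1.72$ is maintained) and that places each pentagon in primary contact. By Lemma~\rif{lemma:mid-172}, the edge bound $\le 1.72$ rules out midpointer contact, so each primary contact must be a slider or a double contact. Continuing the deformation (using small sliding motions along shared edges to convert sliders into double contacts) forces every pair of pentagons to touch, producing a $3C$-triangle whose three edges are still at most $1.72$ and whose area is still at most $\acrit$. By Lemma~\rif{lemma:delta}, this $3C$-triangle cannot have type $\Delta$, whose area exceeds $1.5$; hence its type is pinwheel, pin-$T$, $LJ$-junction, or $TJ$-junction.

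For each of these four remaining types, the appendix (Section~\rif{sec:appendix}) provides an explicit low-dimensional coordinate system. I would finish with an interval-arithmetic computer calculation that, for each contact type and under the constraint that all three edges are at most $1.72$, the Delaunay area exceeds $\acrit$, contradicting subcriticality.

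The main obstacle is the reduction step: iterating Lemma~\rif{lemma:primary} naively to all three pentagons in turn can disrupt contacts established at earlier stages, so some care is needed to arrive at a full $3C$ configuration. A clean way is a compactness argument: minimize $\area$ over the compact space of nonobtuse $P$-triangles with edges in $[2\kappa,1.72]$, then observe that at the minimizer no area-nonincreasing pentagon motion (contraction, rotation, or edge-sliding) remains available, and deduce that every pair of pentagons is in contact. The ensuing computer calculations are of the same flavor as those in Lemma~\rif{lemma:delta} and should be routine given the machinery already introduced.
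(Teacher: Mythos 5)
Your proposal follows essentially the same route as the paper: use Lemma~\rif{lemma:primary} to push each pentagon into primary contact, rule out midpointer contact via Lemma~\rif{lemma:mid-172} (since it forces an edge $>1.72$), reduce to a $3C$-triangle, and finish by interval-arithmetic computation over the classified contact types using the appendix coordinates. The one place where the paper is more explicit than your sketch is precisely the step you flag as an obstacle: rather than a compactness argument, the paper observes that if the triangle has all pentagons in (slider or double) primary contact but is not yet $3C$, then one pentagon must have double contact and the other two only slider contact, and the nonoverlap constraint forces one of those sliders to admit an area-and-edge-decreasing translation; iterating this produces $3C$. Your compactness alternative is plausible but as stated leaves a small gap (the absence of strictly area-decreasing motions at a minimizer does not immediately yield $3C$ without an argument like the paper's about which slider motion is available), so the explicit deformation is the cleaner route. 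Invoking Lemma~\rif{lemma:delta} to dismiss type $\Delta$ up front is a harmless streamlining; the paper simply includes $\Delta$ in the computed case list.
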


\begin{proof} This is a computer-assisted proof.\footnote{The constant
    $1.72$ is nearly optimal.  For example, in the notation of the
    appendix, the pinwheel with parameters $\alpha=\beta=\pi/15$,
    $x_\gamma = 0.18$ is subcritical equilateral with edge lengths
    approximately $1.72256$.} Such a triangle is nonobtuse.  By
  Lemma~\rif{lemma:primary}, we may deform $T$, decreasing its
  edge lengths and area, until each pentagon is in primary contact
  with the other two.  By the previous lemma, we may assume that the
  contact is not midpointer contact.  Thus, each pentagon has double
  contact or slider contact with the other pentagons.
  % checked 2016/9/5 in calcs.ml

  If the $P$-triangle does not have $3C$ contact, then obvious
  geometry forces one pentagon to have double contact and the other
  two pentagons to have slider contact (Figure~\rif{fig:172-slider}).
  The nonoverlapping of the pentagons forces one of slider contacts to
  be such that a sliding motion along the edges of contact decreases
  area and edge lengths of $T$.  Thus, the $P$-contact can be deformed
  until $3C$ contact results.

\tikzfig{172-slider}{We can slide pentagons 
(that is, translate them along their common edge segment) to decrease
lengths and the area of the Delaunay triangle}{
[scale=0.5]
\threepent{0}{0}{-90}{0.63}{-1.54}{90}{-1.27}{ -1.07}{90};
}

Now assume that the $P$-triangle has $3C$ contact.  We have classified
all $3C$ triangles.  We obtain the proof by expressing each type of
triangle in terms of explicit coordinates from the appendix
(Section~\rif{sec:appendix}) and computing bounds on the areas and
edge lengths of the triangles using interval arithmetic.  The result
follows.
\end{proof}

\begin{lemma}\libel{lemma:2C} 
  Let $T$ be a subcritical nonobtuse $P$-triangle with pentagons $A$,
  $B$, and $C$.  Then fixing $B$ and $C$, we may deform $T$ by moving
  the third pentagon $A$, without increasing the area of $T$, until
  $A$ has double contact (with $B$ and $C$).
\end{lemma}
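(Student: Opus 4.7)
The plan is to first invoke Lemma~\rif{lemma:primary} to deform $A$ into primary contact, and then continue pushing the deformation until $A$ contacts both of the other pentagons. Since $T$ is subcritical and nonobtuse, Lemma~\rif{lemma:21} bounds every edge of $T$ by $2.1 < \kappa\sqrt{8}$, so the hypotheses of Lemma~\rif{lemma:primary} are satisfied. That lemma produces a motion of $A$ (with $B$ and $C$ held fixed) that is edge-non-increasing---hence area-non-increasing---and that terminates in primary contact. If the resulting configuration already has double contact, we are done.

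Otherwise, $A$ is in slider contact or midpointer contact with exactly one of $B$ and $C$, say with $B$. I would drop the nonobtuseness restriction at this point and continue deforming $A$: in the slider case, translate $\c_A$ along the direction of the shared edge with $B$; in the midpointer case, rotate $A$ about the contact vertex. The crucial observation is that, for fixed $\c_B$ and $\c_C$, the signed area of the triangle $(\c_A,\c_B,\c_C)$ is an affine function of $\c_A$, so $\op{area}(T) = |L(\c_A)|$ for a single fixed affine $L$. Consequently $\op{area}(T)$ is piecewise linear along any straight-line motion of $\c_A$ and varies continuously along any circular motion, and in either situation one choice of direction does not increase area. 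I continue in that direction until either $A$ contacts $C$ (yielding double contact) or a contact event occurs---loss of slider contact, a pointer vertex reaching the end of an edge, and so forth---at which point the case analysis is reapplied, possibly first reinvoking Lemma~\rif{lemma:primary} to restore primary contact.

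The main obstacle will be proving termination. I plan to handle this by a compactness argument: the set of positions and orientations of $A$ at which $\op{area}(T)$ is at most its original value and $A$ does not overlap $B$ or $C$ is compact, and the continuous function $\op{area}(T)$ attains its minimum on it. At any such minimum, $A$ must have double contact, since otherwise a one-parameter unobstructed direction of motion exists; affineness of $L$ then forces either a strict decrease of area (contradicting minimality) or that $\op{area}(T)$ is locally constant along the entire contact arc, in which case one can slide freely until the next contact event. Combinatorial finiteness of the possible contact types, together with strict descent whenever descent is available, completes the termination argument and realizes the claimed area-non-increasing deformation from the initial configuration to a double-contact configuration.
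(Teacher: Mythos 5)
Your overall strategy (use Lemma~\ref{lemma:primary} to reach primary contact, then keep deforming area-non-increasingly until double contact) is the same as the paper's, but your termination argument diverges from the paper's and has two concrete gaps.

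First, the set you call compact is not compact. Fixing $\c_B,\c_C$, the condition $\area(T)\le\area(T_{\text{init}})$ only bounds the height of $\c_A$ above the line $\c_B\c_C$; it does not bound $\c_A$ in the direction parallel to $\c_B\c_C$. Non-overlap likewise gives only lower bounds $\dx{AB},\dx{AC}\ge2\kappa$. So the feasible set is an unbounded strip minus two pentagon-shaped obstacles, and your minimum need not exist. Second, even if you restrict to a compact piece, the claim that a minimizer must have double contact is false as stated: the signed-area functional $L(\c_A)$ vanishes on the entire line through $\c_B,\c_C$, and there are non-overlapping placements of $A$ with $\c_A$ on that line (outside the segment $\c_B\c_C$), giving $\area(T)=0$ with no contact at all. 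At such a point there is no ``unobstructed direction that strictly decreases area,'' since $|L|$ is already at its absolute minimum. So the dichotomy ``strict decrease or locally constant'' does not force double contact; you need to rule out the degenerate minimizers, which requires an argument that the area-non-increasing path from the initial (genuinely triangular) configuration cannot reach them, and that is where the real geometric content lies. The paper instead argues directly that, once in slider or midpointer contact, the area-decreasing slide/rotation of $A$ also decreases $\dx{AC}$, so $A$ is driven into contact with $C$; this is essentially what the slider- and midpointer-non-anomaly lemmas of Section~6.2 make precise, and it avoids the compactness issues entirely.

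A smaller point: dropping the nonobtuseness constraint is both unnecessary and slightly self-defeating. It is unnecessary because nonobtuseness is \emph{automatically} preserved by any area-non-increasing deformation of a $P$-triangle: a right-angled $P$-triangle has hypotenuse $\ge\kappa\sqrt8$ and legs $\ge2\kappa$, so its area is at least $2\kappa^2>\acrit$; hence an area-non-increasing path starting from a subcritical nonobtuse triangle can never pass through a right triangle (this is exactly the role of Lemma~\ref{lemma:right} in the paper's proof). And it is slightly self-defeating because you later want to ``reinvoke Lemma~\ref{lemma:primary} to restore primary contact,'' but that lemma hypothesizes nonobtuseness, which you have discarded. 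If you keep the observation that nonobtuseness is automatic, you recover Lemma~\ref{lemma:21}'s bound $\dx{AB},\dx{AC}\le2.1$ throughout the deformation, which also repairs the boundedness of your feasible set; but you would still need to exclude the degenerate (area-zero) minimizers, and for that the direct geometric argument seems unavoidable.
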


\begin{proof}
  By Lemma~\rif{lemma:right}, the edge lengths of $T$ are at most
  $\kappa\sqrt8$.  By Lemma~\rif{lemma:primary}, we may assume that
  the pentagon $A$ has primary contact.  If the primary contact of a
  pentagon $A$ is slider contact, we may slide $A$ along the edge
  segment of contact in the direction to decrease the area of $T$
  until it has double contact.  If the contact of $A$ is midpointer
  contact, then we may rotate $A$ about the point of contact with a
  second pentagon $B$, in the direction to decrease the area of $T$
  until it has double contact.  These area-decreasing deformations
  never transform the nonobtuse subcritical triangle into a right
  triangle (Lemma~\rif{lemma:right}).
\end{proof}

\begin{lemma}\libel{lemma:a0}  
A nonobtuse $P$-triangle $T$ has area greater than $\ao$.
\end{lemma}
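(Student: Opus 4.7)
The plan is to deform any putative counterexample down to a $3C$-configuration and then dispatch the finite list of $3C$-types using interval arithmetic on the coordinate systems supplied in the appendix. Suppose for contradiction that $\area(T)\le \ao$. Since $\ao<\acrit$, $T$ is subcritical, and by Lemma~\ref{lemma:right} every edge length is less than $\kappa\sqrt{8}$. Hence, modulo rigid motion, $T$ lies in a compact region of the configuration space (edge lengths confined to $[2\kappa,\kappa\sqrt 8]$ and the three pentagon rotation angles valued on a circle), so the area attains its infimum at some minimizer $T^*$. Nonobtuseness passes to the limit: by Lemma~\ref{lemma:right} a right-angled triangle is not subcritical, ruling out the only possible degeneration.

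To show $T^*$ is a $3C$-triangle, I apply Lemma~\ref{lemma:2C} in two stages. Applied to the pentagon $A$ of $T^*$, the lemma produces an area-nonincreasing deformation terminating with $A$ in double contact; since $\area(T^*)$ is already the infimum, the terminal configuration also has area $\area(T^*)$, and I replace $T^*$ by it. Applying Lemma~\ref{lemma:2C} again to pentagon $B$ (fixing $A$ and $C$) yields a new configuration in which $B$ has double contact; the $A$--$C$ contact is preserved because both pentagons are unmoved, and the $A$--$B$ contact is present in the terminal configuration precisely because $B$ now touches both $A$ and $C$. Consequently $A$ still has double contact, and so does $B$; the $B$--$C$ contact then forces $C$ to touch both $A$ and $B$, giving $3C$. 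Lemma~\ref{lemma:delta} eliminates type $\Delta$ (such triangles have area greater than $1.5>\ao$), leaving the four types pinwheel, pin-$T$, $LJ$-junction, and $TJ$-junction. Each admits a three-parameter description from the appendix under which the triangle area is an explicit analytic function; a computer calculation using interval arithmetic on these parameter spaces will verify $\area(T^*) > \ao$ in each case, producing the required contradiction.

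The main obstacle is the numerical tightness of the bound rather than any conceptual difficulty. The constant $\ao=1.237$ lies only about $2\times 10^{-4}$ below the actual infimum (approximately $1.23719$, attained by a pinwheel with parameters $\alpha=\beta=0$ and $x_\gamma\approx 0.16246$), so the interval-arithmetic subdivision must resolve differences on this scale near the extremal pinwheel without a combinatorial explosion of subintervals. Automatic differentiation and the partition-refinement strategies developed elsewhere in the paper will be needed to keep rounding errors sharply controlled and the verification feasible. A secondary subtlety, easily handled, is that I have slightly abused Lemma~\ref{lemma:2C} by invoking it at the minimizer $T^*$ rather than along an interior deformation; but the lemma's hypotheses are preserved under limits of nonobtuse subcritical $P$-triangles, so the application is legitimate.
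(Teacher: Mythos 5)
Your proof is correct and takes essentially the same route as the paper: reduce a hypothetical low-area nonobtuse $P$-triangle to one with triple contact via Lemma~\ref{lemma:2C}, then dispatch the classified $3C$ types by interval arithmetic on the coordinate systems from the appendix. The compactness/minimizer framing and the careful two-stage application of Lemma~\ref{lemma:2C} to establish all three contacts are correct elaborations of what the paper states more compressedly.
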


\begin{proof} 
  This is a computer-assisted proof.  We may assume for a contradiction
  that $T$ has area less than $\ao$.  In particular, it is
  subcritical.  By Lemma~\rif{lemma:2C}, we may assume that each
  pentagon has double contact with the other two, and that $T$ is
  $3C$.  We have classified all $3C$ triangles.  We obtain the proof
  by expressing each type of triangle in terms of explicit
  coordinates from the appendix (Section~\rif{sec:appendix}) and
  computing bounds on the areas and edge lengths of the triangles
  using interval arithmetic.  The result follows.
\end{proof}

\section{Computer Calculations}\libel{sec:calc}

The proofs of the theorems in this article rely heavily on computer
calculations.  These computer calculations are discussed further at
the end of the article in Section~\rif{appendix:cc}.  In this section,
we make use of the following lemmas, which are proved by computer.
(Although further discussion appears at the end of the article, there
is no circular reasoning involved in using those calculations here.)

\begin{lemma}[computer-assisted]\libel{NKQNXUN} \libel{calc:pseudo1} 
Let $(T_1,T_0)\in \PD$.  The edge shared between
$T_0$ and $T_1$ has length less than $1.8$. 
\end{lemma}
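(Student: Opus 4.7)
The shared edge of $T_0$ and $T_1$ is the longest edge of $T_1$ (by definition of $T_1\Ra T_0$); call its length $e$. The pseudo-dimer condition $T_0\nRa T_1$ means that this shared edge is strictly shorter than the longest edge of $T_0$, so $T_0$ possesses some edge of length $e'>e$. Both triangles are nonobtuse, $T_1$ is subcritical, and $\area(T_1)+\area(T_0)\le 2\acrit$. The plan is to assume $e\ge 1.8$ and derive a contradiction by a computer-assisted sweep of the resulting compact configuration space.

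First I would bound the configuration space using results already in hand. Lemma \ref{lemma:21} gives that every nonobtuse subcritical triangle has edge length at most $2.1$, so all three edges of $T_1$ lie in $[2\kappa,2.1]$. The triangle $T_0$ is nonobtuse with $\area(T_0)\le 2\acrit-\area(T_1)\le 2\acrit-\ao$ by Lemma \ref{lemma:a0}, and each of its edges is at least $2\kappa$; standard area monotonicity in the nonobtuse regime then forces its edges into a compact interval as well.

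Next I would cut down the degrees of freedom. Applying Lemmas \ref{lemma:primary} and \ref{lemma:2C} separately to the apex pentagon of $T_1$ and to the apex pentagon of $T_0$, in each case keeping the two pentagons at the endpoints of the shared edge fixed, one can deform each apex pentagon into double contact with the two endpoint pentagons without increasing either triangle's area and without changing the shared edge length $e$. This reduces the problem to a compact, low-dimensional semi-algebraic family, naturally stratified by the $3C$-classification of each triangle into the types pinwheel, pin-$T$, $LJ$, and $TJ$ (type $\Delta$ being excluded by Lemma \ref{lemma:delta}). Each stratum admits the explicit coordinates introduced in the appendix.

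Finally, interval arithmetic is applied cell-by-cell to verify that, in every configuration with $e\ge 1.8$, at least one of the defining pseudo-dimer conditions must fail: either $\area(T_1)>\acrit$ (contradicting subcriticality of $T_1$), or $\area(T_1)+\area(T_0)>2\acrit$ (contradicting the dimer area bound), or the longest edge of $T_0$ coincides with the shared edge (contradicting $T_0\nRa T_1$). I expect the principal obstacle to be the combinatorial case split produced by pairing each $3C$-type of $T_1$ with each $3C$-type of $T_0$, together with keeping interval widths small enough near the threshold $e=1.8$, where at least one of the target inequalities will be nearly tight; this is presumably why the constant $1.8$, rather than a sharper bound, is chosen.
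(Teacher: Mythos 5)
Your reduction to a computer sweep is the right general shape, but two of the deformation steps do not go through as stated.

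First, Lemma~\ref{lemma:2C} has \emph{subcriticality} as a hypothesis, and $T_0$ in a pseudo-dimer is not subcritical in general: the defining conditions give only $\area(T_0)\le 2\acrit-\area(T_1)\le 2\acrit-\ao=\acrit+\epsN\approx1.344$, which exceeds $2\kappa^2\approx1.309$, the area of the degenerate right triangle with legs $2\kappa$. The proof of Lemma~\ref{lemma:2C} uses subcriticality precisely to invoke Lemma~\ref{lemma:right} and guarantee that the area-decreasing slide/rotation of the apex never produces a right triangle; for $T_0$ this guarantee fails, so you cannot push its apex all the way to double contact. The paper's preparation is designed exactly around this: it relaxes the pseudo-dimer condition $T_0\nRa T_1$ to the weaker $\max(\dx{AB},\dx{BC})\ge1.8$, then deforms the apex of $T_0$ to primary contact (Lemma~\ref{lemma:primary} applies, since the opposite edge $\dx{AC}\in[1.8,2.1]\subset(2\kappa,\kappa\sqrt8)$ by Lemma~\ref{lemma:21}) and stops either at $O2C$ or at the binding boundary $\max(\dx{AB},\dx{BC})=1.8$, yielding three terminal subcases rather than a single $O2C$ one.

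Second, even granting $O2C$ on both apex pentagons, that does not make either triangle $3C$: the apexes touch the two shared pentagons $A$ and $C$, but $A$ and $C$ themselves need not touch. Reaching $3C$ requires the squeeze motion of Section~\ref{sec:squeeze} to bring $A$ and $C$ into contact, and for pseudo-dimers the squeeze obstruction analysis (used for Lemma~\ref{calc:pseudo-area}) relies on Lemmas~\ref{calc:pseudo1} and~\ref{calc:pseudo2} to rule out the long-isosceles boundary --- which would be circular here. So your ``stratify by $3C$ type of each triangle and sweep a four-dimensional space'' plan is not available for this lemma. Instead, the paper keeps the peripheral triangle $T_1$ at $O2C$ (legitimate, since $T_1$ is subcritical), keeps $T_0$ in one of the three subcases above, and runs the verification over the resulting higher-dimensional space with the meet-in-the-middle machinery of Section~\ref{sec:mitm}.
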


\begin{lemma}[computer-assisted]\libel{RWWHLQT}\libel{calc:pseudo2} 
  Let $(T_1,T_0)\in \PD$.  The longest edge of $T_0$ (that is, its
  egressive edge) has length greater than $1.8$.
\end{lemma}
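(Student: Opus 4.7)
I would argue by contradiction and reduce to a computer verification in the style of Lemma~\rif{lemma:172}. Suppose the egressive edge $e'$ of $T_0$ has length at most $1.8$. Since $e'$ is by construction the longest edge of $T_0$, every edge of $T_0$ has length at most $1.8$. By Lemma~\rif{calc:pseudo1}, the shared edge $e$ of $T_0$ and $T_1$ has length less than $1.8$; and since $T_1 \Ra T_0$, the edge $e$ is the longest edge of $T_1$, so every edge of $T_1$ has length at most $1.8$ as well.

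Since both triangles are nonobtuse with all edges of length at most $1.8 < \kappa\sqrt{8}$, Lemma~\rif{lemma:2C} applies to each. Keeping fixed the two pentagons at the endpoints of $e$, deform the remaining pentagon of $T_1$ and then the remaining pentagon of $T_0$ within their respective triangles, maintaining nonobtuseness and not increasing area until double contact holds. This deformation preserves the subcriticality of $T_1$ and the inequality $\area\{T_0,T_1\}\le 2\acrit$. After the deformation each of $T_0, T_1$ is $3C$; by Lemma~\rif{lemma:delta} neither has type $\Delta$, so each is one of the four types pinwheel, pin-$T$, $LJ$, $TJ$, described by an explicit coordinate system from Section~\rif{sec:appendix}.

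For each of the finitely many combinations of types for the ordered pair $(T_1,T_0)$, subject to the compatibility constraint that the two pentagons along the shared edge of the two triangles coincide, I would verify by interval arithmetic that no point of the compact parameter space simultaneously satisfies: both triangles nonobtuse; every edge length at most $1.8$; $e$ the longest edge of $T_1$; $e'$ the longest edge of $T_0$ with $e' \ne e$; $\area(T_1)\le \acrit$; and $\area(T_1)+\area(T_0)\le 2\acrit$ (with the further area lower bound $\area(T_1)>\ao$ from Lemma~\rif{lemma:a0} to tighten the search). The infeasibility of the joint system in every case yields the contradiction.

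\textbf{Main obstacle.} The principal difficulty is the joint parametrization of two adjacent $3C$-triangles sharing two pentagons, which roughly doubles the dimension of the configuration space compared with Lemma~\rif{lemma:a0} and makes the enumeration of type-pairs quadratic in the single-triangle count. Encoding the ``longest edge'' comparisons in a form robust under interval arithmetic requires some care, since they are strict comparisons between edge lengths and the verified solver may need to split the parameter region along the equal-length diagonals. A further subtlety is that a deformation of Lemma~\rif{lemma:2C} could conceivably alter which edge of $T_0$ is longest along the way; to handle this cleanly the verification should be broadened to a disjunction over all candidate choices of longest edge in each triangle, since only the final contact configuration needs to admit \emph{some} valid labeling consistent with the pseudo-dimer structure.
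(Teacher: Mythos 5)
Your reduction to triple contact does not go through, and this is the central gap. Lemma~\rif{lemma:2C} (and the underlying deformation in Lemma~\rif{lemma:primary}) only lets you bring the \emph{moved} pentagon into double contact with the other two; it says nothing about the two pentagons you are holding fixed. Since you are keeping the shared pentagons at the endpoints of $e$ fixed (as you must, so as not to break $T_0$ while deforming $T_1$), the deformation terminates with the outer pentagon of each triangle in double contact with the two shared ones --- that is, each triangle is $O2C$ in the sense of the paper's definition, not $3C$. The shared pentagons along $e$ are in general \emph{not} in contact: being within distance $1.8$ of each other does not make two pentagons touch. To force the shared pentagons into contact and reach genuine $3C$ you would need the squeeze deformation of Section~\rif{sec:squeeze}, which moves the shared pentagons as well; that is a substantial extra step that your proposal omits, and it comes with its own area-decrease lemma and constraint bookkeeping. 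As a consequence, the classification ``pinwheel, pin-$T$, $LJ$, $TJ$'' that you propose to enumerate does not describe the configurations your deformation actually produces.

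There is also a smaller misstep: Lemma~\rif{lemma:2C} is stated for a \emph{subcritical} triangle, and $T_0$ in a pseudo-dimer need not be subcritical ($\area(T_0)$ can be as large as roughly $\acrit+\epsN$). The subcriticality hypothesis is only used inside Lemma~\rif{lemma:2C} to invoke Lemma~\rif{lemma:right} for the edge bound $<\kappa\sqrt8$, and you already have all edges of $T_0$ at most $1.8<\kappa\sqrt8$ by the negated conclusion and Lemma~\rif{calc:pseudo1}; but you should say explicitly that you are reusing the proof of Lemma~\rif{lemma:2C} under the weaker hypothesis, not the lemma statement itself.

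For comparison, the paper does not try to push $T_0$ to $3C$ at all. Its preparation for this calculation (Section~\rif{appendix:cc}) deforms the outer pentagon of $T_0$ only until one of three stopping configurations holds: $T_0$ is $O2C$, or $T_0$ has become long isosceles with slider or midpointer contact along $AB$. The peripheral triangle $T_1$ is separately reduced to $O2C$ (via Lemma~\rif{lemma:2C}), and the two triangles are then coupled through the shared-edge interface data $(d_{AB},\theta_{ABC},\theta_{BAC})$ by the meet-in-the-middle hashing of Section~\rif{sec:mitm}, rather than by constructing a joint parameter space for the pair. That design avoids both the missing squeeze step and the dimension blow-up you identify as the main obstacle. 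Your point about strict longest-edge comparisons being fragile under interval arithmetic is valid, but the clean resolution --- and the one implicitly taken by the paper --- is simply to drop the ``$e'$ is the longest edge of $T_0$'' constraint from the verified domain, since the deformation shrinks nonshared edges and may invalidate it; the remaining constraints (all edges $\le 1.8$, $e$ longest on $T_1$, $\area(T_1)\le\acrit$, $\area\{T_0,T_1\}\le2\acrit$) already define an infeasible region once the squeeze or the $O2C$ reduction is in place.
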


\begin{lemma}\libel{calc:pseudo3}  
  Let $(T_1,T_0)\in \PD$.  The two edges of $T_0$ other than the
  longest edge have lengths less than $1.8$.
\end{lemma}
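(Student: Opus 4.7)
The plan is to dispose of the two non-longest edges of $T_0$ separately. The edge shared between $T_0$ and $T_1$ has length less than $1.8$ directly by Lemma~\rif{NKQNXUN}, so the real content is to bound the third edge $c_2$ of $T_0$ (the edge that is neither shared with $T_1$ nor egressive). I proceed by contradiction, assuming $c_2 \ge 1.8$, and aim for a clash between a geometric lower bound on $\area(T_0)$ and the area budget imposed by the pseudo-dimer condition.

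First I pin down the remaining edge lengths. Let $c_1$ be the shared edge of $T_0$ and $T_1$ and let $e$ be the egressive edge of $T_0$, which by definition is its longest edge. Since the attachment $T_1 \Ra T_0$ identifies $c_1$ with the longest edge of $T_1$, and since $T_1$ is subcritical, Lemma~\rif{lemma:172} forces $c_1 > 1.72$. Lemma~\rif{RWWHLQT} gives $e > 1.8$. Nonobtuseness of $T_0$ means $e^2 \le c_1^2 + c_2^2$, and of course $e \ge c_2$ since $e$ is the longest edge.

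The bulk of the work is to bound $\area(T_0)$ from below. At fixed $c_1, c_2$, Heron's formula presents $16\,\area^2$ as a downward-opening quadratic in $e^2$ with maximum at $e^2 = c_1^2 + c_2^2$; hence on the interval $[c_2, \sqrt{c_1^2+c_2^2}]$ the area is monotonically increasing in $e$, giving $\area(T_0) \ge \area(c_1, c_2, c_2)$. The isosceles area $(c_1/4)\sqrt{4c_2^2 - c_1^2}$ is itself increasing in $c_2$ (trivially) and increasing in $c_1$ throughout $c_1 \in [1.72, 1.8]$ (its $c_1$-derivative is proportional to $12.96 - 2c_1^2 > 0$ in that range), so chaining gives $\area(T_0) \ge \area(1.72, 1.8, 1.8)$, and an explicit Heron computation yields $\area(1.72, 1.8, 1.8) \approx 1.360$.

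On the other hand, the pseudo-dimer bound $\area\{T_0,T_1\} \le 2\acrit$ combined with Lemma~\rif{lemma:a0} applied to the nonobtuse $T_1$ gives $\area(T_0) < 2\acrit - \ao \approx 1.344$. Since $1.360 > 1.344$, this is the required contradiction, so $c_2 < 1.8$. The main obstacle is purely numerical: the gap is only about $0.016$, so the argument depends sensitively on the constants $1.72$ (from Lemma~\rif{lemma:172}) and $\ao = 1.237$ (from Lemma~\rif{lemma:a0}) being calibrated tightly enough to clear $\area(1.72, 1.8, 1.8)$; the margin is comfortably positive but not large, which is presumably why these specific constants were chosen earlier in the paper.
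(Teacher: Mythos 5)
Your proof is correct and follows the paper's approach essentially verbatim: the shared edge is disposed of by Lemma~\rif{calc:pseudo1}, and the third edge is handled by contrasting $\area(T_0) \ge \area(1.72,1.8,1.8) \approx 1.360$ (using the $>1.72$ bound on the shared edge from Lemma~\rif{lemma:172} and the $>1.8$ bound on the egressive edge from Lemma~\rif{calc:pseudo2}) against the budget $\area(T_0) < 2\acrit - \ao = \acrit + \epsN \approx 1.344$, exactly the comparison the paper makes. Your explicit Heron's-formula derivation of the monotonicity is a more elementary, self-contained substitute for the paper's appeal to the general monotonicity of area as a function of edge lengths for nonobtuse triangles, but the numerical heart of the argument is identical.
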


\begin{proof}
  The shared edge between $T_0$ and $T_1$ has length less than $1.8$
  by Lemma~\rif{calc:pseudo1}.  It has length at least $1.72$ by
  Lemma~\rif{lemma:172}.  If (for a contradiction) the third edge of
  $T_0$ has length at least $1.8$, then by the previous lemmas, its
  three edges have lengths at least $1.72$, $1.8$, $1.8$.  Then
\[
\area\{T_1,T_0\} > \ao + \area(1.72,1.8,1.8) 
> \ao + (\acrit + \epsN) = 2\acrit.
\]
% checked in calc.ml 2016/9
This area inequality contradicts a defining property of pseudo-dimers.
\end{proof}

\begin{lemma}[computer-assisted]\libel{BXZBPJW}\libel{calc:pseudo-area}
Let $(T_1,T_0)\in\PD$.  Then $\area\{T_0,T_1\} \ge 2\acrit - \epsM$.
\end{lemma}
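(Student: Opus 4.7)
The plan is to follow the computer-assisted strategy used in the preceding lemmas of this section. A pseudo-dimer $(T_1,T_0)$ is a four-pentagon configuration whose shape is controlled by finitely many continuous parameters, so after a suitable geometric reduction the estimate can be verified by interval arithmetic.

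First, I would collect all available constraints on the edge lengths. The shared edge between $T_0$ and $T_1$ lies in $[1.72,\,1.8]$ by Lemmas~\rif{lemma:172} and~\rif{calc:pseudo1}. The longest (egressive) edge of $T_0$ lies in $(1.8,\,4]$ by Lemma~\rif{calc:pseudo2} and the saturation bound on Delaunay circumradius. The remaining two edges of $T_0$ are in $[2\kappa,\,1.8]$ by Lemma~\rif{calc:pseudo3}, and the two edges of $T_1$ other than the shared one are in $[2\kappa,\,1.8]$ because $T_1\Ra T_0$. Moreover both triangles are nonobtuse, and both are subcritical: $T_1$ by definition of pseudo-dimer, and $T_0$ because otherwise $\area(T_0)>\acrit$ combined with $\area(T_1)\ge \ao$ would give $\area\{T_0,T_1\}>\acrit+\ao=2\acrit-\epsN>2\acrit-\epsM$, which already yields the conclusion. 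So by Lemma~\rif{lemma:21} every edge is at most $2.1$.

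Next, I would perform a deformation-to-primary-contact reduction analogous to Lemmas~\rif{lemma:primary}, \rif{lemma:primary2}, and~\rif{lemma:2C}. Treating the pentagons at the two ``outer'' vertices (those not on the shared edge) one at a time, we translate/rotate the pentagon, fixing the other three, until it attains slider, midpointer, or double contact with its two neighbors. This deformation respects nonobtuseness and does not increase the areas of $T_0$ or $T_1$ individually; since we seek a lower bound on $\area\{T_0,T_1\}$, it only strengthens the problem. Then repeat on each pentagon at a shared-edge vertex (using $d_{AB}$-preserving variants as in Lemma~\rif{lemma:primary2} to keep the common edge controlled). After the reduction, the configuration lies in a finite union of lower-dimensional strata, each parameterized by one of the explicit $3C$-coordinate systems from the appendix (combined across the two triangles). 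On each stratum the area function is an explicit smooth function of a few variables, and interval arithmetic on a systematic subdivision of the parameter box yields $\area\{T_1,T_0\}\ge 2\acrit-\epsM$.

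The main obstacle is the combinatorial complexity: four pentagons produce considerably more contact types than a single $3C$-triangle, and throughout we must impose the pseudo-dimer constraint $T_0\nRa T_1$, i.e.\ that the egressive edge of $T_0$ strictly exceeds the shared edge. This constraint is what separates the pseudo-dimer from the ice-ray dimer (where the sum of areas equals exactly $2\acrit$), and hence is essential to obtain any slack at all. The numerical value $\epsM=0.008$ was in fact chosen precisely so that the interval arithmetic verification on the reduced parameter domain succeeds, as indicated by the extremal pseudo-dimer of Figure~\rif{fig:pseudo-dimer}, whose total area is approximately $2\acrit-10^{-5}$.
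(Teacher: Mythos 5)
Your proposal is in the right spirit (geometric reduction followed by interval arithmetic over explicit coordinates), but it has a concrete numerical error and a deformation scheme that does not match and is not clearly justified.

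The numerical error is in the step where you try to dismiss the case $\area(T_0)>\acrit$. You write $\area\{T_0,T_1\}>\acrit+\ao=2\acrit-\epsN>2\acrit-\epsM$, but the last inequality is backwards: $\epsN\approx 0.053 > 0.008 = \epsM$, so $2\acrit-\epsN < 2\acrit-\epsM$. Hence the case $\area(T_0)>\acrit$ is \emph{not} automatically handled, and in fact $T_0$ need not be subcritical at all for a pseudo-dimer ($\area(T_0)$ can be as large as $\acrit+\epsN$). This breaks your appeal to Lemma~\rif{lemma:21} to bound the edges of $T_0$ by $2.1$. The correct upper bound on the edges of $T_0$ in the pseudo-dimer setting is $\kappa\sqrt8$, obtained by a different argument (Lemma~\rif{lemma:pd-obtuse}), not by subcriticality of $T_0$.

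The second issue is the deformation. Moving the two \emph{outer} pentagons one at a time to primary contact, as in Lemmas~\rif{lemma:primary} and~\rif{lemma:2C}, matches the paper. But for the pentagons at the shared edge you invoke a ``$d_{AB}$-preserving variant'' of Lemma~\rif{lemma:primary2}, which only controls the area of a single triangle, whereas moving a shared pentagon changes the area of both $T_0$ and $T_1$. That one-pentagon-at-a-time scheme is not shown to decrease the total area $\area\{T_0,T_1\}$. The paper instead uses the coherent squeeze transformation of Section~\rif{sec:squeeze}, which simultaneously translates the outer pentagons and slides the shared pentagons toward one another, together with a separate lemma establishing that the squeeze decreases area; the paper's proof further argues that during the preliminary reduction of the outer pentagon $B$ of $T_0$ the long-isosceles boundary is never reached (because that would force the egressive and shared edges of $T_0$ to be equal, which Lemmas~\rif{calc:pseudo1} and~\rif{calc:pseudo2} forbid), then squeezes until $T_1$ is long isosceles or $3C$, and relies on the dimer triple-contact computations being carried out with relaxed assumptions that also cover pseudo-dimer triple contact. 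You would need to replace your shared-edge deformation step by the squeeze argument (or supply an equivalent justification) for the reduction to go through.
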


\begin{lemma}[computer-assisted]\libel{JQMRXTH}\libel{calc:pseudo-area3}
Let $(T_1,T_0)\in\PD$. Assume $T_0\Ra T_-$.  
Then $\area\{T_0,T_1,T_-\} > 3\acrit + \epsM$.
\end{lemma}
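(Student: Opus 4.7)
The plan is to bootstrap from Lemma~\ref{calc:pseudo-area} (BXZBPJW), which already establishes $\area\{T_1,T_0\} \ge 2\acrit - \epsM$ over the full pseudo-dimer configuration space. Once that bound is in hand, it suffices to prove the pointwise inequality $\area(T_-) > \acrit + 2\epsM$, since adding the two yields the desired $3\acrit + \epsM$.

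The geometric data on $T_-$ are as follows: by hypothesis $T_0 \Ra T_-$, so $T_-$ shares with $T_0$ the longest (egressive) edge of $T_0$, which by Lemma~\ref{calc:pseudo2} has length strictly greater than $1.8$. Hence $T_-$ is a Delaunay triangle in a saturated packing with at least one edge of length exceeding $1.8$. This edge-length condition alone is not strong enough: a nonobtuse Delaunay triangle with edges close to $(1.8, 2\kappa, 2\kappa)$ has area only about $1.21$, well below the target $\acrit + 2\epsM \approx 1.306$. The missing constraint is that the two pentagons at the endpoints of the shared edge are rigidly positioned by the pseudo-dimer geometry of $(T_0,T_1)$, the apex pentagon of $T_-$ must avoid overlap with those two and with the apex pentagon of $T_0$, and saturation forces the circumradius of $T_-$ to be at most $2$ with no other pentagon center inside the circumcircle.

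The verification I would carry out in the style of Lemmas~\ref{lemma:delta}, \ref{lemma:172}, and \ref{lemma:a0}. First split on whether $T_-$ is obtuse or nonobtuse. In the nonobtuse case, apply the double-contact reduction of Lemma~\ref{lemma:2C} to the apex pentagon of $T_-$, bringing the configuration into $3C$ form; then classify by $3C$ type (pinwheel, pin-$T$, $LJ$, $TJ$, $\Delta$) using the coordinates of Section~\ref{sec:appendix}, and verify $\area(T_-) > \acrit + 2\epsM$ on each stratum by interval arithmetic. In the obtuse case, Figure~\ref{fig:obtuse} already suggests that an obtuse Delaunay triangle with a long edge has substantial area; the saturation bound on circumradius together with the long-edge condition cuts the parameter space down to a narrow window that interval arithmetic should control easily.

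The main obstacle I anticipate is that decoupling $T_-$ from $(T_0,T_1)$ may sacrifice too much slack to land strictly above $\acrit + 2\epsM$. If so, the computer calculation must be carried out over the joint configuration space of the triple $\{T_1, T_0, T_-\}$, simultaneously encoding the pseudo-dimer constraints on $(T_1, T_0)$ and the Delaunay gluing of $T_-$ to $T_0$ along the long edge. This adds several dimensions on top of those already handled in Lemma~\ref{calc:pseudo-area}, and is probably where the automatic differentiation and meet-in-the-middle techniques advertised in the abstract become essential for keeping the interval-arithmetic verification tractable.
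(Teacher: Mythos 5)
Your proposed decomposition would require certifying the strictly stronger pointwise bound $\area(T_-)>\acrit+2\epsM$, which the paper never establishes --- and which it deliberately avoids committing to. Corollary~\ref{lemma:egress'} (deduced \emph{from} this lemma) gives only $\area(T_-)>\acrit+\epsM$, using the upper bound $\area\{T_1,T_0\}\le 2\acrit$ from the pseudo-dimer definition. The extra $\epsM$ your plan demands is exactly the slack that the three-triangle formulation is designed to reallocate: when $\area\{T_1,T_0\}$ is close to $2\acrit$, the joint inequality only needs $\area(T_-)>\acrit+\epsM$, not $\acrit+2\epsM$. You flag this risk, but it is not merely a possible obstacle to be checked later; it is the structural reason the paper proves the three-term statement instead of your decoupled pair, so the ``fallback'' is not optional.

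The joint verification you describe as a fallback is indeed what the paper does, but your preparation for it has two concrete errors. First, Lemma~\ref{lemma:2C} requires the triangle to be \emph{subcritical}, and after negating the conclusion the only information on $T_-$ is $\area(T_-)\le\acrit+2\epsM$, which does not imply subcriticality; the paper therefore proves a separate preliminary lemma for $T_-$ whose contradiction branch drives $T_-$ to a right triangle, where the shared-edge length $>1.8$ (Lemma~\ref{calc:pseudo2}) forces $\area(T_-)\ge 1.8\kappa$ and already closes the inequality (indeed $2\ao+1.8\kappa>3\acrit+\epsM$). Second, $O2C$ produces only double contact at the apex of $T_-$, not full $3C$: the two pentagons at the ends of the egressive edge have centers more than $1.8$ apart and need not touch, so your proposed $3C$-type classification does not parametrize the relevant configuration space. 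The paper's actual reduction brings both $T_1$ and $T_-$ to $O2C$, applies a squeeze to the pentagon of $T_0$ that is shared only with $T_-$ until it contacts one of the shared pentagons, and then runs the meet-in-the-middle search over six permutation cases with $T_0$ as the central triangle; there is no explicit obtuse/nonobtuse split on $T_-$.
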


\begin{corollary}\libel{lemma:egress'} 
Let $(T_1,T_0)\in\PD$.  Assume that $T_0\Ra T_-$.  Then
$\area(T_-) > \acrit+\epsM$.
\end{corollary}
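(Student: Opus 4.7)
The plan is to derive this as an immediate consequence of the preceding computer-assisted Lemma~\rif{calc:pseudo-area3} together with the defining area bound for pseudo-dimers. Specifically, since $(T_1,T_0) \in \PD$, the fourth defining condition of a pseudo-dimer gives $\area\{T_1,T_0\} \le 2\acrit$. On the other hand, Lemma~\rif{calc:pseudo-area3} applied to the same pseudo-dimer $(T_1,T_0)$ with $T_0 \Ra T_-$ provides $\area\{T_0,T_1,T_-\} > 3\acrit + \epsM$.

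The next step is simply to observe that since the attachment $T_0 \Ra T_-$ means $T_-$ is the Delaunay triangle adjacent to $T_0$ across its longest edge, the triangle $T_-$ is distinct from both $T_0$ and $T_1$ (indeed, $T_1$ is the neighbor of $T_0$ across a different edge, namely the longest edge of $T_1$; this is the content of $T_0 \nRightarrow T_1$ in the definition of $\PD$). Therefore $\{T_0,T_1,T_-\}$ is a three-element set of distinct triangles and area is additive over this set.

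Subtracting then gives
\begin{equation*}
\area(T_-) = \area\{T_0,T_1,T_-\} - \area\{T_0,T_1\} > (3\acrit + \epsM) - 2\acrit = \acrit + \epsM,
\end{equation*}
which is the desired inequality. There is no real obstacle here: the lemma is a direct arithmetic corollary of the two previously established facts, and the only thing to verify carefully is the disjointness that allows area to split as a sum, which follows from the definition of pseudo-dimer.
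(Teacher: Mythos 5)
Your proposal is correct and is essentially identical to the paper's own proof: both subtract the pseudo-dimer area bound $\area\{T_0,T_1\}\le 2\acrit$ from the conclusion of Lemma~\rif{calc:pseudo-area3}. The extra remark about the distinctness of $T_-$ from $T_0$ and $T_1$ is a harmless elaboration that the paper leaves implicit.
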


\begin{proof}  
  By Lemma~\rif{calc:pseudo-area3} and the definition of pseudo-dimer,
\[
\area(T_-) = \area\{T_0,T_1,T_-\} - \area\{T_0,T_1\} 
> (3\acrit + \epsM) - 2\acrit = \acrit + \epsM.
\]
\end{proof}

\begin{corollary}\libel{lemma:m2-area}
  Suppose that $m_-(T_-)>0$.  Then $\area(T_-) > \acrit + \epsM$.
\end{corollary}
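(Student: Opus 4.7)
The plan is to unpack the definition of $m_-(T_-)>0$ and reduce directly to Corollary \rif{lemma:egress'}. Since $m_-(T_-) = n_-(T_-, \M) > 0$, there exists some Delaunay triangle $T_+$ with $(T_+, T_-) \in \M$. By condition (2) of the definition of $\M$, we have $T_+ \Ra T_-$, and by condition (4), there exists (a unique) triangle $T_1$ with $(T_1, T_+) \in \PD$.

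Now, in the pseudo-dimer $(T_1, T_+)$, the triangle $T_+$ plays the role of the ``$T_0$'' in the definition of pseudo-dimer, and the third triangle determined by the pseudo-dimer via the condition $T_0 \Ra T_-$ is precisely the triangle $T_-$ under consideration (since $T_+ \Ra T_-$ from condition (2) of $\M$, and attachment is functional). Therefore, $T_-$ is the triangle lying across the egressive edge of the pseudo-dimer $(T_1, T_+)$.

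Applying Corollary \rif{lemma:egress'} directly to the pseudo-dimer $(T_1, T_+)$ with the associated attached triangle $T_-$ yields $\area(T_-) > \acrit + \epsM$, which is the desired conclusion. No obstacle arises here: the proof is purely a matter of tracing through the definitions of $\M$ and $\PD$, so that the hypothesis of Corollary \rif{lemma:egress'} is precisely satisfied, and then invoking that corollary.
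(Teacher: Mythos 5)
Your proof is correct and follows exactly the paper's own argument: unwind $m_-(T_-)>0$ to produce a pair $(T_+,T_-)\in\M$, use condition (4) of $\M$ to get $(T_1,T_+)\in\PD$ with $T_+$ in the role of $T_0$, note $T_+\Ra T_-$ from condition (2), and invoke Corollary~\rif{lemma:egress'}. The paper's version is just a terser statement of the same reduction.
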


\begin{proof} 
  If $m_-(T_-)>0$, there exists $(T_1,T_0)\in\PD$ such that $T_0\Ra
  T_-$.  The result follows from the previous corollary.
\end{proof}

\begin{definition}[long isosceles]
  We say that a triangle is {\it long isosceles} if the two longest edges of
  the triangle have equal length.  We include equilateral triangles as
  a special case of long isosceles.
\end{definition}

\begin{definition}[O2C]
  We say that a triangle $T=T_0$ or $T=T_1$ in a dimer pair or a
  pseudo-dimer pair has outside double contact $(O2C)$ if the pentagon
  $A$ at the vertex of $T$ that is not shared with the other triangle in the
  pair has double contact.
\end{definition}

\begin{lemma}[computer-assisted]\libel{KUGAKIK}\libel{calc:dimer-isosc}
  Let $(T_1,T_0)\in DP$.  Then $T_1$ is not both $O2C$ and long
  isosceles.
\end{lemma}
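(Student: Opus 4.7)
The plan is to prove this by a computer-assisted interval-arithmetic calculation in the spirit of Lemmas~\ref{lemma:delta}, \ref{lemma:172}, and \ref{lemma:a0}. The strategy is to show that the combined constraints of (i) $O2C$ and long isosceles on $T_1$, (ii) nonobtuseness of both $T_1$ and $T_0$, (iii) $T_1 \Ra T_0$ and $T_0 \Ra T_1$, and (iv) $\area\{T_1,T_0\} \le 2\acrit$ are jointly infeasible. Since $T_1 \Ra T_0$, the edge $e$ shared with $T_0$ is the longest edge of $T_1$, so the long isosceles condition forces one of the two remaining edges of $T_1$ (both incident to the outside pentagon $A$) to have length equal to $|e|$.

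First, I would reduce the $6$-parameter configuration space of $T_1$. By $O2C$, the outside pentagon $A$ already has double contact with the pentagons $B$, $C$ at the endpoints of the shared edge $e$, imposing two contact equations. If $B$ and $C$ are not themselves in contact, then applying an area-non-increasing deformation in the style of Lemmas~\ref{lemma:primary2} and~\ref{lemma:2C} (fixing $A$ and, say, $B$, and moving $C$ along constraints that preserve $O2C$, nonobtuseness, and the long isosceles relation $|AB|=|e|$ or $|AC|=|e|$) reduces the problem to the $3C$ case. By Lemma~\ref{lemma:delta}, the type $\Delta$ case is ruled out (its area already exceeds $1.5 > \acrit$ and in combination with $\area(T_0) > \ao$ contradicts the dimer area bound). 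The remaining $3C$-types — pinwheel, pin-$T$, $LJ$, and $TJ$ — each admit the explicit low-dimensional coordinatizations given in the appendix (Section~\ref{sec:appendix}).

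Second, for each $3C$-type I would impose the long isosceles equation as a codimension-$1$ constraint within the corresponding coordinate box, using whichever of the two edges adjacent to $A$ is the equal one (both subcases must be checked). Given the resulting $(n{-}1)$-parameter family, interval arithmetic with recursive bisection yields tight enclosures for $|e|$ and $\area(T_1)$. On the $T_0$ side, once $|e|$ is enclosed in an interval, the constraints that $T_0$ be nonobtuse and that $|e|$ be its longest edge restrict the remaining two edges of $T_0$ to $[2\kappa, |e|]$, and monotonicity of area for nonobtuse triangles (as used in Lemma~\ref{lemma:21}) gives a lower bound $\area(T_0) \ge \area(|e|, |e|, \sqrt{2}|e|/\sqrt{\text{something}})$ — more simply, $\area(T_0) \ge \area(|e|, 2\kappa, 2\kappa)$ when this triangle is nonobtuse, and otherwise the associated rigidity on $|e|$ can be exploited directly. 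Combining the lower bound on $\area(T_1)$ coming from the isosceles/$O2C$ constraint with the lower bound on $\area(T_0)$ should violate $\area\{T_1,T_0\} \le 2\acrit$ in every box.

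The main obstacle will be keeping the interval arithmetic tight enough across all $3C$-subtypes and both long-isosceles subcases, particularly in configurations where the shared edge $|e|$ is near the Lemma~\ref{calc:pseudo2}-style regime around $1.8$ and where the pentagon rotation angles are near contact-switching boundaries. Near such boundaries, the contact equations become ill-conditioned and the area gap $2\acrit - \area\{T_1,T_0\}$ can be small, so fine bisection — together, possibly, with a meet-in-the-middle enumeration on the rotation-angle degrees of freedom as mentioned in the abstract — will be needed to close the argument. The isosceles equation, being one real equation on a low-dimensional domain, fortunately reduces the effective dimension enough that the calculation should remain tractable.
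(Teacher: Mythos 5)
Your high-level plan — interval arithmetic over a reduced parameter space — is in the right spirit, but it diverges from the paper's proof in two ways that matter, and one of the divergences is a genuine gap.

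The paper's actual preparation for this lemma is minimal: negate the conclusion, assume $T_1$ is $O2C$ and long isosceles, additionally deform $T_0$ to $O2C$ (moving only $T_0$'s outer pentagon, which does not touch $T_1$), and then run the meet-in-the-middle computation over the resulting configuration space \emph{without further reduction}. In particular the paper does \emph{not} reduce $T_1$ to triple contact. Your proposal tries to go further, reducing $T_1$ to $3C$ by "fixing $A$ and, say, $B$, and moving $C$." But $C$ is one of the two pentagons shared with $T_0$: moving $C$ changes $T_0$ as well, and you give no argument that this deformation keeps $\area(T_0)$ under control, preserves $T_0\Ra T_1$, or preserves nonobtuseness of $T_0$. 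The paper has a tool for exactly this — the squeeze transformation of Section~\ref{sec:squeeze}, which coherently translates the shared pentagons $A,C$ while moving both outer pentagons — and it is invoked for other dimer computations, but deliberately \emph{not} for this one. Your reduction would need to cite and verify those hypotheses; as written, it is a gap.

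The second and more serious problem is the way you propose to bound $\area(T_0)$ from below: once $|e|$ is enclosed, you take $\area(T_0)\ge\area(|e|,2\kappa,2\kappa)$. This discards the coupling between $T_0$ and $T_1$ through the pentagon rotation angles on the shared edge, and that coupling is exactly what makes the lemma true. Concretely: the equilateral pinwheel with edge length $\approx 1.72256$ is $3C$, hence its outer pentagon has double contact, hence it is $O2C$; it is equilateral, hence long isosceles; and its area $\approx 1.285 < \acrit$, so it is subcritical. Setting $T_1$ to this triangle, your bound gives
\[
\area(T_1)+\area(T_0) \gtrsim 1.285 + \area(1.72256,\,2\kappa,\,2\kappa) \approx 1.285 + 1.18 = 2.47 \;<\; 2\acrit \approx 2.58,
\]
so the decoupled estimate does not reach a contradiction. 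The contradiction has to come from the fact that, given the pentagons $A,C$ and their rotations as fixed by $T_1$, no compatible $T_0$ with shared longest edge actually achieves area anywhere near $\area(|e|,2\kappa,2\kappa)$. That is precisely the information the hash-table lookup in the MITM scheme carries, keyed by $(d_{AC},\theta,\theta')$ as in Equation~\ref{eqn:assembly}. You mention MITM only as a possible patch near "contact-switching boundaries," but in fact it has to carry the whole argument. So the approach as outlined is not wrong in kind, but its main estimate fails, and the fallback you gesture at is actually the proof.
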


\begin{definition}[large angle]\libel{def:large}  
  Let $T$ be a $P$-triangle.  Let $e$ be an edge of the triangle with
  pentagons $A$ and $B$ at its endpoints.  Let $\alpha = \alpha(T,e)$
  be the angle between the edges of the two pentagons $A$ and $B$.  (See
  Figure~\rif{fig:large}.)  Modulo $2\pi/5$, we can assume that
  $\alpha\in [0,2\pi/5]$.  We say that the angle is {\it large} along
  $(T,e)$ if $\pi/5<\alpha < 2\pi/5$.  
% N.B.
% The way the code deals with alpha=2pi/5 is somewhat subtle.
% The test in the code is just alpha >> pi15, and periodize_pent
% is called on the angle, which normalizes to [0,2pi/5].
% If the angle equals 2pi/5, then periodize pent splits the interval into two,
% one interval containing 0 and the other containing 2pi/5.
% In this case the test alpha >> pi15 fails.
\end{definition}

\tikzfig{large}{A Delaunay triangle $T$ 
with a large angle $\alpha$ along $(T,e)$.}
{
\begin{scope}[scale=0.8]
\threepent{0.00}{0.00}{50.58}{1.10}{1.49}{181.07}{1.73}{0.00}{217.07};
\draw (0,0) node[anchor=north] {$A$};
\draw (0,0) node[anchor=south] {$e\ $};
\draw (1.1,1.49) node[anchor=south] {$B$};
\draw[black] (50.58:1) -- ++ ($2*(50.58+72:1) - 2*(50.58:1)$);
\draw[black] (50.58:1) -- ++ ($2*(1.1,1.49)+2*(181.07:1) - 2*(50.58:1)$);
\draw (-0.8,1.4) node {$\alpha$};
\end{scope}
}

Let $T$ be a $P$-triangle and let $T'$ be the adjacent $P$-triangle
along edge $e$.  We have the invariant
$\alpha(T,e)+\alpha(T',e)=2\pi/5$.  Hence if the angle is large along
$(T,e)$ then it is not large along $(T',e)$.

\begin{lemma}[computer-assisted]\libel{FHBGHHY}\libel{calc:large}
  Let $\{T_0,T_1\}$ be given $P$-triangles (not necessarily a
  pseudo-dimer) such that $\area(T_1)\le \acrit$ and $T_1\Ra T_0$.
  Assume that there is a nonshared edge $e$ of $T_0$ of length greater
  than $1.8$ and such that the angle is not large along $(T_0,e)$.  Then
  $\area\{T_0,T_1\} > 2\acrit + \epsM$.
\end{lemma}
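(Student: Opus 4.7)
The plan is to follow the same computer-assisted template used in Lemmas~\rif{lemma:delta}, \rif{lemma:172}, and~\rif{lemma:a0}, but applied to a $P$-pair rather than a single $P$-triangle. The configuration consists of two Delaunay triangles $T_0$ and $T_1$ sharing an edge, carrying four pentagons at the four distinct vertices. Its moduli are the five edge lengths (three per triangle, the shared edge counted once) together with the rotational orientations of the four pentagons. The hypotheses carve out a compact semi-algebraic subset of this moduli space: $\area(T_1)\le\acrit$, the shared edge is the longest edge of $T_1$, the nonshared edge $e$ of $T_0$ satisfies $|e|>1.8$, and the relative angle $\alpha(T_0,e)$ lies in $[0,\pi/5]$.

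First I would reduce to primary contact. Since we want only a lower bound on $\area\{T_0,T_1\}$, Lemma~\rif{lemma:primary} (or Lemma~\rif{lemma:primary2} when an edge length must be preserved) can be applied to each pentagon in turn, deforming until every pentagon is in primary contact with its neighbors, without increasing the total area and without violating nonobtuseness or the longest-edge constraint on $T_1$. The ``not large'' hypothesis on $\alpha(T_0,e)$ is maintained by halting the deformation the moment $\alpha$ reaches $\pi/5$; the resulting boundary configuration still lies inside the compact region we intend to estimate, so this is harmless.

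Next, as in Lemma~\rif{lemma:172}, any residual slider or midpointer contact can be further deformed (by sliding along a common edge segment, or by rotating about the midpointer) so as to decrease area until each of $T_0$ and $T_1$ is a $3C$-triangle. By the classification of Section~\rif{sec:contact}, each of $T_0$ and $T_1$ is then one of pinwheel, pin-$T$, $LJ$, $TJ$, or $\Delta$, with $\Delta$ excluded on $T_1$ by Lemma~\rif{lemma:delta}. For each product of admissible types one writes down the explicit parameterization from the appendix, glues the two copies along the shared edge, and imposes the edge-length and angle constraints to obtain a finite family of compact parameter boxes.

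The final step is to verify $\area\{T_0,T_1\}>2\acrit+\epsM$ on each such box by interval arithmetic with subdivision, exactly as in the earlier single-triangle lemmas. The main obstacle is dimensionality: the $P$-pair moduli space is roughly twice the size of the single-triangle spaces handled previously, so naive subdivision becomes prohibitively expensive. I would expect to lean on automatic differentiation for monotonicity-based pruning, and on the meet-in-the-middle strategy advertised in the abstract: the shared edge, together with the orientations of its two endpoint pentagons, is a natural interface, so each candidate state of that interface can be matched against independently precomputed feasibility tables for $T_0$ (subject to $|e|>1.8$ and $\alpha\le\pi/5$) and for $T_1$ (subject to $\area(T_1)\le\acrit$ with the shared edge longest), yielding the area lower bound in product form.
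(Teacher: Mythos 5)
Your reduction to triple contact on both triangles is the gap. The paper's preparation of this calculation (see the appendix, ``calculation of Lemma~\ref{calc:large}'') deliberately stops well short of $3C$: it brings $T_1$ to $O2C$ (the peripheral triangle is subcritical and nonobtuse, so Lemma~\ref{lemma:2C} applies to it), but for $T_0$ it only deforms the outer pentagon to primary contact and then sorts into five cases --- either $T_0$ is $O2C$, or the $1.8$-length constraint on $e$ binds and the outer pentagon is translated (maintaining $|e|=1.8$) until it touches a single shared pentagon. In none of these cases is $T_0$ assumed to be $3C$, nor is it assumed that the two shared pentagons on the common edge are in contact. The interval computation therefore runs over the larger $O2C$/boundary configuration space via MITM, not over the $3$-parameter $3C$ families from the appendix.

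The reason you cannot push $T_0$ to $3C$ is that $T_0$ need not be subcritical: under the negated conclusion $\area\{T_0,T_1\}\le 2\acrit+\epsM$ and $\area(T_1)\ge\ao$ one only gets $\area(T_0)\lesssim 1.35$, which exceeds $\acrit$. Lemma~\ref{lemma:2C} (which licenses the deformation to double contact) explicitly requires subcriticality, and the argument of Lemma~\ref{lemma:172} requires all edges $\le 1.72$, whereas here $|e|>1.8$. Squeezing the two shared pentagons together to force the third contact would also need the area bound $\acrit+\epsN$ from the squeezing lemma, which again $T_0$ need not satisfy. Without these, the area-decreasing path to $3C$ can stall, and the clean $3C$-type classification of Section~\ref{sec:contact} is unavailable for $T_0$. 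You would need to either prove that $T_0$ is forced to be subcritical in this setting (it is not, in general), or adopt the paper's coarser case split where $T_0$ retains at most one contact pair and the $1.8$ constraint is held at its boundary.
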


\begin{corollary}\libel{lemma:large}
  If $(T_1,T_0)\in \PD$ and $e$ is the longest (that is, egressive)
  edge of $T_0$.  Then the angle is large along $(T_0,e)$.
\end{corollary}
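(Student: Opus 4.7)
The plan is to derive this directly from the computer-assisted Lemma~\rif{calc:large} (FHBGHHY) by contradiction, using the defining properties of a pseudo-dimer together with the edge length bound from Lemma~\rif{calc:pseudo2} (RWWHLQT).

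First I would verify that the hypotheses of Lemma~\rif{calc:large} are satisfied for the pair $\{T_0,T_1\}$ coming from $(T_1,T_0)\in\PD$. By definition of pseudo-dimer, $T_1$ is subcritical, so $\area(T_1)\le\acrit$, and also $T_1\Ra T_0$. The edge $e$ is the longest (egressive) edge of $T_0$, and since $T_0\nRightarrow T_1$ in a pseudo-dimer, this edge $e$ is not the edge shared with $T_1$, i.e. $e$ is a nonshared edge of $T_0$. By Lemma~\rif{calc:pseudo2}, this egressive edge has length greater than $1.8$.

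Now suppose, for a contradiction, that the angle is not large along $(T_0,e)$. Then all hypotheses of Lemma~\rif{calc:large} are in force, so we conclude
\[
\area\{T_0,T_1\} > 2\acrit+\epsM.
\]
However, the fourth defining condition of a pseudo-dimer forces $\area\{T_1,T_0\}\le 2\acrit$. Since $\epsM = 0.008 > 0$, this yields a contradiction. Therefore the angle must be large along $(T_0,e)$, completing the proof.

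The only subtle point is that the egressive edge is guaranteed to be nonshared with $T_1$, but this is immediate from the definition of pseudo-dimer: the relation $T_0\nRightarrow T_1$ says precisely that the longest edge of $T_0$ is not the edge shared with $T_1$. Everything else is a direct invocation of previously stated (computer-assisted) lemmas, so there is no real obstacle beyond assembling them correctly.
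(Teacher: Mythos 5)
Your proof is correct and follows the paper's own argument: a proof by contradiction invoking Lemma~\ref{calc:large}, with the hypotheses checked via the pseudo-dimer definition and Lemma~\ref{calc:pseudo2}. You simply spell out the hypothesis-verification in more detail than the paper's terse one-line version.
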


\begin{proof} 
  If the angle is not large, then we can apply the lemma to find tht
  the area of the pseudo-dimer is greater than $2\acrit + \epsM$,
  which contradicts one of the defining properties of a pseudo-dimer.
\end{proof}

\begin{lemma}[computer-assisted]\libel{HUQEJAT}\libel{calc:pent3}
  Let $T_1^i\Ra T_0$ and $\area(T_1^i)\le\acrit$  for
  distinct $P$-triangles $T_1^0$ and $T_1^1$.  Then
  $\area\{T_0,T_1^0,T_1^1\} > 3\acrit + \epsM$.
\end{lemma}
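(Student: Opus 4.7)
The plan is to reduce the inequality to a finite-dimensional optimization problem over an explicit parameterization of admissible configurations, then verify the strict lower bound $3\acrit + \epsM$ by interval arithmetic, in the same spirit as Lemmas~\rif{lemma:delta}, \rif{lemma:172}, and~\rif{lemma:a0}.

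First I would use subcriticality to constrain the shared edges. Since each $T_1^i$ is subcritical and $T_1^i\Ra T_0$, the longest edge of $T_1^i$---which is the edge shared with $T_0$---has length at least $1.72$ by Lemma~\rif{lemma:172}. By Lemma~\rif{lemma:21} and the saturation bound on Delaunay circumradii, it has length at most $2.1$ when $T_1^i$ is nonobtuse, and at most $4$ in general. The two edges of $T_0$ shared with $T_1^0$ and $T_1^1$ are two distinct edges of $T_0$, both of length at least $1.72$. Furthermore each non-shared edge of $T_1^i$ has length in $[2\kappa,\ell_i]$, where $\ell_i$ is the length of the shared (longest) edge.

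Next I would reduce to a contact configuration. In the nonobtuse subcase, Lemma~\rif{lemma:right} gives all edges of $T_1^i$ bounded by $\kappa\sqrt{8}$. Holding $T_0$ (together with its three pentagons) fixed, I would apply Lemma~\rif{lemma:2C} to the single pentagon at the vertex of $T_1^i$ opposite the shared edge, deforming it in an area-non-increasing way until it has double contact with the two pentagons at the endpoints of the shared edge. After this deformation each $T_1^i$ is a $3C$ $P$-triangle, and by Lemma~\rif{lemma:delta} it has one of four types: pinwheel, pin-$T$, $LJ$-junction, or $TJ$-junction. The obtuse subcase (for one or both of the $T_1^i$) is handled by a separate subcalculation in which the obtuseness forces the shared edge to be close to the circumdiameter and thereby forces $T_0$ to have large area, along the lines suggested by Figure~\rif{fig:obtuse}.

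With these reductions in place I would parameterize. Using the explicit coordinate charts from Section~\rif{sec:appendix} for each $3C$ type, the two ``outer'' pentagons (opposite the shared edges in $T_1^0$ and $T_1^1$) are described by a small number of angle and shear parameters, while $T_0$ itself is parameterized by its three edge lengths and three pentagon rotations, subject to the $[1.72,2.1]$ constraints on the two shared edges and to non-overlap of all five pentagons. For each combination of $3C$-types for $T_1^0$ and $T_1^1$ and each labeling of which two edges of $T_0$ are shared, I would write $\area\{T_0,T_1^0,T_1^1\}=\area(T_0)+\area(T_1^0)+\area(T_1^1)$ as an explicit function of the parameters and bound it below by $3\acrit+\epsM$ using interval arithmetic with recursive subdivision.

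The main obstacle I anticipate is computational: the parameter space is larger than in any of the preceding area lemmas, the slack $\epsM=0.008$ is small compared to the natural overestimation of interval arithmetic, and the case analysis runs over roughly sixteen type combinations together with an auxiliary obtuse case. This is precisely the setting where the meet-in-the-middle technique and automatic differentiation mentioned in the abstract become essential, both to prune the search tree and to sharpen interval enclosures enough to close the inequality.
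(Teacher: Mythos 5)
There is a genuine gap in your reduction step. You claim that after applying Lemma~\rif{lemma:2C} to the outer pentagon of $T_1^i$, each $T_1^i$ becomes a $3C$ $P$-triangle and can then be classified as a pinwheel, pin-$T$, $LJ$-junction or $TJ$-junction and parameterized by the three-dimensional coordinate charts of Section~\rif{sec:appendix}. This is not correct. Lemma~\rif{lemma:2C} only yields $O2C$ contact: the outer pentagon of $T_1^i$ touches each of the two pentagons along the edge shared with $T_0$, but those two shared pentagons need not touch \emph{each other}, because they are pinned to $T_0$ and you have fixed $T_0$. In the terminology of the paper, $T_1^i$ lands in the four-dimensional configuration space of $P$-triangles with two separate pairs in contact, not the three-dimensional $3C$ space. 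Restricting to $3C$ types would silently discard all configurations in which the two shared pentagons of $T_0$ are separated, which is a large (and, generically, the whole) portion of the domain, so the interval computation you describe would not actually certify the inequality.

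The paper's preparation handles this extra degree of freedom in a different way. It makes the peripheral triangles $O2C$ (not $3C$), then splits on whether $T_0$ itself already has a pair of pentagons in contact. If so, it fixes which edge of $T_0$ is the non-shared one (three subcases). If $T_0$ has no contacting pair, it applies a squeeze transformation to the two non-central pentagons of $T_0$ — sliding them toward the common pentagon $B$ shared by $T_1^0$ and $T_1^1$ — until each $T_1^i$ becomes long isosceles. This reduces the dimension without ever invoking $3C$ classification for the peripheral triangles. If you want to repair your approach, you should replace the $3C$ reduction by either the long-isosceles reduction via squeeze, or an explicit treatment of the two-contact (non-$3C$) coordinate system together with the assembly constraints of Equation~\rif{eqn:assembly}, as in the MITM framework. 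Your remarks about MITM and automatic differentiation are in the right spirit, but they do not compensate for a parameterization that misses most of the domain.
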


\begin{lemma}[computer-assisted]\libel{QPJDYDB}\libel{calc:pent4}
  Let $T_1^i\Ra T_0$ and $\area(T_1^i)\le\acrit$ for distinct
  $P$-triangles $T_1^0$, $T_1^1$, and $T_1^2$.  Then
  $\area\{T_0,T_1^0,T_1^1,T_1^2\} > 4\acrit$.
\end{lemma}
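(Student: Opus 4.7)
The plan is to first establish the combinatorial skeleton of the configuration, then reduce the inequality to a finite-dimensional constrained optimization that can be verified by interval arithmetic. Each $T_1^i$ attaches to $T_0$ along its own longest edge, and since $T_0$ has only three edges, the three distinct triangles $T_1^0, T_1^1, T_1^2$ must attach along three distinct edges of $T_0$. Hence every edge of $T_0$ coincides with the longest edge of some subcritical $P$-triangle. By Lemma~\rif{lemma:172}, a subcritical $P$-triangle has some edge of length greater than $1.72$, so its longest edge exceeds $1.72$. Therefore every edge of $T_0$ has length at least $1.72$, and a second application of Lemma~\rif{lemma:172} to $T_0$ itself yields $\area(T_0) > \acrit$ for free; the remaining work is to squeeze a further $3\acrit$ out of the three subcritical $T_1^i$.

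For the quantitative step, I would fix $T_0$ and exploit Lemma~\rif{lemma:2C}: for each nonobtuse subcritical $T_1^i$, the outside pentagon (the one at the vertex not shared with $T_0$) can be deformed to double contact without increasing area, producing a $3C$-configuration in which all three pentagons of $T_1^i$ touch pairwise. The classification of Section~\rif{sec:contact} then restricts each such $T_1^i$ to the types pinwheel, pin-$T$, $LJ$, or $TJ$ (type $\Delta$ is ruled out by Lemma~\rif{lemma:delta}), and the coordinate systems of the appendix parameterize each type in finitely many real variables. Obtuse $T_1^i$ need separate treatment: such a triangle has circumradius at most $2$, its longest edge matches an edge of $T_0$, and the Delaunay property forbids $T_0$ from having its opposite vertex inside that circumcircle, so $T_0$ is forced to be quite large (compare Figure~\rif{fig:obtuse}) and supplies a correspondingly large contribution to the total area.

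The problem reduces to verifying $\area(T_0) + \sum_i \area(T_1^i) > 4\acrit$ over finitely many combinatorial cases on a bounded semi-algebraic configuration space, which I would check by branch-and-bound interval arithmetic along the lines of Section~\rif{sec:calc}, using automatic differentiation to prune by monotonicity where possible. The main obstacle I anticipate is the combinatorial blowup: each of the three $T_1^i$ can independently be obtuse or any of the four admissible $3C$-types, and the pentagon rotation angles are coupled across the common vertices of the four triangles, so the joint configuration space is genuinely high-dimensional. I expect most cases to resolve quickly because the free bound $\area(T_0) > \acrit$ only requires an average of $\acrit$ from each $T_1^i$, leaving comfortable slack; the tight cases will cluster near configurations where $T_0$ is close to the subcritical boundary and each $T_1^i$ is close to the ice-ray triangle, which is where the branching will need to be finest.
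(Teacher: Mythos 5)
Your combinatorial skeleton is fine: the three distinct $T_1^i$ attach along distinct edges of $T_0$, each such edge is the longest edge of a subcritical triangle, and the contrapositive of Lemma~\rif{lemma:172} forces each to exceed $1.72$. But the ``second application'' of Lemma~\rif{lemma:172} that is supposed to give $\area(T_0) > \acrit$ \emph{for free} runs the implication backwards: the lemma says that every edge $\le 1.72$ implies the triangle is not subcritical, and knowing that every edge of $T_0$ \emph{exceeds} $1.72$ gives you nothing via that lemma. The paper exhibits a subcritical equilateral $P$-triangle with edge lengths about $1.72256$ (Figure~\rif{fig:subcritical} and the footnote to Lemma~\rif{lemma:172}), which is a direct counterexample to the claim. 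The downstream optimism is then also misplaced: since the hypothesis $\area(T_1^i) \le \acrit$ means the three peripheral triangles contribute at most $3\acrit$ in total, there is no ``comfortable slack'' at all --- a bound of the form $\area(T_0) > \acrit$ cannot close $\area\{T_0,T_1^0,T_1^1,T_1^2\} > 4\acrit$ unless all three $T_1^i$ are exactly critical. The excess has to be extracted from the coupling among the four triangles through the shared pentagons, which is precisely why this is a genuine joint optimization and not a sum of independent single-triangle estimates.

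Two secondary points. First, Lemma~\rif{lemma:2C} deforms the outer pentagon to double contact ($O2C$); it does not produce triple contact, so the classification of $3C$-triangles into pinwheel/pin-$T$/$LJ$/$TJ$ does not apply to the reduced $T_1^i$, which remain in the four-dimensional two-contact configuration space of the appendix. Second, the paper keeps $T_0$ fully general (six parameters), reduces each $T_1^i$ only to $O2C$, and then --- this is the ingredient your sketch omits --- uses the meet-in-the-middle hashing of Section~\rif{sec:mitm}, keyed on the shared edge length and inclination angles, together with the $S_3$-symmetry that permutes the three peripheral triangles, to make the high-dimensional joint search tractable. Plain branch-and-bound over the full joint configuration space, which is what you describe, would not terminate in practice.
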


\section{Dimer Pairs}\libel{sec:dimer-pair}

The purpose of this section is to give a proof of the following
theorem.  This theorem is the principal optimization problem of this
article in the sense that all other optimizations deal with
configurations that are far from optimal.

\begin{theorem}\libel{thm:dimer-area}
  Let $(T_1,T_0)$ be a dimer pair.  (In particular, we assume that
  $T_1$ is subcritical, that $\area\{T_0,T_1\}\le 2\acrit$ and that
  $T_0$ and $T_1$ share a common longest edge.)  Then $(T_1,T_0)$ is
  the ice-ray dimer of area exactly $2\acrit$.
\end{theorem}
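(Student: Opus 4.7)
The plan is to use the double-contact deformation of Lemma~\rif{lemma:2C} to collapse the configuration space down to a finite-dimensional family of maximally constrained $3C$-dimers, then close the remaining optimization by computer-assisted interval arithmetic of the kind already used in Lemmas~\rif{lemma:delta}, \rif{lemma:172}, and~\rif{lemma:a0}.

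First I would record the \emph{a priori} estimates. Subcriticality of $T_1$ gives $\area(T_1)\le\acrit$, while Lemma~\rif{lemma:a0} gives $\area(T_0)>\ao$, so $\area(T_0)\le 2\acrit-\ao<\acrit+\epsN$. In particular, Lemma~\rif{lemma:right} bounds every edge by $\kappa\sqrt 8$, so Lemma~\rif{lemma:2C} applies to both triangles. Label by $B,C$ the two pentagons along the shared (common longest) edge and by $A_1,A_0$ the two outside pentagons of $T_1,T_0$. Holding $B$ and $C$ fixed and applying Lemma~\rif{lemma:2C} to $T_1$, deform $A_1$ to double contact with $B$ and $C$ without increasing $\area(T_1)$ or disturbing nonobtuseness; then repeat with $A_0$ in $T_0$. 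The resulting dimer is a pair of $3C$-triangles glued along $\overline{\c_B\c_C}$, and the total area has only decreased, so it suffices to prove the theorem in this reduced setting.

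Now classify. By Lemma~\rif{lemma:delta}, a $3C$-triangle of type $\Delta$ has area exceeding $1.5$; if either $T_0$ or $T_1$ were of type $\Delta$, then the complementary triangle would have area less than $2\acrit-1.5<\ao$, contradicting Lemma~\rif{lemma:a0}. Hence each of $T_0$ and $T_1$ is of type pinwheel, pin-$T$, $LJ$, or $TJ$. For each of the resulting $4\times 4$ pairings of types I would write the dimer in the explicit coordinates of the appendix, impose the constraint that the shared edge is the longest of both triangles (this rules out many pairings outright by simple edge-length checks, cf.\ Lemma~\rif{lemma:172}), and use interval arithmetic to verify that the minimum of $\area(T_0)+\area(T_1)$ over the cell is at least $2\acrit$, with equality only at the ice-ray dimer.

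The principal obstacle is the computer-assisted optimization itself. The ice-ray dimer is not an isolated minimum in the raw configuration space but sits on the one-parameter shear family depicted in Figure~\rif{fig:Gamma}, along which the area is critical; distinguishing this critical locus as the unique minimizer of the constrained (shared-longest-edge) problem, rather than merely bounding the minimum value, requires a careful subdivision scheme in the interval enumeration. In addition, the boundaries between the $3C$-types (the degenerate configurations such as the cloverleaf and the degenerate $LJ$, and the deformations of Figure~\rif{fig:KK2min}) must be treated so that no minimizer is lost at the interface between coordinate charts; this is analogous to but more delicate than the enumeration already performed for Lemma~\rif{lemma:a0}, because the dimer has roughly twice the degrees of freedom and a much sharper target constant.
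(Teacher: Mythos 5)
Your reduction to $3C$ has a genuine gap, and the endgame you sketch would not close.

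First, applying Lemma~\rif{lemma:2C} to each triangle separately (fixing the shared pentagons $B,C$ and moving the outer pentagons $A_1,A_0$ to double contact) yields two $O2C$ triangles, \emph{not} two $3C$ triangles. A $3C$ triangle requires all three pairs of pentagons to be in contact, and nothing in your deformation brings $B$ into contact with $C$. Bridging $O2C$ to $3C$ is a separate, nontrivial step: the paper's ``squeezing'' deformation translates all four pentagons simultaneously to contract $\overline{\c_B\c_C}$ while preserving the two outer double contacts, and the fact that this decreases area is itself a computer-assisted lemma (it is not a monotonicity statement like Lemma~\rif{lemma:2C}). Moreover, Lemma~\rif{lemma:2C} as stated applies to subcritical triangles, which $T_0$ need not be, and more importantly it does not preserve the dimer-pair constraint that the shared edge remains the longest edge of both triangles; the paper handles this by first reducing to ``$O2C$ or long isosceles'' and then disposing of the long-isosceles possibility via a dedicated computation (Lemma~\rif{calc:dimer-isosc}) and the pseudo-dimer edge-length bounds.

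Second, even granting the reduction to $3C$ dimers, the plan to ``use interval arithmetic to verify that the minimum \ldots is at least $2\acrit$, with equality only at the ice-ray dimer'' cannot work by subdivision alone, no matter how careful. The infimum is attained at an interior point of the feasible region, so any interval cell containing the ice-ray dimer will report an area interval straddling $2\acrit$, and refining the subdivision never resolves this. You correctly observe that the ice-ray dimer lies on the shear curve $\Gamma$ along which the area is critical, but you leave the resolution as an exercise. The paper's actual resolution is a different mechanism: after excluding all but a small explicit coordinate neighborhood $U$ of the ice-ray dimer by interval arithmetic, it constructs a piecewise-linear path from any point of $U$ to a point of $\Gamma$, proves by \emph{automatic differentiation} (not subdivision) that area strictly decreases along this path, and then reduces to a one-variable minimization along $\Gamma$ closed by a second-derivative computation. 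That derivative-based argument is the essential missing ingredient here.
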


The proof will fill the entire section.  The strategy of the proof is
to give a sequence of area decreasing deformations to $(T_1,T_0)$,
until the ice-ray dimer is reached.

We fix notation that will be used throughout this section. Let
$(T_1,T_0)$ be a dimer pair.  The $P$-triangle $T_1$ has a pentagon
centered at each vertex.  We label the pentagons of $T_1$ as $A$, $B$,
$C$, with $A$ and $C$ shared with $T_0$.  We call $B$ the outer
pentagon of $T_1$.  Similarly, we label the pentagons of $T_0$ as $A$,
$C$, $D$, with outer pentagon $D$ of $T_0$.

\begin{lemma}  
  Let $(T_1,T_0)$ be a dimer pair, then every edge of $T_1$ and $T_0$
  has length less than $\kappa\sqrt8$.  In particular $T_1$ and $T_0$
  are both acute (and not just merely nonobtuse).
\end{lemma}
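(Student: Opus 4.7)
The plan is to derive both conclusions (edge bound and acuteness) from the classification Lemma~\ref{lemma:right} together with the simple observation that, in a dimer pair, the shared edge is the longest edge of both triangles.

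First I would treat $T_1$. Since $T_1$ is nonobtuse and subcritical by the definition of a dimer pair, Lemma~\ref{lemma:right} immediately gives that every edge of $T_1$ has length strictly less than $\kappa\sqrt{8}$. Next, for $T_0$, the key remark is that $T_0 \Ra T_1$ means the shared edge between $T_0$ and $T_1$ is the longest edge of $T_0$; likewise, $T_1 \Ra T_0$ means this same shared edge is the longest edge of $T_1$. Since $T_1$'s longest edge has length less than $\kappa\sqrt{8}$ by the previous step, the shared edge does too, and hence every edge of $T_0$ — each bounded by its longest — also has length strictly less than $\kappa\sqrt{8}$.

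Finally, for the acuteness claim, both $T_0$ and $T_1$ are already known to be nonobtuse, so each is either acute or right-angled; it suffices to rule out a right angle in each. A right-angled Delaunay triangle in a pentagon packing has two legs each of length at least $2\kappa$ (the minimum Delaunay edge length enforced by the fact that distinct pentagon centers are at distance at least $2\kappa$, attained at full edge contact). Its hypotenuse then has length at least $\sqrt{(2\kappa)^2+(2\kappa)^2} = 2\kappa\sqrt{2} = \kappa\sqrt{8}$, contradicting the edge bound just established for both $T_0$ and $T_1$. Hence both triangles are acute.

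There is really no hard step here: everything is a short deduction from Lemma~\ref{lemma:right} and from the attachment conventions $T_1 \Ra T_0$, $T_0 \Ra T_1$ built into the definition of a dimer pair. The only point one should state carefully is that, by definition of attachment, $T_0$ and $T_1$ share their (mutually) longest edge, so the edge-length bound propagates from the subcritical triangle $T_1$ to its partner $T_0$, which is not assumed to be subcritical.
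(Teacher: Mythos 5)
Your proposal is correct and follows essentially the same route as the paper: observe that the shared edge is the common longest edge of both triangles, apply Lemma~\ref{lemma:right} to the subcritical triangle $T_1$ to bound all edges of $T_1$ (hence the shared edge, hence all edges of $T_0$) by $\kappa\sqrt{8}$, and deduce acuteness from the fact that a right Delaunay triangle with legs at least $2\kappa$ would have hypotenuse at least $\kappa\sqrt{8}$. The paper's proof is terser but relies on exactly the same facts; you merely spell out the acuteness step that the paper leaves implicit.
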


\begin{proof}  
  The shared edge between $T_0$ and $T_1$ is the common longest edge
  of the two triangles.  It is enough to show that this edge has
  length less than $\kappa\sqrt8$.  This is an edge of a subcritical
  triangle $T_1$. The result follows from Lemma~\rif{lemma:right}.
\end{proof}

In particular, area non-increasing deformations of a dimer pair, never
transform an acute triangle into a right or obtuse triangle.  In other
words, the nonobtuseness constraint in the definition of a dimer pair
is never a binding constraint in a deformation.

The deformation of a general dimer pair to the ice-ray dimer takes
place in several stages.  We give a summary of the stages here, before
going into details.  Here is the proof sketch:

\begin{enumerate}
\item We deform the dimer pair so that each of $T_0$ and $T_1$ is
  $O2C$ or long isosceles.
\item We deform so that each of $T_0$ and $T_1$ is $O2C$.
\item We show that both triangles are triple contact.
\item Working with triple contact triangles, we compute that the
  condition $\area\{T_0,T_1\}\le 2\acrit$ implies that $(T_1,T_0)$
  lies in a small explicit neighborhood of the ice-ray dimer.
\item We construct a curve $\Gamma$ in the configuration space of
  dimer pairs, with parameter $t$ such that $t=0$ defines the ice-ray
  dimer.
\item Working with triple contact triangles in a small explicit
  neighborhood of the ice-ray dimer, and for some small explicit
  constant $M$, each dimer pair can be connected by a path (in the
  dimer configuration space) to a dimer on the curve $\Gamma$ with
  parameter $|t|<M$.  A computation shows that the area of the dimer
  decreases along the path to $\Gamma$.  Thus, every area-minimizing
  dimer pair lies on the curve $\Gamma$.
\item The unique global minimum of the area function along $\Gamma$
  occurs at $t=0$; that is, the ice-ray dimer is the unique global
  minimizer along $\Gamma$ for $|t|<M$.
\end{enumerate}

\subsection{reduction to $O2C$ or long isosceles}

In this section, we show that each of $T=T_0$ and $T=T_1$ can be
deformed in an area decreasing way until $T$ is either $O2C$ (that is,
the outer pentagon has double contact) or long isosceles (that is, the
two longest edges of the triangle have the same length).

To show this, we assume that $T$ and its deformations are not long
isosceles; that is, it and its deformations have a unique longest edge
that is shared with the other triangle.  Fixing the two pentagons of
$T$ ($A$ and $C$) along the shared edge, we show we can deform the
outer pentagon until it has $2C$ contact.

This is easy to carry out.  By Lemmas~\rif{lemma:primary}
and~\rif{lemma:primary2}, we can move $B$ in an area decreasing way
until $B$ has primary contact.  We can continue to move the outer
pentagon by translation (meaning no rotation) that preserves contact
with either $A$ or $C$ and that is non-increasing in triangle area
until double contact is achieved.  This is $O2C$.  We do this for both
$T=T_0$ and $T=T_1$.

\subsection{reduction to $O2C$}\libel{sec:O2C}

In this subsection we show that each of $T=T_0$ and $T=T_1$ can be
deformed in an area decreasing way so that it is $O2C$.

We begin with the case $T=T_0$.  If the deformations in the previous
section made $T_0$ into a long isosceles triangle, then $(T_1,T_0)$ is
a boundary case that can also be classified as a pseudo-dimer.  By
earlier calculations, the longest edge of a pseudo-dimer is greater
than $1.8$ and has strictly greater length than the shared edge
between $T_0$ and $T_1$.  Thus, it is not long isosceles.

We now consider the case $T=T_1$.  In view of the reductions of the
previous subsection, we may assume that $T_1$ has primary contact and
that $T_1$ is long isosceles.  We may further assume that translation
of the outer pentagon $B$ while maintaining contact (with  $A$ or $C$)
in an area decreasing direction would violate the constraint that the
longest edge of $T_1$ is the shared edge.  (In other words, the
translation that decreases area would increase the edge length of the
long nonshared edge.)  For a contradiction, we may assume that the
primary contact is not $O2C$.

We claim that these conditions force $T_1$ not to be subcritical.
This is contrary to the defining conditions of a dimer pair.  Thus, we
complete this stage of the proof by proving non-subcriticality. For
the rest of the proof, we disregard $T_0$.

The proof is computer assisted.  We deform $T_1$ in an area decreasing
way into a configuration that can be easily computed.  Without loss of
generality, we assume that the outer pentagon $B$ is in contact with
pentagon $A$.  Because we are now disregarding $T_0$, we may deform the
triangle $T_1$ by moving $C$, preserving the long isosceles constraint
and decreasing area, until $C$ has primary contact.  In particular,
$C$ is in contact with $A$ or $B$.

We consider two cases, depending on whether the primary contact of $B$
has slider contact or midpointer contact with $A$.  We need two
non-anomaly lemmas, one for slider contact and one for midpointer
contact.  In both cases, the lemmas imply that the edge of contact
between $A$ and $B$ (whether slider or midpointer) is one of the long
edges of the isosceles triangle.

\begin{lemma}[slider-non-anomaly]  
  Let $T$ be a subcritical nonobtuse $P$-triangle with pentagons $A$,
  $B$, and $C$.  Assume that $B$ has slider contact with $A$.  Then
  the translation of $B$ along the slider contact in the direction to
  decrease the area also decreases the length $\dx{BC}$.
\end{lemma}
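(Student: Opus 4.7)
My plan is to prove the lemma by a direct coordinate computation that compares the $t$-derivatives of the triangle's area and of $\dx{BC}^2$ along the slider translation. I would place $\c_A$ at the origin with the shared edge between $A$ and $B$ running horizontally at height $\kappa$; since the inradius of a regular pentagon is $\kappa$, this forces $\c_B = (0, 2\kappa)$, and the slider motion becomes translation of $B$ parallel to the $x$-axis, which I would parametrize as $\c_B(t) = (t, 2\kappa)$. Writing $\c_C = (x,y)$, I would reduce to $x > 0$ by reflecting across the $y$-axis if necessary (this simultaneously reverses the slider direction and both signs of interest, so the conclusion is preserved). I would then invoke Lemma~\rif{lemma:right} to conclude that a subcritical nonobtuse triangle is \emph{strictly} acute; the strict acuteness of the angle at $\c_A$ forces $y > 0$ (while $y < 2\kappa$ follows from the acuteness at $\c_B$).

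With the setup in hand, the key formulas are
\[
2\,\op{area}(t) \;=\; |ty - 2\kappa x|,
\qquad
\dx{BC}^2 \;=\; (t-x)^2 + (2\kappa - y)^2.
\]
At $t = 0$ the quantity inside the absolute value equals $-2\kappa x < 0$, so near $t=0$ the area is the smooth function $\kappa x - ty/2$, with $t$-derivative equal to $-y/2 < 0$. Thus the area strictly decreases as $t$ increases. Meanwhile, the $t$-derivative of $\dx{BC}^2$ is $2(t-x)$, which at $t = 0$ equals $-2x < 0$, so $\dx{BC}$ also strictly decreases as $t$ increases. The two directions of decrease therefore agree, which is what the lemma asserts.

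The only place where anything delicate enters is the appeal to Lemma~\rif{lemma:right}: I need $y > 0$ strictly so that the direction of area decrease is unambiguous and matches the direction of $\dx{BC}$ decrease; the hypothesis of subcriticality enters exactly here, since a right angle at $\c_A$ (the boundary case $y = 0$) is incompatible with subcriticality. Apart from this bookkeeping, the calculation is routine and I do not foresee any substantive obstacle.
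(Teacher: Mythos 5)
There is a gap in the coordinate setup: placing $\c_A$ at the origin with the contact line horizontal at height $\kappa$ only fixes the $y$-coordinate of $\c_B$ to equal $2\kappa$; the $x$-coordinate is a free parameter $s$ with $|s|<2\sigma$, and $s=0$ is not forced. You must therefore evaluate the two derivatives at $t=s$, not at $t=0$. At $t=s$ the derivative of $\dx{BC}^2$ is $2(s-x)$, while the derivative of $2\,\area$ is $\pm y$ according to the sign of $sy-2\kappa x$, and these signs do not manifestly match. Worse, the step ``acuteness at $\c_A$ forces $y>0$'' also presupposes $s=0$: for general $s$ the acuteness condition at $\c_A$ reads $sx+2\kappa y>0$, which permits $y\le 0$ when $s>0$. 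Concretely, a configuration with $0<s<x$ and $y<0$ would make the two derivatives point in opposite directions; such configurations are only excluded because subcriticality forces the area to be at least $2\kappa^2>\acrit$ there, a fact your argument never invokes (nor does it address $s>x>0$, which must be ruled out via the edge bound $\kappa\sqrt{8}$).

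The paper sidesteps the offset entirely by adapting coordinates to the conclusion rather than to the contact: it places $\c_A$ and $\c_C$ on the $x$-axis (with $\c_C$ to the right and $\c_B$ above) and argues by contradiction. If the area-decreasing slider direction increased $\dx{BC}$, the contact line $\lambda$ would have positive slope; since $\c_B$ lies on the parallel line $\lambda'$ at distance $2\kappa$ from the origin, $\c_B$ must then have $y$-coordinate at least $2\kappa$ (so area $\ge 2\kappa^2>\acrit$, against subcriticality) or negative $x$-coordinate (so the angle at $\c_A$ is obtuse, against nonobtuseness). This uses exactly the same two hypotheses you appeal to, but in a coordinate system where the offset $s$ never appears and the only case split is that one dichotomy. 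Your direct-computation route can be salvaged, but only by restoring $s$ and supplying the case analysis that rules out the $s\ne 0$ pathologies, which ends up reproducing the paper's use of the bounds $2\kappa^2>\acrit$ and $\kappa\sqrt{8}$ in a more roundabout way.
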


\begin{proof} 
  Assume to the contrary that the translation is increasing in
  $\dx{BC}$.  Choose coordinates so that the $x$-axis passes through
  $\c_A$ and $\c_C$, with the origin at $\c_A$, with $\c_C$ in the
  right half-plane, and with $\c_B$ in the positive half-plane
  (Figure~\rif{fig:slider}).  Our contrary assumption means the the
  line $\lambda$ through the edge of contact between $A$ and $B$ has
  positive slope.  Slider contact implies that the center $\c_B$ lies
  on the line $\lambda'$ parallel to $\lambda$ at distance $2\kappa$
  from the origin.  Every point on $\lambda'$ either has
  $y$-coordinate at least $2\kappa$ or negative $x$-coordinate.  If
  the $y$-coordinate is at least $2\kappa$ the triangle is not
  subcritical.  (The area is at least $2\kappa^2>\acrit$.) If the
  $x$-coordinate of $\c_B$ is negative, then the triangle $T$ is
  obtuse.
\end{proof}

\tikzfig{slider}{The center $\c_B$ of the nonobtuse triangle lands in
  the first quadrant and gives a triangle $(\c_A,\c_B,\c_C)$ of height
  and base both at least $2\kappa$.}
{
[scale=1.0]
\draw (0,0) -- (2,0);
\draw (0,0) -- (0,2);
\coordinate (V) at ($(0,0) + (-90+2*72+10:1)   $);
\draw (V) -- ++ (10:1.5) node[anchor=west] {$\lambda$};
\draw (V) -- ++ (180+10:1.5);
\coordinate (B) at ($(V) + (10:0.5) + (90+10- 2*72 - 180:1)$);
\draw (B) -- ++ (10:1.5) node[anchor=west] {$\lambda'$};
\draw (B) -- ++ (180+10:1.5);
\penp{B}{90+10};
\pen{0}{0}{-90+10};
\smalldot{B};
\smalldot{0,0};
\smalldot{2,0};
\draw (B) node [anchor=south west] {$\c_B$};
\draw (0,0) node [anchor=north] {$\c_A$};
\draw (2,0) node [anchor=north] {$\c_C$};
}

\begin{lemma}[midpointer-non-anomaly]  
  Let $T$ be a subcritical nonobtuse $P$-triangle with pentagons $A$,
  $B$, and $C$.  Assume that $A$ has midpointer contact with $B$, with
  $A$ pointing to $B$ at $\v_{AB}$.  Then the rotation of $B$ about
  the point of contact in the direction to decrease the area also
  decreases the distance $\dx{BC}$.
\end{lemma}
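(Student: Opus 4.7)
I would mirror the coordinate-based argument used in the slider-non-anomaly lemma. Place $\c_A$ at the origin, $\c_C$ on the positive $x$-axis, and $\c_B$ in the upper half-plane, so that the line $\lambda$ through $\c_A$ and $\c_C$ is the $x$-axis and the area of $T$ is proportional to the $y$-coordinate of $\c_B$. Midpointer contact says $\v_{AB}$ is a vertex of $A$ (so $\normo{\v_{AB}}=1$) and the midpoint of an edge of $B$ (so $\normo{\c_B-\v_{AB}}=\kappa$, with the segment $\v_{AB}\c_B$ perpendicular to that edge of $B$). Hence under rotation of $B$ about $\v_{AB}$, the center $\c_B$ moves tangentially along a circle of radius $\kappa$ about $\v_{AB}$.

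The area-decreasing direction is whichever of the two tangent directions has negative $y$-component, and the $\dx{BC}$-decreasing direction is whichever has positive inner product with $\c_C-\c_B$. Arguing by contradiction, suppose these two directions are opposite. Then the oriented line $\ell$ through $\v_{AB}$ and $\c_B$, with its natural perpendicular tangent, separates the points so that $\c_C$ lies on the half-plane bounded by $\ell$ into which the \emph{area-increasing} rotation would carry $\c_B$. This is exactly the analogue of the condition in the slider lemma that the line of contact $\lambda$ has positive slope. A direct reading off of the coordinates then shows that $\c_B$ must lie in one of two forbidden regions: either $\c_B$ has $y$-coordinate at least $2\kappa$, in which case $\area(T)\ge 2\kappa\cdot\kappa > \acrit$, contradicting that $T$ is subcritical; or $\c_B$ has nonpositive $x$-coordinate, in which case the triangle $(\c_A,\c_B,\c_C)$ is obtuse (indeed non-acute) at $\c_A$, contradicting the hypothesis that $T$ is nonobtuse.

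The main obstacle is verifying cleanly that the sign conditions force exactly this tall-or-obtuse dichotomy, because the geometry here is a bit more delicate than in the slider case: the constraint $\normo{\c_B-\v_{AB}}=\kappa$ is weaker than slider contact (one only gets $\dx{AB}>1.72$ from Lemma~\rif{lemma:mid-172} rather than $\dx{AB}=\sqrt{(2\kappa)^2+\sigma^2}$). An alternative route, should the direct coordinate analysis prove messy, is to note that under the contrary assumption one can rotate $B$ a bit further about $\v_{AB}$ (still in the area-decreasing sense) until the pointer contact becomes a slider contact between $A$ and $B$ without either recovering subcriticality or obtuseness; then the slider-non-anomaly lemma applies verbatim and delivers the contradiction.
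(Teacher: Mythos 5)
Your main route---a direct coordinate argument---is a genuinely different approach from the paper's (which reduces to the slider lemma by a translation of $B$), and a version of it does work, but the dichotomy you announce is not the right one. Put $\c_A$ at the origin, $\c_C=(c,0)$, $\c_B=(p,q)$ with $p,q>0$ (nonobtuseness at $\c_A$), and write $\c_B=\v_{AB}+\kappa\n$ with $\n=(\cos\theta,\sin\theta)$ the inward normal of $B$'s contact edge and $\v_{AB}=(\cos\psi,\sin\psi)$.  Nonoverlap confines the contact edge to the exterior cone of $A$ at $\v_{AB}$, which is exactly $|\psi-\theta|<\pi/5$, hence $\c_B\cdot\n=\cos(\psi-\theta)+\kappa>2\kappa$.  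The area derivative under rotation has the sign of $\cos\theta$, and combining the contrary hypothesis with $c>p>0$, $q>0$ forces $\sin\theta$ and $\cos\theta$ to have opposite signs.  In the branch $\cos\theta<0$, $\sin\theta>0$, the displayed inequality gives $q\sin\theta>2\kappa-p\cos\theta\ge 2\kappa$, so $q\ge 2\kappa$; that half of your dichotomy is fine.  But in the branch $\cos\theta>0$, $\sin\theta<0$, the same inequality gives $p\cos\theta>2\kappa+q|\sin\theta|>0$, so $p$ is \emph{large and positive}---there is no obstruction at $\c_A$, and ``$x(\c_B)\le 0$'' is not what goes wrong.  The real obstruction in this branch is the angle at $\c_B$: nonobtuseness there gives $q^{2}\ge p(c-p)$, and with the contrary hypothesis $c-p>q\cos\theta/|\sin\theta|$ this yields $q|\sin\theta|>p\cos\theta$, contradicting $p\cos\theta>2\kappa+q|\sin\theta|$.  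So the correct dichotomy is ``height $\ge 2\kappa$'' versus ``obtuse at $\c_B$,'' not ``height $\ge 2\kappa$'' versus ``obtuse at $\c_A$.''

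Your fallback route is closer in spirit to the paper's proof, but the deformation is different in a way that matters.  The paper translates $B$ straight downward, rotating $A$ about $\c_A$ to maintain pointer contact; that motion is engineered so that both the slope of $B$'s contact edge (which becomes the slider line $\lambda$) and $x(\c_B)$ are held fixed, so the slider lemma's geometric conclusion transfers directly back to the original configuration.  Rotating $B$ about $\v_{AB}$ changes the slope of $B$'s contact edge and changes $x(\c_B)$, and you have not shown that the slider configuration you reach still satisfies the contrary hypothesis of the slider lemma---the sign of $(\c_B-\c_C)\cdot\n^{\perp}$ may flip along your rotation.  The slider lemma only ``delivers the contradiction'' at a configuration already satisfying its own contrary hypothesis, so ``applies verbatim'' is not justified as written; you would need to track that sign or, more simply, switch to the paper's translation.
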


\begin{proof} 
  Suppose to the contrary that the rotation is increasing in
  $\dx{BC}$.  Choose coordinates so that the $x$-axis passes through
  $\c_A$ and $\c_C$, with origin at $\c_A$, with $\c_C$ in the right
  half-plane, and with $\c_B$ in the first quadrant.  We claim that
  the distance from $\c_B$ to the the $x$-axis is at least $2\kappa$
  so that the area of the triangle is at least $2\kappa^2 > \acrit$,
  contrary to the assumption that the triangle $T$ is subcritical.  To
  prove the claim, we disregard the pentagon $C$.  We translate $B$
  directly downward, rotating $A$ as needed about $\c_A$ so that it
  maintains pointer contact with $B$.  Eventually, slider contact is
  established between $A$ and $B$, and the configuration falls into
  the setting of the previous lemma.
\end{proof}

As a corollary of these two lemmas, if $d_{AB}$ is not a longest edge,
then there exists a deformation decreasing area and $d_{BC}$.  This
allows us to reduce the long isosceles triangle to a triangle with
double contact and such that a long edge runs from $\c_A$ to $\c_B$.
The contact type between $A$ and $B$ is either slider contact or
midpointer contact.  We choose coordinates and
compute\footnote{calculations iso\_2C and iso\_2C'} with interval
arithmetic to show that no such triangle is subcritical.  This
completes this reduction.

\subsection{reduction to triple contact}\libel{sec:squeeze}

In this stage, we initially assume that both triangles $T_0$ and $T_1$
are $O2C$.  We deform so that both triangles are triple contact.

We briefly describe the argument.  Our deformations will preserve the
$O2C$ contacts.  By an argument made in the second paragraph of
Section~\rif{sec:O2C}, we may assume without loss of generality that
the triangle $T_0$ is not long isosceles.  By
Lemma~\rif{calc:dimer-isosc}, we have that $T_1$ is not long
isosceles.  Assuming that neither triangle is long isosceles, we show
that both triangles can be brought into triple contact.  Because both
triangles are already $O2C$, this amounts to decreasing the edge
between $\c_A$ and $\c_C$ until the pentagons $A$ and $C$ come into
contact.  This deformation consists of a translation of all four
pentagons in a motion we call {\it squeezing}.  It suffices to
describe the deformation separately on each triangle $T_0$ and $T_1$
and to prove that this deformation decreases area.

Let $T$ be a triangle with double contact at $B$.  We assume that
$\c_A$ and $\c_C$ lie on the $x$-axis with $\c_A$ to the left of
$\c_C$, and with $\c_B$ in the upper half-plane.  If $\c_A$ is free to
translate to the right or if $\c_C$ is free to translate to the left
without overlapping pentagons, then we do so.  We assume that we are
not in this trivial case.

The squeezing deformation is defined as a motion that translates $B$
directly upward away from the $x$-axis, while translating $A$ to the
right and $C$ to the left to maintain double contact at $B$.  Note
that $A$ and $C$ move by translation along the $x$-axis.  It is clear
that by adjusting the rates at which $B$ (on $T_1$) moves upward and
$D$ (on $T_0$) moves downward, the motions of $T_0$ and $T_1$ are
concordant and give a motion of a dimer pair.

\begin{lemma}  
  Let $T$ be a nonobtuse $P$-triangle with pentagons $A$, $B$, and
  $C$, where $B$ has double contact.  Assume that the area of $T$ is
  at most $\acrit+\epsN$.  Assume that $T$ is not in the trivial
  situation of free translation mentioned above.  Then the squeezing
  deformation decreases area.
\end{lemma}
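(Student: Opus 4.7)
The plan is to parameterize the squeezing motion infinitesimally and verify that the area strictly decreases. Place coordinates with $\c_A = (a, 0)$, $\c_C = (c, 0)$, $\c_B = (x_B, h)$ where $h > 0$; nonobtuseness at $\c_A$ and $\c_C$ gives $a \le x_B \le c$. Write $d\c_A = (da, 0)$, $d\c_C = (-dc, 0)$, $d\c_B = (db_x, dh)$ with $dh > 0$. Since $\area(T) = (c-a)h/2$, it is enough to prove the infinitesimal inequality $(c-a)\,dh < (da + dc)\,h$.

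Each of the two contacts of $B$ constrains the infinitesimal motion. A slider or midpointer contact shears instantly into a vertex-on-edge contact with the same normal $\mathbf{n}_i$, so we may assume a pointer constraint $(d\c_B - d\c_A)\cdot\mathbf{n}_1 = 0$ and $(d\c_C - d\c_B)\cdot\mathbf{n}_2 = 0$. Eliminating $db_x$ from these two linear relations yields
\[
da + dc \;=\; \left(\frac{n_{1y}}{n_{1x}} - \frac{n_{2y}}{n_{2x}}\right) dh,
\]
so the target inequality is equivalent to
\[
\frac{c-a}{h} \;<\; \tan\beta_1 - \tan\beta_2, \qquad \mathbf{n}_i = (\cos\beta_i, \sin\beta_i).
\]
The left-hand side equals $\cot(\angle BAC) + \cot(\angle BCA)$, which is bounded given nonobtuseness.

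In the pure slider-slider subcase, $\mathbf{n}_1 \parallel \c_B - \c_A$ and $\mathbf{n}_2 \parallel \c_C - \c_B$, so $\beta_1 = \angle BAC$ and $-\beta_2 = \angle BCA$. Writing $\phi_A, \phi_B, \phi_C$ for the three angles of $T$ and using the identities $\tan\phi - \cot\phi = -2\cot(2\phi)$ and $\cot x + \cot y = \sin(x+y)/(\sin x \sin y)$, the inequality collapses to $\sin(2\phi_B) > 0$, which is nothing other than nonobtuseness at $\c_B$. Thus the slider-slider subcase is automatic, and the area bound is not even required there.

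The main obstacle is the remaining cases in which at least one contact is a generic pointer, so the normal $\mathbf{n}_i$ deviates from $\c_B - \c_A$ (resp.\ $\c_C - \c_B$) by an angle controlled by the local pentagon orientation at the contact vertex. The approach is to enumerate the finite combinations of contact types at the two edges (slider, midpointer, pointer with either side as receptor), write $\beta_1$ and $\beta_2$ explicitly in each combination in terms of the pentagon orientation and vertex parameters from the appendix, and then verify the inequality by an interval-arithmetic computer calculation over the compact parameter region carved out by $\area(T) \le \acrit + \epsN$, nonobtuseness, the Delaunay edge bounds, and the nontriviality assumption. The role of the area bound is to eliminate thin elongated configurations in which the cotangent sum on the left grows large enough to threaten the inequality, while the nontriviality hypothesis excludes boundary configurations where $A$ or $C$ could slide freely and the contact normal degenerates.
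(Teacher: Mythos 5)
Your proposal is correct and follows essentially the same route as the paper: set up the infinitesimal squeeze, turn the two contact constraints into a linear relation giving $da+dc$ in terms of $dh$ times a difference of (co)tangents of contact-normal angles (the paper's $1/\sigma_A + 1/\sigma_C$), and then verify the resulting inequality by interval arithmetic over the admissible configuration space, which is exactly what the paper's calculation \texttt{squeeze\_calc} does. Your slider--slider observation is a pleasant bonus the paper does not record, though to make it airtight you should note that slider--slider double contact forces $\phi_B$ to be a multiple of $2\pi/5$, hence $\phi_B = 2\pi/5$ by nonobtuseness, so $\sin(2\phi_B)>0$ strictly; nonobtuseness alone only gives $\phi_B\le\pi/2$, which would permit $\sin(2\phi_B)=0$.
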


\begin{proof}  
  We analyze the effect on the area by moving $B$ directly upward by
  $\Delta y >0$, $A$ to the right by $\Delta x_A >0$ and $C$ to the
  left by $\Delta x_C >0$.  Recall that $\dx{XY} = \dd{X}{Y}$, and let
  $\arc_X$ be the angle of the triangle at $\c_X$.  The area of $T$ is
  $\area(T) = d_{AC} d_{AB} \sin(\arc_A)/2$.  The transformed area is
  $(d_{AC}-\Delta x_A - \Delta x_C)(d_{AB}\arc_A + \Delta y)/2$.
   Let $\sigma_A = \Delta y/\Delta x_A$ and $\sigma_C = \Delta y
  /\Delta x_C$.  Passing to the limit as $\Delta y \mapsto 0$, we find
  that squeezing decreases the area exactly when
\[
d_{AC} < \left(\frac{1}{\sigma_A}
+\frac{1}{\sigma_C}\right)d_{AB}\sin(\arc_A).
\]
It is enough to prove this inequality.  We do this with a computer
calculation\footnote{calculation squeeze\_calc} using interval
arithmetic.

We defined $\sigma_A$ and $\sigma_C$ as derivatives, but in fact no
differentiation is required.  For example, consider $\sigma_A$.  The
pentagon $B$ has contact with $A$.  Thus, $B$ points into $A$ or $A$
points into $B$.  If $A$ points into $B$, then the point of contact
lies along an edge $e$ of $B$.  The squeeze transformation translates
$A$ and $B$ maintaining the contact.  Viewed from a coordinate system
that fixes $B$, the squeezing lemma translates $A$ parallel to the
line through $e$.  That is, $\sigma_A$ is simply the absolute value of
the slope of the line through $e$.  Elementary coordinate calculations
described in the coordinate section of this article give an explicit
formula for the slope of this line.  There are two cases, depending on
which pentagon points to the other.  There are no difficulties in
carrying out the computer calculations with these explicit formulas.

In a degenerate situations, the pentagons $A$ and $B$ might have
vertex to vertex contact. But even in this degenerate case, the
squeezing deformation determines an edge $e$ of $A$ or $B$ that
determines the slope $\sigma_A$.
\end{proof}

The triangle $T_1$ is subcritical.  The area of $T_0$ is given by
\[
\area(T_0) = \area\{T_0,T_1\} - \area(T_1) 
< 2\acrit - \ao = \acrit+\epsN.
\]
Thus, the assumption of the lemma holds.  We continue the squeezing
deformation until $A$ comes into contact with $C$.  This completes the
reduction to $3C$ contact.

\subsection{reduction to a small neighborhood of the ice-ray
  dimer}\libel{sec:nbd}

From this stage forward, $T_0$ and $T_1$ are both triangles with
triple contact.  We show that the condition $\area\{T_0,T_1\}\le
2\acrit$ implies that $(T_1,T_0)$ lies in a small explicit
neighborhood of the ice-ray dimer.

The shared pentagons $A$ and $C$ are in contact.  By symmetry, we may
assume that $A$ points into $C$.  We have classified all $P$-triangles
with triple contact in Section~\rif{sec:contact}.  At this stage, we
rely heavily on this classification.  The triple contact type $\Delta$
has area at least $1.5 > \acrit+\epsN$ by Lemma~\rif{lemma:delta}.
This is too large an area to be part of a dimer pair.  There are eight
combinatorial ways that the pair $(A,C)$ with $A$ pointing to $C$ can
be extended to a triple contact triangle: a pinwheel, a pin-$T$
junction, an $LJ$-junction (3 ways), and a $TJ$-junction (3 ways).
There are three ways of extending the edge along $(A,C)$ to an
$LJ$-junction depending on which of the three triangle edges of the
$LJ$-junction is placed along $(A,C)$.  A similar remark applies to
$TJ$-junctions.  A pinwheel has cyclic symmetry, so it gives rise to a
single case.

We need an argument to show that only a single combinatorial type of
pin-$T$ triangle needs to be considered.  In this paragraph only, we
shift notation and refer to labels on pentagons in the pin-$T$
configuration shown in Figure~\rif{fig:cord-pint}.  We claim that if
the area the triangle has area less than $\acrit+\epsN$, then the edge
length $\dx{BC}$ in that figure is the unique longest edge of the
triangle.  This claim is established by computer calculation.  (See
Section~\rif{sec:one-pintx}.)  This means that the shared edge of the
pin-$T$ triangle is determined.
%calculation dimer.ml:one\_pintx.

Because $T_0$ and $T_1$ both have triple contact, the dimer pair
$(T_1,T_0)$ lies in a four-dimensional configuration space.  The
strategy of the proof is to check by computer that there does not
exist a dimer pair outside a small explicit neighborhood of the
ice-ray dimer.  In other words, the area constrains $\area(T_1)\le
\acrit$ and $\area\{T_0,T_1\}\le2\acrit$ are impossible to satisfy
when the longest edge on both triangles is the shared edge.  We run
over $64 = 8\times 8$ cases depending on the combinatorial types of
$T_0$ and $T_1$.  In each case, $T_1$ runs over a three-dimensional
configuration space.  Most of the $64$ cases do not contain the
ice-ray dimer.  In these cases, it is not necessary to specify a small
explicit neighborhood.  In the cases that do contain the ice-ray
dimer, the neighborhood is desribed in local coordinates.
% calculation in dimer.ml

We say a word about the coordinate system used to carry out these
calculations.  The triangles $T_0$ and $T_1$ separately have good
coordinate systems described in Section~\rif{sec:coords}.  Three
variables each running over a bounded closed interval parameterize the
configuration space for each configuration type.  These coordinates
are always numerically stable.  The quantities $x_{AC}$ and
$\alpha_{AC}$ (that parameterize two pentagons in contact)
can be computed from $T_0$ or $T_1$ alone.  A natural
way to try to parameterize the dimer pairs of a given combinatorial
type is to use the three coordinates $x_1,x_2,x_3$ from
Section~\rif{sec:coords} for $T_1$, then to choose an appropriate
quantity $x_4$ on $T_0$ such that $T_0$ is determined by $x_{AC}$,
$\alpha_{AC}$ (viewed as functions of $x_1,x_2,x_3$) and $x_4$.  Then
$x_1,\ldots,x_4$ give coordinates for the dimer pair that can be used
to do the computer calculations.

Usually, this strategy works, but in a few situations, there is no
obvious way to pick the fourth coordinate $x_4$ in a numerically
stable way.  Fortunately, we can give a characterization of all
situations where it is difficult to pick a numerically stable
coordinate $x_4$.  These are expressed as conditions on the
combinatorial type of $T_0$ and as bounds on $x_{AC}$ and
$\alpha_{AC}$.  In each situation, we use good coordinates provided by
Section~\rif{sec:coords} to show that these conditions of numerical
instability force $T_0$ to have area greater than $\acrit+\epsN$,
which is incompatible with the conditions defining a dimer pair.
These too are computer calculations.  (See Sections~\rif{sec:one-ljx}
and~\rif{sec:one-tjx}.)  Thus, we are justified in excluding a few
situations, where coordinates become unstable.  (The underlying source
of numerical instability is our use of the law of sines
\[
\frac{a}{\sin\alpha} = \frac{b}{\sin\beta}
\]
to compute the length of one triangle edge $a$ in terms of another
$b$, which encounters instability for $\beta$ near $0$.)
% calculations one_ljx one_tjx in dimer.ml

In terms of the local coordinates of Section~\rif{sec:coords}, if
$x_1=x_2=x_3=x_4=0$ defines the ice-ray dimer, then the explicit small
neighborhood we exclude is given by $|x_i|\le 0.01$, for $i=1,2,3,4$.
For example, in the pinwheel type on $T_1$, we have
$(x_1,x_2,x_3)=(\alpha,\beta,x_\gamma)$ as given in
Section~\rif{sec:pint}, and the fourth variable $x_4$ is determined by
the type of $T_0$.  These computer calculations are used to complete this
stage of the optimization.

\subsection{defining a curve}\libel{sec:gamma}

At this stage, we define a curve $\Gamma$ in the configuration space
of triple contact dimer pairs such that the ice-ray dimer is defined
by parameter value $t=0$.  We represent the ice-ray dimer by a pair of
triple contact triangles with shared pentagons $A$ and $C$, where $A$
points to $C$.  The   curve is described by the  shear motion that
slides along all four edges of contact, illustrated in Figures
\ref{fig:Gamma} and \ref{fig:Gamma'}.
The parameter $t$ is the signed distance between the pointer vertex
$A\to C$ and the midpoint of the receptor edge on $C \leftarrow A$.
See Figure~\rif{fig:Gamma'}.

\tikzfig{Gamma'}{Thre configurations along the curve $\Gamma$.}
{
\begin{scope}[scale=0.5,xshift=-6cm]
\threepentnoD{0.00}{0.00}{79.87}{0.93}{1.36}{259.87}{1.83}{0.00}{223.87};
\threepentnoD{0.00}{0.00}{-64.13}{1.03}{-1.50}{-244.13}
{1.83}{0.00}{-208.13};
\draw (0,0) node {$A$};
\draw (1.83,0) node {$C$};
\draw (0.5,-3.5) node {$t= 0.25$};
\smalldot{79.87-72:1};
\smalldot{$(1.83,0)+(223.87-3*72+180:0.809)$};
\end{scope}
\begin{scope}[scale=0.5]
\threepentnoD{0.00}{0.00}{72.00}{0.96}{1.43}{252.00}{1.81}{0.00}{216.00};
\threepentnoD{0.00}{0.00}{-72.00}{0.96}{-1.43}{-252.00}{1.81}{0.00}{-216.00};
\draw (0,0) node {$A$};
\draw (1.81,0) node {$C$};
\smalldot{1,0};
\draw (0.5,-3.5) node {$t=0$};
\end{scope}
\begin{scope}[scale=0.5,xshift=6cm]
\threepentnoD{0.00}{0.00}{-79.87}{0.93}{-1.36}{-259.87}{1.83}{0.00}{-223.87};
\threepentnoD{0.00}{0.00}{64.13}{1.03}{1.50}{244.13}{1.83}{0.00}{208.13};
\draw (0,0) node {$A$};
\draw (1.83,0) node {$C$};
\draw (0.5,-3.5) node {$t= -0.25$};
\smalldot{-79.87+72:1};
\smalldot{$(1.82,0)+(-223.87+3*72+180:0.809)$};
\end{scope}
}
%# format_pinwheelAB zero zero (sigma - m 0.25);;
%- : string = "{0.00}{0.00}{79.87}{0.93}{1.36}{259.87}{1.83}{0.00}{223.87}"
%# format_pinwheelAB zero zero (sigma + m 0.25);;
%- : string = "{0.00}{0.00}{64.13}{1.03}{1.50}{244.13}{1.83}{0.00}{208.13}"

In terms of the coordinates $(x_\alpha,\alpha)$ of
Section~\rif{sec:coords} for two pentagons in contact, the relative
position of $A$ and $C$ is described by the curve $\alpha=\pi/5$ and
$x_\alpha = \sigma+t$.

We call a $\Gamma$-dimer to be a dimer on the curve $\Gamma$.  We
define a $\Gamma$-triangle to be a $P$-triangle that occurs as one of
the two triangles in a $\Gamma$-dimer.

\subsection{reduction to the curve}\libel{sec:curve}

At this stage, we show that we can reduce to points on the curve in
the following sense.  Working with triple contact triangles in a small
explicit neighborhood $U$ of the ice-ray dimer ($|x_i|<M$, where
$M=0.01$, for $i=1,2,3,4$ in appropriate coordinates), each dimer pair
$(T_1,T_0)$ can be connected by a path (in the dimer configuration
space) to a $\Gamma$-dimer with parameter $|t|<0.01$.  The area
function is decreasing along this path.  We have two tasks.  First, we
construct the path $P$, and then we show that the area function
decreases along the path.  We will use $s$ as a local parameter on the
path ($s\mapsto P(s)$), which we need to keep separate from the
parameter $t$ for $\Gamma$.  We construct a path such that $s=0$
determines the initial dimer pair $(T_1,T_0)$ and such that the path
$P$ is defined for $s\in [0,s_0]$, for some $s_0>0$.

Section~\rif{sec:shared} accomplishes these tasks.  The path $P$ is
constructed and it terminates on $\Gamma$.  If the initial point of
the path lies in the neighborhood $U$, then the path stays in $U$
and terminates at a point on $\Gamma$ with parameter $|t|<M$.

Finally, we need to check that the area function is decreasing. This,
we prove by a computer calculation of the derivative of the area
function along the path at $s=0$.  For this, we use automatic
differentiation algorithms, as described in
Section~\rif{sec:auto-diff}.  This completes the reduction to points
on the curve $\Gamma(t)$.
% calculations in autodiff.ml

\subsection{global minimization along the curve}

The final stage of the proof of Theorem~\rif{thm:dimer-area} is the
minimization of the area of a $\Gamma$-dimer, for parameters
$|t|<0.01$.  We have now reduced to an optimization problem in a
single variable that is relatively easy to solve.  The area function
is obviously analytic.  By automatic differentiation, we take the
second derivative of the area function as a function of $t$ and
calculate that it is always positive.  Thus, the area function has a
unique global minimum on $|t|\le 0.01$.  By symmetry in the underlying
geometry, it is clear that the area function has derivative zero along
$\Gamma$ at $t=0$.  (See Figure~\rif{fig:Gamma}.) The global
minimum is therefore given at $t=0$, which is the ice-ray dimer.

\section{Pseudo-Dimers}

In this section, we determine the structure of pseudo-dimers and
specialize the function $b$ to this context.

For a nonobtuse triangle, the area is a monotonic function of its edge
lengths.  This makes it easy to give lower bounds on triangle areas.
Here are some simple area calculations that will be used.
\begin{align*}
\area(1.8,1.8,1.8) &> \acrit + 0.112\\
\area(1.8,1.8,1.72) &> \acrit + 0.069\\
\area(1.8,1.72,1.72) &> \acrit + 3\epsM\\
\area(1.8,1.8,2\kappa) &> \acrit + \epsM\\
\area(\kappa\sqrt8,2\kappa,1.72) &> \acrit +\epsN.
\end{align*}
% calculations in calc.ml. Checked 9/2016.

\begin{lemma}\libel{lemma:pd-m2}  
If $(T_1,T_0)\in\PD$, then $m_-(T_0)=0$.
\end{lemma}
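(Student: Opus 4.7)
The plan is to suppose for contradiction that $m_-(T_0)>0$, pick some $T_+$ with $(T_+,T_0)\in\M$, and derive a contradiction using the ``large angle'' machinery. By definition of $\M$, we have $T_+\Ra T_0$ (so the shared edge $e$ between $T_+$ and $T_0$ is the longest edge of $T_+$), and there exists some $T_1^*$ with $(T_1^*,T_+)\in\PD$. In that pseudo-dimer, $T_+$ plays the role of $T_0$, so its egressive edge is precisely the edge along which it attaches, namely $e$. By Lemma~\rif{calc:pseudo2} applied to $(T_1^*,T_+)$, this egressive edge has length $>1.8$; equivalently, $e$ has length $>1.8$.

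Next, I identify $T_+$ in terms of the original pseudo-dimer. Applying Lemma~\rif{calc:pseudo3} to $(T_1,T_0)\in\PD$, the two non-egressive edges of $T_0$ have length $<1.8$. Since $e$ has length $>1.8$ and is an edge of $T_0$, it must be the egressive edge of $T_0$; this forces $T_+=T_-$, where $T_-$ is the triangle across the egressive edge (defined by $T_0\Ra T_-$). So we get mutual attachment $T_0\Ra T_-$ and $T_-\Ra T_0$ along $e$.

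Now I apply Corollary~\rif{lemma:large} twice. Applied to $(T_1,T_0)\in\PD$ with egressive edge $e$, it gives that the angle is large along $(T_0,e)$, i.e.\ $\alpha(T_0,e)\in(\pi/5,2\pi/5)$. Applied to $(T_1^*,T_-)\in\PD$, whose egressive edge is again $e$ (since $T_-\Ra T_0$), it gives $\alpha(T_-,e)\in(\pi/5,2\pi/5)$. But the invariant noted just after Definition~\rif{def:large} says $\alpha(T_0,e)+\alpha(T_-,e)=2\pi/5$, whereas the sum of two numbers each in $(\pi/5,2\pi/5)$ lies in $(2\pi/5,4\pi/5)$; contradiction. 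Hence no such $T_+$ exists and $m_-(T_0)=0$.

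The only real obstacle is matching up the bookkeeping: one must verify that the ``egressive edge'' in the hypothetical pseudo-dimer $(T_1^*,T_+)$ coincides with the edge shared with $T_0$, which is precisely what lets the large-angle corollary be applied to both pseudo-dimers on the same edge $e$. Once this identification is in hand, the non-anomaly is immediate from the $\alpha(T,e)+\alpha(T',e)=2\pi/5$ complementarity.
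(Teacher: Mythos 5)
Your argument is correct and is essentially the paper's own proof, just spelled out in more detail: identify the shared edge as the egressive edge of $T_0$ via Lemmas~\ref{calc:pseudo2} and~\ref{calc:pseudo3}, then derive the contradiction from Corollary~\ref{lemma:large} and the complementarity $\alpha(T_0,e)+\alpha(T_-,e)=2\pi/5$.
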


\begin{proof} 
  Assume for a contradiction that $m_-(T_0)>0$.  Then there exists
  $(T'_1,T_0')\in\PD$ and $(T'_0,T_0)\in\M$.  The shared edge between
  $T_0$ and $T'_0$ has length greater than $1.8$.  This is the
  egressive edge $e$ of $T_0$.  This is impossible by
  Lemma~\rif{calc:large}, which states that the condition of having a
  large angle is not symmetrical for two pseudo-dimers $(T_1',T_0')$
  and $(T_1,T_0)$.
\end{proof}

\begin{corollary}\libel{lemma:m1m2}  
  For every triangle $T$, either $m_+(T)=0$ or $m_-(T)=0$.
\end{corollary}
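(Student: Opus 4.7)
The plan is to deduce the corollary directly from Lemma~\ref{lemma:pd-m2}, using only the structural definition of $\M$. Assume $m_+(T)>0$. Then by the definition of $m_+$ there exists some Delaunay triangle $T_-$ such that $(T,T_-)\in\M$. Invoking condition~(4) in the definition of $\M$ applied to this pair (with $T$ playing the role of $T_+$), there exists a (unique) triangle $T_1$ such that $(T_1,T)\in\PD$. Thus $T$ occurs as the second coordinate (i.e., the role of $T_0$) of some pseudo-dimer.

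Now apply Lemma~\ref{lemma:pd-m2} to the pseudo-dimer $(T_1,T)$: it gives $m_-(T)=0$. This yields the desired dichotomy: whenever $m_+(T)>0$ we have $m_-(T)=0$, equivalently, $m_+(T)=0$ or $m_-(T)=0$.

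The argument is purely bookkeeping about which slot of a pair $T$ occupies, together with the already-proved Lemma~\ref{lemma:pd-m2}; no new geometry is needed. The only subtle point is reading the definition of $\M$ carefully to see that condition~(4) constrains the first coordinate $T_+$ of the pair in $\M$ to itself be the $T_0$ of a pseudo-dimer, which is exactly the hypothesis needed to apply Lemma~\ref{lemma:pd-m2}. Accordingly, the proof can be stated in two short sentences with no obstacle beyond making this chain of definitions explicit.
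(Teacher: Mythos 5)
Your proof is correct and takes essentially the same route as the paper: unwind $m_+(T)>0$ to get $(T,T_-)\in\M$, use condition~(4) of the definition of $\M$ to produce $(T_1,T)\in\PD$, and then apply Lemma~\ref{lemma:pd-m2} to conclude $m_-(T)=0$. The paper compresses the first two steps into one sentence, but the logic is identical.
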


\begin{proof}  
  If $m_+(T)>0$, then there exists some some $(T_1,T)\in\PD$.  The
  lemma gives $m_-(T)=0$.
\end{proof}

\begin{lemma}[pseudo-dimer disjointness]  
  Assume $(T_1,T_0)\in \PD$ and $(T'_1,T'_0)\in\PD$ and
  $\{T_0,T_1\}\cap\{T'_0,T'_1\} \ne \emptyset$.  Then $T_0 = T'_0$.
\end{lemma}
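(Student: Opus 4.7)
The plan is to do case analysis on how the intersection $\{T_0,T_1\}\cap\{T_0',T_1'\}$ can arise. Since each set has two elements, at least one of the four equalities $T_1=T_1'$, $T_1=T_0'$, $T_0=T_1'$, $T_0=T_0'$ must hold. The first and last will give the conclusion directly, and the middle two will be ruled out using the edge length estimates on pseudo-dimers from Lemmas \rif{calc:pseudo1} and \rif{calc:pseudo2}.

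If $T_1=T_1'$, then because the attachment relation $(\Ra)$ is a function (each triangle attaches to a unique other triangle), the two attachments $T_1\Ra T_0$ and $T_1'\Ra T_0'$ force $T_0=T_0'$. If $T_0=T_0'$, we are already done. It remains to exclude the two mixed cases.

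Suppose $T_1=T_0'$. Viewing $T_1$ as the first component of the pseudo-dimer $(T_1,T_0)$, the shared edge $T_0\cap T_1$ is the longest edge of $T_1$ by definition of $T_1\Ra T_0$, and this edge has length less than $1.8$ by Lemma \rif{calc:pseudo1}. Hence the longest edge of $T_1$ has length less than $1.8$. But viewing the same triangle as $T_0'$ inside the pseudo-dimer $(T_1',T_0')$, Lemma \rif{calc:pseudo2} forces the longest edge of $T_0'=T_1$ to exceed $1.8$, a contradiction. The case $T_0=T_1'$ is symmetric: the longest edge of $T_1'=T_0$ has length less than $1.8$ by \rif{calc:pseudo1} applied to $(T_1',T_0')$, while the longest edge of $T_0$ exceeds $1.8$ by \rif{calc:pseudo2} applied to $(T_1,T_0)$, again a contradiction.

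The only substantive step is recognizing that the two nontrivial cases are killed by the fact that the egressive edge of the second component of a pseudo-dimer is strictly longer than the shared (attachment) edge, so no single triangle can simultaneously serve as a ``first'' component in one pseudo-dimer and a ``second'' component in another. I do not expect any real obstacle; both inputs are already proved earlier as computer-assisted lemmas, and the bookkeeping is straightforward.
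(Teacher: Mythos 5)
Your proof is correct and follows essentially the same route as the paper's: handle $T_1=T_1'$ by uniqueness of the attachment target, and kill the two mixed cases $T_1=T_0'$ and $T_0=T_1'$ by the incompatibility between Lemma~\ref{calc:pseudo1} (shared edge $<1.8$) and Lemma~\ref{calc:pseudo2} (egressive edge $>1.8$). No gaps; the bookkeeping is accurate.
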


\begin{proof}  
  If we dismiss the other three cases $T_1=T'_1$ and $T_1=T'_0$ and
  $T_0=T'_1$ of a nonempty intersection, then the conclusion $T_0 =
  T'_0$ will stand.

  Assume first that $T_1 = T'_1$.  Then $T_1\Ra T_0$ and $T_1\Ra
  T'_0$, which gives $T_0=T'_0$.

  Next assume that $T_1=T'_0$.  We have $T'_1\Ra T'_0 = T_1 \Ra T_0$.
  By the calculations above (Lemma~\rif{calc:pseudo1} and
  Lemma~\rif{calc:pseudo2}), this puts incompatible constraints on the
  length of the shared edge between $T_0$ and $T_1$.  It must have
  length less than $1.8$ and greater than $1.8$.

  The case $T_0=T'_1$ follows from the previous case by symmetry.
\end{proof}

\begin{lemma}[pseudo-dimer-obtuse]\libel{lemma:pd-obtuse}
  Assume $(T_1,T_0)\in\PD$.  Then each edge of $T_0$ and $T_1$ has
  length less than $\kappa\sqrt8$.  In particular, if $T$ is obtuse
  then $T\nRa T_0$ and $T \nRa T_1$.
\end{lemma}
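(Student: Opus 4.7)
The plan is to first bound the edges of $T_1$, which is immediate from Lemma~\rif{calc:pseudo1}, then to bound the edges of $T_0$ (the main step, since $T_0$ need not be subcritical, so its egressive edge is controlled only indirectly through the pseudo-dimer area constraint), and finally to deduce the obtuseness statement as a short corollary of the edge bound. For $T_1$: since $T_1\Ra T_0$, the shared edge is the longest edge of $T_1$, so every edge of $T_1$ has length at most that of the shared edge, which is less than $1.8<\kappa\sqrt8$ by Lemma~\rif{calc:pseudo1}.

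For $T_0$, the two non-egressive edges have length less than $1.8<\kappa\sqrt8$ by Lemmas~\rif{calc:pseudo1} and~\rif{calc:pseudo3}, so it remains to bound the egressive (longest) edge $a$ of $T_0$. Let $c$ be the shared edge and $b$ be the remaining edge. The packing constraint gives $b\ge 2\kappa$, and applying the contrapositive of Lemma~\rif{lemma:172} to the subcritical $T_1$ (whose longest edge is $c$) gives $c>1.72$. Combined with $T_1$ nonobtuse and subcritical plus Lemma~\rif{lemma:a0},
\[
\area(T_0)\le\area\{T_0,T_1\}-\area(T_1)<2\acrit-\ao=\acrit+\epsN.
\]
Suppose for contradiction that $a\ge\kappa\sqrt8$. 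Using the monotonicity of area on the nonobtuse regime noted at the start of this section, one may decrease $a$ to $\kappa\sqrt8$, then $b$ to $2\kappa$, then $c$ to $1.72$; along the way the only angle at risk of becoming obtuse is the one opposite $a$, and the numerical inequality $(2\kappa)^2+1.72^2>8\kappa^2$ keeps it acute, so every intermediate triangle is nonobtuse and the area is weakly decreasing. Hence
\[
\area(T_0)\ge\area(\kappa\sqrt8,\,2\kappa,\,1.72)>\acrit+\epsN,
\]
the last inequality being the final estimate listed at the start of this section. This contradicts the previous display, so $a<\kappa\sqrt8$.

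The obtuseness statement is then immediate. If $T$ is obtuse and $T\Ra T_0$ (the case $T\Ra T_1$ is identical), then the shared edge is the longest edge $e$ of $T$. Since $T$ is obtuse, $e^2>f^2+g^2$ where $f,g$ are the other edges of $T$; the packing constraint gives $f,g\ge 2\kappa$, so $e>\kappa\sqrt8$. But $e$ is also an edge of $T_0$, so by the first part $e<\kappa\sqrt8$, a contradiction. The main obstacle in the proof above is controlling $a$, and I expect the critical balance to be precisely the inequality $\area(\kappa\sqrt8,2\kappa,1.72)>\acrit+\epsN$, since the subcritical/nonobtuse corner $b=2\kappa$, $c=2\kappa$ would make the bound fail --- it is the gap $c>1.72$ coming from Lemma~\rif{lemma:172} that saves the argument.
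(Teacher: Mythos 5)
Your proof is correct and follows essentially the same route as the paper: you bound the egressive edge of $T_0$ by showing that $a\ge\kappa\sqrt8$ would force $\area(\kappa\sqrt8,2\kappa,1.72)>\acrit+\epsN$ (the last of the listed area estimates), contradicting the pseudo-dimer area bound together with $\area(T_1)>\ao$. The paper states the contradiction as $\area\{T_1,T_0\}>\ao+(\acrit+\epsN)=2\acrit$ rather than isolating $\area(T_0)$, but this is the same algebra; the only genuine difference is that you spell out the nonobtuse-monotonicity deformation (which the paper leaves implicit) and add a redundant citation of Lemma~\rif{calc:pseudo3}, since bounding the longest edge of $T_0$ already bounds the others.
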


\begin{proof} 
  The shared edge between $T_0$ and $T_1$ has length less than $1.8 <
  \kappa\sqrt8$ (Lemma~\rif{calc:pseudo1}).  The shared edge is the
  longest edge of $T_1$, so that each edge of $T_1$ has length less
  than $1.8$.  The only possibility is the egressive edge of $T_0$.
  But if the egressive edge of $T_0$ has length at least
  $\kappa\sqrt8$, then
\[
\area\{T_1,T_0\} > \ao + \area(\kappa\sqrt8,2\kappa,1.72) 
> \ao + (\acrit + \epsN) = 2\acrit.
\]
This contradicts the area condition in the definition of pseudo-dimer.
This completes the first claim of the lemma.

If $T$ is obtuse, then its longest edge has length at least
$\kappa\sqrt8$.  If $T\Ra T_i$, then the shared edge has length at
least $\kappa\sqrt8$, contrary to the first claim of the lemma.
\end{proof}

\begin{corollary}\libel{lemma:pd-n2}
If $(T_1,T_0)\in\PD$, then $n_-(T_0)=0$.
\end{corollary}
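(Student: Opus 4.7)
The plan is to derive a contradiction by cases, using the freshly established Lemma~\rif{lemma:pd-obtuse} together with the definition of $\N$. Suppose for contradiction that $n_-(T_0) > 0$, so that there exists a triangle $T_+$ with $(T_+,T_0) \in \N = \N_{\text{obtuse}} \sqcup \N_{\text{nonobtuse}}$. I would split on which component of the disjoint union contains the pair.

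In the first case, $(T_+,T_0) \in \N_{\text{obtuse}}$, which by definition requires the target $T_0$ to be obtuse. But $T_0$ is nonobtuse by the very definition of a pseudo-dimer (condition (1) on $\PD$), giving an immediate contradiction.

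In the second case, $(T_+,T_0) \in \N_{\text{nonobtuse}}$. Condition (4) in the definition of $\N_{\text{nonobtuse}}$ asserts that there exists an obtuse triangle $T$ with $T \Ra T_0$ (and satisfying a further area inequality, which we do not even need). This directly contradicts Lemma~\rif{lemma:pd-obtuse}, whose second claim says that no obtuse triangle $T$ satisfies $T \Ra T_0$ when $(T_1,T_0)\in\PD$.

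Both cases being impossible, no such $T_+$ exists, hence $n_-(T_0)=0$. The proof is essentially bookkeeping against the definition of $\N$; no obstacle is anticipated, because the real work — ruling out long shared edges and thereby ruling out obtuse attachments to $T_0$ — has already been done in Lemma~\rif{lemma:pd-obtuse} via the computer-assisted bounds of Lemmas~\rif{calc:pseudo1} and~\rif{calc:pseudo2}.
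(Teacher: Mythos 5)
Your proof is correct and follows essentially the same route as the paper. The only cosmetic difference is that the paper routes through Corollary~\rif{lemma:obtuse-b-arrow} to extract the obtuse triangle $T$ with $T\Ra T_0$, whereas you unfold the definition of $\N_{\text{nonobtuse}}$ directly (and dispose of the $\N_{\text{obtuse}}$ branch explicitly); both then conclude by invoking Lemma~\rif{lemma:pd-obtuse}.
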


\begin{proof} 
  Suppose for a contradiction that $n_-(T_0)>0$.  Then $(T_+,T_0)\in
  \N$ for some $T_+$.  Because $T_0$ is nonobtuse,
  Lemma~\rif{lemma:obtuse-b-arrow} implies that there exists some
  obtuse triangle $T$ such that $T\Ra T_0$.  This is contrary to the
  previous lemma.
\end{proof}

\begin{lemma}\libel{lemma:pd-1} 
  Let $(T_1,T_0)\in\PD$.  Then
  $n_+(T_1)=n_-(T_1)=m_+(T_1)=m_-(T_1)=0$.  In particular, $b(T_1) =
  \area(T_1)$.
\end{lemma}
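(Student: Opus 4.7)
The plan is to kill each of $n_+(T_1)$, $n_-(T_1)$, $m_+(T_1)$, and $m_-(T_1)$ separately, after which $b(T_1) = \area(T_1)$ follows directly from the definition of $b$.

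For $n_+(T_1)$: since $T_1$ attaches to exactly one triangle, namely $T_0$, the only candidate pair is $(T_1,T_0)$. This pair cannot belong to $\N_{\text{obtuse}}$ because $T_0$ is nonobtuse (definition of pseudo-dimer), and it cannot belong to $\N_{\text{nonobtuse}}$ because condition~4 there demands an obtuse triangle $T$ with $T\Ra T_0$, which Lemma~\rif{lemma:pd-obtuse} forbids. The argument for $n_-(T_1) = 0$ is parallel: any $(T_+,T_1)\in\N$ would either force $T_1$ obtuse (ruled out since $T_1$ is nonobtuse by the pseudo-dimer definition) or require an obtuse $T$ with $T\Ra T_1$, again forbidden by Lemma~\rif{lemma:pd-obtuse}.

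For $m_+(T_1)=0$: if $(T_1,T_-)\in\M$, then $T_1\Ra T_-$ forces $T_-=T_0$, so $(T_1,T_0)\in\M$. The fourth membership condition of $\M$ then produces some $(T_1'',T_1)\in\PD$, but pseudo-dimer disjointness applied to $(T_1,T_0)$ and $(T_1'',T_1)$, which share the triangle $T_1$, would force $T_0 = T_1$, a contradiction.

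The subtlest piece, and the main obstacle, is $m_-(T_1)=0$. Suppose $(T_+,T_1)\in\M$. Condition~4 of $\M$ supplies a pseudo-dimer $(T_1'',T_+)\in\PD$ in which $T_+$ plays the role of $T_0$. Its egressive edge has length greater than $1.8$ by Lemma~\rif{calc:pseudo2}. Since $T_+$ attaches to exactly one triangle and $T_+\Ra T_1$, this egressive edge is precisely the edge shared between $T_+$ and $T_1$, so it is an edge of $T_1$ of length exceeding $1.8$. But the longest edge of $T_1$ is the one shared with $T_0$ (because $T_1\Ra T_0$ is its attachment), and Lemma~\rif{calc:pseudo1} applied to $(T_1,T_0)\in\PD$ says that edge has length strictly less than $1.8$. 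This contradicts the existence of any edge of $T_1$ of length greater than $1.8$, so $m_-(T_1)=0$. With all four counts vanishing, the formula $b(T_1)=\area(T_1)+\epsN(n_+(T_1)-n_-(T_1))+\epsM(m_+(T_1)-m_-(T_1))$ collapses to $b(T_1)=\area(T_1)$.
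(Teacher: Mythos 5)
Your proof is correct and follows the paper's own argument: the $n_\pm$ cases are dispatched via Lemma~\rif{lemma:pd-obtuse} exactly as in the paper, and the $m_-$ case via the egressive-edge-length tension between Lemmas~\rif{calc:pseudo1} and~\rif{calc:pseudo2}. Your $m_+$ case invokes the pseudo-dimer disjointness lemma rather than the paper's direct edge-length comparison, but that disjointness lemma's relevant case rests on the same two computer-assisted length estimates, so the underlying reasoning is the same.
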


\begin{proof}  
  We claim that $m_+(T_1)=m_-(T_1)=0$.  Otherwise $T_1$ shares an
  egressive edge of length greater than $1.8$ with some pseudo-dimer.
  However, the longest edge of $T_1$ is its shared edge with $T_0$,
  which has length less than $1.8$.  This gives the claim.

  Next we claim that $n_+(T_1)=0$.  Otherwise, $(T_1,T_0)\in \N$.
  Because $T_0$ is nonobtuse, Lemma~\rif{lemma:obtuse-b-arrow} 
  implies that there exists an obtuse
  triangle $T$ such that $T\Ra T_0$.  This contradicts
  Lemma~\rif{lemma:pd-obtuse}.

  Finally, we claim that $n_-(T_1)=0$.  Otherwise, $(T_+,T_1)\in \N$ for
  some $T_+$.  Because $T_1$ is nonobtuse, this implies that there exists
  an obtuse triangle $T$ such that $T\Ra T_1$.  This contradicts
  Lemma~\rif{lemma:pd-obtuse}.
\end{proof}

\begin{lemma}\libel{lemma:pd-b}  
  Let $(T_1,T_0)\in\PD$.  Then $b(T_0) > \acrit$.  That is, $T_0$ is
  not $b$-subcritical.
\end{lemma}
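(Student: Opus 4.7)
The plan is to compute $b(T_0)$ by first killing all the negative correction terms using earlier lemmas and then showing that the positive contribution from the attachment $T_0\Ra T_-$ pushes $b(T_0)$ strictly past $\acrit$. Corollary~\rif{lemma:pd-n2} gives $n_-(T_0)=0$ and Lemma~\rif{lemma:pd-m2} gives $m_-(T_0)=0$, so it suffices to show
\[
b(T_0) = \area(T_0) + \epsN\, n_+(T_0) + \epsM\, m_+(T_0) > \acrit.
\]

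Let $T_-$ denote the unique triangle with $T_0\Ra T_-$. The shared edge is the longest edge of $T_0$ and has length greater than $1.8$ by Lemma~\rif{calc:pseudo2}, while $T_0$ is nonobtuse by definition of $\PD$. I would then verify that $(T_0,T_-)$ lies in $\N\cup\M$, so that exactly one of $n_+(T_0)$, $m_+(T_0)$ equals $1$ by disjointness of $\N$ and $\M$. If $T_-$ is obtuse, the four defining conditions of $\N_{\text{obtuse}}$ are all immediately met. If $T_-$ is nonobtuse, I split on whether condition~(4) of $\N_{\text{nonobtuse}}$ holds for some obtuse triangle attaching to $T_-$: if yes, $(T_0,T_-)\in\N$; if no, I check the four conditions for $\M$ directly, using the pseudo-dimer disjointness lemma to certify uniqueness in condition~(4), and conclude $(T_0,T_-)\in\M$.

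If $n_+(T_0) = 1$, then Lemma~\rif{lemma:a0} immediately gives
\[
b(T_0) = \area(T_0) + \epsN > \ao + \epsN = \acrit.
\]
If instead $m_+(T_0) = 1$, then Lemma~\rif{calc:pseudo-area} combined with the subcritical bound $\area(T_1)\le\acrit$ yields
\[
\area(T_0) = \area\{T_0,T_1\} - \area(T_1) \ge (2\acrit - \epsM) - \acrit = \acrit - \epsM,
\]
and so $b(T_0) \ge \acrit$. The main obstacle will be promoting this last inequality to the required strict inequality: equality would force the simultaneous saturation of Lemma~\rif{calc:pseudo-area} and of the subcritical bound on $T_1$. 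I expect this simultaneous extremum to be excluded by a modest sharpening of the computer-assisted bound in Lemma~\rif{calc:pseudo-area} (or, alternatively, by a direct perturbative argument showing that an exactly critical $T_1$ forces $\area\{T_0,T_1\}$ strictly above $2\acrit - \epsM$), which would close the gap and complete the proof.
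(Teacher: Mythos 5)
The main gap is your claim that the pseudo-dimer disjointness lemma ``certifies uniqueness in condition~(4)'' of $\M$. That lemma only says that if two pseudo-dimer pairs $(T_1,T_0)$ and $(T_1',T_0')$ share a triangle, then $T_0=T_0'$; it does \emph{not} say $T_1=T_1'$, and nothing prevents two distinct $T_1\ne T_1'$ from both forming pseudo-dimers with the same $T_0$. When that happens, $\M$~condition~(4) fails and $(T_0,T_-)\not\in\M$; if in addition $(T_0,T_-)\not\in\N$ (which you have already placed yourself in, once $T_-$ is nonobtuse and the $\N_{\text{nonobtuse}}$ condition fails), then $n_+(T_0)=m_+(T_0)=0$ and $b(T_0)=\area(T_0)$ receives no positive correction. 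Your proposal provides no bound at all in this situation, so it does not close the argument, and the disjointness lemma cannot be made to do so.

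This uncovered case is precisely what the paper's proof treats last. When $n_+(T_0)=m_+(T_0)=0$, the only obstruction to $(T_0,T_-)\in\M$ (with conditions~1--3 in place, as you verified) is the existence of some $T_1'\ne T_1$ with $(T_1',T_0)\in\PD$; the two edges of $T_0$ shared with $T_1$ and $T_1'$ then have lengths in $(1.72,1.8)$ by Lemmas~\rif{lemma:172} and~\rif{calc:pseudo1}, while the egressive edge exceeds $1.8$ by Lemma~\rif{calc:pseudo2}, giving $b(T_0)=\area(T_0)\ge\area(1.72,1.72,1.8)>\acrit$ by monotonicity of area for nonobtuse triangles. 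On your secondary concern: the paper's own inequality chain treats the bound of Lemma~\rif{calc:pseudo-area} as strict (which is what the interval-arithmetic computation in fact delivers), so $\area(T_0)>\acrit-\epsM$ strictly and the $m_+(T_0)=1$ case already closes with the required strict inequality; the sharpening you anticipate is built in, and that worry was not the crux.
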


\begin{proof}
  We recall from Corollary~\rif{lemma:pd-n2} that $n_-(T_0)=0$.  From
  Lemma~\rif{lemma:pd-m2} we have $m_-(T_0)=0$.  Thus, $b(T_0) =
  \area(T_0) + \epsN n_+(T_0) + \epsM m_+(T_0)$.  We have
  (by Lemma~\rif{calc:pseudo-area})
\[
\area(T_0) = \area\{T_0,T_1\} - \area(T_1) 
> (2\acrit-\epsM) -\acrit = \acrit - \epsM.
\]

We first treat the case that $n_+(T_0)>0$ or $m_+(T_0)>0$.  Then we
have $n_+(T_0)+m_+(T_0)\ge 1$.  Thus,
\[
b(T_0) \ge \area(T_0) + \epsM (n_+(T_0)+m_+(T_0)) > (\acrit - \epsM)
+ \epsN > \acrit.
\]
This completes this case.

For the remainder of the proof, we assume that $n_+(T_0)=m_+(T_0)=0$.  In
particular, we have $b(T_0) = \area(T_0)$.

Let $T_0\Ra T_-$.  We have $(T_0,T_-)\in\M$ by the definition of $\M$,
unless the uniqueness property of $\M$ condition 4 fails.  That is,
there exists $T'_1\ne T_1$ such that $(T'_1,T_0)\in\PD$.  The edges of
$T_0$ shared with $T_1$ and $T'_1$ have length at least $1.72$ and
less than $1.8$.  The egressive edge of $T_0$ has length at least
$1.8$.  Thus,
\[
b(T_0) = \area (T_0) \ge \area(1.72,1.72,1.8) > \acrit.
\]
% calc is in table above.
This completes the proof.
\end{proof}

\section{Main inequality for dimers}

Recall that the previous section shows that there exists a unique
dimer pair (up to congruence): the ice-ray dimer.  In particular, if
$(T_1,T_0)$ is a dimer, then $\area(T_0)=\area(T_1)=\acrit$, and
$T_0\Ra T_1$, and $T_1\Ra T_0$, and $(T_1,T_0)$ is a dimer too.  Thus,
the relationship between $T_0$ and $T_1$ in a dimer pair is
symmetrical.  This section proves the following theorem.

\begin{theorem}\libel{thm:dimer} 
  Let $(T_1,T_0)$ be a dimer pair.  Then for $T\in\{T_0,T_1\}$, we have
  $m_+(T)=m_-(T) = n_+(T)=n_-(T) = 0$; and $b(T) = \area(T)$.
  Moreover, $\{T_0,T_1\}$ is a cluster, and the main inequality holds
  for $\{T_0,T_1\}$.
\end{theorem}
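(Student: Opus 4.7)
The proof builds on the shape data from Theorem~\ref{thm:dimer-area} together with the computer-assisted estimates of Section~\ref{sec:calc}.

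\textbf{Shape data.} By Theorem~\ref{thm:dimer-area}, $(T_1,T_0)$ is congruent to the ice-ray dimer: both $T_0$ and $T_1$ are acute Delaunay triangles of area exactly $\acrit$, sharing their common longest edge of length $1+\kappa$, while each of the four remaining edges has length at most $\sqrt{3\kappa^{2}+1}<1.8$. In particular every edge of $T_0\cup T_1$ is strictly less than $\kappa\sqrt{8}$.

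\textbf{Vanishing of $n_\pm, m_\pm$.} Since $T_0,T_1$ are acute, no condition of $\N$ with obtuse target is met. Every condition of $\N_{\text{nonobtuse}}$ with source or target in $\{T_0,T_1\}$ demands an obtuse triangle $T$ with $T\Ra T_i$, but then $T$'s longest edge (of length $\ge\kappa\sqrt{8}$) would coincide with an edge of $T_i$ of length $<\kappa\sqrt{8}$; impossible. Thus $n_\pm(T_i)=0$. A nonzero $m_+(T_i)$ or $m_-(T_i)$ produces a pseudo-dimer $(T'',T^+)\in\PD$ with $T^+\in\{T_0,T_1\}$: for $m_+$, $T^+=T_i$; for $m_-$, Lemma~\ref{calc:pseudo2} together with the $1.8$-bound on non-dimer edges forces $T^+$ to be the dimer partner. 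In either case the pseudo-dimer's egressive edge is the dimer edge and its egressive target is the other dimer member, so Lemma~\ref{calc:pseudo-area3} gives $\area(T'')>\acrit+\epsM$, contradicting $T''$'s subcriticality. Hence $m_\pm(T_i)=0$, and $b(T_i)=\area(T_i)=\acrit$.

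\textbf{The cluster is exactly $\{T_0,T_1\}$.} Both $T_i$ are $b$-subcritical, with $T_0\Ra T_1$ and $T_1\Ra T_0$, so $T_0\rab T_1$ and $T_1\rab T_0$, placing them in one cluster. The unique outgoing $\Ra$-arrow from each $T_i$ goes to its partner, so no enlargement occurs via outgoing arrows. Suppose $T'\rab T_0$ with $T'\ne T_1$ (the $T_1$ case is symmetric). Then $T'\Ra T_0$ along $T'$'s longest edge, which equals an edge of $T_0$ distinct from the dimer edge, hence has length at most $\sqrt{3\kappa^{2}+1}<1.8<\kappa\sqrt{8}$. If $\area(T')\le\acrit$, Lemma~\ref{calc:pent3} applied to the two subcritical attachers $T_1,T'$ of $T_0$ gives $\area\{T_0,T_1,T'\}>3\acrit+\epsM$, forcing $\area(T')>\acrit$, a contradiction; so $\area(T')>\acrit$ in any case. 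The same edge bound forbids an obtuse triangle from attaching to $T'$ along its longest edge, giving $n_-(T')=0$, and it is incompatible with the egressive $>1.8$ of Lemma~\ref{calc:pseudo2} being shared with $T'$, giving $m_-(T')=0$. Thus $b(T')\ge\area(T')>\acrit$, contradicting $T'$'s $b$-subcriticality. The cluster is therefore exactly $\{T_0,T_1\}$, and the main inequality holds with equality: $\tfrac12(b(T_0)+b(T_1))=\acrit$.

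The main obstacle is the cluster-extension step: an exterior $T'$ could in principle have area only marginally above $\acrit$ while its $b$-value is dragged below $\acrit$ by $-\epsN n_-(T')-\epsM m_-(T')$. The strict $1.8$-bound on every edge of $T'$, inherited from $T_0$ through the attachment relation, is precisely what rules out obtuse attackers and pseudo-dimer egressives at $T'$, forcing $n_-(T')=m_-(T')=0$ and closing the argument.
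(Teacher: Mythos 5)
Your proof is correct, but it takes a genuinely different route from the paper's. The paper proves $m_\pm(T)=0$ by appealing to the abstract disjointness machinery already developed---Lemma~\ref{lemma:m1-dimer} and Corollary~\ref{lemma:m2-dimer}, which rest on Lemma~\ref{lemma:pd-nra-dimer}---then gets $n_-(T)=0$ by the short observation that an obtuse attacher would force an edge of length at least $\kappa\sqrt 8$ onto a subcritical triangle, and closes the cluster using the general structural Lemmas~\ref{lemma:rab-sequence} and~\ref{lemma:t2b-nonobtuse}. You instead lean directly on Theorem~\ref{thm:dimer-area} to pin down the exact ice-ray-dimer geometry (shared edge $1+\kappa\approx1.809>1.8$, the other four edges $\approx1.72<1.8$, all below $\kappa\sqrt 8$), so the $1.8$ threshold immediately identifies which triangle any pseudo-dimer egressive must coincide with; you then apply Lemma~\ref{calc:pseudo-area3} with $\area(T_i)=\area(T_{1-i})=\acrit$ to squeeze $\area(T'')>\acrit+\epsM$ out of a contradiction with $T''$'s subcriticality, and you rule out a third cluster member $T'$ by combining Lemma~\ref{calc:pent3} with the same edge bounds to get $\area(T')>\acrit$ and $n_-(T')=m_-(T')=0$. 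Both routes are valid. The paper's lemmas are more widely reusable (they do not need the minimizer's shape in hand), while your approach is more self-contained once Theorem~\ref{thm:dimer-area} is available; one small presentational gap is that you state the exact edge lengths $1+\kappa$ and $\sqrt{3\kappa^2+1}$ without derivation, though the only facts your argument actually uses---dimer edge strictly greater than $1.8$, the other edges strictly less than $1.8$, all below $\kappa\sqrt 8$---are easily checked from the ice-ray geometry.
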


The proof will occupy the entire section.  Before treating dimers, we
treat the easy case of singletons.

\begin{lemma}\libel{lemma:singleton}  
  Assume that all the edges of a $P$-triangle $T$ have length less
  than $1.72$.  Then the cluster of $T$ is the singleton $\{T\}$ and
  $b(T) =\area(T)$.  Generally, if $\{T\}$ is a singleton
  cluster, then $b(T) > \acrit$ and the main inequality holds.
\end{lemma}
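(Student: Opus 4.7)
The plan is to handle the two claims separately, using the edge-length threshold $1.72$ as the key link to Lemma~\rif{lemma:172}.

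First I would dispose of the $b$-corrections. Both membership in $\N$ and in $\M$ (in the first slot) requires the source triangle to have longest edge of length at least $1.72$. Since every edge of $T$ has length less than $1.72$, we immediately get $n_+(T)=m_+(T)=0$. For the in-degrees, suppose $(T_+,T)\in\N\cup\M$. By definition of $(\Ra)$, the shared edge between $T_+$ and $T$ is the longest edge of $T_+$, hence has length at least $1.72$; but this shared edge is also an edge of $T$, contradicting the hypothesis. So $n_-(T)=m_-(T)=0$, and therefore $b(T)=\area(T)$.

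Next I would show $T$ is a singleton cluster. Lemma~\rif{lemma:172} gives $\area(T)>\acrit$, so $b(T)=\area(T)>\acrit$ and $T$ is not $b$-subcritical; in particular the unique outgoing arrow $T\Ra T_0$ is not promoted to a $\rab$-arrow. For incoming arrows, suppose $T'\Ra T$. Then the longest edge of $T'$ coincides with an edge of $T$, so it has length less than $1.72$, which forces every edge of $T'$ to be less than $1.72$. Applying the first paragraph to $T'$ in place of $T$ yields $b(T')=\area(T')>\acrit$ by Lemma~\rif{lemma:172}, so $T'$ is not $b$-subcritical and the arrow $T'\rab T$ does not exist. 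Thus no $\rab$-edge touches $T$ and the $(\equiv_b)$-class of $T$ is $\{T\}$.

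For the general (final) claim, suppose $\{T\}$ is a singleton cluster. The triangle $T$ attaches to a unique triangle $T_0$ via $T\Ra T_0$. If $T$ were $b$-subcritical, then by definition $T\rab T_0$, which would force $T\equiv_b T_0$ and contradict the singleton hypothesis. Hence $b(T)>\acrit$, and the main inequality
\[
\frac{\sum_{T'\in\{T\}} b(T')}{\card(\{T\})} = b(T) > \acrit
\]
holds (strictly). There is no real obstacle here: the argument is purely bookkeeping against the definitions of $\N$, $\M$, and $(\rab)$, with Lemma~\rif{lemma:172} supplying the one nontrivial geometric input.
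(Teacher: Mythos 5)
Your proof is correct and follows essentially the same route as the paper's: first observe that the $1.72$ threshold built into $\N$ and $\M$ forces $n_\pm(T)=m_\pm(T)=0$ so $b(T)=\area(T)$, then invoke Lemma~\ref{lemma:172} to get $b(T)>\acrit$, rule out incoming $\rab$-arrows by applying the same argument to any attaching triangle, and handle the general singleton case via the unique outgoing attachment. You merely spell out in more detail the step the paper summarizes as ``the constant $1.72$ is built into the definition of $n_\pm,m_\pm$,'' but the ideas are identical.
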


\begin{proof} 
  Assume that all the edges of $T$ have length less than $1.72$.  The
  constant $1.72$ is built into the definition of the constants
  $n_\pm,m_\pm$. This gives $n_+(T)=n_-(T) = m_+(T)=m_-(T)=0$.  Thus,
  $b(T)=\area(T)$.  We have $\area(T) > \acrit$ by
  Lemma~\rif{lemma:172}.  Because $T$ is not $b$-subcritical, there are
  no arrows $T\rab -$.

  We claim that there cannot exist an arrow $T_+\rab T$.  Otherwise,
  the longest edge of $T_+$ has length less than $1.72$ and again
  $T_+$ is not $b$-subcritical.  The claim follows and the cluster is
  a singleton.

  In general, if $\{T\}$ is any singleton cluster, there is no arrow
  $T\rab -$.  This implies that $T$ is not $b$-subcritical, so that
  $b(T) > \acrit$.  The result follows.
\end{proof}

\subsection{interaction of dimers and pseudo-dimers}

Next, we show the disjointness of dimers from pseudo-dimers.

\begin{lemma}\libel{lemma:pd-nra-dimer}
  Let $(T_1,T_0)\in DP$ and let $(T'_1,T'_0)\in\PD$.  Then for
  $T\in\{T_0,T_1\}$, we have $T'_0\nRa T$.
\end{lemma}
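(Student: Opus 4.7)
The plan is to assume for contradiction that $T'_0 \Ra T$ for some $T \in \{T_0, T_1\}$, then show this forces $T'_0$ to actually be the dimer partner of $T$, and finally apply the egressive area estimate for pseudo-dimers to obtain a contradiction. The two ingredients are (i) an edge-length comparison to pin down the location of $T'_0$, and (ii) Corollary \rif{lemma:egress'} to rule out that location.

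By Theorem \rif{thm:dimer-area}, $(T_1,T_0)$ is the ice-ray dimer, which is completely rigid: the two triangles share their common longest edge of length $1+\kappa \approx 1.809$ (the within-column distance in the ice-ray lattice, realized by a pentagon top vertex tangent to the bottom edge of the pentagon stacked above it), while each of the four remaining edges of the dimer has length strictly less than $1.8$. Now by Lemma \rif{calc:pseudo2}, the edge shared between $T'_0$ and $T$ is the egressive edge of $T'_0$ and has length $> 1.8$. Since this shared edge is also an edge of $T$, and the unique edge of $T$ with length exceeding $1.8$ is the dimer's common longest edge, the shared edge between $T'_0$ and $T$ must be precisely that dimer edge. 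As each Delaunay edge in a saturated packing belongs to exactly two triangles, $T'_0$ must equal the dimer partner of $T$, so $T'_0 \in \{T_0, T_1\}$.

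Without loss of generality, $T'_0 = T_0$ and $T = T_1$. The pseudo-dimer $(T'_1, T_0)$ supplies a triangle $T'_-$ with $T'_0 \Ra T'_-$; since attachment targets are unique and the dimer condition gives $T_0 \Ra T_1$, we must have $T'_- = T_1$. Corollary \rif{lemma:egress'} then yields $\area(T'_-) > \acrit + \epsM$, but $\area(T_1) = \acrit$ because $T_1$ is an ice-ray triangle, and $\epsM > 0$. This is the desired contradiction, so no such $T'_0 \Ra T$ exists.

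The delicate point is the edge-length claim for the ice-ray triangle --- specifically, that exactly one of its three edges has length exceeding $1.8$. This is an elementary explicit computation in the ice-ray lattice (the within-column edge has length $1+\kappa$, and the two between-column edges are readily verified to be less than $1.8$), and is the only place in the argument requiring a numeric check.
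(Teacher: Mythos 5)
Your proof is correct, but it takes a detour that the lemma does not require. Once you assume $T'_0 \Ra T$, the triangle $T$ \emph{is} the egressive target $T'_-$ of the pseudo-dimer $(T'_1,T'_0)$: by definition $T'_-$ is the unique triangle that $T'_0$ attaches to, and attachment is single-valued. So Corollary~\ref{lemma:egress'} immediately yields $\area(T) > \acrit + \epsM > \acrit$, while the fact that $(T_1,T_0)$ is the ice-ray dimer (Theorem~\ref{thm:dimer-area}) gives $\area(T) = \acrit$. Contradiction. That three-line argument is precisely the paper's proof.

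Your intermediate steps --- using Lemma~\ref{calc:pseudo2} to get the $>1.8$ egressive edge, arguing that the ice-ray triangle has exactly one edge of length exceeding $1.8$, concluding $T'_0 \in \{T_0,T_1\}$, and only then identifying $T'_- = T_1$ --- are logically sound, but they establish $T'_- = T$ by a roundabout path when it already follows from the definition of $T'_-$ together with uniqueness of the attachment target. In particular, the ``delicate numeric check'' you flag at the end (that the ice-ray triangle has a unique edge longer than $1.8$) is real work but unnecessary work; the paper's proof never needs it. When a lemma supplies exactly the area estimate for the attachment target, it is usually a sign that no geometric localization of $T'_0$ is required.
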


\begin{proof} 
  Assume $T'_0\Ra T$.  By Lemma~\rif{lemma:egress'}, we have
  $\area(T)>\acrit$.  But if $T\in\{T_0,T_1\}$, we have
  $\area(T)=\acrit$.  This gives the result.
\end{proof}

\begin{corollary}\libel{lemma:m2-dimer}  
  Let $(T_1,T_0)\in DP$ and let $(T'_1,T'_0)\in \PD$.  Then
  $m_-(T_0)=m_-(T_1)=0$.
\end{corollary}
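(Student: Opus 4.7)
The plan is to apply Lemma~\rif{lemma:pd-nra-dimer} directly, after unpacking what it means for $m_-(T)$ to be nonzero. Recall that $m_-(T) = \card\{T_+ : (T_+,T)\in \M\}$, so it suffices to show that there is no pair $(T_+, T)\in \M$ with $T \in \{T_0, T_1\}$.

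First I would argue by contradiction. Assume $(T_+, T) \in \M$ for some $T_+$ and some $T\in\{T_0,T_1\}$. By condition~(2) of the definition of $\M$ we have $T_+ \Ra T$, and by condition~(4) there exists a (unique) $T_1''$ such that $(T_1'', T_+) \in \PD$. So $T_+$ plays the role of the ``$T_0'$'' in a pseudo-dimer, namely the pseudo-dimer $(T_1'', T_+)$.

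Now I would apply Lemma~\rif{lemma:pd-nra-dimer} to the given dimer pair $(T_1, T_0) \in DP$ together with the pseudo-dimer $(T_1'', T_+) \in \PD$ (which takes the role of $(T_1', T_0')$ in that lemma's statement). The conclusion of the lemma is precisely that $T_+ \nRa T$ for every $T \in \{T_0, T_1\}$, contradicting $T_+ \Ra T$ obtained above. Hence no such $T_+$ exists, so $m_-(T_0) = m_-(T_1) = 0$.

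There is no real obstacle here; the content of the statement is essentially bookkeeping. The substantive geometric fact was already absorbed into Lemma~\rif{lemma:pd-nra-dimer} (which in turn relied on Corollary~\rif{lemma:egress'}, giving $\area(T) > \acrit$ for any target $T$ of an egressive arrow out of a pseudo-dimer, whereas the two triangles of a dimer pair each have area exactly $\acrit$). Once that lemma is in hand, the corollary follows by simply matching the $\M$-defining data to the hypotheses of that lemma.
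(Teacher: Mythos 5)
Your proof is correct and matches the paper's argument exactly: both unpack the definition of $\M$ (condition~4) to produce a pseudo-dimer whose target triangle $T_+$ attaches to $T$, and then invoke Lemma~\ref{lemma:pd-nra-dimer} to reach the contradiction $T_+\nRa T$.
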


\begin{proof}  
  If $m_-(T_-)>0$, then by the definition of the set $\M$, there
  exists $(T_1,T_0)\in \PD$, such that $T_0\Ra T_-$.  However,
  $T'_0\nRa T_-$, for $T_-\in\{T_0,T_1\}$ by
  Lemma~\rif{lemma:pd-nra-dimer}.  The result follows.
\end{proof}

\begin{lemma}\libel{lemma:m1-dimer}  
  Let $(T_1,T_0)\in DP$ and let $(T'_1,T'_0)\in\PD$.  Then
  $\{T_0,T_1\}\cap \{T'_0,T'_1\} = \emptyset$.  Moreover,
  $m_+(T_0) = m_+(T_1)=0$.
\end{lemma}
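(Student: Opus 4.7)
The plan is to handle the two claims in sequence, both by a short case analysis exploiting the asymmetry between dimer pairs (two-way attachment) and pseudo-dimers (one-way attachment).

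For the disjointness claim, I would enumerate the four ways a triangle could be shared between $\{T_0,T_1\}$ and $\{T'_0,T'_1\}$. If $T_0 = T'_0$ or $T_1 = T'_0$, then the egressive arrow $T'_0 \Ra T'_-$ of the pseudo-dimer starts at a dimer triangle; since each triangle attaches to a unique triangle while the dimer supplies both $T_0 \Ra T_1$ and $T_1 \Ra T_0$, uniqueness forces $T'_- \in \{T_0, T_1\}$, which directly contradicts Lemma~\ref{lemma:pd-nra-dimer}. If instead $T_0 = T'_1$ or $T_1 = T'_1$, then the pseudo-dimer arrow $T'_1 \Ra T'_0$ combined with uniqueness of attachment identifies $(T'_1, T'_0)$ with either $(T_0, T_1)$ or $(T_1, T_0)$, whereupon the defining non-arrow $T'_0 \nRa T'_1$ of the pseudo-dimer directly contradicts the corresponding dimer arrow.

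For the $m_+$ claim, I would leverage the disjointness just established. Suppose $m_+(T_0) > 0$, so that $(T_0, T_-) \in \M$ for some $T_-$. Condition (4) in the definition of $\M$ then supplies a pseudo-dimer $(T_1^\star, T_0) \in \PD$; applying the first part of the lemma to $(T_1, T_0) \in DP$ and $(T_1^\star, T_0) \in \PD$ forces $\{T_0, T_1\} \cap \{T_1^\star, T_0\} = \emptyset$, contradicting that $T_0$ lies in both. The argument for $m_+(T_1) = 0$ is entirely symmetric: a witness in $\M$ with first coordinate $T_1$ produces a pseudo-dimer whose second coordinate is $T_1$, to which the disjointness again applies.

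The only real obstacle is careful bookkeeping of the four intersection cases and picking the correct prior result in each. No new technical ingredient is required, since Theorem~\ref{thm:dimer-area} (rigidity of dimer pairs with equal areas $\acrit$) and Lemma~\ref{lemma:pd-nra-dimer} have already done the substantive work, and the $m_+$ claim reduces to a one-line application of the disjointness.
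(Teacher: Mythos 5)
Your proof is correct and follows essentially the same plan as the paper: disjointness via a short case analysis using Lemma~\ref{lemma:pd-nra-dimer} and uniqueness of attachment, then the $m_+$ vanishing as a one-line corollary via $\M$ condition (4). The only minor variation is in the $T'_1$ cases, where the paper reduces to the $T'_0$ case already handled (via symmetry of the dimer pair), while you derive a fresh contradiction directly from the pseudo-dimer's defining non-arrow $T'_0 \nRa T'_1$ against the dimer's $\Ra$; both routes are valid and rely on the same ingredients.
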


\begin{proof}
  The relationship between $T_0$ and $T_1$ in a dimer pair is
  symmetrical.  It is enough to show $T_1\not\in\{T_0',T_1'\}$.  We
  claim that $T_1\ne T'_0$.  Otherwise, $T'_0\Ra T_0$, which is
  contrary to Lemma~\rif{lemma:pd-nra-dimer}.

  We claim that $T'_1\ne T_1$.  Otherwise, if $T_1=T'_1$, then
  $T'_1=T_1\Ra T_0$ and $T'_1\Ra T'_0$, so that $T_0 = T'_0$, which
  have shown impossible.  This proves the disjointness result.

  If $m_+(T) >0$, then there exists $T_1''$ such that $(T_1'',T)\in
  \PD$.  This is impossible for $T\in\{T_0,T_1\}$ by the disjointness
  result.
\end{proof}

\subsection{interaction with obtuse triangles}

\begin{lemma}\libel{lemma:t2b-nonobtuse}
  If $T_1$ is obtuse, $T_0$ is nonobtuse, and if $T_1 \Ra T_0$, then
  $T_0$ is not $b$-subcritical.
\end{lemma}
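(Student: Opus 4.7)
The plan is to establish $b(T_0) > \acrit$, showing $T_0$ is not $b$-subcritical, by combining a strong lower bound on $\area(T_0)$ with a ceiling on $n_-(T_0) + m_-(T_0)$. This leverages the intuition foreshadowed earlier that the Delaunay triangle neighboring an obtuse one along its long edge carries substantial surplus area.

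First, since $T_1$ is obtuse and every Delaunay edge has length at least $2\kappa$, the longest edge of $T_1$, which is the edge $e$ shared with $T_0$ by the definition of $T_1 \Ra T_0$, satisfies $|e|^2 > (2\kappa)^2 + (2\kappa)^2$, so $|e| > \kappa\sqrt{8} > 1.72$. Moreover, the pair $(T_1, T_0)$ lies in neither $\N$ nor $\M$: it fails $\N_{\text{obtuse}}$ because $T_0$ is nonobtuse; it fails $\N_{\text{nonobtuse}}$ because $T_+ = T_1$ is obtuse while the definition requires $T_+$ nonobtuse; and it fails $\M$ because the pseudo-dimer prerequisite likewise forces $T_+$ nonobtuse. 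Hence the edge $e$ contributes nothing to $n_-(T_0) + m_-(T_0)$. Any contribution comes from the two other edges of $T_0$, and since every element of $\N \cup \M$ requires the shared edge (the longest edge of $T_+$) to have length at least $1.72$, we obtain $n_-(T_0) + m_-(T_0) \le k$, where $k \in \{0,1,2\}$ is the number of edges of $T_0$ other than $e$ of length at least $1.72$.

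Since $T_0$ is nonobtuse, $\area(T_0)$ is monotonically nondecreasing in each edge length throughout the nonobtuse region (a Heron-formula computation). Minimizing against the appropriate lower bounds gives, in the three cases:
\begin{itemize}
\item $k = 0$: $\area(T_0) \ge \area(2\kappa, 2\kappa, \kappa\sqrt 8) = 2\kappa^2 > \acrit$.
\item $k = 1$: $\area(T_0) \ge \area(\kappa\sqrt 8, 2\kappa, 1.72) > \acrit + \epsN$, the last of the cited area estimates.
\item $k = 2$: $\area(T_0) \ge \area(\kappa\sqrt 8, 1.72, 1.72) > \acrit + 2\epsN$, a direct Heron computation.
\end{itemize}
Using $\epsM < \epsN$,
\[
b(T_0) \;\ge\; \area(T_0) - \epsN\, n_-(T_0) - \epsM\, m_-(T_0) \;\ge\; \area(T_0) - k\, \epsN \;>\; \acrit
\]
in each case.

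The only substantive step is the numerical check for $k=2$; the rest is bookkeeping about which pairs can inhabit $\N$ or $\M$. I would also verify in passing that each extremal reference triangle lies in the nonobtuse region, which follows from $8\kappa^2 \le 2(2\kappa)^2$ and $8\kappa^2 < 2(1.72)^2$, so the area minima are indeed attained at those reference triangles.
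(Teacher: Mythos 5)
Your proof is correct and follows essentially the same strategy as the paper's: use the obtuseness of $T_1$ to get a long shared edge ($\ge\kappa\sqrt8$), observe that $(T_1,T_0)$ contributes nothing to $n_-(T_0)$ or $m_-(T_0)$, bound the remaining contributions by the number of non-shared edges of length $\ge 1.72$, and close with Heron-type area estimates. The only cosmetic differences are that you bound $n_-+m_-$ together and absorb $\epsM$ into $\epsN$, whereas the paper invokes Lemma~\ref{lemma:n2m2} to split into the mutually exclusive cases $n_->0$ and $m_->0$ and uses the sharper constant $1.8$ in the $m_-$ case; both routes land on the same area bounds.
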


\begin{proof} 
  If $T_1$ is obtuse, then its longest edge, which is shared with
  $T_0$, has length at least $\kappa\sqrt8$.

  Assume for a contradiction that $T_0$ is nonobtuse and
  $b$-subcritical.  By Lemma~\rif{lemma:right}, $\area(T_0) > \acrit$.
  Thus, we must have $n_-(T_0)>0$ or $m_-(T_0)>0$.  We consider two
  cases, according to which of these inequalities occurs.

  Assume in the first case that $n_-(T_0)>0$; that is, $(T',T_0)\in
  \N$.  Recall that this implies $m_-(T_0)=0$ by
  Lemma~\rif{lemma:n2m2}.  We have $T_1\Ra T_0$ and $(T_1,T_0)\not\in
  \N$, because $T_1$ is obtuse.  It follows that $n_-(T_0)\le 2$.  In
  fact, $n_-(T_0)$ is equal to the number of nonobtuse $T$ with
  longest edge of length at least $1.72$ such that $T\Ra T_0$.  If
  $n_-(T_0)=1$, we have
  \[
b(T_0) \ge \area(T_0) - \epsN 
> \area(\kappa\sqrt8,1.72,2\kappa)  - \epsN > \acrit.
\]
If $n_-(T_0)=2$, we have
  \[
b(T_0) \ge \area(T_0) - 2\epsN 
> \area(\kappa\sqrt8,1.72,1.72)  - 2\epsN > \acrit.
\]
  This completes the first case.
% calculation checked in calcs.ml 2016/9.

  Finally, we consider the case that $m_-(T_0)>0$ (and $n_-(T_0)=0$).
  We have $m_-(T_0)\le 2$, because $T_1\Ra T_0$, and $T_1$ is obtuse,
  and cannot be part of a pseudo-dimer.  The following area estimate
  gives the result.
\[
  b(T_0) \ge \area(T_0) - 2\epsM\ 
  > \area(\kappa\sqrt8,1.8,2\kappa) -
  2\epsM > \acrit.
\]
% calculation checked in calcs.ml 2016/9.
The constant $1.8$ comes from the egressive edge of a pseudo-dimer in
the definition of $\M$ and $m_-$.
\end{proof}

\begin{corollary}\libel{lemma:n2b} 
  If $n_-(T_-)>0$ with $T_-$ nonobtuse, then $b(T_-) > \acrit$.
\end{corollary}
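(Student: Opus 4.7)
The plan is to observe that this corollary follows essentially immediately from Lemma~\rif{lemma:t2b-nonobtuse} once we unpack the definition of $\N$. Since $T_-$ is nonobtuse, any pair $(T_+,T_-) \in \N$ witnessing $n_-(T_-)>0$ must lie in $\N_{\text{nonobtuse}}$ rather than $\N_{\text{obtuse}}$, because membership in $\N_{\text{obtuse}}$ requires the target to be obtuse.

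Next, I invoke condition 4 in the definition of $\N_{\text{nonobtuse}}$: the existence of $(T_+,T_-)\in\N_{\text{nonobtuse}}$ guarantees an \emph{obtuse} triangle $T$ with $T\Ra T_-$ (the extra area bound $\area(T)-n_-(T,\N_{\text{obtuse}})\epsN \le \acrit$ is part of the hypothesis, but we do not actually need it here). This is the key logical step: the very definition of $\N_{\text{nonobtuse}}$ is rigged so that a nonobtuse target with $n_->0$ must have an obtuse triangle pointing into it.

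Finally, I apply Lemma~\rif{lemma:t2b-nonobtuse} to the pair $(T_1,T_0) := (T,T_-)$, where $T$ is obtuse, $T_-$ is nonobtuse, and $T\Ra T_-$. The lemma concludes that $T_-$ is not $b$-subcritical, i.e.\ $b(T_-) > \acrit$, which is exactly the desired inequality. No obstacle is anticipated; the work has already been done in Lemma~\rif{lemma:t2b-nonobtuse}, and the corollary simply repackages the conclusion in the language of $n_-$.
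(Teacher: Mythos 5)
Your proof is correct and takes essentially the same route as the paper: since $T_-$ is nonobtuse the witnessing pair lies in $\N_{\text{nonobtuse}}$, condition~4 of that definition furnishes an obtuse triangle $T$ with $T\Ra T_-$, and Lemma~\rif{lemma:t2b-nonobtuse} then gives $b(T_-)>\acrit$. The only cosmetic difference is that the paper compresses the first two steps into the single phrase ``by the definition of $\N$ (nonobtuse target).''
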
  

\begin{proof}  
  By the definition of $\N$ (nonobtuse target), There exists an obtuse
  triangle $T$ such that $T\Ra T_-$.  The result follows from the
  lemma.
\end{proof}

\begin{lemma}\libel{lemma:m2b}  
  Let $T$ be $b$-subcritical and nonobtuse.  Then $n_-(T)=0$.
  Moreover, assume that there exists $T'$ that is $b$-subcritical such
  that $T'\rab T$ or $T\rab T'$.  Then $\area(T)\le \acrit$.
\end{lemma}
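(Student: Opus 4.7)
The first claim $n_-(T)=0$ is just the contrapositive of Corollary~\ref{lemma:n2b}: if $n_-(T)>0$ and $T$ is nonobtuse, then $b(T)>\acrit$, contradicting $b$-subcriticality.

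For the second claim, the plan is to substitute $n_-(T)=0$ into $b(T)\le\acrit$ and rewrite the target inequality $\area(T)\le\acrit$ as $\epsM m_-(T)\le\epsN n_+(T)+\epsM m_+(T)$. The easy cases are $n_+(T)\ge 1$ (apply $\epsN\ge 3\epsM$ together with $m_-(T)\le 3$), $m_+(T)\ge 1$ (which forces $m_-(T)=0$ by Lemma~\ref{lemma:m1m2}), and $m_-(T)=0$. What remains is the \emph{bad case} $n_+(T)=m_+(T)=0$ with $m_-(T)\ge 1$, which I will eliminate using the hypothesis on $T'$.

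In the bad case, Corollary~\ref{lemma:m2-area} yields $\area(T)>\acrit+\epsM$; combined with the bound $\area(T)\le\acrit+\epsM m_-(T)$ extracted from $b$-subcriticality, this forces $m_-(T)\ge 2$. If $m_-(T)=3$, then every edge of $T$ is an $\M$-egressive edge of length exceeding $1.8$ by Lemma~\ref{calc:pseudo2}, so $\area(T)\ge\area(1.8,1.8,1.8)>\acrit+3\epsM$, contradicting the upper bound. Hence $m_-(T)=2$, and $T$ has two egressive edges $e_1,e_2>1.8$ and a third edge $e_3$. The longest edge of $T$ must be $e_1$ or $e_2$, for otherwise $e_3>1.8$ would again force all three edges to exceed $1.8$ and the same contradiction would apply. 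Say the longest is $e_1$, so $T\Ra T^a$ where $T^a$ is the pseudo-dimer $T_0$ along $e_1$; by Lemma~\ref{lemma:pd-b}, $T^a$ is not $b$-subcritical.

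To finish, I invoke the hypothesis. If $T\rab T'$, uniqueness of attachment forces $T'=T^a$, contradicting $b$-subcriticality of $T'$. If $T'\rab T$, then $T'$ is nonobtuse (else $T$ would not be $b$-subcritical by Lemma~\ref{lemma:t2b-nonobtuse}), and $T'$ must share its own longest edge with $T$, so $T'\in\{T^a,T^b,T^c\}$, where $T^b$ is the pseudo-dimer $T_0$ along $e_2$ and $T^c$ is the Delaunay neighbor along $e_3$. Lemma~\ref{lemma:pd-b} rules out $T'\in\{T^a,T^b\}$, so $T'=T^c$ and $T^c\Ra T$ forces $e_3$ to be the longest edge of $T^c$. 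If $e_3\ge 1.72$, the monotonicity estimate $\area(T)\ge\area(1.8,1.8,1.72)>\acrit+0.069>\acrit+2\epsM$ contradicts $\area(T)\le\acrit+2\epsM$. If $e_3<1.72$, every edge of $T^c$ lies below $1.72$, so Lemma~\ref{lemma:singleton} gives $b(T^c)=\area(T^c)>\acrit$, contradicting $b$-subcriticality of $T^c$. The main obstacle is the careful case analysis on the three Delaunay neighbors of $T$ and the bookkeeping of how $\M$-pairs contribute to $m_-(T)$; the area estimates and the pseudo-dimer, singleton, and obtuse-attachment lemmas needed are already in place.
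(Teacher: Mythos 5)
Your proof is correct and follows essentially the same route as the paper's: derive $n_-(T)=0$ from the contrapositive of Corollary~\ref{lemma:n2b}, use Corollary~\ref{lemma:m2-area} to force $m_-(T)\ge 2$, identify $T'$ as the Delaunay neighbor of $T$ along the one edge not shared with a pseudo-dimer $T_0$ (the only neighbor that can be $b$-subcritical, by Lemma~\ref{lemma:pd-b}), and conclude $\area(T)\ge\area(1.8,1.8,1.72)$ for the final contradiction. The paper packs this into a single terse estimate $b(T)\ge\area(T)-m_-(T)\epsM>\area(1.8,1.8,1.72)-3\epsM>\acrit$, whereas you spell out the easy/bad case split, explicitly eliminate $m_-(T)=3$, and handle the $e_3<1.72$ subcase with the singleton lemma; these are the same ingredients, just fully unpacked.
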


\begin{proof}  
  By the contrapositive of the corollary, it follows that $n_-(T)=0$.

  Assume for a contradiction that $\area(T) > \acrit$.  We have
\begin{equation}\libel{eqn:2-ac}
\area(T) > \acrit \ge b(T) \ge \area(T) - \epsM m_-(T).
\end{equation}
Thus, $m_-(T) >0$.  By the definition of $\M$, there exists
$(T'_1,T'_0)\in\PD$ with $(T'_0,T)\in\M$ and $T'_0\Ra T$.  By
Corollary~\rif{lemma:egress'}, we have $\area(T) >
\acrit+\epsM$. Combined with Inequality ~\rif{eqn:2-ac}, this gives
$m_-(T)\ge 2$.  Repeating the argument, we have $(T''_1,T''_0)\in \PD$
with $(T''_0,T)\in \M$ and $T''_0\Ra T$.  The triangles $T'$, $T'_0$,
and $T''_0$ are distinct, because for example $b(T'_0) > \acrit \ge
b(T')$ (Lemma~\rif{lemma:pd-b}).  Because of the arrow $T'\rab T$ or $T\rab
T'$, the triangles $T$ and $T'$ belong to the same cluster.  The
longest edge of $T'$ has length at least $1.72$.  Then
\[
b(T) \ge \area(T) - m_-(T)\epsM > \area(1.8,1.8,1.72) - 3\epsM > \acrit.
\]
This  contradicts the assumption that $T$ is $b$-subcritical.
\end{proof}

\begin{lemma} \libel{lemma:rab-sequence} Assume that $T_1$ and $T_0$
  are both nonobtuse.  Then there does not exist a sequence $T_1\rab
  T_0 \rab T$, with $T_1\ne T$.
\end{lemma}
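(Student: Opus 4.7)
The plan is to derive a contradiction by showing that the hypothesized sequence would force $(T_1,T_0)$ to be a pseudo-dimer, while simultaneously requiring $T_0$ to be $b$-subcritical, which contradicts Lemma~\rif{lemma:pd-b}.

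First, I would upgrade the $b$-subcriticality of $T_1$ and $T_0$ to actual subcriticality of the area. Since $T_1\rab T_0$ with both $T_1,T_0$ nonobtuse and $b$-subcritical, the second part of Lemma~\rif{lemma:m2b} (applied to $T_1$, using the witness $T'=T_0$) gives $\area(T_1)\le\acrit$. Applying the same lemma to $T_0$ (with witness $T'=T$, which is available precisely because $T_0\rab T$ exists) gives $\area(T_0)\le\acrit$. Adding these yields $\area\{T_1,T_0\}\le 2\acrit$.

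Next I would verify that $(T_1,T_0)$ satisfies all four defining conditions of a pseudo-dimer. Conditions (1) and (2) are immediate: both triangles are nonobtuse by assumption, $T_1\Ra T_0$ comes from the arrow $T_1\rab T_0$, and $T_1$ is subcritical by the previous paragraph. Condition (4) is the area bound just established. For condition (3), I would observe that $T_0\Ra T$ together with $T\ne T_1$ and the fact that each triangle attaches to exactly one other triangle forces $T_0\nRa T_1$. Thus $(T_1,T_0)\in\PD$.

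The final step is to invoke Lemma~\rif{lemma:pd-b}, which asserts that for every pseudo-dimer $(T_1,T_0)$ one has $b(T_0)>\acrit$; that is, $T_0$ is not $b$-subcritical. But the hypothesis $T_0\rab T$ requires precisely that $T_0$ be $b$-subcritical, contradiction. There is no real obstacle here: all the heavy lifting has already been done in the pseudo-dimer section (especially the computer-assisted area bound of Lemma~\rif{calc:pseudo-area} that underlies Lemma~\rif{lemma:pd-b}), and this lemma is essentially a bookkeeping consequence that certifies the intuition that a $b$-subcritical nonobtuse triangle $T_0$ cannot simultaneously receive an attachment from one triangle and emit an attachment to a different one.
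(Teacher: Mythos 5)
Your proof takes essentially the same route as the paper's: show $(T_1,T_0)$ is a pseudo-dimer and then invoke Lemma~\rif{lemma:pd-b}, which says the target $T_0$ of a pseudo-dimer is never $b$-subcritical, contradicting $T_0\rab T$. You streamline away some intermediate bookkeeping the paper performs (it also verifies $m_+(T_0)=m_-(T_0)=n_+(T_0)=0$, hence $\area(T_0)=b(T_0)$, before reaching the same conclusion, but those facts are not actually needed to close the argument). One small slip worth correcting: when you apply Lemma~\rif{lemma:m2b} to $T_0$, your named witness $T'=T$ does not work, because the lemma requires the witness $T'$ to be $b$-subcritical, and the arrow $T_0\rab T$ only certifies that the \emph{source} $T_0$ is $b$-subcritical, saying nothing about $T$. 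The witness you want is $T'=T_1$, which is $b$-subcritical (being the source of $T_1\rab T_0$) and satisfies $T'\rab T_0$; with that substitution the rest of the argument goes through exactly as you wrote it.
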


\begin{proof}  
  Assume for a contradiction that the sequence exists.  By the
  Lemma~\rif{lemma:m2b}, $n_-(T_0)=0$ and $\area(T_0)\le \acrit$.

  We claim that $m_+(T_0)=0$.  Otherwise, there exists
  $(T'_1,T_0)\in\PD$, and by Lemma~\rif{lemma:pd-b}, $T_0$ is not
  $b$-subcritical, contradicting the assumptions of the lemma.

  We claim that $m_-(T_0)=0$.  This follows by
  Lemma~\rif{lemma:m2-area} and the claim $\area(T_0)\le \acrit$.

We have that $n_+(T_0)=0$.  Otherwise, we reach the contradiction,
\[
\acrit \ge b(T_0) =\area(T_0) + \epsN n_+(T_0) > \ao + \epsN = \acrit.
\]
This shows that $\area(T_0) = b(T_0)$.

By Lemma~\rif{lemma:m2b}, we have $\area(T_1)\le \acrit$.
It follows that $(T_1,T_0)\in\PD$, and by Lemma~\rif{lemma:pd-b},
we reach a contradiction to the assumption that $T_0$ is $b$-subcritical. 
\end{proof}
  
We are finally in a position to prove Theorem~\rif{thm:dimer}.

\begin{proof}[Proof of dimer theorem~\rif{thm:dimer}.]
  Let $(T_1,T_0)$ be a dimer pair, and let $T\in \{T_0,T_1\}$.  We
  have proved that $m_+(T)=m_-(T)=0$ in Lemma~\rif{lemma:m1-dimer} and
  Corollary~\rif{lemma:m2-dimer}.

  We claim that $n_-(T_0)=n_-(T_1)=0$.  Otherwise, $T_1$ or $T_0$ has
  an edge of length at least $\kappa\sqrt8$, coming from an edge
  shared with an obtuse triangle, and the triangle $T_0$ or $T_1$ is
  not subcritical.

  We claim that $n_+(T_0)=n_+(T_1)=0$.  Otherwise, say
  $(T_1,T_0)\in\N$, and we have the contradiction $n_-(T_0)>0$.

  This shows that $b(T_i)=\area(T_i)$, for $i=0,1$.  Because $T_0$ and
  $T_1$ are both subcritical, they are also $b$-subcritical, and fall
  into the same cluster.  This is the full cluster, for otherwise, we
  would have say $T\rab T_0\rab T_1$, which is impossible by
  Lemma~\rif{lemma:rab-sequence}, for $T'$ nonobtuse.  And if $T'$ is
  obtuse, this contradicts Lemma~\rif{lemma:t2b-nonobtuse}).
  This gives the proof.
\end{proof}

\subsection{cluster structure}

% Fig 1. 1-obtuse.pdf page 13b.

We continue with our analysis of the clusters in a fixed saturated
packing of regular pentagons.  

\begin{lemma}\libel{lemma:notlong}  
  Let $T'$ and $T$ be $P$-triangles, such that $T'\Ra T$, where
  $T'$ is nonobtuse subcritical and $T$ is obtuse.  Then the edge
  of attachment is not the longest edge of $T$.
\end{lemma}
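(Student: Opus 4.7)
The plan is to argue by comparing lengths: the shared edge is the longest edge of $T'$ and hence short, while the longest edge of an obtuse Delaunay triangle must be long.

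First, I would use Lemma~\ref{lemma:right}, which says that any nonobtuse subcritical $P$-triangle has edge lengths less than $\kappa\sqrt{8}$. Since $T'\Ra T$, the edge of attachment is by definition the longest edge of $T'$, so its length is strictly less than $\kappa\sqrt{8}$.

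Next, I would give a lower bound on the longest edge of $T$. Let $\theta$ be the obtuse angle of $T$ and let $c$ be the opposite (longest) edge, with other edges $a$ and $b$. By the standing convention in this article, every Delaunay edge has length at least $2\kappa$, so $a,b\ge 2\kappa$. By the law of cosines,
\[
c^2 = a^2 + b^2 - 2ab\cos\theta > a^2 + b^2 \ge 8\kappa^2,
\]
since $\cos\theta<0$. Hence $c > \kappa\sqrt{8}$.

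Combining these two bounds, the shared edge has length strictly less than $\kappa\sqrt{8}$, while the longest edge of $T$ has length strictly greater than $\kappa\sqrt{8}$. Therefore the shared edge cannot be the longest edge of $T$, which is precisely the claim. There is no real obstacle here; the argument is a direct comparison using the two length bounds already established earlier in the paper.
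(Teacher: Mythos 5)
Your proof is correct and takes essentially the same approach as the paper's: compare the upper bound $\kappa\sqrt{8}$ on edges of the nonobtuse subcritical $T'$ (Lemma~\ref{lemma:right}) with the lower bound $\kappa\sqrt{8}$ on the longest edge of the obtuse $T$. The only difference is that you rederive the latter bound via the law of cosines while the paper simply cites it as a previously noted fact.
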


\begin{proof} We have seen that each edge of a nonobtuse subcritical
  triangle has length less than $\kappa\sqrt8$ and that the longest
  edge of an obtuse Delaunay triangle has length at least
  $\kappa\sqrt{8}$.  These are incompatible conditions on a shared
  edge.
\end{proof}

\begin{corollary} \libel{lemma:nm2}
  Let $(T_+,T_-)\in\N$, where $T_-$ is obtuse.  Then the edge shared
  between the triangles is not the longest edge of $T_-$.  In
  particular, $n_-(T_-)\le 2$.
\end{corollary}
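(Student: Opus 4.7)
The plan is to derive the first assertion directly from Lemma~\rif{lemma:notlong} and then deduce $n_-(T_-)\le 2$ by a simple edge-count on $T_-$.

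The hypothesis $(T_+,T_-)\in\N$ together with $T_-$ obtuse places the pair in $\N_{\text{obtuse}}$, whose defining conditions give: $T_+$ is nonobtuse, $T_+\Ra T_-$, and the longest edge of $T_+$ has length at least $1.72$. Since $T_+\Ra T_-$ means that $T_-$ is adjacent to $T_+$ along the longest edge of $T_+$, the shared edge is precisely that longest edge. Invoking Lemma~\rif{lemma:notlong} with $T'=T_+$, $T=T_-$ then yields that this edge of attachment is not the longest edge of $T_-$, which is exactly the first assertion.

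For the bound $n_-(T_-)\le 2$: a pair $(T_+,T_-)\in\N$ with target $T_-$ determines, and is determined by, the shared edge, one of the three edges of $T_-$, since the Delaunay neighbor across any given edge is unique. By the first claim, the longest edge of $T_-$ cannot serve as a shared edge, leaving at most two candidate edges and hence $n_-(T_-)\le 2$.

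The one point that requires care is that Lemma~\rif{lemma:notlong} is stated for $T'$ nonobtuse \emph{subcritical}, whereas $\N_{\text{obtuse}}$ records only nonobtuseness together with the $\ge 1.72$ lower bound on the longest edge of $T_+$. The hard part is closing this small gap. My approach would be to run the underlying contradiction of Lemma~\rif{lemma:notlong} directly: if the shared edge were the longest edge of $T_-$, then since $T_-$ is obtuse Delaunay its length would exceed $\kappa\sqrt{8}$, and being simultaneously the longest edge of the nonobtuse triangle $T_+$, the contrapositive of Lemma~\rif{lemma:right} would force $T_+$ not to be subcritical — an outcome that one then rules out using the structural role that $\N$-membership plays as a cluster boundary marker in the surrounding context.
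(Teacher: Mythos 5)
Your first two paragraphs reproduce the paper's argument exactly: membership in $\N_{\text{obtuse}}$ gives $T_+\Ra T_-$ with $T_+$ nonobtuse, the shared edge is by the definition of attachment the longest edge of $T_+$, Lemma~\rif{lemma:notlong} forbids that edge from also being the longest edge of $T_-$, and since the Delaunay neighbor across each edge is unique, at most two of the three edges of $T_-$ can carry a pair of $\N$, so $n_-(T_-)\le 2$. That is the paper's proof.

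The concern in your last paragraph is genuine, but your proposed way of closing it does not work. Lemma~\rif{lemma:notlong} is stated for $T'$ nonobtuse \emph{subcritical}, and subcriticality is essential: its proof runs through Lemma~\rif{lemma:right} to bound every edge of $T'$ below $\kappa\sqrt{8}$. Membership in $\N_{\text{obtuse}}$ grants only that $T_+$ is nonobtuse with longest edge at least $1.72$, and neither that nor $T_+\Ra T_-$ implies $\area(T_+)\le\acrit$; note also $1.72<\kappa\sqrt{8}$, so the $1.72$ lower bound by itself cannot supply the contradiction. The paper's one-sentence proof simply asserts the hypotheses of Lemma~\rif{lemma:notlong} without flagging this, so your observation is a fair reading of the text rather than a misunderstanding on your part. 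However, appealing to ``the structural role that $\N$-membership plays as a cluster boundary marker'' is not a usable step: the only established fact of that flavor, Lemma~\rif{lemma:Nb}, gives $b(T_+)>\acrit$, which is the \emph{opposite} inequality, and no lemma in the paper derives $\area(T_+)\le\acrit$ from $(T_+,T_-)\in\N$. A concrete repair would be either to add subcriticality of $T_+$ as a hypothesis of the corollary (which is what Lemma~\rif{lemma:notlong} actually handles), or to note that the corollary's only downstream use (the obtuse-$T_0$ case of Lemma~\rif{lemma:t2b}) survives with the trivial bound $n_-(T_-)\le 3$, since $\areta(2\kappa,\kappa\sqrt8,2)-3\epsN$ still exceeds $\acrit$.
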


\begin{proof} If $(T_+,T_-)\in \N$ (obtuse target), then $T_+\Ra T_-$.
  Also, $T_+$ and $T_-$ satisfy the assumptions of
  Lemma~\rif{lemma:notlong}.
\end{proof}

\begin{lemma}\libel{lemma:3m2} 
If $m_-(T) = 3$, then $b(T) > \acrit + \epsN$.
\end{lemma}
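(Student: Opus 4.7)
The plan is to show that $m_-(T)=3$ forces every edge of $T$ to exceed $1.8$, so that $\area(T)$ is large enough to absorb the $-3\epsM$ correction and still leave a margin larger than $\epsN$.

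First I assemble the formula for $b(T)$. Since $m_-(T)=3>0$, Lemma~\rif{lemma:obtuse-b} rules out $T$ being obtuse, so $T$ is nonobtuse. Lemma~\rif{lemma:n2m2} then gives $n_-(T)=0$, and Corollary~\rif{lemma:m1m2} gives $m_+(T)=0$. Therefore
\[
b(T)\;=\;\area(T)+\epsN\,n_+(T)-3\epsM\;\ge\;\area(T)-3\epsM.
\]

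Next I read off the geometric content of $m_-(T)=3$. There are three distinct triangles $T_+^0,T_+^1,T_+^2$ with $(T_+^i,T)\in\M$; each arises from a pseudo-dimer $(T_1^i,T_+^i)\in\PD$ with $T_+^i\Ra T$. Because the three $T_+^i$ are distinct and each sits across a different edge of $T$, the three shared edges exhaust the boundary of $T$. Each shared edge is the egressive edge of the corresponding pseudo-dimer, whose length exceeds $1.8$ by Lemma~\rif{calc:pseudo2}. Hence every edge of $T$ has length strictly greater than $1.8$.

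Finally, I use that $T$ is nonobtuse with all edges $>1.8$ to conclude $\area(T)\ge\area(1.8,1.8,1.8)$. This is the main step: labeling the edges $a\le b\le c$, nonobtuseness gives $c^2\le a^2+b^2$, and $\area=\tfrac12 ab\sin C$ with $C\in[0,\pi/2]$ is nondecreasing in $c$ on $[b,\sqrt{a^2+b^2}]$, minimized at $c=b$; the resulting isoceles area $\tfrac{a}{4}\sqrt{4b^2-a^2}$ is nondecreasing in each of $a,b$ on $\{a\le b\}$, minimized at $a=b=1.8$. Combined with the explicit estimate $\area(1.8,1.8,1.8)>\acrit+0.112$ listed at the head of this section, this yields
\[
b(T)\;>\;\acrit+0.112-3\epsM\;=\;\acrit+0.088\;>\;\acrit+\epsN,
\]
since $\epsN\approx 0.0534$. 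The only delicate point is the monotonicity claim for nonobtuse triangles with all edges at least $1.8$, but this reduces as described to a one-variable calculation on the isoceles family and is routine.
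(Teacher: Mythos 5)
Your proof is correct and follows essentially the same route as the paper's: use Lemma~\ref{lemma:n2m2} to kill the $n_-$ term, note that $m_-(T)=3$ forces every edge of $T$ to be the egressive edge of a pseudo-dimer (hence each has length greater than $1.8$ by Lemma~\ref{calc:pseudo2}), and apply $\area(T)\ge\area(1.8,1.8,1.8)$. You supply two details the paper leaves implicit --- invoking Lemma~\ref{lemma:obtuse-b} to deduce $T$ is nonobtuse so that edge-length monotonicity of area applies, and working out the one-variable reduction to the isosceles case --- but the substance is the same.
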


\begin{proof} By Lemma~\rif{lemma:n2m2}, $n_-(T)=0$.  There is a
  longest (egressive) edge of a pseudo-dimer along each edge of $T$,
  each of length at least $1.8$.  This gives
\[
b(T) \ge \area(T) - 3\epsM 
\ge \area(1.8,1.8,1.8) - 3\epsM > \acrit + \epsN.
\]
\end{proof}
%calculation checked in calcs.ml 2016/9

\begin{lemma}\libel{lemma:Nb}  
  If $(T_+,T_-)\in \N$, then $b(T_+)>\acrit$.
\end{lemma}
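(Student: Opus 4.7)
The plan is to unpack $b(T_+)$ directly using the definitions and then do a short case split on $n_-(T_+)$ and $m_-(T_+)$, calling on the already-proved bookkeeping lemmas from this section. First, I note that by definition of $\N$ the triangle $T_+$ is nonobtuse with longest edge at least $1.72$, and the membership $(T_+,T_-)\in\N$ forces $n_+(T_+)\ge 1$, hence $n_+(T_+)=1$ (since $n_+\le 1$). By Lemma~\rif{lemma:n1m1}, $m_+(T_+)=0$. Thus
\[
b(T_+) \;=\; \area(T_+) \;+\; \epsN\bigl(1 - n_-(T_+)\bigr) \;-\; \epsM\, m_-(T_+),
\]
and the task reduces to bounding this from below by $\acrit$.

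The case $n_-(T_+)>0$ is immediate: since $T_+$ is nonobtuse, Corollary~\rif{lemma:n2b} already gives $b(T_+)>\acrit$. So I may assume $n_-(T_+)=0$ and carry out a three-way split on $m_-(T_+)$. If $m_-(T_+)=0$, then $b(T_+)=\area(T_+)+\epsN$, and the lower bound $\area(T_+)>\ao$ from Lemma~\rif{lemma:a0} (using nonobtuseness) together with $\ao+\epsN=\acrit$ gives the strict inequality. If $m_-(T_+)\in\{1,2\}$, then some pseudo-dimer $(T_1',T_0')\in\PD$ satisfies $T_0'\Ra T_+$, so Corollary~\rif{lemma:egress'} yields $\area(T_+)>\acrit+\epsM$; combined with $m_-(T_+)\le 2$ this gives $b(T_+)>\acrit+\epsM+\epsN-2\epsM=\acrit+\epsN-\epsM>\acrit$, where the final inequality is immediate from the numerical values $\epsN\approx 0.05336$ and $\epsM=0.008$. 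Finally, if $m_-(T_+)=3$, Lemma~\rif{lemma:3m2} directly delivers $b(T_+)>\acrit+\epsN>\acrit$.

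There is no real obstacle here: each branch is already set up by a previously established lemma in the section, and the role of this proof is simply to route the cases to the right tool. The only point that requires a moment of care is observing that $n_+(T_+)=1$ (rather than merely $\ge 1$), which is needed so that the $\epsN$ contribution cancels with $-\epsN n_-(T_+)$ cleanly in the $m_-=0$ case; this is immediate from $n_+\le 1$ noted in the remark after the definitions. The rest is numerical verification using the constants $\ao,\epsN,\epsM,\acrit$ already fixed in Section~\rif{sec:intro}.
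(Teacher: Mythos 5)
Your proof is correct, and the overall skeleton — establish $n_+(T_+)=1$ and $m_+(T_+)=0$ to pin down $b(T_+)$ exactly, then case-split on $n_-(T_+)$ and $m_-(T_+)$ — is the same as the paper's. There is one genuine improvement worth noting: in the case $n_-(T_+)>0$ you invoke Corollary~\ref{lemma:n2b} (itself resting on Lemma~\ref{lemma:t2b-nonobtuse}), which delivers $b(T_+)>\acrit$ directly. The paper does not cite that corollary here; instead it re-derives the bound from scratch by extracting an obtuse $T'$ with $T'\Ra T_+$, using Lemma~\ref{lemma:n2m2} to force $m_-(T_+)=0$, arguing $n_-(T_+)\le 2$, and then estimating $\area(2\kappa,1.72,\kappa\sqrt8)-\epsN>\acrit$ numerically. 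Your route avoids duplicating that numerical argument. Two other small differences are cosmetic: you argue directly rather than by contradiction, and you peel off $m_-(T_+)=3$ into a separate case handled by Lemma~\ref{lemma:3m2}, whereas the paper treats $m_-(T_+)\in\{1,2,3\}$ uniformly via $\acrit+\epsN-2\epsM>\acrit$; both work, since $\epsN>2\epsM$. You also could have cited Corollary~\ref{lemma:m2-area} (which packages the existence of the pseudo-dimer) instead of unwinding $\M$ and then applying Corollary~\ref{lemma:egress'}, but that is a matter of taste, not correctness.
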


\begin{proof}  Otherwise, $n_+(T_+)\ge 1$, and
\[
\acrit \ge b(T_+) 
\ge \area(T_+) + \epsN (1 - n_-(T_+)) - \epsM m_-(T_+)
> \acrit - \epsN n_-(T_+) - \epsM m_-(T_+).
\]
So $n_-(T_+) > 0$ or $m_-(T_+)>0$. This gives two cases.  

Suppose that $n_-(T_+) >0$.  Recall that $T_+$ is nonobtuse.  Then
$(T,T_+)\in\N$ for some nonobtuse $T$ whose shared edge with $T_+$ has
length at least $1.72$.  Thus, by the definition of $\N$ (nonobtuse
target), there exists $T'$ obtuse such that $T'\Ra T_+$.  The shared
edge has length at least $\kappa\sqrt8$.  Also, $n_-(T_+)\le 2$
(because $(T',T_+)\not\in \N$).  By Lemma~\rif{lemma:n2m2}, we have
$m_-(T_+)=0$.  Then
\[
b(T_+) \ge \area(T_+) + \epsN (1 - 2) 
\ge \area (2\kappa,1.72,\kappa\sqrt8) - \epsN > \acrit.
\]
This completes this case.
%calculation checked in calcs.ml 2016/9

Finally, suppose that $m_-(T_+)>0$ and $n_-(T_+)=0$.  There exists a
pseudo-dimer $(T'_1,T'_0)$ such that $T'_0\Ra T_+$.  We have
by Lemma~\rif{lemma:m2-area},
\[
b(T_+) \ge \area(T_+)+\epsN - \epsM m_-(T_+) 
> (\acrit + \epsM) + \epsN -\epsM  3 \ge \acrit.
\]
\end{proof}

\begin{lemma}\libel{lemma:no-ao} 
  There is no arrow $T_1 \rab T_0$ with $T_1$ nonobtuse and $T_0$
  obtuse.
\end{lemma}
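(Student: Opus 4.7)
The plan is to argue by contradiction: suppose $T_1\rab T_0$ with $T_1$ nonobtuse and $T_0$ obtuse. Unpacking $(\rab)$, the shared edge $e$ between $T_0$ and $T_1$ is the longest edge of $T_1$, and $T_1$ is $b$-subcritical. I would split on the length of $e$ at the threshold $1.72$, which is precisely the cutoff built into the definitions of $\N$ and $\M$.

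In the easy case $|e|\ge 1.72$, I would simply verify that $(T_1,T_0)$ satisfies the three membership conditions for $\N_{\text{obtuse}}$: namely $T_0$ is obtuse, $T_1\Ra T_0$, and $T_1$ is nonobtuse with longest edge of length at least $1.72$. Thus $(T_1,T_0)\in\N$, so $n_+(T_1)\ge 1$, and Lemma~\rif{lemma:Nb} immediately gives $b(T_1)>\acrit$, contradicting $b$-subcriticality of $T_1$.

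In the other case $|e|<1.72$, every edge of $T_1$ has length less than $1.72$, and Lemma~\rif{lemma:172} yields $\area(T_1)>\acrit$. I would then show that each of $n_+(T_1)$, $n_-(T_1)$, $m_+(T_1)$, $m_-(T_1)$ vanishes, by the uniform observation that every membership condition for $\N$ and $\M$ demands some triangle in the pair (always one sharing an edge with $T_1$) to have longest edge at least $1.72$, which would force an edge of $T_1$ itself to have length at least $1.72$, contradicting our case assumption. Hence $b(T_1)=\area(T_1)>\acrit$, again a contradiction.

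The argument is really a careful unfolding of definitions and no new geometric estimates are required beyond Lemmas~\rif{lemma:172} and~\rif{lemma:Nb}. The only mild subtlety is seeing that the constant $1.72$ was chosen precisely so the two cases dovetail: when the shared edge is long, the pair lands in $\N$ and Lemma~\rif{lemma:Nb} applies; when the shared edge is short, $T_1$ is cleanly isolated from every membership list of $\N$ and $\M$, so the $n_\pm,m_\pm$ corrections disappear and Lemma~\rif{lemma:172} suffices. I expect this dovetailing to be the only step worth checking carefully.
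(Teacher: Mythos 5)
Your proposal is correct and takes essentially the same approach as the paper's proof. The paper proceeds linearly rather than by explicit case split — it uses Lemma~\rif{lemma:Nb} to rule out $(T_1,T_0)\in\N$, deduces from the definition of $\N_{\text{obtuse}}$ that the longest edge of $T_1$ must be less than $1.72$, and then cites Lemma~\rif{lemma:singleton} (which your case 2 inlines, re-deriving the facts that $b(T_1)=\area(T_1)$ and $\area(T_1)>\acrit$ from Lemma~\rif{lemma:172}); the two arguments use the same lemmas and the same $1.72$ threshold in the same way.
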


\begin{proof}  
  Assume for a contradiction that such a pair $(T_1,T_0)$ exists.  By
  Lemma~\rif{lemma:Nb}, $(T_1,T_0)\not\in\N$.  By the definition of
  $\N$ (obtuse target), the longest edge of $T_1$ has length less than
  $1.72$.  By Lemma~\rif{lemma:singleton}, $T_1$ forms a singleton
  cluster.  This contradicts $T_1\rab T_0$.
\end{proof}

\section{Obtuse Clusters}\libel{sec:obtuse}

In this section we prove the strict main inequality for clusters that
contain an obtuse triangle.  This will involve several cases, but in
every case the strict main inequality will be found to hold by a large
margin.  This allows us to use rather crude approximations of
area in this section.  In particular, we are able to disregard most
constraints on the shapes of Delaunay triangles imposed by the
pentagons.  Instead, we use generic features of the triangles such as
the fact that the circumradius is at most two and the edge lengths of
the triangle are at least $2\kappa$.

\begin{remark}\libel{rem:delaunay}
  Recall that the Delaunay property implies that two adjacent Delaunay
  triangles $T_1$ and $T_2$ have the property that $\alpha_1 +
  \alpha_2\le \pi$, where $\alpha_i$ is the angle of $T_i$ that is not
  at the shared edge of $T_1$ and $T_2$.  In particular, {\it two
    obtuse Delaunay triangles cannot be joined along an edge that is
    the longest on both triangles.}  The extreme case
  $\alpha_1+\alpha_2=\pi$ corresponds to the degenerate situation
  where $T_1$ and $T_2$ form a cocircular quadrilateral. When
  cocircular, either diagonal of the quadrilateral gives an acceptable
  Delaunay triangulation.
\end{remark}

% page 7 pdf obtuse.

We write $\areta(d_1,d_2,h)$ for the area of a triangle with two edges
$d_1,d_2$ and circumradius $h$.  In general, two noncongruent
triangles have data $d_1,d_2,h$.  We choose $\areta(d_1,d_2,h)$ to
give the area of that triangle such that its third edge $d_3$ is as
long as possible.

The following lemma shows that under quite general
conditions, we are justified in our decision to choose $d_3$ as long
as possible in the definition of the function $\areta(d_1,d_2,h)$.  It
is justified in the sense that the other choice does not usually give
a Delaunay triangle of a pentagon packing, according to the following
simple test.

\begin{lemma}\libel{lemma:areta}  
  Let $d_1$, $d_2$ and $\eta$ be positive real numbers.  Assume that
  $T$ and $T'$ are triangles with edge lengths $d_1,d_2,d_3$ and
  $d_1,d_2,d_3'$, and with the same circumradius $\eta$. Assume
  $2\kappa\le d_1\le d_2$.  Set $\theta =
  \arc(\eta,\eta,d_1)+\arc(\eta,\eta,2\kappa)$.  If $2\kappa \le d_3'
  < d_3$, then $\theta < \pi$ and $2\eta\sin(\theta/2) \le d_2$.
\end{lemma}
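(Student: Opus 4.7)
The plan is to view both triangles as inscribed in a common circle of radius $\eta$ and classify the possible values of the third edge. Throughout, I write $\theta_i = \arc(\eta,\eta,d_i)$ for $i=1,2$ and $\theta_0 = \arc(\eta,\eta,2\kappa)$, so that the lemma's $\theta$ equals $\theta_1+\theta_0$, and each $\theta_j\in(0,\pi]$ is the central angle subtending the corresponding chord.

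First I would show that, given $d_1\le d_2$ and circumradius $\eta$, there are at most two noncongruent inscribed triangles, with third edges
\[
2\eta\sin\bigl((\theta_1+\theta_2)/2\bigr)\quad\text{and}\quad 2\eta\sin\bigl((\theta_2-\theta_1)/2\bigr).
\]
This is immediate by placing one vertex on the circle: the other two vertices sit at prescribed central-angular distances $\theta_1,\theta_2$, and up to reflection there are exactly two inequivalent ways to arrange them, which give the two chord lengths above. Equivalently, one can argue via the inscribed angle theorem: the angles opposite $d_1,d_2$ are either $\theta_i/2$ or $\pi-\theta_i/2$, and an admissible pair $(A,B)$ with $A+B<\pi$ yields exactly the two cases (both acute vs.\ the angle opposite $d_2$ obtuse), giving the two formulas.

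Next I would compare these two candidate values. Using $\sin(\pi-x)=\sin x$ together with $\theta_2\le\pi$, one checks that
\[
\sin\bigl((\theta_1+\theta_2)/2\bigr)\ge\sin\bigl((\theta_2-\theta_1)/2\bigr),
\]
with equality precisely when $\theta_2=\pi$, i.e., when $d_2=2\eta$. The hypothesis $d_3'<d_3$ is strict, so this boundary case is excluded: we must have $\theta_2<\pi$, and the identification
\[
d_3=2\eta\sin\bigl((\theta_1+\theta_2)/2\bigr),\qquad d_3'=2\eta\sin\bigl((\theta_2-\theta_1)/2\bigr)
\]
follows. At this point the bound $\theta<\pi$ is already essentially free, since I will show $\theta\le\theta_2<\pi$.

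Finally I would translate the hypothesis $d_3'\ge 2\kappa$. Since both $(\theta_2-\theta_1)/2$ and $\theta_0/2$ lie in $[0,\pi/2]$ (using $\theta_2\le\pi$, $\theta_1\ge0$, and $\kappa\le\eta$), the inequality $\sin((\theta_2-\theta_1)/2)\ge\sin(\theta_0/2)=\kappa/\eta$ is equivalent, by monotonicity of $\sin$ on $[0,\pi/2]$, to $\theta_2-\theta_1\ge\theta_0$. That is, $\theta_2\ge\theta_1+\theta_0=\theta$. Combined with $\theta_2<\pi$ this gives $\theta<\pi$, and another application of monotonicity of $\sin$ on $[0,\pi/2]$ (valid since $\theta/2\le\theta_2/2<\pi/2$) yields $2\eta\sin(\theta/2)\le 2\eta\sin(\theta_2/2)=d_2$, as required.

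The only nontrivial step is the classification in the first paragraph, and in particular the careful comparison of $\sin((\theta_1+\theta_2)/2)$ versus $\sin((\theta_2-\theta_1)/2)$ when $\theta_1+\theta_2>\pi$; the rest is straightforward monotonicity of $\sin$ on a half-period.
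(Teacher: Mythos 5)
Your argument is correct and takes essentially the same route as the paper's: both place $T$ and $T'$ on the same circumcircle, identify $d_3$ and $d_3'$ with the chords subtending central angles $\theta_2+\theta_1$ and $\theta_2-\theta_1$, and convert the two hypotheses into $\theta_2<\pi$ (from $d_3'<d_3$) and $\theta_2\ge\theta_1+\theta_0=\theta$ (from $d_3'\ge 2\kappa$). The difference is only presentational: where the paper appeals to its figure, you make the comparison explicit via the product-to-sum identity and monotonicity of $\sin$ on $[0,\pi/2]$.
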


As a corollary, 
in contraposition, if $\theta\ge\pi$ or if $d_2 < 2\eta\sin(\theta/2)$,
then the triangle $T'\ne T$, with $d_3' < d_3$,
cannot satisfy the constraint $2\kappa\le d_3'$ of a  Delaunay triangle.

\begin{proof} 
  See Figure~\rif{fig:areta}.  Let $\p$, $\q$, and $\r$ (resp. $\p$,
  $\q$, and $\r'$) be the vertices of $T$ (resp. $T'$) on a common
  circle, with
\[
\norm{\p}{\q} =d_1, 
\quad\norm{\p}{\r}=\norm{\p}{\r'}=d_2, \quad\text{and } 
\norm{\q}{\r'} = d_3' \le d_3=\norm{\q}{\r}.
\]
The angle on the circumcircle from $\p$ to $\q$ is
$\arc(\eta,\eta,d_1)$, and $\theta$ is the angle on the circumcircle
from $\p$ to the first point $\s$ beyond $\q$ such that
$\norm{\s}{\q}=2\kappa$.  If $\theta\ge\pi$, a point $\r'\ne \r$
satisfying the constraints does not exist.  Assume $\theta < \pi$.  As
the figure indicates, $\norm{\p}{\r'}$ is minimized (as a function of
$d_2$) when $\r'=\s$, and $d_3' = 2\kappa$, the lower constraint.
Then $d_2 = \norm{\p}{\r'}\ge \norm{\p}{\s}= 2\eta\sin(\theta/2)$.
\end{proof}

\tikzfig{areta}{
There can be two positions $\r,\r'$ 
on the circumcircle for
the third vertex of the triangle.  
Here, $d_1 = \norm{\p}{\q}$.}{
[scale=1.3]
\draw (0,0) circle (1cm);
\draw (1,0) node[anchor=west] {$\p$} --  (70:1cm) node[anchor=south] {$\q$};
\draw (1,0) -- node[above=1pt] {$d_2$} (130:1cm) node[anchor=south] {$\r'$};
\draw (1,0) -- node[below=1pt] {$d_2$} (-130:1cm) node[anchor=north] {$\r$};
\draw (130:1cm) -- (-130:1cm);
\draw[thin,gray] (1,0) -- (-1,0);
\draw (100:1cm) node[above=1pt] {$\s$};
}

\begin{lemma}\libel{lemma:t2b}
  If $T_1$ is obtuse, and $T_1 \Ra T_0$, then $T_0$ is not
  $b$-subcritical.
\end{lemma}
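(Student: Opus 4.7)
The plan is to split on whether $T_0$ is obtuse. When $T_0$ is nonobtuse, the statement is exactly Lemma~\ref{lemma:t2b-nonobtuse}, already proved. So the only new content is the case where $T_0$ is also obtuse.

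Suppose both $T_1$ and $T_0$ are obtuse. Since $T_1 \Ra T_0$, the shared edge is the longest edge of $T_1$, and obtuseness of $T_1$ forces its length to be at least $\kappa\sqrt{8}$. By the Delaunay inequality of Remark~\ref{rem:delaunay}, the angle of $T_0$ opposite the shared edge and the obtuse angle of $T_1$ opposite the shared edge sum to at most $\pi$; hence the angle of $T_0$ opposite the shared edge is strictly acute. The obtuse angle of $T_0$ therefore lies opposite a \emph{different} edge $c$, and $c$ is the longest edge of $T_0$. Being the longest edge of an obtuse Delaunay triangle, $c \ge \kappa\sqrt{8}$. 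Thus $T_0$ has two edges of length at least $\kappa\sqrt{8}$ (the shared edge and $c$), a third of length at least $2\kappa$, and by saturation, circumradius $\eta_0 \le 2$.

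The key quantitative step is to show that any such $T_0$ satisfies $\area(T_0) > \acrit + 2\epsN$. Taking $d_1 = 2\kappa$ and $d_2 = \kappa\sqrt{8}$ (so that $2\kappa \le d_1 \le d_2$ as required by Lemma~\ref{lemma:areta}), the longest edge $c$ of $T_0$ is the ``long'' branch of that lemma, giving $\area(T_0) \ge \areta(d_1, d_2, \eta_0)$ at the lower bounds on $d_1, d_2$. Parametrize such an inscribed triangle by angles $\alpha, \beta, \gamma$ with $\sin\alpha = d_1/(2h)$, $\sin\beta = d_2/(2h)$, and $\gamma = \pi - \alpha - \beta$ in the obtuse branch. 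A short computation shows that the area $\tfrac{1}{2}d_1 d_2 \sin\gamma$ is monotonically increasing in $d_1$ and $d_2$ and monotonically decreasing in $h$ on the obtuse branch. Hence the infimum over the feasible region is attained at the corner $d_1 = 2\kappa$, $d_2 = \kappa\sqrt{8}$, $h = 2$, where solving the resulting quadratic in $\sin^2\gamma$ yields $\areta(2\kappa, \kappa\sqrt{8}, 2) \approx 1.58$, comfortably exceeding $\acrit + 2\epsN \approx 1.40$.

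Finally, $T_0$ is obtuse, so Lemma~\ref{lemma:obtuse-b} gives $b(T_0) = \area(T_0) - \epsN n_-(T_0)$, and Corollary~\ref{lemma:nm2} gives $n_-(T_0) \le 2$. Therefore
\[
b(T_0) \ge \area(T_0) - 2\epsN > (\acrit + 2\epsN) - 2\epsN = \acrit,
\]
so $T_0$ is not $b$-subcritical, completing the proof. The only real obstacle is the monotonicity-plus-boundary computation bounding $\areta(2\kappa, \kappa\sqrt{8}, 2)$ from below; it is routine but constitutes the sole quantitative content of the argument.
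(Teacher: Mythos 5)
Your proposal is correct and follows essentially the same route as the paper's proof: splitting on whether $T_0$ is obtuse, citing Lemma~\rif{lemma:t2b-nonobtuse} in the nonobtuse case, and in the obtuse case using Remark~\rif{rem:delaunay} to place the shared edge adjacent to the obtuse angle of $T_0$, then applying $b(T_0)\ge\areta(2\kappa,\kappa\sqrt{8},2)-2\epsN>\acrit$ via Corollary~\rif{lemma:nm2} and Lemma~\rif{lemma:obtuse-b}. Your explicit monotonicity analysis of $\areta$ on the long branch is a slightly more detailed exposition of the paper's phrase ``deform $T_0$ decreasing its area and increasing its longest edge and its circumradius,'' and the only item you omit is the explicit verification of the hypothesis of Lemma~\rif{lemma:areta} that the paper records, which is a routine check.
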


\begin{proof}  
  If $T_0$ is nonobtuse, then this is Lemma~\rif{lemma:t2b-nonobtuse}.

  Assume that $T_0$ is obtuse.  By basic properties of Delaunay
  triangles (Remark~\rif{rem:delaunay}), Delaunay triangles never join
  along an edge that is the longest on both triangles.  Thus, $T_1$
  attaches to $T_0$ along an edge adjacent to the obtuse angle of
  $T_0$.  The shared edge has length at least $\kappa\sqrt8$.  To
  bound the area of $T_0$, we deform $T_0$ decreasing its area and
  increasing its longest edge and its circumradius, until we obtain a
  triangle of circumradius $\eta=2$, and shortest edges $2\kappa$ and
  $\kappa\sqrt{8}$.  Then a numerical calculation (using
  Corollary~\rif{lemma:nm2} and Lemma~\rif{lemma:obtuse-b}) gives
\[
b(T_0) \ge \area(T_0) - \epsN n_-(T_0) \ge 
\areta(2\kappa,\kappa\sqrt{8},2) - 2\epsN > \acrit.
\] 
% checked 2016/2/18 in Mathematica. It holds by a margin 0.1856....
% 2016/9 checked in calcs.ml.
The use of the function $\areta$ is justified by
Lemma~\rif{lemma:areta} and the numerical estimate
\[
d_2 = \kappa\sqrt8 
<  4 \sin(\arc(2,2,2\kappa)) = 2\eta\sin(\theta/2).
\] 
% checked 2016/2/18, 2016/9 in Mathematica.
% part of areta function in calcs.ml, lemma_t2b.
\end{proof}

In future uses of the function $\areta$, we always check that the
conditions of Lemma~\rif{lemma:areta} justify the use of the function.
We do not show these calculations.

\begin{lemma}\libel{lemma:sequence}  There does not exist a three
  term sequence $- \rab -\rab -$ where the three triangles are
  distinct.
\end{lemma}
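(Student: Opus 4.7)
The plan is to suppose for contradiction that a sequence $T_2 \rab T_1 \rab T_0$ of three distinct Delaunay triangles exists, and to derive a contradiction by splitting into two cases according to whether the middle triangle $T_1$ is obtuse or nonobtuse. The payoff is that each case collapses quickly onto an already established lemma, so essentially no new geometric content is needed; all of the real work was done in Lemmas \rif{lemma:rab-sequence}, \rif{lemma:t2b}, and \rif{lemma:no-ao}.

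First I would handle the case that $T_1$ is nonobtuse. Since $T_1 \rab T_0$, the triangle $T_1$ is $b$-subcritical. The contrapositive of Lemma \rif{lemma:t2b} applied to the attachment $T_2 \Ra T_1$ (part of $T_2 \rab T_1$) then forces $T_2$ to be nonobtuse as well: if $T_2$ were obtuse, then $T_1$ would fail to be $b$-subcritical. Having established that both $T_2$ and $T_1$ are nonobtuse, the hypotheses of Lemma \rif{lemma:rab-sequence} are met by the sequence $T_2 \rab T_1 \rab T_0$, and distinctness of $T_2$ and $T_0$ is part of our standing assumption, so that lemma delivers the desired contradiction.

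Next I would handle the case that $T_1$ is obtuse. The attachment $T_2 \rab T_1$ requires that $T_1$ is itself $b$-subcritical, so as before Lemma \rif{lemma:t2b} forces $T_2$ to be nonobtuse (an obtuse $T_2$ attaching to $T_1$ would leave $T_1$ not $b$-subcritical). But a nonobtuse $T_2$ with an arrow $T_2 \rab T_1$ into an obtuse $T_1$ is exactly what Lemma \rif{lemma:no-ao} forbids. Contradiction again.

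The main obstacle is really cosmetic rather than substantive: one has to be careful to align the roles of the triangles with the variable names in the cited lemmas (Lemma \rif{lemma:rab-sequence} is phrased with its $T_1, T_0$ corresponding to our $T_2, T_1$, and so on). Once that bookkeeping is done correctly, both cases close immediately, and there is no need for any further area estimates or computer calculations at this point.
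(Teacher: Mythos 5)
Your proof is correct and follows essentially the same route as the paper: the paper also rules out the sequence by first using Lemma~\ref{lemma:t2b} to force the source triangle to be nonobtuse, then using Lemma~\ref{lemma:no-ao} to propagate nonobtuseness along the arrows, and finally invoking Lemma~\ref{lemma:rab-sequence}; you have merely reorganized this into an explicit case split on whether $T_1$ is obtuse. One small slip in your second case: the fact that $T_1$ is $b$-subcritical comes from the arrow $T_1 \rab T_0$ (the source of $(\rab)$ must be $b$-subcritical), not from $T_2 \rab T_1$ as you wrote, but since you had this right in Case~1 and the conclusion is the same, the argument is unaffected.
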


\begin{proof}  
  Assume for a contradiction, that such a sequence exists.  By
  Lemma~\rif{lemma:t2b}, there does not exist a sequence $- \rab -
  \rab -$, where the first triangle is obtuse.  Thus, we may assume
  that the first triangle is nonobtuse.  By Lemma~\rif{lemma:no-ao},
  there does not exist $T_1\rab T_0$, where $T_1$ is nonobtuse and
  $T_0$ is obtuse.  Thus, we may assume that every triangle in the
  sequence is nonobtuse.  This is impossible by
  Lemma~\rif{lemma:rab-sequence}.
\end{proof}

If $\C$ is a cluster that is not a singleton, then there is
some arrow $T_1\rab T_0$.  We have the following structure
theorem for clusters.

\begin{theorem}\libel{lemma:c-weak}
  Let $\C$ be a cluster, and let $T_1\rab T_0$ be an
  arrow between triangles in $\C$.
  Then 
  \begin{equation}\libel{eqn:c-weak}
  \C = \{T_0\}\cup \{T \mid T\rab T_0\}.
  \end{equation}
\end{theorem}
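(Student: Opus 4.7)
The plan is to read off the structure of each cluster directly from the three–term sequence lemma (Lemma~\rif{lemma:sequence}), which forbids any chain $T' \rab T'' \rab T'''$ of three distinct triangles. I would use this repeatedly to show that the directed graph on $\C$ given by $(\rab)$ has diameter at most $2$ and a single ``sink'' vertex, so that $\C$ is exactly the in–neighborhood of $T_0$ (together with $T_0$ itself).

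More concretely, I would first establish three structural observations about arrows incident to $T_0$ and $T_1$, each a direct contraposition of Lemma~\rif{lemma:sequence}. First, no triangle $T\ne T_0$ satisfies $T\rab T_1$: otherwise $T\rab T_1\rab T_0$ is a forbidden distinct 3--sequence. Second, if $T_0$ has any outgoing arrow $T_0\rab T'$, then $T'=T_1$: otherwise $T_1\rab T_0\rab T'$ is a forbidden distinct 3--sequence (using that each triangle has $n_+\le 1$, so the outgoing arrow is unique when it exists). Third, for any $T\ne T_1$ with $T\rab T_0$, the only arrow incident to $T$ is this one: its unique outgoing arrow must go to $T_0$ by uniqueness of attachment, and any incoming arrow $T''\rab T$ with $T''\ne T_0$ produces the forbidden 3-sequence $T''\rab T\rab T_0$; the possibility $T_0\rab T$ is excluded by the previous point since $T\ne T_1$.

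Next I would translate these into a statement about the undirected connectivity defining $\C$. Let $\C^\star := \{T_0\}\cup\{T\mid T\rab T_0\}$. Every element of $\C^\star$ is $(\equiv_b)$-equivalent to $T_0$ via a single $\rab$-edge, so $\C^\star\subseteq\C$. For the reverse inclusion, take any $T\in\C$ and a shortest undirected $\rab$-path from $T$ to $T_0$ (or to $T_1$, which lies in $\C^\star$). The three structural observations show that from any vertex in $\C^\star$, every $\rab$-neighbor is again in $\C^\star$: the neighbors of $T_0$ are $T_1$ and the triangles $T\rab T_0$, all in $\C^\star$; the neighbors of $T_1$ are only $T_0$ (by observations one and two); and the neighbors of any other $T\in\C^\star$ are only $T_0$ (by observation three). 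Hence $\C^\star$ is closed under taking $\rab$-neighbors, so the connected component of $T_0$ is contained in $\C^\star$, giving $\C=\C^\star$.

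The only subtle point, and the one I would be most careful with, is the possibility of a mutual attachment $T_0\rab T_1$ together with $T_1\rab T_0$: this does \emph{not} violate Lemma~\rif{lemma:sequence} because the lemma requires three \emph{distinct} triangles, and one should not accidentally rule it out. In fact the dimer case of Theorem~\rif{thm:dimer} exhibits exactly this configuration, and the structural description $\C = \{T_0\}\cup\{T\mid T\rab T_0\}$ correctly specializes to $\{T_0,T_1\}$ there. Apart from handling this case cleanly, the argument is a purely combinatorial consequence of Lemma~\rif{lemma:sequence}, so I do not expect genuine technical obstacles.
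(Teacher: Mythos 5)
Your proof is correct and follows essentially the same route as the paper's: both arguments apply Lemma~\rif{lemma:sequence} to forbid any further arrows into or out of $\{T_0\}\cup\{T\mid T\rab T_0\}$, concluding that this set is closed under $\rab$-adjacency and hence is the whole cluster. You articulate the closure argument a bit more explicitly (splitting into separate observations about neighbors of $T_0$, $T_1$, and the remaining $T$), and you correctly flag that the possible mutual attachment $T_0\rab T_1\rab T_0$ does not violate the three-distinct-triangle hypothesis, which is the one place a careless reading could go wrong; the only small imprecision is attributing the uniqueness of the outgoing $\rab$-arrow to ``$n_+\le 1$'' rather than directly to the fact that each triangle attaches to exactly one other, but this does not affect the argument.
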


\begin{proof}  
  We use Lemma~\rif{lemma:sequence}.  Assume that $T\rab T_0$.  There
  is no arrow $T'\rab T$, with $T'\ne T_0$, because that would also
  produce a sequence $T' \rab T \rab T_0$ of three distinct triangles.
  There is a unique arrow out of $T$.  Thus, we have accounted for all
  of the arrows in and out of $T$.

  There is no arrow $T_0\rab T''$, with $T''\ne T_1$, because that
  would produce a sequence $T_1\rab T_0\rab T''$ of  three 
  distinct triangles.  We have accounted for all the arrows in and
  out of $T_0$.  Thus, the full cluster has been identified.
\end{proof}

\begin{corollary}\libel{lemma:card4}
  Every cluster is finite of cardinality at most $4$.
\end{corollary}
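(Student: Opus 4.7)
The plan is to read this off directly from Theorem \ref{lemma:c-weak}, using the elementary observation about $\Ra$ recorded in the remark after the definition of $\M$ and $\N$.

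If $\C$ is a singleton cluster, then $\card(\C) = 1 \le 4$ and we are done. Otherwise, there exists an arrow $T_1 \rab T_0$ between triangles of $\C$, and Theorem \ref{lemma:c-weak} gives the explicit description
\[
\C = \{T_0\} \cup \{T \mid T \rab T_0\}.
\]
So it suffices to bound the set $\{T \mid T \rab T_0\}$. But any $T$ in this set satisfies $T \Ra T_0$, which means $T_0$ is adjacent to $T$ along the longest edge of $T$. In particular, the shared edge is one of the three edges of $T_0$, and for each edge of $T_0$ there is at most one Delaunay triangle on the opposite side. Hence there are at most three triangles $T$ with $T \rab T_0$, giving $\card(\C) \le 1 + 3 = 4$.

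Finiteness is automatic from this cardinality bound, so no separate argument is needed. There is really no main obstacle here; the work is entirely absorbed in the preceding structure theorem \ref{lemma:c-weak}, which in turn rests on Lemma \ref{lemma:sequence} forbidding three-term $\rab$-chains of distinct triangles. The corollary is a one-line packaging of that structural result together with the triviality that each triangle has exactly three edges.
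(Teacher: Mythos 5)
Your argument is correct and is exactly the paper's: apply Theorem \ref{lemma:c-weak} to write $\C = \{T_0\}\cup\{T\mid T\rab T_0\}$, and note that at most three triangles can attach to $T_0$ since each attachment uses a distinct edge of $T_0$. The paper's proof is just the one-liner ``At most three triangles attach to $T_0$,'' so you have matched it.
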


\begin{proof} At most three triangles attach to $T_0$.
\end{proof}

The following theorem is the main result of this
section.

\begin{theorem}\libel{lemma:obtuse}  
  Let $\C$ be any cluster that contains an obtuse triangle.  Then 
   the strict main inequality (\rif{eqn:strict-main})
  holds for $\C$.
\end{theorem}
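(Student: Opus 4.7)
My plan is to exploit the cluster structure theorem (Theorem~\rif{lemma:c-weak}) to decompose a non-singleton cluster as $\C = \{T_0\} \cup \{T : T \rab T_0\}$, with $|\C| \le 4$ by Corollary~\rif{lemma:card4}; the singleton case follows from Lemma~\rif{lemma:singleton}. The key observation driving every subcase is that whenever any obtuse $T^* \rab T_0$ is present, Lemma~\rif{lemma:t2b} forces $b(T_0) > \acrit$, yielding a quantitative surplus at the central triangle that I must show dominates the accumulated deficits from the $b$-subcritical attaching triangles.

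First I would treat the case where $T_0$ is itself obtuse. By Lemma~\rif{lemma:no-ao} every attaching $T_i$ is then also obtuse, and by Remark~\rif{rem:delaunay} the shared edge between $T_i$ and $T_0$ is longest on $T_i$ but not on $T_0$; hence $T_0$ has at least two distinct edges of length at least $\kappa\sqrt{8}$ (its own longest, plus the shared edge with $T_i$), and if two triangles attach then all three edges of $T_0$ have length at least $\kappa\sqrt{8}$. This bounds $|\C| \le 3$ and, through crude $\areta$ estimates justified by Lemma~\rif{lemma:areta}, forces $\area(T_0)$ comfortably larger than $\acrit$. For each obtuse $T \in \C$, Lemma~\rif{lemma:obtuse-b} gives $b(T) = \area(T) - \epsN n_-(T)$ with $n_-(T) \le 2$ by Corollary~\rif{lemma:nm2}, and comparing the surplus at $T_0$ to the deficits at the $T_i$ (themselves obtuse Delaunay triangles with one edge at least $\kappa\sqrt{8}$) closes the case.

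Next I would treat the case where $T_0$ is nonobtuse and some obtuse $T^* \rab T_0$ exists. The argument inside the proof of Lemma~\rif{lemma:t2b-nonobtuse} is quantitative: depending on $n_-(T_0)$ and $m_-(T_0)$, it produces explicit lower bounds of the form $b(T_0) \ge \areta(\kappa\sqrt{8}, 1.72, 2\kappa) - \epsN$ or $\areta(\kappa\sqrt{8}, 1.8, 2\kappa) - 2\epsM$, each exceeding $\acrit$ by a concrete margin. For each attaching $T_i$ I would bound $b(T_i)$ from below using Lemma~\rif{lemma:a0} when $T_i$ is nonobtuse, or a crude obtuse area estimate via $\areta$ otherwise, together with the caps $n_-(T_i) \le 2$ and $m_-(T_i) \le 3$. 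Across the at most three attachments, the total deficit $\sum (\acrit - b(T_i))$ is then shown to fall strictly below the surplus at $T_0$.

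The main obstacle is the bookkeeping in the second case: the cluster can contain up to three attaching triangles of mixed obtuse/nonobtuse type, and each may carry correction terms inherited from $\N$- and $\M$-pairs with triangles outside $\C$. Nevertheless, as advertised at the start of Section~\rif{sec:obtuse}, the margins here are large, so the rough $\areta$-bounds together with the integer caps on $n_\pm$ and $m_\pm$ should suffice in every subcase without any need for sharp geometric estimates.
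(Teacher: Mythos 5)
Your high-level plan (singleton case, then structure theorem $\C = \{T_0\}\cup\{T : T \rab T_0\}$ with $\card(\C)\le 4$, split on whether $T_0$ is obtuse) matches the paper's skeleton, but the execution diverges in a way that will not close. The central problem is that you propose to bound $b(T)$ \emph{separately for each triangle} and then sum the surpluses and deficits, whereas the paper must and does perform a \emph{joint} area minimization over the whole cluster, typically parameterized by the length of the shared edge. Consider the paper's Case~4: $T_0$ and $T_1$ both obtuse, $\card(\C)=2$. The best per-triangle bounds are $\area(T_1)\ge \areta(2\kappa,2\kappa,2)\approx 0.968$ and $\area(T_0)\ge\areta(\kappa\sqrt{8},2\kappa,2)\approx 1.58$, giving a naive total of about $2.55$, yet the target is $2\acrit+\epsN\tn$, which can be as large as $\approx 2.79$. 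The per-triangle minima are attained at incompatible values of the shared edge length $x$: small $x$ makes $T_0$ small but forces $T_1$ large (an obtuse triangle with longest edge near $\kappa\sqrt{8}$ is nearly right and has area near $2\kappa^2$), while large $x$ makes $T_1$ small but blows up $T_0$. The paper explicitly minimizes $\area(2\kappa,2\kappa,x)+\areta(2\kappa,x,2)$ over $x$; decoupling the two triangles destroys the estimate. The same decoupling issue defeats your plan in the paper's Case~1 (nonobtuse $T_0$ plus one obtuse attacher), where $T_0$ with only one edge forced to $\kappa\sqrt{8}$ has area as small as $\approx 1.31$, a surplus of only $\approx 0.02$ against a deficit of $\approx 0.43$ from the obtuse attacher; the paper instead deforms the joint quadrilateral $Q$ to cocircular or extremal configurations. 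Your parenthetical claim that ``the margins here are large, so the rough $\areta$-bounds \dots should suffice in every subcase'' is simply false for per-triangle bounds.

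There are two further omissions, less fatal but worth noting. First, you consider ``attaching triangles of mixed obtuse/nonobtuse type,'' but Lemma~\ref{lemma:obtuse-source} shows that in a cluster containing an obtuse triangle every arrow $T_1\rab T_0$ has $T_1$ obtuse; the mixed case is vacuous, and without this fact you would be wasting effort on a subcase whose $b(T_i)$ corrections you cannot control. Second, your bookkeeping of the $\N$- and $\M$-corrections ``inherited from pairs with triangles outside $\C$'' is exactly what Lemma~\ref{lemma:tb} is designed to handle cleanly: it shows $\sum_{T\in\C} b(T)\ge\sum_{T\in\C}\tb(T)$, replacing the individual $n_\pm,m_\pm$ accounting by the single quantity $\tn(\C)$, the count of external edges of length at least $1.72$. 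The paper's proof then reduces to the purely geometric inequality $\area(\C) > \acrit\card(\C)+\epsN\tn(\C)$, which it attacks cluster by cluster with joint $\areta$-minimizations. You would need both Lemma~\ref{lemma:obtuse-source} and Lemma~\ref{lemma:tb}, plus the joint (not per-triangle) deformation arguments, to make your proposal go through.
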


We prepare for the proof of the theorem with some lemmas.

\begin{lemma} \libel{lemma:obtuse-source} 
  Let $T_1\rab T_0$ be an
  arrow between two triangles in a cluster that contains an obtuse
  triangle.  Then $T_1$ is obtuse.  Moreover, for every obtuse
  triangle $T$ in the cluster, $b(T) = \area(T) -\epsN n_-(T)$.
\end{lemma}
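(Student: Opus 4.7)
The plan is to handle the ``moreover'' clause instantly from Lemma~\ref{lemma:obtuse-b} (which already supplies $m_\pm(T)=n_+(T)=0$ for obtuse $T$, giving $b(T)=\area(T)-\epsN n_-(T)$), and to prove the main statement by contradiction: I will suppose that $T_1$ is nonobtuse and derive a contradiction to $b(T_1)\le\acrit$ by placing the pair $(T_1,T_0)$ into $\N$ and then invoking Lemma~\ref{lemma:Nb}.

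Assume $T_1$ is nonobtuse. First, Lemma~\ref{lemma:no-ao} forces $T_0$ to be nonobtuse as well. Fix an obtuse triangle $T^*\in\C$; applying Theorem~\ref{lemma:c-weak} to the given arrow $T_1\rab T_0$ writes $\C=\{T_0\}\cup\{T\mid T\rab T_0\}$, and since $T_0$ is nonobtuse we must have $T^*\ne T_0$, yielding the arrow $T^*\rab T_0$. This is the obtuse witness that the rest of the argument needs.

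The key step is verifying $(T_1,T_0)\in\N_{\text{nonobtuse}}$. Conditions 1--2 are immediate ($T_0$ and $T_1$ are nonobtuse, and $T_1\Ra T_0$). For condition 3, I would note that the cluster of $T_1$ contains the distinct triangle $T_0$ and hence is not a singleton, so the contrapositive of Lemma~\ref{lemma:singleton} gives an edge of $T_1$ of length at least $1.72$, whence its longest edge meets the threshold. For condition 4 I would use $T^*$ itself as the obtuse witness: because $T^*\rab T_0$ and $T^*$ is obtuse, Lemma~\ref{lemma:obtuse-b} gives $b(T^*)=\area(T^*)-\epsN n_-(T^*)\le\acrit$; and since the target $T^*$ is obtuse, every pair $(-,T^*)\in\N$ is forced into $\N_{\text{obtuse}}$, so $n_-(T^*)=n_-(T^*,\N_{\text{obtuse}})$, and the inequality required in condition 4 is exactly $b(T^*)\le\acrit$. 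This bookkeeping, making sure the obtuse witness supplies both the attachment $T^*\Ra T_0$ and the precise form of the area inequality, is the main obstacle; everything else is routine.

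With $(T_1,T_0)\in\N$ established, Lemma~\ref{lemma:Nb} gives $b(T_1)>\acrit$, contradicting the $b$-subcriticality of $T_1$ encoded in the arrow $T_1\rab T_0$. Hence $T_1$ must be obtuse. The second assertion, that $b(T)=\area(T)-\epsN n_-(T)$ for every obtuse $T$ in the cluster, is then just Lemma~\ref{lemma:obtuse-b} applied triangle by triangle.
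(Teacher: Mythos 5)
Your proposal is correct and follows essentially the same route as the paper: assume $T_1$ nonobtuse, deduce $T_0$ nonobtuse via Lemma~\ref{lemma:no-ao}, use the cluster structure theorem to extract an obtuse $T^*$ with $T^*\rab T_0$, verify $(T_1,T_0)\in\N_{\text{nonobtuse}}$ (with the singleton lemma giving the $1.72$ threshold and $T^*$ serving as the obtuse witness for condition~4), and then contradict $b$-subcriticality via Lemma~\ref{lemma:Nb}. You spell out the $n_-(T^*)=n_-(T^*,\N_{\text{obtuse}})$ bookkeeping more explicitly than the paper does, but that is exactly the content the paper is implicitly invoking.
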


\begin{proof} 
  Assume for a contradiction that $T_1$ is nonobtuse.  By
  Lemma~\rif{lemma:no-ao} applied to the arrow $T_1\rab T_0$, the
  triangle $T_0$ is nonobtuse.  By assumption and the structure
  theorem for clusters, there exists $T'\rab T_0$, where $T'$ is
  obtuse.  The singleton lemma (Lemma~\rif{lemma:singleton}) implies
  that the longest edge of $T_1$, which is shared with $T_0$ has
  length at least $1.72$.  By the definition of $\N$ (nonobtuse
  target), we have $(T_1,T_0)\in\N$.  By Lemma~\rif{lemma:Nb}, we have
  $b(T_1)>\acrit$, and $T_1$ is not $b$-subcritical.  Thus, we obtain
  a contradiction to $T_1\rab T_0$.

 Thus, $T_1$ is obtuse.
  Moreover, if $T$ is obtuse, then $b(T) = \area(T) -\epsN n_-(T)$ by
  Lemma~\rif{lemma:obtuse-b}.
\end{proof}

Let $\C$ be a cluster.  By {\it external edge} of the cluster, we mean an
edge of a triangle in the cluster that is not shared with another
triangle in the cluster.  Let $\tn(\C)$ be the total number of
external edges of length at least $1.72$ of the cluster $\C$.  We have
\[
\tn(\C) =\sum_{T\in \C} \tn(T),
\]
where $\tn(T)$ is the number of edges of $T$ of length at least $1.72$
that are external edges of its cluster.
Define $\tb(T) := \area(T)-\epsN\tn(T)$.  We use $\tb(T)$ to give an
easily computed lower bound given in the following lemma.

\begin{lemma} \libel{lemma:tb}
  Let $\C$ be a cluster of cardinality at least $2$ 
  that contains an obtuse triangle.
Then 
\[
\sum_{T\in\C} b(T)\ge \sum_{T\in\C} \tb(T).
\]
\end{lemma}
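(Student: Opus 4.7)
The plan is to prove the inequality by expanding the definitions and tracking contributions at the cluster boundary; the obtuse hypothesis and size condition are not essential for this particular bookkeeping step. Starting from
\[
\sum_{T\in\C}\bigl(b(T) - \tb(T)\bigr)
= \epsN\!\sum_{T\in\C}\!(n_+(T) - n_-(T)) + \epsM\!\sum_{T\in\C}\!(m_+(T) - m_-(T)) + \epsN\,\tn(\C),
\]
I would partition $\N$ into three subsets based on whether the endpoints of a pair lie in $\C$: $\N_{\mathrm{in}}$ (both), $\N_{\mathrm{out}}$ (only $T_+$ in $\C$), and $\N_{\mathrm{bdy}}$ (only $T_-$ in $\C$). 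Pairs in $\N_{\mathrm{in}}$ cancel between $\sum n_+$ and $\sum n_-$, giving
\[
\sum_{T\in\C}(n_+(T) - n_-(T)) = |\N_{\mathrm{out}}| - |\N_{\mathrm{bdy}}|,
\]
and analogously $\sum_{T\in\C}(m_+(T) - m_-(T)) = |\M_{\mathrm{out}}| - |\M_{\mathrm{bdy}}|$.

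The geometric core of the argument is that every pair $(T_+,T_-)$ in $\N_{\mathrm{bdy}}\cup\M_{\mathrm{bdy}}$ contributes a distinct edge to $\tn(\C)$. Indeed, the definitions of $\N$ (both variants) and $\M$ all require $T_+\Ra T_-$ with the longest edge of $T_+$ of length at least $1.72$; this longest edge is the shared edge, which is external to $\C$ since $T_+\notin\C$ and $T_-\in\C$, and has length $\ge 1.72$, so it is counted by $\tn(T_-)\le\tn(\C)$. Distinct pairs yield distinct edges because each external edge of $\C$ is shared between exactly one outside triangle and one cluster triangle, uniquely determining the ordered pair, and $\N\cap\M=\emptyset$ rules out double counting across the two sets. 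Hence
\[
|\N_{\mathrm{bdy}}| + |\M_{\mathrm{bdy}}| \le \tn(\C).
\]

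Combining these pieces with the numerical fact $\epsN>\epsM>0$ yields
\[
\epsN\,\tn(\C) \ge \epsN|\N_{\mathrm{bdy}}| + \epsM|\M_{\mathrm{bdy}}|,
\]
and adding the trivially nonnegative quantity $\epsN|\N_{\mathrm{out}}|+\epsM|\M_{\mathrm{out}}|$ gives the lemma. The only step requiring real care is the geometric counting claim, but this is an immediate consequence of the length condition built into both definitions; no deformation, interval arithmetic, or case analysis on combinatorial type is required here.
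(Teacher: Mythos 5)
Your proposal is correct and matches the paper's argument in all essentials: both proofs cancel the contributions of pairs internal to $\C$, then observe that each remaining pair $(T_+,T_-)$ with $T_-\in\C$, $T_+\notin\C$ has its shared edge (the longest edge of $T_+$, of length $\ge 1.72$) counted in $\tn(\C)$, and both drop the nonnegative $n_+,m_+$ boundary terms and use $\epsM\le\epsN$ to absorb the $\M$-contribution. The only difference is organizational: the paper proves the pointwise bound $b_{ext}(T)\ge\tb(T)$ for each $T\in\C$ using the sets $\N_{ext},\M_{ext}$, whereas you sum over $\C$ first and partition the boundary pairs globally, which amounts to the same computation.
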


\begin{proof}
  Let $\N_{ext}\subseteq \N$ be the subset consisting of pairs
  $(T_+,T_-)$ such that at least one of $T_+$ and $T_-$ is not in
  $\C$.  Define $\M_{ext}\subseteq \M$ similarly.  If $(T_+,T_-)\in
  \N\setminus \N_{ext}$, then $T_+,T_-\in\C$ and the pair $(T_+,T_-)$
  contributes $+\epsN$ to the value of $b(T_+)$ and $-\epsN$ to the
  value of $b(T_-)$.  These contributions cancel.  Similar comments
  apply to $(T_+,T_-)\in\M\setminus \M_{ext}$.  Thus,
\[
\sum_{T\in \C} b(T) = \sum_{T\in \C} b_{ext}(T),
\]
where $b_{ext}(T)$ is defined as $b(T)$, but using $\N_{ext}$ and
$\M_{ext}$ instead of $\N$ and $\M$. It is enough to show that
\[
b_{ext}(T) \ge \tb(T),
\]
for all $T\in\C$.

Let $T\in\C$.  To prove the inequality for $T$, it is enough to show
that $\tn(T)\ge n_{ext,-}(T)+m_{ext,-}$.  In fact, this inequality gives
\begin{align*}
b_{ext}(T) &\ge \area(T) - \epsN n_{ext,-}(T) - \epsM m_{ext,-}(T) \\
&\ge \area(T) - \epsN (n_{ext,-}(T)+m_{ext,-}(T)) \\
&\ge \area(T) - \epsN \tn(T)\\
&= \tb(T).
\end{align*}
Note that $n_{ext,-}(T)+m_{ext,-}(T)$ counts pairs
$(T_+,T)\in\N_{ext}\sqcup M_{ext}$ and every such shared edge is
external and has length at least $1.72$.  Thus every pair counted in
$n_{ext,-}(T)+m_{ext,-}(T)$ is also counted in $\tn(T)$.  This gives
the inequality and completes the proof of the lemma.
\end{proof}

\begin{proof}[proof of Theorem~\rif{lemma:obtuse}.]
  The proof involves several relatively simple cases.  
  We recall that each Delaunay triangle has edge lengths at
  least $2\kappa$ and circumradius at most $2$.

  If the cluster is a singleton $\{T\}$, where $T$ is obtuse, then the
  singleton lemma (Lemma~\rif{lemma:singleton}) gives the result.  We
  now assume that $\C$ is not a singleton.  By
  Theorem~\rif{lemma:c-weak}, the cluster $\C$ has the form of
  Equation~\rif{eqn:c-weak} for some triangle $T_0$.  Each $T$ such
  that $T\rab T_0$ is obtuse by Lemma~\rif{lemma:obtuse}.

  We break the proof into six cases depending on whether $T_0$ is
  nonobtuse, and depending on $\card(\C)\in \{2,3,4\}$.  In each case
  we prove inequality
\begin{equation}\libel{eqn:n}
 \area(\C) > \acrit \card(\C) + \epsN \tn(\C).
\end{equation}
By Lemma~\rif{lemma:tb},    this implies that
\[
\sum_{T\in\C} b(T) \ge \sum_{T\in\C} \tb(T) 
\ge \area(\C) -  \epsN\tn(\C) 
> \acrit \card(\C),
\]
which is the strong main inequality.

{\it Case 1. The triangle $T_0$ is a nonobtuse triangle, and
  $\C=\{T_0,T_1\}$.}  The triangle $T_0$ has a vertex $\v$ that is
not shared with $T_1$.  By the Delaunay property, $\v$ lies outside
the circumcircle of $T_1$.  The triangles $T_0$ and $T_1$ form a
quadrilateral $Q$ whose diagonal is the shared edge of $T_0$ and
$T_1$.  We deform the quadrilateral $Q$ to decrease its area while
maintaining the following constraints:
\begin{enumerate}
\item The vertex $\v$ lies on or outside the circumcircle of
  $T_1$. The circumradius of $T_1$ is at most $2$.
\item The edge length of the $i$th edge of $Q$ is at least
  $d_i\in\{2\kappa,1.72\}$,
where  the number of $d_i$ that equal $1.72$ is $\tn(\C)$; and
\item $T_1$ is not acute; $T_0$ is not obtuse.
\end{enumerate}
We drop all other constraints as we deform. (In particular, we do not
enforce the nonoverlapping of pentagons in the $P$-triangles.)  We
continue to deform $Q$ until one of the following two subcases hold:
\begin{enumerate}
\item $Q$ is cocircular; or
\item For all $i=1,2,3,4$, the $i$th edge of $Q$ has reached its lower
  bound $d_i$.
\end{enumerate}

In the first subcase (cocircularity), we drop the third constraint
(acute/obtuse) and continue area decreasing deformations for $Q$
under the constraint of a fixed circumcircle.  We note that the area
of a cocircular quadrilateral $Q$ depends only on the lengths of the
edges and not on their cyclic order on $Q$.  We may thus rearrange the
edge order as we deform.  For a given circumcircle, the area is
minimized when three of the edges attain their lower bound $d_i$.  By
suitable reordering of the edges, we may assume that $Q$ is an
isosceles trapezoid and that the fourth (free) edge is parallel to and
at least as long as its opposite edge on $Q$.  For such $Q$, the area as a
function of the circumradius is concave, so that the minimum occurs
when the circumradius is as small (that is, all edges attain the
minimum $d_i$) or as large (that is, $\eta(Q)=2$) as possible.  When
$\eta(Q)=2$, we relax the edge lengths constraints further to allow
three edges to have length $2\kappa$.  Explicit numerical calculations
in these two extremal configurations show that the inequality
(\rif{eqn:n}) is satisfied for each $\tn\in \{1,2,3,4\}$.

In the second subcase (every edge attains its minimal length $d_i$),
the four edge lengths are fixed.  We drop the constraint that $T_0$ is
not obtuse. The area of $Q$ is a concave function of the length of the
diagonal.  We thus minimize the area of $Q$ when the diagonal is as
small as possible (that is, $T_1$ is a right triangle -- when this
satisfies the constraint that $\v$ is outside the circumcircle of
$T_1$) or as large as possible (that is, $Q$ is cocircular).  The
cocircular case has already been considered.  Explicit numerical
calculations of $Q$ when $T_1$ is right gives the inequality
(\rif{eqn:n}) in each case.
% checked 2016/2/18 in Mathematica.

{\it Case 2. The triangle $T_0$ is a nonobtuse triangle, and
  $\C=\{T_0,T_1,T_1'\}$.}  
The long edges of the obtuse triangles
$T_1$ and $T_1'$ have length at least $\kappa\sqrt{8}$.

We consider a subcase where $\eta(T_1)\le 1.7$ and $\eta(T_1') \le
1.7$.  Then calculations based on the monotonicity of the area
functions give
\[
\area(T_0) \ge \area(d,\kappa\sqrt{8},\kappa\sqrt{8}) > 
   \begin{cases}1.73,&\text{if } d= 2\kappa\\ 
     1.73+\epsN, & \text{if } d = 1.72 \end{cases}.
\]
The areas of $T=T_1,T_1'$ are at least
\begin{equation}\libel{eqn:173}
\area(T) \ge \areta(2\kappa,d,1.7) >
   \begin{cases}1.08,&\text{if } d = 2\kappa\\
     1.08+2\epsN, & \text{if } d = 1.72 \end{cases}.
\end{equation}
These bounds give inequality (\rif{eqn:n}):
\[
\area(T_0) +\area\{T_1,T_1'\}  > 1.73 + 2(1.08) + \epsN \tn > 
3 \acrit +  \epsN \tn.
\]
% checked 2016/2/18 in Mathematica.

By symmetry, we may now assume that $\eta(T_1) \ge 1.7$.  
The areas of $T_1$ and $T_1'$ are at least
\begin{equation}\libel{eqn:968}
\areta(2\kappa,2\kappa,2) > 0.968.
\end{equation}
By the Delaunay condition, because $T_1\Ra T_0$, $T_1$ is obtuse, and
$T_0$ is nonobtuse, this forces $\eta(T_0)\ge 1.7$.  We minimize the
area of $T_0$ subject to the constraints that its circumradius is at
least $1.7$, that it is nonobtuse, and its edge lengths are at least
$\kappa\sqrt{8}$, $\kappa\sqrt{8}$, and $2\kappa$.  If two edges are
$2\kappa$, $\kappa\sqrt{8}$ (or even if two edges are $\kappa\sqrt8$,
$\kappa\sqrt8$), then $T_0$ is obtuse by the circumradius constraint.
The binding constraints for the minimizaton become $\eta(T_0)=1.7$,
$2\kappa$ edge length, and a right triangle.  Such a triangle has area
at least
\[
2\kappa\sqrt{\eta^2 - \kappa^2} \ge 2.41.
\]  
There are five external edges, and we have $\tn(\C)\le 5$.
This completes this case:
\[
\area(T_1) + \area(T_1') + \area(T_0) 
>
2(0.968) + 2.41 > 3\acrit + 5 \epsN \ge 3\acrit + \epsN \tn.
\] 
%checked 2016/2/18 in Mathematica.

{\it Case 3. The triangle $T_0$ is a nonobtuse triangle, and
  $C'=\{T_0,T_1,T_1',T_1''\}$.}

This case is almost identical to case 2.  We use the same bounds
(Equations (\rif{eqn:173}) and (\rif{eqn:968})) on $\area(T)$ as
before, for $T = T_1, T_1', T_1''$.  We can improve the bound on the
area of $T_0$:
\[
\area(T_0) 
\ge \area(\kappa\sqrt{8},\kappa\sqrt{8},\kappa\sqrt{8}) > 2.2668.
\]
Moreover, in the subcase where $\eta(T_0)\ge 1.7$, we have
(even after dropping the nonobtuseness constraint):
\[
\area(T_0) \ge \areta(\kappa\sqrt{8},\kappa\sqrt{8},1.7) > 2.6.
\]
In this case, $\tn\le 6$.  Proceeding as before, we get
\[
\area\{T_0,T_1,T_1',T_1''\} > 
\begin{cases}
2.2668 + 3 (1.08) \\
2.6 + 3(0.968)
\end{cases}
> 4\acrit +  \epsN \tn.
\] 
%checked 2016/2/18 in Mathematica.

This completes the proof for cases involving a nonobtuse triangle
$T_0$.  In the remaining cases, we assume that $T_0$ is obtuse.  
In the remaining cases, every triangle in $\C$ is obtuse.

{\it Case 4. The triangle $T_0$ is an obtuse triangle, and
  $\C=\{T_0,T_1\}$.}

In this case, $\tn\le 4$.  It will not be necessary to create subcases
according to whether short edges are at least $2\kappa$ or $1.72$.  We
will show that we can relax the lower bound on the short edges to
$2\kappa$ and still obtain the bound (\rif{eqn:n}).

We minimize area by flattening $T_0$ by stretching its long edge until
$\eta(T_0)=2$.  We further decrease area, keeping the circumradius
fixed, by contracting the shorter edge not shared with $T_1$, until
that edge has length $2\kappa$.

Next continue to minimize area by contracting an edge of $T_1$,
keeping its circumradius fixed, until an edge has length $2\kappa$.
Then, allowing the circumradius of $T_1$ to increase, we continue until
both shorter edges have length $2\kappa$ or until the circumradius
reaches $2$.

First assume that both shorter edges of $T_1$ have length $2\kappa$.
We have reduced to a one-parameter family of quadrilaterals.  We can
choose the parameter to be the length $x$ of the diagonal, the common
edge of $T_1$ and $T_0$.  The parameter $x$ ranges between
$\kappa\sqrt{8}$ and $x_{\max}\approx 2.9594$, determined by the
condition $\eta(2\kappa,2\kappa,x_{\max}) = 2$.  We check numerically
that
\[
\area(T_1) + \area(T_0) \ge \area(2\kappa,2\kappa,x) +
\areta(2\kappa,x,2) > 2\acrit + 4 \epsN \ge 2\acrit + \epsN \tn.
\] 
% checked 2016/2/18 in Mathematica.
% 2016/9 in calcs.ml.

Next, assume the circumradius of $\eta(T_1)$ reaches $2$, then we
have a cocircular quadrilateral that can be treated as in Case 1.  In
particular, the minimizing cocircular quadrilateral has three edges of
length $2\kappa$ and circumradius $2$, which has area
\[
\area\{T_0,T_1\}
>  2\acrit+4\epsN \ge 2\acrit+\epsN\tn.
\]
This completes the argument in this case.

{\it Case 5. The triangle $T_0$ is an obtuse triangle, and
  $\C=\{T_0,T_1,T_1'\}$.}

By Remark~\rif{rem:delaunay}, there is no arrow $T \rab T_0$ in $\C$
such that the shared edge is the long edge of $T_0$.  In particular,
there cannot exist (Case 6) with $\C=\{T_0,T_1,T_1',T_1''\}$ with
every triangle obtuse.  Thus, Case 5 is the last case to be
considered.

We have $\tn\le 5$.
The area of $T_0$ is at least
$\areta(\kappa\sqrt{8},\kappa\sqrt{8},2) > 2.45$.  
Using our earlier estimates (\rif{eqn:968})
for $\area(T)$, for $T=T_1,T_1'$, we
have
\[
\area\{T_1,T_1',T_0\} > 2 (0.968) + 2.45 > 3\acrit + \epsN\tn.
\] 
% checked 2016/2/18 in Mathematica.
% checked 2016/9 in calcs.ml.
(As mentioned earlier, each use of the function $\areta$ is justified
by a calculation based on Lemma~\rif{lemma:areta}.)
This completes the proof of the theorem.
\end{proof}

\section{Nonobtuse Clusters}\libel{sec:nonobtuse}

In this section we prove the main inequality for clusters in which
every triangle is nonobtuse.  By Corollary~\rif{lemma:n2b} and
Lemma~\rif{lemma:Nb}, if $T$ is $b$-subcritical and nonobtuse, then
$n_+(T)=n_-(T)=0$.

\begin{lemma}\libel{lemma:m2} 
  Let $T\rab T_0$. Assume that $T$ and $T_0$ are nonobtuse.  Then
  $m_-(T)=0$.
\end{lemma}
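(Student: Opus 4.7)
The plan is to assume, for contradiction, that $m_-(T)\ge 1$ and exploit the tight relation between $\area(T)$, $b(T)$, and $m_-(T)$. First I would note that since $T$ is nonobtuse and $b$-subcritical, Corollary~\rif{lemma:n2b} and Lemma~\rif{lemma:Nb} force $n_+(T)=n_-(T)=0$, and Corollary~\rif{lemma:m1m2} forces $m_+(T)=0$; hence $b(T)=\area(T)-\epsM\,m_-(T)$. Corollary~\rif{lemma:m2-area} then gives $\area(T)>\acrit+\epsM$, which together with $b(T)\le\acrit$ forces $m_-(T)\ge 2$. So I may fix two distinct pseudo-dimers $(T_1',T_0')$ and $(T_1'',T_0'')$ with $(T_0',T),(T_0'',T)\in\M$; by pseudo-dimer disjointness $T_0'\ne T_0''$, and by Lemma~\rif{calc:pseudo2} each of the two egressive edges into $T$ has length greater than $1.8$.

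Next I would case-split on whether $T_0$ coincides with one of the pseudo-dimer outers. If $T_0\notin\{T_0',T_0''\}$, then $T_0,T_0',T_0''$ lie on three distinct edges of $T$; the edge with $T_0$ is the longest of $T$ (from $T\Ra T_0$), so it is at least as long as the two pseudo-dimer edges, forcing all three edges of $T$ to exceed $1.8$. The area estimate $\area(1.8,1.8,1.8)>\acrit+0.112>\acrit+2\epsM$ recorded at the start of Section~\rif{sec:dimer-pair} then gives $b(T)=\area(T)-2\epsM>\acrit$, a contradiction.

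The hard part is the remaining case, where $T_0$ coincides with (say) $T_0'$, so that $T$ and $T_0$ are mutually attached along their common longest edge, with $T_0$ the outer of a pseudo-dimer. Here only two distinct outer triangles attach to $T$, and the naive three-edge estimate degrades to $\area(T)\ge\area(1.8,1.8,2\kappa)>\acrit+\epsM$, which only yields $b(T)>\acrit-\epsM$. To close this gap I would invoke Lemma~\rif{lemma:pd-1} to observe that $T_1'$ is $b$-subcritical with $T_1'\rab T_0$, placing $T_1'$ in the cluster at hub $T_0$, and then chain Lemma~\rif{calc:pseudo-area3} applied to $(T_1',T_0)$ with egressive target $T$ (together with the analogous inequality for $(T_1'',T_0'')$) with the pseudo-dimer upper bound $\area\{T_1',T_0\}\le 2\acrit$ and Corollary~\rif{lemma:large} (which pins the large-angle direction on the attacker's side of both egressive edges) to push $\area(T)$ above $\acrit+2\epsM$. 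The key obstacle I expect in this subcase is that the mutual longest-edge attachment between $T$ and $T_0$ mimics the ice-ray dimer structure while $T$ is not quite subcritical, so the proof must use the strictly positive gap between pseudo-dimer geometry and genuine dimer pairs recorded in Theorem~\rif{thm:dimer-area}; this is precisely the kind of tight arithmetic the paper handles via a computer-assisted area estimate.
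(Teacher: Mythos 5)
Your setup is correct: you establish $n_+(T)=n_-(T)=m_+(T)=0$, use Corollary~\ref{lemma:m2-area} to rule out $m_-(T)=1$, and then work with two pseudo-dimers $(T_1',T_0')$, $(T_1'',T_0'')$ whose outer triangles $T_0'\ne T_0''$ attach to $T$. Your first case (where $T_0$, $T_0'$, $T_0''$ are all distinct) is also sound: all three edges of $T$ exceed $1.8$ and $\area(1.8,1.8,1.8)>\acrit+2\epsM$ finishes it.

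However, you introduce a case split on whether $T_0\in\{T_0',T_0''\}$ that the paper does not need, and in the second case you do not actually close the argument. The bound you get from Lemma~\ref{calc:pseudo-area3} combined with the pseudo-dimer upper bound $\area\{T_1',T_0\}\le 2\acrit$ is just Corollary~\ref{lemma:egress'} again: $\area(T)>\acrit+\epsM$, which leaves $b(T)>\acrit-\epsM$, short by exactly $\epsM$. Invoking the second pseudo-dimer $(T_1'',T_0'')$ gives the same bound, not a stronger one, and Theorem~\ref{thm:dimer-area} concerns genuine dimer pairs and is not applicable here.

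The ingredient you are missing is Lemma~\ref{lemma:2small}, which is the whole point of the correction term $\epsM$ and of the large-angle machinery: if a nonobtuse $P$-triangle $T$ has two edges $e'$, $e''$ of length at least $1.8$ such that the angles are \emph{not} large along $(T,e')$ and $(T,e'')$, then $\area(T)>\acrit+2\epsM$. You already note that Corollary~\ref{lemma:large} forces the angles to be large along $(T_0',e')$ and $(T_0'',e'')$; by the invariant $\alpha(T,e)+\alpha(T',e)=2\pi/5$ this means the angles are not large along $(T,e')$ and $(T,e'')$. Feeding this directly into Lemma~\ref{lemma:2small} yields $\area(T)>\acrit+2\epsM$, hence $b(T)\ge\area(T)-2\epsM>\acrit$, contradicting $b$-subcriticality. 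This argument is uniform and makes your case split unnecessary. (You should also cite Lemma~\ref{lemma:3m2} to get $m_-(T)\le 2$, and note that $T_0'\ne T_0''$ follows from the definition of $m_-$ as a cardinality, not from pseudo-dimer disjointness.)
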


\begin{proof}  
  Assume for a contradiction that $m_-(T)>0$.  We have just observed
  that $n_+(T)=n_-(T)=0$.  By Lemma~\rif{lemma:m1m2}, we have
  $m_+(T)=0$.  By Lemma~\rif{lemma:3m2}, we have $m_-(T)\le 2$.

  We have $m_-(T)=2$.  Otherwise, if $m_-(T)=1$, we have a
  contradiction (by Corollary~\rif{lemma:m2-area}):
\[
\acrit \ge b(T) \ge \area(T) - \epsM 
> (\acrit + \epsM) - \epsM = \acrit.
\]

Because $m_-(T)=2$, there exist two pseudo-dimers $(T'_1,T'_0)$ and
$(T''_1,T''_0)$ such that $T'_0\Ra T$ and $T''_0\Ra T$.  The shared
edges $e'$ and $e''$ have length at least $1.8$.  Moreover, the angles
are large along $(T_0',e')$ and $(T_0'',e'')$, but not large along
$(T,e')$ and $(T,e'')$.  (See Definition~\rif{def:large} and
Lemma~\rif{lemma:large}.)  Lemma~\rif{lemma:2small} (below) and the
estimate
\[
\acrit \ge b(T) \ge \area(T) - 2\epsM 
> (\acrit + 2\epsM) - 2\epsM = \acrit
\]
complete the proof.
\end{proof}

\begin{lemma} \libel{lemma:2small}
  Let $T$ be a nonobtuse $P$-triangle.  Suppose that two of its edges
  $e'$ and $e''$ have length at least $1.8$ and that the angles are
  not large along $(T,e')$ and $(T,e'')$.  Then $\area(T) > \acrit +
  2\epsM$.
\end{lemma}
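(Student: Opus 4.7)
The plan is to reduce the problem to a small finite collection of low-dimensional configuration spaces and then verify the bound by interval arithmetic, following the pattern of Lemma~\rif{lemma:172}, Lemma~\rif{lemma:delta}, and Lemma~\rif{lemma:a0}. Observe first that the naive estimate $\area(1.8,1.8,2\kappa) > \acrit + \epsM$ listed at the beginning of Section~6 is only half of what is needed here, so the not-large constraint at \emph{both} edges (rather than at a single edge) must play an essential role; in particular, the deformation cannot be allowed to relax $\alpha$ freely along $e'$ or $e''$.

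First I would apply the area monotonicity for nonobtuse triangles together with Lemma~\rif{lemma:primary} to deform $T$, shrinking $e'$ and $e''$ down to their minimum values compatible with the constraints: nonobtuseness, $|e'|, |e''| \ge 1.8$, and the not-large inequality $\alpha \le \pi/5$ along both $e'$ and $e''$. At the end of the deformation each pentagon has primary contact; by an area-decreasing motion analogous to Lemma~\rif{lemma:2C}, each pentagon can further be brought into double contact, yielding a $3C$ configuration. The $\Delta$ type is eliminated immediately by Lemma~\rif{lemma:delta}, whose bound $1.5$ already exceeds $\acrit + 2\epsM$. The remaining types are pinwheel, pin-$T$, $LJ$, and $TJ$, each admitting a three-parameter coordinate system from the appendix. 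For each type I would enumerate the ordered pairs of edges of the triangle that can carry the labels $e'$ and $e''$, impose the edge-length and not-large constraints symbolically, and verify $\area(T) > \acrit + 2\epsM$ by interval arithmetic on a subdivision of each three-dimensional parameter cube.

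The main obstacle will be the combinatorial bookkeeping: on each of the four surviving $3C$ types we must cover every assignment of $e'$ and $e''$ to a pair of edges, and confirm that imposing the not-large condition \emph{simultaneously} along two distinct edges suffices to push the area past $\acrit + 2\epsM$. A secondary concern is numerical behavior near the boundary $\alpha = \pi/5$ of the not-large region, where the interval subdivision must be fine enough to resolve the bound. However, the gap between $\acrit + 2\epsM \approx 1.306$ and the general nonobtuse minimum $\ao \approx 1.237$ leaves a comfortable margin, and the forced lower bound of $1.8$ on two of the three edges already pushes typical areas well above the target; once the cases are correctly enumerated, the interval-arithmetic verification should go through with the same machinery used elsewhere in the paper.
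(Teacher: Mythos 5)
Your proposal takes a genuinely different route from the paper, and it has a gap. The paper's proof is short and entirely non-computer-assisted: if the third (short) edge has length at least $1.63$, then $\area(T)\ge\area(1.8,1.8,1.63)>\acrit+2\epsM$ directly; and if instead the third edge $d_{AB}$ lies in $[2\kappa,1.63]$, the configuration cannot exist at all. Translating $B$ parallel into contact with $A$ and using $d_{AB}\le 1.63$ forces the incidence angle of some edge of $A$ with some edge of $B$ to lie within about $0.021$ of $4\pi/5$; the triangle obtained by extending one edge each of $A$, $B$, $C$ then has angles $\alpha$, $\beta$, $\gamma'$ with $\gamma'$ near $4\pi/5$, while the two not-large hypotheses force $\alpha,\beta\ge\pi/5$, so the angle sum exceeds $\pi$. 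No interval arithmetic is needed.

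The concrete gap in your plan is that Lemma~\rif{lemma:primary} and Lemma~\rif{lemma:2C} do not preserve the not-large hypothesis, so the reduction to $3C$ configurations is unjustified. The angle $\alpha(T,e)$ of Definition~\rif{def:large} is, modulo $2\pi/5$, the relative rotation of the two pentagons at the ends of $e$; it is unchanged by pure translation but shifts under any rotation. Both deformation lemmas deliberately interleave rotations of the moving pentagon (Lemma~\rif{lemma:primary} rotates $A$ about $\c_A$ to break a contact, Lemma~\rif{lemma:2C} rotates $A$ about a contact point), and each such rotation alters $\alpha(T,e')$ and $\alpha(T,e'')$ and in general pushes them into the large range, so the deformed configuration exits the constrained domain. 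You flag this tension yourself (``the deformation cannot be allowed to relax $\alpha$ freely'') but then route the argument through lemmas that do precisely that. Even setting aside the deformation, the area minimizer over the constrained domain has no a priori reason to be triple-contact: the two edge-length constraints at $1.8$, the two not-large constraints at $\pi/5$, and a single pentagon contact already give five active constraints in the six-dimensional configuration space of $P$-triangles, so the minimizer could perfectly well have only one pentagon pair in contact. Without a valid $3C$ reduction the interval-arithmetic check would have to sweep a full six-dimensional domain, which is not the computation carried out in Lemmas~\rif{lemma:172} and~\rif{lemma:a0}.
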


\begin{proof}  
  If the third edge has length at least $1.63$, then the result easily
  follows:
\[
\area(T) \ge \area(1.8,1.8,1.63) > \acrit + 2\epsM.
\]
We remark that $1.63$ is close to the minimum edge length
$2\kappa\approx 1.618$.  This leaves hardly any flexibility in the
relative position of the two pentagons along this edge.

We may assume without loss of generality that the third edge has
length in the range $[2\kappa,1.63]$.  Let the pentagons at the
vertices of $T$ be $A$, $B$, and $C$, with $d_{AB}\le 1.63$.  For the
moment, we disregard the pentagon $C$ and parallel translate $B$,
decreasing $d_{AB}$ until $A$ and $B$ come into contact.  We assume
without loss of generality that $B$ points into $A$ at $\v_B$.  Draw
the configuration as in Figure~\rif{fig:not-large} with a vertical
receptor edge $e$ on $A$.  There are two cases, depending on whether
$\v_B$ lies above or below the midpoint of the edge $e$.  (The pointer
cannot be at the midpoint of $e$ by Lemma~\rif{lemma:mid-172}, and
$1.72 > 1.63 \ge d_{AB}$.)  Let $\gamma\in[0,2\pi/5)$ be the angle
%(]
formed by edges of $A$ and $B$ at the pointer $\v_B$ as in
Figure~\rif{fig:not-large}.

We have the constraint
\[
\cos(\pi/5 - \gamma) =\sin(\gamma+3\pi/10) \le 1.63 - \kappa.
\]
This constraint expresses the fact the distance from $\c_B$ to the
edge $e$ of pentagon $A$ can be at most $1.63-\kappa$.  The constraint
implies that
\[
\gamma \ge \pi/5 +
\arccos (1.63-\kappa) > 2\pi/5 - 0.021\quad \text{ or }\quad
\gamma\le \pi/5 - \arccos(1.63-\kappa) < 0.021,
\]
according to whether $\v_B$ is below or above the midpoint of $e$.

Now we return to the original $P$-triangle with pentagons $A$, $B$,
$C$ in their original position.  Because $\gamma$ was obtained after a
parallel translation of $B$, it equals the incidence angle of lines
through edges of the original $A$ and $B$.  Changing the choice of
edges on $A$ and $B$, we find an incidence angle  $\gamma' \in
[4\pi/5-0.021,4\pi/5+0.021]$.  See Figure~\rif{fig:not-large}.

We form a triangle with angles $\alpha$, $\beta$, and $\gamma'$ by
extending edges of $A$, $B$, and $C$.  The assumption that angles are
not large along $(T,e')$ and $(T,e'')$ gives $\alpha\ge \pi/5$ and
$\beta\ge \pi/5$.  The angle sum of the triangle gives a contradiction
\[
\pi=\alpha+\beta+\gamma' 
\ge  \pi/5 + \pi/5 + (4\pi/5 - 0.021) > \pi.
\]
\end{proof}

\tikzfig{not-large}{The angle between the nearly vertical edges of
  $A$ and $B$ is $\gamma$.  On the left, $\gamma= 0.02$ and $
  2\kappa<d_{AB}\approx 1.6296 < 1.63$.  There is almost no play in
  the configuration.  The  figure on the right does not satisfy the
  constraints of the proof of Lemma \ref{lemma:2small}, but
  illustrates the notation.}
{
\begin{scope}[scale=1]
\pen{0}{0}{35.87};
\pen{1.623}{0}{-34.72+180};
\draw (0,0) node {$A$};
\draw (1.623,0) node {$B$};
\draw (0.8,1) node {$\gamma'$};
\draw ($(1.623,0)+(-34.72+180:1)$) node[anchor=north west] {\ $\v_B$};
\smalldot{$(1.623,0)+(-34.72+180:1)$};
\end{scope}
\begin{scope}[scale=1,xshift=5cm]
\pen{0}{0}{34.86};
\pen{1.756}{0}{-17.68+180};
\draw (0,0) node {$A$};
\draw (1.756,0) node {$B$};
\draw ($(1.756,0) + (-17.68+180:1)$) --  ++ 
($1.5*(-17.68+180+72:1) - 1.5*(-17.68+180:1)$) node[anchor=east] {$\gamma$};
\draw ($(1.756,0) + (-17.68+180:1)$) --  ++ 
($1.5*(34.86-72:1) - 1.5*(34.86:1)$) ;
\coordinate (A) at (34.86:1);
\coordinate (Aa) at (34.86+72:1);
\draw ($1.21*(A) - 0.21*(Aa)$) node[anchor=south] {$\gamma'$} 
-- ($-0.4*(A) + 1.4*(Aa)$);
\draw ($(1.756,0)+(-17.68+180:1)$) node[anchor=north west] {\ $\v_B$};
\smalldot{$(1.756,0)+(-17.68+180:1)$};
\end{scope}
}
% deg23 (thetax (m 0.02) (m 0.02));;
% deg23 (thetax (m 0.3) (m 0.3));;

Next, we turn our attention to the target $T_0$ of an arrow $T\rab T_0$,
where both $T$ and $T_0$ are nonobtuse.  By
Lemma~\rif{lemma:singleton}, the shared edge has length at least
$1.72$.

\begin{lemma}\libel{lemma:n2-0} 
  Let $T_0$ be a triangle in a cluster $\C$ containing only nonobtuse
  triangles.  Then $n_-(T_0)=0$.
\end{lemma}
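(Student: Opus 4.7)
The plan is to argue by contradiction using Corollary~\rif{lemma:obtuse-b-arrow}. Suppose toward a contradiction that $n_-(T_0) > 0$, so there exists some $T_+$ with $(T_+, T_0) \in \N$. Since $T_0 \in \C$ and $\C$ consists only of nonobtuse triangles, the triangle $T_0$ is itself nonobtuse. Hence the pair $(T_+, T_0)$ does not lie in $\N_{\text{obtuse}}$, so it must lie in $\N_{\text{nonobtuse}}$.

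Now I invoke Corollary~\rif{lemma:obtuse-b-arrow} directly: since $(T_+, T_0) \in \N$ with nonobtuse target, there exists an obtuse triangle $T$ such that $T \rab T_0$. The relation $(\rab)$ is precisely what generates the equivalence relation $(\equiv_b)$ whose classes are clusters, so the arrow $T \rab T_0$ forces $T$ and $T_0$ to lie in the same cluster, namely $\C$. But $T$ is obtuse, contradicting the hypothesis that every triangle in $\C$ is nonobtuse. Therefore $n_-(T_0) = 0$.

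There is essentially no obstacle here: the lemma is a bookkeeping consequence of the earlier corollary, which already packages the key content (that a nonobtuse target of an $\N$-pair is the target of an obtuse $\rab$-arrow). The only care needed is to recognize that the cluster hypothesis, combined with $T_0 \in \C$, forces the nonobtuse-target branch of the disjoint decomposition $\N = \N_{\text{obtuse}} \sqcup \N_{\text{nonobtuse}}$, so that Corollary~\rif{lemma:obtuse-b-arrow} applies.
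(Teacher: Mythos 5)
Your proof is correct and matches the paper's argument exactly: assume $n_-(T_0)>0$, apply Corollary~\rif{lemma:obtuse-b-arrow} to the nonobtuse target $T_0$ to produce an obtuse $T$ with $T\rab T_0$, and observe that this forces an obtuse triangle into $\C$, a contradiction. The explicit remark about $(T_+,T_0)$ falling into $\N_{\text{nonobtuse}}$ is a small unpacking of the corollary's hypothesis but does not change the route.
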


\begin{proof} 
  If $n_-(T_0)>0$, then by Corollary~\rif{lemma:obtuse-b-arrow} there
  exists an obtuse triangle $T'$ such that $T'\rab T_0$ and the
  cluster $\C$ contains an obtuse triangle.
\end{proof}

\begin{lemma}\libel{lemma:T1-all} 
  Let $T\rab T_0$. Assume that $T$ and $T_0$ belong to a cluster $\C$
 containing only  nonobtuse triangles.  Then
  $m_+(T)=0$.  Moreover, $n_+(T)=n_-(T)=m_+(T)=m_-(T)=0$ and
  $b(T)=\area(T)\le \acrit$.
\end{lemma}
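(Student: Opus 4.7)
The plan is to show that each of the four correction terms $n_+(T), n_-(T), m_+(T), m_-(T)$ vanishes separately by contraposition, invoking an already-established lemma in each case. Once all four vanish, the definition of $b$ collapses to $b(T)=\area(T)$, and the hypothesis that $T\rab T_0$ (so $T$ is $b$-subcritical) immediately gives $\area(T)\le\acrit$.

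First I would dispose of the two $n$-terms. For $n_-(T)=0$, the contrapositive of Lemma~\rif{lemma:n2b} applies: since $T$ is nonobtuse and $b(T)\le\acrit$, we must have $n_-(T)=0$. For $n_+(T)=0$, suppose $n_+(T)>0$; then $(T,T')\in\N$ for some $T'$, and Lemma~\rif{lemma:Nb} yields $b(T)>\acrit$, contradicting $b$-subcriticality.

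Next I would handle the two $m$-terms. The highlighted first claim $m_+(T)=0$ is the content of Lemma~\rif{lemma:pd-b} in contrapositive form: if $m_+(T)>0$, then there exists $(T_1',T)\in\PD$, making $T$ the target of a pseudo-dimer, whence $b(T)>\acrit$, which again contradicts $b$-subcriticality. For $m_-(T)=0$, I would cite Lemma~\rif{lemma:m2} directly, whose hypotheses ($T\rab T_0$ with both triangles nonobtuse) are precisely what we are given.

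With all four terms equal to zero, the definition of $b$ reduces to $b(T)=\area(T)$, and since $T$ is $b$-subcritical we obtain $\area(T)=b(T)\le \acrit$, completing the proof. There is essentially no obstacle here: the lemma is a bookkeeping consequence of the earlier machinery, and the only subtlety is recognizing that the emphasized claim $m_+(T)=0$ is exactly the content of the pseudo-dimer lemma \rif{lemma:pd-b} rather than of the simpler disjointness Lemma~\rif{lemma:n1m1} (which would require $n_+(T)>0$ and thus is not applicable here).
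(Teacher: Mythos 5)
Your proof is correct and follows essentially the same route as the paper: dispose of $m_+(T)$ via Lemma~\ref{lemma:pd-b}, $n_-(T)$ via Lemma~\ref{lemma:n2b}, and $m_-(T)$ via Lemma~\ref{lemma:m2}, then conclude $b(T)=\area(T)\le\acrit$. The only divergence is in deducing $n_+(T)=0$: you apply Lemma~\ref{lemma:Nb} to $(T,T_0)\in\N$ to get the contradiction $b(T)>\acrit$, whereas the paper instead notes that $(T,T_0)\in\N$ forces $n_-(T_0)>0$, contradicting Lemma~\ref{lemma:n2-0}; both paths are valid and equally direct.
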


\begin{proof}
  If $m_+(T)>0$, then there exists a pseudo-dimer $(T'_1,T'_0)$ such
  that $T=T'_0$.  Then Lemma~\rif{lemma:pd-b} implies the
  contradiction that there is no arrow $T\rab T_0$.

  The final statement is a summary of the preceding series of lemmas.
   If $n_+(T)>0$, then $(T,T_0)\in\N$ and $n_-(T_0)>0$, which is contrary
  to Lemma~\rif{lemma:n2-0}.    The equalities $n_-(T)=m_-(T)=0$ are
  Lemmas~\rif{lemma:n2b} and~\rif{lemma:m2}.
\end{proof}

\begin{lemma}\libel{lemma:4}
  Let $\C$ be a cluster consisting of nonobtuse triangles.  Assume
  that the cardinality of $\C$ is four.  Then the strict main
  inequality holds for $\C$.
\end{lemma}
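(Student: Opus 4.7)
The plan is to combine the structure theorem for clusters with the computer-assisted area bound \ref{calc:pent4} and the already-established triviality of the correction terms in the nonobtuse setting. First I would invoke Theorem~\ref{lemma:c-weak}: since $\card(\C)=4$, there exist $T_0 \in \C$ and exactly three distinct triangles $T_1^0, T_1^1, T_1^2$ with $T_1^i \rab T_0$, and $\C = \{T_0, T_1^0, T_1^1, T_1^2\}$. Every triangle in $\C$ is nonobtuse by hypothesis, and since each $T_1^i$ attaches to $T_0$ along its longest edge, the three shared edges are distinct; hence they exhaust all three edges of $T_0$.

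Next I would apply Lemma~\ref{lemma:T1-all} to each arrow $T_1^i \rab T_0$: we obtain $n_+(T_1^i) = n_-(T_1^i) = m_+(T_1^i) = m_-(T_1^i) = 0$ and $b(T_1^i) = \area(T_1^i)$. The key observation is that this immediately forces $n_-(T_0) = m_-(T_0) = 0$. Indeed, any pair $(T_+, T_0) \in \N \cup \M$ requires $T_+ \Ra T_0$, so $T_+$ must be adjacent to $T_0$ along one of its edges. All three edges of $T_0$ are occupied by the attachments from $T_1^0, T_1^1, T_1^2$; but $m_+(T_1^i) = 0$ rules out $(T_1^i, T_0) \in \M$, and $n_+(T_1^i) = 0$ rules out $(T_1^i, T_0) \in \N$. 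So no pair $(T_+, T_0)$ lies in $\N \cup \M$ at all.

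Consequently $b(T_0) = \area(T_0) + \epsN n_+(T_0) + \epsM m_+(T_0) \ge \area(T_0)$, and summing over the cluster gives
\[
\sum_{T \in \C} b(T) \;\ge\; \area(T_0) + \sum_{i=0}^{2} \area(T_1^i) \;=\; \area\{T_0, T_1^0, T_1^1, T_1^2\}.
\]
Finally I would apply the computer-assisted Lemma~\ref{calc:pent4}, whose hypotheses are met because each $T_1^i$ satisfies $T_1^i \Ra T_0$ with $\area(T_1^i) \le \acrit$. This yields $\area\{T_0, T_1^0, T_1^1, T_1^2\} > 4\acrit = \acrit\,\card(\C)$, which is the strict main inequality. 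There is no real obstacle here: the entire argument is a bookkeeping exercise that leverages the fact that a cardinality-four cluster fills all three edges of its central triangle $T_0$, so that the negative corrections $\epsN n_-(T_0)$ and $\epsM m_-(T_0)$ cannot appear, and the heavy lifting is delegated to the area bound Lemma~\ref{calc:pent4}.
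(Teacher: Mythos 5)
Your proof is correct and takes essentially the same route as the paper: both reduce to the cluster structure $\C=\{T_0,T_1^0,T_1^1,T_1^2\}$ via Theorem~\ref{lemma:c-weak}, invoke Lemma~\ref{lemma:T1-all} for the peripheral triangles, show the negative corrections at $T_0$ vanish, and conclude with the computer-assisted Lemma~\ref{calc:pent4}. The only difference is local: the paper obtains $n_-(T_0)=0$ from Lemma~\ref{lemma:n2-0} and $m_-(T_0)=0$ via a contradiction through Lemma~\ref{lemma:pd-b}, whereas you get both at once from the observation that the three $T_1^i$ occupy all three edges of $T_0$ and have $n_+=m_+=0$; this is a slightly cleaner packaging of the same underlying facts, not a genuinely different argument.
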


\begin{proof}  
  Let $\C = \{T_0\}\cup \{T_1^i \mid i=1,2,3\}$.  We have $b(T_1^i) =
  \area(T_1^i)\le \acrit$ by Lemma~\rif{lemma:T1-all}.  By
  Lemma~\rif{lemma:n2-0}, we have
  $n_-(T_0)=0$.

  We claim $m_-(T_0)=0$.  Otherwise, if $m_-(T_0) >0$, then
  $(T,T_1^i)\in\PD$ for some $T$ and some $i$.  The arrow $T_1^i\rab
  T_0$ is inconsistent with Lemma~\rif{lemma:pd-b}

  We claim $n_+(T_0)=0$.  Otherwise, if $(T_0,T_1^i)\in\N$, then we
  get $n_-(T_1^i)>0$, which is contrary to Lemma~\rif{lemma:T1-all}.

  Hence all the negative coefficients $n_-,m_-$ are zero on the cluster:
  $b(T_1^i)=\area(T_1^i)$ and $b(T_0) \ge \area(T_0)$.  The result now
  follows from Lemma~\rif{calc:pent4}.
\end{proof}

\begin{lemma}\libel{lemma:3}
  Let $\C$ be a cluster consisting of nonobtuse triangles.  Assume
  that the cardinality of $\C$ is three.  Then the strict main
  inequality holds for $\C$.
\end{lemma}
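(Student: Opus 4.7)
The plan is to adapt the argument of Lemma \ref{lemma:4} to cardinality three, using Lemma \ref{calc:pent3} in place of Lemma \ref{calc:pent4}. By Theorem \ref{lemma:c-weak} the cluster will have the form $\C = \{T_0, T_1^0, T_1^1\}$ with $T_1^i \rab T_0$ for $i = 0, 1$. Lemma \ref{lemma:T1-all} immediately gives $b(T_1^i) = \area(T_1^i) \le \acrit$, and Lemma \ref{lemma:n2-0} gives $n_-(T_0) = 0$.

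The one step that requires genuine adjustment is the bound on $m_-(T_0)$. In cardinality four all three edges of $T_0$ were saturated by the $T_1^i$-arrows, forcing $m_-(T_0)=0$; here only two edges are so occupied, and the third might host a pseudo-dimer egress. I expect this to be the main (though mild) obstacle. The plan is to show $m_-(T_0) \le 1$: if $(T_+, T_0) \in \M$, then $T_+$ plays the role of $T_0'$ in some pseudo-dimer, so by Lemma \ref{lemma:pd-b} we get $b(T_+) > \acrit$, hence $T_+$ is not $b$-subcritical, and (by Theorem \ref{lemma:c-weak}) $T_+ \notin \C$, so $T_+ \ne T_1^0, T_1^1$. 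Since distinct attachments to $T_0$ use distinct edges of $T_0$, only the third edge is available, yielding at most one such $T_+$.

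Assembling the pieces, the bound on $m_-(T_0)$ together with $n_-(T_0)=0$ and the nonnegativity of the positive-sign terms gives
\[
b(T_0) \;\ge\; \area(T_0) - \epsM\, m_-(T_0) \;\ge\; \area(T_0) - \epsM.
\]
Applying Lemma \ref{calc:pent3} to the area sum, $\area\{T_0, T_1^0, T_1^1\} > 3\acrit + \epsM$, exactly cancels the $-\epsM$ loss, producing
\[
\sum_{T \in \C} b(T) \;\ge\; \area(T_0) + \area(T_1^0) + \area(T_1^1) - \epsM \;>\; 3\acrit,
\]
which is the strict main inequality. The tight accounting works because the $\epsM$ slack built into Lemma \ref{calc:pent3} was designed precisely to absorb a single pseudo-dimer egress of this kind.
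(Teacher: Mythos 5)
Your proof is correct and follows essentially the same route as the paper: $b(T_1^i)=\area(T_1^i)\le\acrit$ by Lemma~\ref{lemma:T1-all}, $n_-(T_0)=0$ by Lemma~\ref{lemma:n2-0}, the bound $m_-(T_0)\le 1$ via Lemma~\ref{lemma:pd-b} (the paper phrases it as: $m_-(T_0)\ge 2$ would force some $T_+$ to equal a $T_1^i$, making that $T_1^i$ not $b$-subcritical, contradicting $T_1^i\rab T_0$), and then Lemma~\ref{calc:pent3} to absorb the single $\epsM$ loss. You also correctly identified in advance that the only nontrivial adjustment from the cardinality-four case is the $m_-(T_0)$ bound.
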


\begin{proof}  
  Let $\C = \{T_0\}\cup \{T_1^i \mid i=1,2\}$.  We have $b(T_1^i) =
  \area(T_1^i)\le \acrit$ by Lemma~\rif{lemma:T1-all}.  By
  Lemma~\rif{lemma:n2-0}, we have
  $n_-(T_0)=0$.

  We claim that $m_-(T_0)\le 1$.  Otherwise, by the definition of
  $\M$, there exists $(T,T_1^i)\in\PD$ for some $T$ and some $i$.  The
  arrow $T_1^i\rab T_0$ is inconsistent with Lemma~\rif{lemma:pd-b}.

This gives
\[
b(T_0) \ge \area(T_0) - \epsM
\]
By Lemma~\rif{calc:pent3}, we have
\[
\sum_{T\in\C} b(T) \ge (\area(T_0) - \epsM) +  \area(\C\setminus \{T_0\}) 
= -\epsM+ \area(\C) > 3\acrit.
\]
This is the strict main inequality for $\C$.
\end{proof}

\begin{lemma}\libel{lemma:2} 
  Let $\C$ be a cluster consisting of nonobtuse triangles.  Assume
  that the cardinality of $\C$ is two.  If the cluster is not a dimer
  pair, then the strict main inequality holds for $\C$.
\end{lemma}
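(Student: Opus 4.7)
Since $\{T_0,T_1\}$ is a cluster of size two, it is generated by at least one arrow $T_1\rab T_0$, and both triangles are nonobtuse. Lemma~\rif{lemma:T1-all} gives $b(T_1)=\area(T_1)\le\acrit$, and Lemma~\rif{lemma:n2-0} gives $n_-(T_0)=0$, so
\[
b(T_0)+b(T_1)=\area\{T_0,T_1\}+\epsN\, n_+(T_0)+\epsM\bigl(m_+(T_0)-m_-(T_0)\bigr).
\]
Conditions~(1) and~(2) of the definition of a dimer pair follow automatically from $T_1\rab T_0$, so failure of $(T_1,T_0)$ to be a dimer pair splits into two cases: either (A) $T_0\nRa T_1$ together with $\area\{T_0,T_1\}\le 2\acrit$, so that $(T_1,T_0)\in\PD$, or (B) $\area\{T_0,T_1\}>2\acrit$.

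\paragraph{Case A (pseudo-dimer).} Lemma~\rif{lemma:pd-m2} gives $m_-(T_0)=0$. Reviewing the dichotomy in the proof of Lemma~\rif{lemma:pd-b} under the constraint $|\C|=2$: if $n_+(T_0)=m_+(T_0)=0$, then either the arrow $T_0\Ra T_-$ places $(T_0,T_-)\in\M$ (contradicting $m_+(T_0)=0$), or the pseudo-dimer ending at $T_0$ is not unique, producing a second pair $(T_1',T_0)\in\PD$ with $T_1'\ne T_1$, and then $T_1'\rab T_0$ would enlarge the cluster past size two. Hence $n_+(T_0)+m_+(T_0)\ge 1$. Combined with the strict form of Lemma~\rif{calc:pseudo-area} (recall that $\epsM$ was chosen as a safe numerical upper bound on the deficiency of a pseudo-dimer area from $2\acrit$), this yields
\[
b(T_0)+b(T_1)\ge\area\{T_0,T_1\}+\epsM>(2\acrit-\epsM)+\epsM=2\acrit.
\]

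\paragraph{Case B (excess area).} First, the shared edge of $T_0$ with $T_1$ cannot be the egressive edge of any pseudo-dimer, because that would make $T_1$ itself the ``$T_0$'' of a pseudo-dimer and Lemma~\rif{lemma:pd-b} would force $b(T_1)>\acrit$, contradicting $T_1\rab T_0$. Hence $m_-(T_0)\le 2$, and each contribution forces one of the other two edges of $T_0$ to have length at least $1.8$. If $m_-(T_0)=0$, the strict main inequality is immediate from $\area\{T_0,T_1\}>2\acrit$. If $m_-(T_0)\ge 1$, then $T_0$ carries a forced long edge; together with the bound $\area(1.8,1.8,2\kappa)>\acrit+\epsM$ from the list at the beginning of the pseudo-dimer section and the singleton lower bound of $1.72$ on the shared edge (Lemma~\rif{lemma:singleton}), this provides enough slack in $\area\{T_0,T_1\}$ to absorb the $\epsM m_-(T_0)$ correction and produce the strict inequality.

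\paragraph{Expected obstacle.} The delicate subcase is Case~B with $m_-(T_0)=1$, where the $\epsM$-correction is of the same order of magnitude as the natural geometric slack. Rigorously closing this subcase most likely demands a modest supplementary interval-arithmetic computation, in the spirit of Lemma~\rif{calc:pseudo-area}, applied to non-pseudo-dimer pairs forced to carry a long egressive neighbor via $\M$.
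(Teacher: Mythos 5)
Your overall skeleton matches the paper's proof: dispose of the pseudo-dimer case (your Case~A) essentially as the paper does, then in the remaining case (your Case~B) assume $\area\{T_0,T_1\}>2\acrit$, derive $m_-(T_0)>0$, and split on the value of $m_-(T_0)$. Your observation that the shared edge $T_0/T_1$ cannot be the egressive edge of a pseudo-dimer (else $(T_1,\cdot)\in\M$, forcing $T_1$ to be the target of a pseudo-dimer and hence $b(T_1)>\acrit$ by Lemma~\ref{lemma:pd-b}) is correct and gives $m_-(T_0)\le 2$ cleanly; the paper instead kills $m_-(T_0)=3$ directly via Lemma~\ref{lemma:3m2}, so your route is a small but genuine simplification there.

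The gap, which you partially sensed, is real and is located exactly at $m_-(T_0)=1$. Pure edge-length area estimates do not close it: with one forced edge $\ge 1.8$, the shared edge $\ge 1.72$, and the third edge only $\ge 2\kappa\approx 1.618$, one has $\area(T_0)\ge\area(1.8,1.72,2\kappa)\approx 1.264 < \acrit\approx 1.290$, so $b(T_0)\ge\area(T_0)-\epsM$ can be below $\acrit$, and adding $b(T_1)\le\acrit$ cannot give $>2\acrit$. The paper closes this not by a fresh computation but by an ingredient already present: the egressive edge $e$ of the neighboring pseudo-dimer $(T_1',T_0')$ has a large angle along $(T_0',e)$ (Corollary~\ref{lemma:large}), hence, since incidence angles on opposite sides of a shared edge sum to $2\pi/5$, the angle is \emph{not} large along $(T_0,e)$. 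This puts $\{T_0,T_1\}$ squarely in the hypotheses of Lemma~\ref{calc:large}, which yields the crucial strengthening $\area\{T_0,T_1\}>2\acrit+\epsM$, whence $b(T_0)+b(T_1)\ge\area\{T_0,T_1\}-\epsM>2\acrit$. You flagged the need for ``a modest supplementary interval-arithmetic computation,'' but the specific missing idea is to transfer the not-large-angle property across the shared edge and invoke Lemma~\ref{calc:large}; without that, the argument does not close. (Also, in your $m_-(T_0)=2$ sub-case you cite $\area(1.8,1.8,2\kappa)>\acrit+\epsM$, which is too weak; the correct estimate is $\area(1.8,1.8,1.72)>\acrit+\epsN+2\epsM$, using the $1.72$ lower bound on the shared edge.)
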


\begin{proof}  
  Let $\C = \{T_1,T_0\}$, with $T_1\rab T_0$. By
  Lemma~\rif{lemma:T1-all}, $b(T_1)=\area(T_1)\le \acrit$.

  We assume that $\C$ is not a dimer pair $(T_1,T_0)$.

  We consider the case of a pseudo-dimer.  If $(T_1,T_0)\in\PD$, then
  $m_-(T_0)=n_-(T_0)=0$ (by Lemmas~\rif{lemma:pd-m2}
  and~\rif{lemma:pd-n2}).  Thus, by Lemma~\rif{calc:pseudo-area} and
  Lemma~\rif{lemma:T1-all},
\[
b(T_1)+b(T_0) 
\ge \area\{T_1,T_0\} + \epsM (n_+(T_0) + m_+(T_0)) 
> 2\acrit - \epsM  + \epsM (n_+(T_0)+m_+(T_0)).
\]
The main inequality follows if we show that $n_+(T_0)>0$ or
$m_+(T_0)>0$.  Assume for a contradiction that $n_+(T_0)=m_+(T_0)=0$.
Pick $T$ such that $T_0\Ra T$.  The condition $n_+(T_0)=0$ implies
that $(T_0,T)\not\in\N$.  The longest edge of $T_0$ is at least $1.72$
by Lemma~\rif{calc:pseudo2}.  According to the definition of $\M$, we
have $m_+(T_0)>0$ unless uniqueness fails: there exists $T'_1\ne T_1$
such $(T'_1,T_0)\in\PD$.  This is impossible by
Lemma~\rif{lemma:pd-1}, because the cardinality of $\C$ is only two.
This completes the case of a pseudo-dimer.

By Lemma~\rif{lemma:n2-0}, we have $n_-(T_0)=0$.  Thus,
by Lemma~\rif{lemma:T1-all},
\begin{equation}\libel{eqn:2}
b(T_0) \ge \area(T_0)  - \epsM m_-(T_0)
\quad b(T_1) = \area(T_1) \le \acrit.
\end{equation}

We may assume that $\area\{T_0,T_1\}> 2\acrit$.  Otherwise,
$(T_1,T_0)$ is a dimer pair or a pseudo-dimer, and these cases have
already been handled.  We assume for a contradiction that that the
strict main inequality is false:
\[
2\acrit \ge b(T_1) + b(T_0).
\]
Combined with the Inequalities~\rif{eqn:2},
this gives
\begin{align*}
2\acrit &\ge b(T_1) + b(T_0) \\
&\ge \area(T_1) + \area(T_0)  - \epsM m_-(T_0)\\
&> 2\acrit  - \epsM m_-(T_0).
\end{align*}
This implies that  $m_-(T_0) > 0$.

We consider the case $m_-(T_0)= 3$.  By Lemma~\rif{lemma:3m2}, we have
$b(T_1) + b(T_0) > \ao + (\acrit + \epsN) = 2\acrit$, which completes
this case.

We consider the case $m_-(T_0)=2$.  In this case, there are two
pseudo-dimers that share a longest edge with $T_0$.  These edges have
length at least $1.8$. The third edge is shared with $T_1$ and has
length at least $1.72$.  Then
\[
b(T_1)+b(T_0) 
\ge \ao + (\area (1.8,1.8,1.72) - 2\epsM) > 2\acrit.
\]
% checked 2016/9 in calcs.ml.

Finally, we consider the case $m_-(T_0)=1$.  There exists a
pseudo-dimer whose long edge $e$ is shared with $T_0$.  That edge has
length at least $1.8$, and the angle is not large along $(T_0,e)$. We
are in the context covered by Lemma~\rif{calc:large}.  That lemma
implies
\[
b(T_1) + b(T_0) \ge \area\{T_1,T_0\} - \epsM 
> (2\acrit  +\epsM) - \epsM = 2\acrit.
\]
This completes the proof.
\end{proof}

We are ready to give a proof of the pentagonal ice-ray conjecture. We
repeat the statement of the theorem (Theorem~\rif{thm:main}) from the
introduction of the article.

\begin{theorem}   
  No packing of congruent regular pentagons in the Euclidean plane has
  density greater than that of the pentagonal ice-ray.  The pentagonal
  ice-ray is the unique periodic packing of congruent regular
  pentagons that attains optimal density.
\end{theorem}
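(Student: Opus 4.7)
The plan is to assemble this main theorem from the structural results established in the preceding sections. By Lemma \rif{lemma:main}, the density bound $(5-\sqrt5)/3$ follows from two facts: (i) every cluster $\C$ in every saturated packing is finite, and (ii) every cluster satisfies the main inequality $\sum_{T\in\C} b(T) \ge \acrit\,\card(\C)$. Finiteness is already Corollary \rif{lemma:card4} (in fact $\card(\C)\le 4$), so the task reduces to establishing Theorem \rif{thm2:main}: the main inequality holds on every cluster, with equality only for an ice-ray dimer pair.

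To prove Theorem \rif{thm2:main}, I would split on whether $\C$ contains an obtuse triangle. If it does, Theorem \rif{lemma:obtuse} gives the strict main inequality outright. If every triangle of $\C$ is nonobtuse, I case on $\card(\C)\in\{1,2,3,4\}$. A singleton is handled by Lemma \rif{lemma:singleton}, which gives strict inequality. A pair $(T_1,T_0)$ is either a dimer pair, in which case Theorem \rif{thm:dimer} yields $b(T_0)+b(T_1)=2\acrit$ and identifies $\C$ as the ice-ray dimer, or a non-dimer pair, in which case Lemma \rif{lemma:2} gives strict inequality. Triples and quadruples are strict by Lemmas \rif{lemma:3} and \rif{lemma:4} respectively. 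Across all subcases, the only equality case is the ice-ray dimer pair of area exactly $2\acrit$.

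For uniqueness of the periodic maximizer I would invoke Remark \rif{rem:equal}: on a flat torus the density equals the upper bound of Lemma \rif{lemma:main} only when every cluster attains exact equality in the main inequality. Since the unique equality case is the ice-ray dimer, every cluster of a maximizing periodic packing is forced to be an ice-ray dimer. A short combinatorial argument then shows that a partition of the torus into ice-ray dimers matching correctly along shared pentagons is forced to be a translate of the pentagonal ice-ray, giving uniqueness. General (not necessarily periodic) saturated packings are handled by the standard approximation of the optimal density by densities of saturated periodic packings used in the proof of Lemma \rif{lemma:main}.

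The real obstacle in this final assembly is mere bookkeeping; all the substantive difficulties have been resolved upstream. Those upstream obstacles, roughly in decreasing order of difficulty, are the seven-stage deformation argument reducing a general dimer pair to a point on the one-parameter curve $\Gamma$ (Section \rif{sec:dimer-pair}), the pseudo-dimer geometry and the large-angle correction $\epsM$ needed to make $b$ meaningful on near-optimal but non-dimer pairs, and the computer-assisted interval-arithmetic bounds such as Lemmas \rif{calc:pseudo-area}, \rif{calc:pent3}, \rif{calc:pent4}, and \rif{calc:large}, which supply the strict slack needed for the multi-triangle nonobtuse cases.
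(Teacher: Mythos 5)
Your proposal follows the paper's own proof essentially verbatim: reduce to Theorem~\rif{thm2:main} via Lemma~\rif{lemma:main} and Remark~\rif{rem:equal}, bound cluster size by Corollary~\rif{lemma:card4}, and then invoke Lemma~\rif{lemma:obtuse} for obtuse clusters, Lemmas~\rif{lemma:singleton}, \rif{lemma:2}, \rif{lemma:3}, \rif{lemma:4} for the nonobtuse cardinality cases, and Theorems~\rif{thm:dimer} and \rif{thm:dimer-area} for the dimer equality case. The only cosmetic difference is that you branch on obtuse-versus-nonobtuse before cardinality whereas the paper peels off the dimer-pair case first, and you are explicit (as the paper is not) that Remark~\rif{rem:equal} still leaves a short combinatorial step to promote ``every cluster is congruent to the ice-ray dimer'' to ``the packing is a translate of the pentagonal ice-ray.''
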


We combine the proof with a proof of Theorem~\rif{thm2:main}.

\begin{proof}
  By the main inequality in Lemma~\rif{lemma:main} applied to
  $a=\acrit$, together with Remark~\rif{rem:equal} it is enough to
  give a proof of Theorem~\rif{thm2:main}.  Specifically, we show that
  every cluster in every saturated packing is finite of cardinality at
  most $4$.  This is Corollary~\rif{lemma:card4}.  If $\C$ is a dimer
  pair, then $\C$ is the ice-ray dimer in the pentagonal ice-ray and
  the (weak) main inequality holds for $\C$, with equality exactly for
  the ice-ray dimer.  This is Theorem~\rif{thm:dimer} and
  Theorem~\rif{thm:dimer-area}.  If $\C$ is not a dimer pair, then we
  show that $\C$ satisfies the strict main inequality.  If $\C$ has an
  obtuse triangle, then this is found in Lemma~\rif{lemma:obtuse}.  If
  every triangle in $\C$ is nonobtuse, and if $\C$ has cardinality
  $4$, $3$, $2$, or $1$, then the result is found in
  Lemmas~\rif{lemma:4}, \rif{lemma:3}, \rif{lemma:2},
  \rif{lemma:singleton}.

  This completes the proof of the main theorem.
\end{proof}

\section{Appendix on Explicit Coordinates}\libel{sec:coords}
\libel{sec:appendix}

\subsection{two pentagons in contact}\libel{sec:two}

Let $A$ and $B$ be pentagons in contact, with $B$ the pointer at
vertex $\v_B$ to the receptor pentagon $A$.  Label vertices
$(\u_A,\w_A,\u_B,\v_B,\w_B)$ of $A$ and $B$ as in
Figure~\rif{fig:theta}.  Let $x=x_\alpha=\norm{\v_B}{\w_A}$ and $\alpha
= \angle(\v_B,\u_B,\u_A)$.  We have $0\le x_\alpha\le 2\sigma$ and
$0\le \alpha\le 2\pi/5$.

\tikzfig{theta}{coordinates for a pair of pentagons in contact}{
[scale=1.2]
\pen{0}{0}{0};
\pen{1.616}{0.598}{168.54};
\draw[blue] (0,0) node[anchor=south,black] {$B$}
 -- (1.616,0.598) node[anchor=south,black] {$A$};
\draw (0,0) node[anchor=north,black] {$\c_B$};
\smalldot{0,0};
\draw (1.616,0.598) node[anchor=north,black] {$\c_A$};
\smalldot{1.616,0.598};
\draw (72:1) node[anchor=south] {$\w_B$};
\draw (0:1) node[anchor=west] {$\v_B$};
\draw (-72:1) node[anchor=north] {$\u_B$};
\smalldot{72:1};
\smalldot{0:1};
\smalldot{-72:1};
\smalldot{0.636,0.797};
\draw (0.636,0.797) node[anchor=west] {$\w_A$}
node[anchor=north west] {$x$};
\smalldot{1.124,-0.2727};
\draw (1.124,-0.2727) node[anchor=north west] {$\u_A$}
node[anchor=north east] {$\alpha$};
}

Let $\bl = \bl(x_\alpha,\alpha) = \norm{\c_A}{\c_B}$, viewed as a
function of $x_\alpha$ and $\alpha$.  We omit the explicit formula for
$\bl$, but it is obtained by simple trigonometry.  Under the symmetry
$\u_A\leftrightarrow \w_A$, $\u_B\leftrightarrow\w_B$, we have the
transformation
\begin{equation}\libel{eqn:sym}
x\leftrightarrow 2\sigma-x,
\quad \alpha\leftrightarrow 2\pi/5-\alpha
\end{equation}
and
\[
\bl(x,\alpha) = \bl(2\sigma-x,2\pi/5 - \alpha).
\]
We use $(x_\alpha,\alpha)$ as the standard coordinates on the
configuration space of two pentagons in contact.  These coordinates
are particularly convenient, because they present the configuration
space as a rectangle $[0,2\sigma]\times[0,2\pi/5]$.

\subsection{angles of two pentagons in contact}\libel{sec:xalpha}

Let $A$ and $B$ be two pentagons in contact, given in coordinates by
$(x_\alpha,\alpha)$ as in the previous subsection.  Referring to
Figure~\rif{fig:theta}, $\v_B$ is the pointer vertex of $B$ into $A$,
and we have an angle $\theta' = \angle(\c_B,\c_A,\v_B)$ that specifies
the location of $\v_B$ relative to the segment $(\c_B,\c_A)$.  The
oriented angle $\angle(\c_B,\c_A,\v)\in[0,2\pi/5]$ to another vertex
$\v=\u_B,\w_B,\ldots$ of $B$ is $\theta' + 2\pi k/5$ for some integer
$k$.  We thus consider $\theta'$ as an angle defined module
$2\pi\ring{Z}/5$.  By subtracting a multiple of $2\pi/5$, we choose
the angle to lie in the range $[-\pi/5,\pi/5]$.  With these
conventions, in the figure, $\theta'$ is positive.

Referring to Figure~\rif{fig:theta}, $\u_A$ is a vertex of $A$, and we
have an angle $\theta = \angle(\c_A,\c_B,\u_A)$ that gives the
location of $\u_A$ relative to the segment $(\c_A,\c_B)$.  The angle
$\angle(\c_A,\c_B,\u)$ to another vertex of $A$ is $\theta + 2\pi k/5$
for some integer $k$.  Adjusting by a multiple of $2\pi/5$, we choose
$\theta$ to lie in the range $[0,2\pi/5]$.

We stress our convention that $\theta'$ refers to the angle on the
pointer pentagon and that $\theta$ refers to the angle on the receptor
pentagon.  It can be easily checked that 
\begin{equation}
\theta+\theta' \equiv \alpha \mod (2\pi\ring{Z}/5).
\end{equation}
(We remark in passing that we can define angles $\theta$, $\theta'$,
and $\alpha$ even when $A$ and $B$ are not in contact in such a way
that this relation still holds.)  We may consider $\theta$ and
$\theta'$ functions of the standard variables $(x_\alpha,\alpha)$.  With
our conventions $\theta'\in [-\pi/5,\pi/5]$ and $\theta\in [0,2\pi/5]$
on the range of these angles, $\theta'$ and $\theta$ are both
determined as continuous functions of $(x_\alpha,\alpha)$.  To obtain
continuity, it is necessary for $\theta$ to take both values $0$ and
$2\pi/5$, even though these values are equal modulo $2\pi\ring{Z}/5$.

In general, when two pentagons $A$ and $B$ come into contact,
sometimes $A$ is the pointer and sometimes $B$ is the pointer.  We
describe an extended coordinate system $(x_\alpha,\alpha)$, with a
domain $\alpha\in[0,4\pi/5]$ and $x_\alpha\in[0,2\sigma]$ of twice the
size that unifies both pointer directions.  See
Figure~\rif{fig:extended}.  When $\alpha\le2\pi/5$, the coordinates are
precisely as before.  Note that configurations with $\alpha=2\pi/5$
are ambiguous, with $B$ pointing into $A$ with coordinates
$(x_\alpha,\alpha)$, or with $A$ pointing into $B$ with coordinates
$(x'_\alpha,\alpha')=(2\sigma - x_\alpha,0)$.  When $\alpha > 2\pi/5$, we
let ($x_\alpha,\alpha)$ represent the configuration with $A$ pointing
into $B$ and coordinates $(x'_\alpha,\alpha') =
(2\sigma-x_\alpha,\alpha-2\pi/5)$.  The configuration of two pentagons
in contact depends continuously on the coordinates $(x_\alpha,\alpha)$.
The dependence is analytic except along $\alpha=2\pi/5$.

Using the continuous dependence of the configuration on the
coordinates, we may uniquely extend the functions $\theta$ and
$\theta'$ to continuous functions on the extended domain.  The
functions still represent the inclination angle of a vertex of $A$
(resp. $B$) with respect to the edge $(\c_A,\c_B)$.  However, when
$\alpha\ge 2\pi/5$, the range of $\theta'$ is $[0,2\pi/5]$ and the
range of $\theta$ is $[\pi/5,3\pi/5]$.  Also, $\theta$ becomes
the coordinate related to the pointer vertex (of $A$ into $B$).
Here is an explicit formula
for the extension on the domain $\alpha\ge 2\pi/5$:
\begin{align*}
\dx{AB}(x_\alpha,\alpha) &=\dx{AB}(2\sigma-x_\alpha,\alpha-2\pi/5),\\
\theta'(x_\alpha,\alpha) 
&= \theta(2\sigma-x_\alpha,\alpha-2\pi/5),\\
\theta(x_\alpha,\alpha) 
&= 2\pi/5+ \theta'(2\sigma-x_\alpha,\alpha-2\pi/5).
\end{align*}
By unifying both directions of pointing, extended coordinates lead to
a significant reduction in the number of cases to be considered.  In
fact, a single calculation can involve multi-triangle configurations
with several ($k$) edges in contact, and without extended coordinates
this leads to $2^k$ times the number of cases.

\tikzfig{extended}{Extended coordinates give a continuous transition
  from pairs $(A,B)$ with pointer $B\to A$ to pointer $A\to B$.  The
  functions $\theta,\theta'$ are continuous in $\alpha,x_\alpha$.  The
  function $\theta$ is the angle $\angle(\c_A,\c_B,\v)$, and $\theta'
  = \angle(\c_B,\c_A,\w)$. The vertices $\v$ and $\w$ transition into
  and away from the pointer vertex.  In these figures, $\alpha$ varies
  and $x_\alpha=0.8=\norm{\v}{\w}$ is fixed.}
{
\begin{scope}[scale=1]
\pen{0}{0}{41.16};
\pen{1.745}{0}{180-16.51};
\draw[blue] (0,0) node[black,anchor=east] {$A$} 
-- (1.745,0) node[black,anchor=west] {$B$};
\draw (0.8,-2) node {$\alpha = 2\pi/5 - 0.25$};
\smalldot{41.16-72:1};
\draw (41.16-72:1) node[anchor=east] {$\v~$~};
\smalldot{$(1.745,0)+(180-16.51:1)$};
\draw ($(1.745,0)+(180-16.51:1)$)  node[anchor=west] {$\w$};
\end{scope}
\begin{scope}[scale=1,xshift=4cm]
\pen{0}{0}{49.067};
\pen{1.66}{0}{180-22.93};
\draw (0.8,-2) node {$\alpha = 2\pi/5$};
\draw[blue] (0,0) node[black,anchor=east] {$A$} 
-- (1.66,0) node[black,anchor=west] {$B$};
\smalldot{49.067-72:1};
\draw (49.067-72:1) node[anchor= east] {$\v$~};
\smalldot{$(1.66,0)+(180-22.93:1)$};
\draw ($(1.66,0)+(180-22.93:1)$)  node[anchor=west] {$~\w$};
\end{scope}
\begin{scope}[scale=1,xshift=8cm]
\pen{0}{0}{55.489};
\pen{1.745}{0}{180-30.83};
\draw (0.8,-2) node {$\alpha = 2\pi/5+0.25$};
\draw[blue] (0,0) node[black,anchor=east] {$A$} 
-- (1.745,0) node[black,anchor=west] {$B$};
\smalldot{55.489-72:1};
\draw (55.489-72:1) node[anchor=east] {$\v$};
\smalldot{$(1.745,0)+(180-30.83:1)$};
\draw ($(1.745,0)+(180-30.83:1)$)  
node[anchor=west] {$~\w$};
\end{scope}
}
%(* fig:extended *)
%deg23 (ethetax (m 0.8) (pi25 - m 0.25));;
%deg23 (ethetax (m 0.8) (pi25 - m 0.0));;
%deg23 (ethetax (m 0.8) (pi25 + m 0.25));;

\subsection{two pentagons in contact, alternative
  coordinates}\libel{sec:banana}

Inversely, $d_{AB} = \norm{\c_A}{\c_B}$, $\theta$, and $\theta'$
determine $(x_\alpha,\alpha)$.  In fact, any two of $d_{AB}$,
$\theta$, $\theta'$ determine $(x_\alpha,\alpha)$ up to finite
ambiguity.  In general, there can be two configurations of pentagons
for a given $\dx{AB}$ and $\theta'$. See Figure~\rif{fig:banana2}.
They can be distinguished by a boolean variable giving the sign of $h
:= x_\alpha-\sigma$.

\tikzfig{banana2}{With given fixed center $\c_A$ and fixed pointer
  pentagon $B$, there might be two pentagons $A$ in contact with
  $B$. They are distinguished by a boolean variable indicating whether
  the pointer $B\to A$ lies above or below the midpoint of the edge on
  $A$.}
{
\begin{scope}[scale=1,xshift=5cm]
\pen{0}{0}{29.68};
\pentpink{0}{0}{-15.887}{1}{black!50};
\pen{1.82}{0}{180-5.73};
\draw (0,0) node[black] {$A$} ;
\draw (1.82,0) node[black] {$B$};
\end{scope}
}
%(* fig:banana2 *)
%let Some a = (theta_banana (m 1.82) (m 0.1)) in 
%degofrad a.high;;
%let Some a = theta_banana (m 1.82) (m (-. 0.1)) in
%degofrad a.high;;
%degofrad 0.1;;

The computer code implements a function that generates a configuration
of two pentagons $(A,B)$ in contact as a function of $\dx{AB}$ and
$\theta'$, for contact type $B\to A$ and $h\ge0$.  (This function does
not use extended coordinates.)  Our proof of the pentagonal ice-ray
conjecture depends on having fast, numerically-stable algorithms for
computing configurations in terms of these variables on intervals.  It
is a matter of simple trigonometry to express $\theta$ in terms of
$\dx{AB}$ and $\theta'$.  However, it is somewhat more work to give
good interval bounds on $\theta$ as a function of intervals $\dx{AB}$
and $\theta'$.  The implementation of this function is based on
detailed information about the image of $(d_{AB},\theta')$ as
functions on the set of pairs $(A,B)$ with $B\to A$ and $h\ge 0$.  The
image is a convex region in the plane with with a piecewise analytic
boundary.

\subsection{pentagon existence test}\libel{sec:pet}

The values $(\dx{AB},\theta,\theta')$ can be defined for a pair of
nonoverlapping pentagons, even when $A$ and $B$ do not touch.  The
computer code implements a test to determine whether a given triple
$(d,\theta,\theta')$ is equal to a triple
$(\dx{AB},\theta,\theta')$ associated with some pair $A,B$ of
pentagons in contact.  More generally, when $(d,\theta,\theta')$
are interval-valued variables, the computer code implements a test to
determine if there exist pentagons $A$ and $B$ that do not overlap and
whose values lie in the given intervals.  The interested reader can
consult the computer code for the details of the test.

\subsection{coordinates for zero, single, and double contact
  triangles}

The configuration space of $P$-triangles in which no pair of pentagons
is in contact is six dimensional.  Let $A$, $B$, $C$ be the pentagons
centered at the vertices $\c_A$, $\c_B$, $\c_C$ of the triangle.  For
example, we can use the six coordinates
$\dx{AB},\dx{BC},\dx{AC},\theta_{ABC},\theta_{BCA},\theta_{CAB}$.  The
three edge lengths $\dx{XY}$ of the triangle determine the triangle up
to congruence, and the three angles $\theta_{XYZ}$ determine the
inclination of each pentagon $X$ relative to the edge $(\c_X,\c_Y)$ of
the triangle.  See Figure~\rif{fig:p-triangle}.  Other quantities can
be easily computed from these six coordinates.  For example, to
compute $\theta_{ACB}$, the angle of pentagon $A$ relative to the edge
$(\c_A,\c_C)$, we use the relation
\begin{equation}
\arc_A + \theta_{ABC} + \theta_{ACB} \cong 0\mod 2\pi/5.
\end{equation}
where $\arc_A$ is the angle of the triangle at vertex $\c_A$.  
Similar equations hold for the angles at $\c_B$ and $\c_C$.

\tikzfig{p-triangle}{Angle conventions. 
  The edge lengths of the Delaunay triangle are
  $d_{AC}$, $d_{AB}$, and $d_{BC}$.  The angles at $\c_A$ are
  $\theta_{ABC}$, $\arc_A$, and $\theta_{ACB}$.  The angles at $\c_B$
  are $\theta_{BAC}$, $\arc_B$, and $\theta_{BCA}$. The angles at
  $\c_C$ are $\theta_{CBA}$, $\arc_C$, and $\theta_{CAB}$.}
{
\begin{scope}[scale=1.9,xshift=5cm]
\def\x{1.2}
\threepent
{0.00}{0.00}{96.88}
{\x*0.95}{\x*1.52}{252.34}
{\x*1.68}{0.00}{210.61};
\draw(0,0) node[anchor=east] {$A$};
\draw(\x*0.95,\x*1.52) node[anchor=south] {$B$};
\draw(\x*1.68,0) node[anchor=west] {$C$};
\smalldot{0,0};
\smalldot{\x*0.95,\x*1.52};
\smalldot{\x*1.68,0};
\draw(0,0) -- ++ (96.88:1);
\draw(0,0) -- ++ (96.88- 2*72:1);
\draw(\x*0.95,\x*1.52) -- ++ (252.34+72:1);
\draw(\x*0.95,\x*1.52) -- ++ (252.34-72:1);
\draw(\x*1.68,0) -- ++ (210.61 - 2*72:1);
\draw(\x*1.68,0) -- ++ (210.61:1);
\draw(25:0.65) node {$\arc_A$};
\draw(75:0.7) node {$\theta_{ABC}$};
\draw(-20:0.6) node {$\theta_{ACB}$};
\draw($(\x*0.95,\x*1.52)+(-45:0.7)$) node {$\theta_{BCA}$};
\draw($(\x*0.95,\x*1.52)+(-95:0.65)$) node {$\arc_B$};
\draw($(\x*0.95,\x*1.52)+(210:0.55)$) node {$\theta_{BAC}$};
\draw($(\x*1.68,0)+(90:0.65)$) node {$\theta_{CBA}$};
\draw($(\x*1.68,0)+(140:0.60)$) node {$\arc_C$};
\draw($(\x*1.68,0)+(195:0.6)$) node {$\theta_{CAB}$};
\end{scope}
}
% (* fig:p-triangle *)
% format_pinwheelAC (m 0.1) (m 0.1) (m 0.3);;

Each $P$-triangle determines a six-tuple $(\dx{AB},\dx{BC},\ldots)$.
We can algorithmically test whether a six-tuple of real numbers (or of
intervals) comes from $P$-triangles by using the triangle inequality
and the test of Section~\rif{sec:pet}.

The configuration space of $P$-triangles in which a single pair
$(A,B)$ of pentagons comes into contact is five-dimensional.  Let $A$,
$B$, $C$ be the pentagons centered at the vertices $\c_A$, $\c_B$,
$\c_C$ of the triangle.  Assume that $A$ is in contact with $C$, with
pointer $A\to C$.  For example, we can use the five coordinates
$\dx{AB},\dx{BC},\dx{AC},\theta_{ACB},\theta_{BAC}$.  The three edge
lengths $\dx{XY}$ of the triangle determine the triangle up to
congruence, and the two angles $\theta_{XYZ}$ determine the
inclination of each pentagon relative to the triangle.  It is clear
that the $\theta_{ACB}$ fixes the inclination of $A$ and that
$\theta_{BAC}$ fixes the inclination of $B$.  The variable
$\theta_{CAB}$ is computed from $\dx{AC}$ and $\theta_{ACB}$ by the
procedure in Section~\rif{sec:banana}.  Other quantities can be easily
computed from these quatitites. Again, we can algorithmically
test whether a given five-tuple $(\dx{AB},\dx{BC},\ldots)$ 
comes from a $P$-triangle.

The configuration space of $P$-triangles in which two separate pairs
$(A,B)$ and $(B,C)$ of pentagons comes into contact is
four-dimensional.  Let $A$, $B$, $C$ be the pentagons centered at the
vertices $\c_A$, $\c_B$, $\c_C$ of the triangle.  We can use the
extended coordinates $(x_\alpha,\alpha)$ of Section~\rif{sec:xalpha}
to give the relative position of $A$ and $B$ and similar coordinates
$(x_\gamma,\gamma)$ to give the relative position of $B$ and $C$.
These four coordinates uniquely determine the $P$-triangle up to
finite ambiguity.  The ambiguity is resolved by specifying which edge
of $B$ is in contact with $A$ and which edge is in contact with $C$.
These four coordinates determine other quantities such as lengths
$\dx{AB}$ and $\dx{BC}$ and angles $\theta_{BAC}$, $\theta_{ABC}$,
$\theta_{BCA}$, $\theta_{CBA}$.  Once again, we can algorithmically
test whether a given four-tuple $(x_\alpha,\alpha,x_\gamma,\gamma)$
comes from a $P$-triangle.

\subsection{triple contact}

If we have coordinates on a $3C$-triangle that determine the variables
$(x_\alpha,\alpha)$ for each of the pairs $\{A,B\}$, $\{B,C\}$, and
$\{A,C\}$ of pentagons, then we may use the function $\bl$ of
Section~\rif{sec:two} to calculate the edge lengths and area of the
$3C$-triangle.

The configuration space of $3C$-triangles is three dimensional,
obtained by imposing three contact constraints between pairs of
pentagons on the six-dimensional configuration space of all
$P$-triangles.

\subsection{$3C$ triangles with a shared edge}\label{sec:shared}

Some calculations deal with a dimer pair of $3C$ triangles sharing a
triangle edge and two pentagons $\bar{A}$ and $\bar{C}$, say with
$\bar{A}$ pointing into $\bar{C}$.  (We place bar accents on symbols
in this subsection for compatibility with the sections that follow.)
Let $\bar{B}$ and $\bar{D}$ be the two outer pentagons of the dimer
pair.  The configuration space of dimer in which both triangles have
triple contact is four dimensional.  In this situation, it is best to
develop coordinate systems that make efficient use of the shared
information.  Associated with the pentagons $(\bar{A},\bar{C})$ in
contact are two variables $(\bar x_\beta,\bar\beta)$ (that we rename
from $(x_\alpha,\alpha)$ in Section \ref{sec:xalpha}).  It is generally
advantageous to make $(\bar x_\beta,\bar\beta)$ two of the four
coordinates on the dimer pair.  We supplement this with one further
angle $\bar\alpha$ to determine the position of $\bar B$ and a further
angle to determine $\bar D$.

We focus on the $P$-triangle $(\bar A,\bar B,\bar C)$, with
coordinates $(\bar\alpha,\bar\beta,\bar x_\beta)$.  Similar
considerations apply to the other $P$-triangle $(\bar A,\bar D,\bar
C)$.  We assume hat $\bar A$ points into $\bar C$.

For each triple contact, the following sections give further details
about its coordinate systems.  There will be a {\it shared edge}
coordinate system for each contact type and each pair of pentagons
$(\bar A,\bar C)$ such that $\bar A$ points into $\bar C$.

We call a $\Gamma$-triangle to be a $P$-triangle that appears as one
of the two triangles (with given shared edge) in the curve $\Gamma$ of
Section~\rif{sec:gamma}.  We set up coordinates in a uniform manner so
that regardless of the contact type of the $3C$ triangle, the curve
$\Gamma$ is parameterized by variable $t$ and is given as
\[
t = \bar x_\beta  -\sigma, \quad
\bar\alpha = 0,\quad
\bar\beta = \pi/5.
\]
(This is the curve restricted to the $P$-triangle $(\bar A,\bar B,\bar
C)$, with a similar description on the other triangle $(\bar A,\bar
D,\bar C)$ in the dimer.)

When the the configuration space of a give $3C$-type contains
$\Gamma$-triangles and the ice-ray triangle, we also describe a path
$P$ with parameter $s$ from an arbitrary triangle in that
configuration space to a $\Gamma$-triangle.  Again, we set up
coordinates uniformly so that the formulas are independent of the
contact type of the $P$-triangle.  Coordinates will be defined in such
a way that for all points of the domain, the relation $\bar\alpha\ge
0$ holds.  The path $P$ from an arbitrary point in the domain
$(\bar\alpha_0,\bar\beta_0,\bar x_{\beta,0})$ to the curve $\Gamma$ is
defined by functions $s\mapsto (\bar\alpha(s),\bar\beta(s),\bar
x_\beta(s))$ (with parameter $s\ge 0$), where
\begin{align*}
\bar\alpha(s) &= \begin{cases}
\bar\alpha_0 - s, & \bar\alpha_0 > 0;\\
0, &\bar\alpha_0=0;
\end{cases}\\
\bar\beta(s) &= \begin{cases}
\bar\beta_0 - s, & \bar\beta_0 > \pi/5;\\
\bar\beta_0 + s, & \bar\beta_0 < \pi/5;\\
\pi/5, &\bar\beta_0=\pi/5;
\end{cases}\\
\bar x_\beta(s) &= \bar x_{\beta,0}.
\end{align*}
Note that $\bar x_\beta$ remains constant along the path $P$.  This
path is to be understood piecewise.  That is, the path continues until
a boundary is hit (say $s$ such that $\bar\beta_0-s = \pi/5$ or
$\bar\alpha_0-s = 0$).  Then a new initial value
$(\bar\alpha_1,\bar\beta_1)$ is set at the boundary, and the path
continues.  The path is continuous and piecewise linear.  The path
terminates when $\bar\alpha = 0$ and $\bar\beta = \pi/5$.  These
termination conditions are the defining conditions of the image of
$\Gamma$.  Thus, the path leads to $\Gamma$ in all cases.
If the initial configuration satisfies
\[
|\bar\alpha_0|\le M,\quad 
|\bar\beta_0-\pi/5|\le M,\quad |\bar x_{\beta,0}-\sigma|\le M,
\]
then these inequalities for $(\bar\alpha(s),\bar\beta(s),\bar
x_\beta(s))$ hold along the path $P$.

We write $\bar\alpha(s,\bar\alpha_0)$, $\bar\beta(s,\bar\beta_0)$, and
$\bar x_\beta(s,\bar x_{\beta,0})$ to show the dependence of the path
$P$ on the initial point.  The dimer pair has a second triangle $(\bar
A,\bar D,\bar C)$ whose initial configuration has coordinates
$(\bar\alpha_1,\bar\beta_1,\bar x_{\beta,1})$, where by Equation
(\rif{eqn:sym}) we have
\[
\bar\beta_0+\bar\beta_1 = \pi/5,\quad 
\bar x_{\beta,0}+\bar x_{\beta,1} = 2\sigma.
\]
The equations for the path on the two triangles then satisfy
\[
\bar\beta(s,\bar\beta_0)+\bar\beta(s,\bar\beta_1)=\pi/5,\quad
\bar x_{\beta}(s,\bar x_{\beta,0}) + \bar x_{\beta}(s,\bar x_{\beta,1}) = 2\sigma.
\]
This means that the paths for the two triangles are coherent along the
shared pentagons $(\bar A,\bar C)$ and determine a path of the full
dimer $(\bar A,\bar B,\bar C,\bar D)$ that terminates at the curve
$\Gamma$.

\subsection{triple contact at a $\Delta$-junction}

We describe a coordinate system on $3C$-triangles of $\Delta$-junction
type.  As indicated in Figure~\rif{fig:cord-delta}, we use coordinates
$(x_\alpha,\alpha,\beta)$, where $x_\alpha$ is a length and $\alpha$
and $\beta$ are each angles between lines through edges of pentagons
in contact.  We assume that $B$ points into $A$ and into $C$ and that
$A$ points into $C$. The length $x_\alpha$ is the (small) distance
between the nearly coincident vertices of pentagons $B$ and $C$.  The
coordinates satisfy the conditions $0\le\beta\le\alpha\le\pi/5$,
$\alpha+\beta\le \pi/5$, and $x_\alpha\in[0, 2\sigma -
\sigma/\kappa]$.  Starting from these coordinates, we define $\gamma$
by $\alpha+\beta+\gamma=\pi/5$, and angles $\alpha',\beta',\gamma'$ of
the inner triangle $\Delta$ by
\begin{align}\libel{eqn:abc}
\alpha+\alpha' &= 2\pi/5,\\
\beta+\beta' &= 2\pi/5,\nonumber\\
\gamma+\gamma' &= 2\pi/5.\nonumber
\end{align}
The edges $y_\alpha$, $y_\beta = 2\sigma$, and $y_\gamma$ of the
triangle $\Delta$ opposite the angles $\alpha'$, $\beta'$, $\gamma'$,
respectively are easily computed by the law of sines.  Define
$x_\beta$ by $x_\alpha+y_\gamma+x_\beta=2\sigma$, and $x_\gamma$ by
$y_\alpha+x_\gamma=2\sigma$.  The value $x_\beta$ is the distance
between the nearly coincident vertices of pentagons $A$ and $C$, and
$x_\gamma$ is the distance between the nearly coincident vertices of
pentagons $A$ and $B$.  The edges of the $3C$ Delaunay triangle have
lengths
\[
\bl(x_\alpha,\alpha'),\quad \bl(x_\beta,\beta'),\quad \bl(x_\gamma,\gamma').
\]
By Lemma~\rif{lemma:delta}, triangles of type $\Delta$ have area too
large to be relevant for calculations of dimers.  There is no need to
describe the shared coordinates $(\bar \alpha,\bar\beta,\bar x_\beta)$
in this case.

\tikzfig{cord-delta}{Coordinates for $\Delta$-types}
{
[scale=1.0]
\threepentnoD
{0.00}{0.00}{-5.16}%C
{0.99}{1.70}{235.38}%B
{1.98}{0.00}{183.43}; %A
\draw (0,0) node {$C$};
\draw (0.99,1.70) node {$B$};
\draw (1.98,0.0) node {$A$};
\draw (1.0,0.6) node {$\Delta$};
%\draw[blue] (0,0) -- (1.98,0);
\draw (-5.16:1) -- ++ (126 - 5.16 :2.5) node[anchor=south] {$\alpha$};
\draw (-5.16:1) -- ++ (- 180 + 126 - 5.16 :1.5) node[anchor=west] {$\beta$};
\draw ++ (1.98,0) ++ (183.43 - 72:1) 
-- ++ (54 + 3.43:1) node[anchor=south] {$\gamma$};
\draw (72-5.16:1) -- ++ (126 + 90 - 5.16:0.5);
\draw (0.99,1.70) ++ (235.38:1) node[anchor = south west] {$x_\alpha$} 
-- ++ (126+90 - 5.16:0.5);
}

\subsection{triple contact at a pinwheel type}

We describe a coordinate system on $3C$-triangles of pinwheel type.
We assume that $C$ points into $B$, that $B$ points into $A$, and that
$A$ points into $C$.  As indicated in Figure~\rif{fig:cord-pinwheel},
we use coordinates $(\alpha,\beta,x_\gamma)$.  The angles $\alpha$ and
$\beta$ are angles between pentagon edges on touching pentagons.  The
value $x_\gamma$ is the distance between the pointer vertex of
pentagon $A$ and the pointer vertex of pentagon $B$.  The coordinates
satisfy constraints: $0\le\alpha$, $0\le\beta$, $\alpha+\beta\le
\pi/5$, and $0\le x_\gamma\le 2\sigma$.  Define $\gamma$ by
$\alpha+\beta+\gamma=\pi/5$.  The angles $\alpha'$, $\beta'$, and
$\gamma'$ of the inner background triangle $P$ of the pinwheel are
given by Equation~\rif{eqn:abc}.  The edge lengths $x_\alpha$,
$x_\beta$, $x_\gamma$ of the inner triangle $P$ are easily computed
from $(\alpha,\beta,x_\gamma)$ by the law of sines.  The edges of the
$3C$-triangle have lengths
\[
\bl(x_\alpha,\alpha),\quad 
\bl(x_\beta,\beta),\quad \bl(x_\gamma,\gamma).
\]

\tikzfig{cord-pinwheel}{Coordinates for pinwheel type}{
\begin{scope}[xshift=4cm,scale=1.2]
\threepentnoD{0.00}{0.00}{46.69}%A
{0.82}{1.53}{218.09}%C
{1.73}{0.00}{163.28};%B
\draw(0,0) node {$A$};
\draw(0.82,1.53) node {$C$};
\draw(1.73,0) node {$B$};
\draw(1.8,1.2) node {$\alpha$};
\draw(-0.2,1.0) node {$\beta$};
\draw(0.85,-0.7) node {$\gamma$};
\draw(0.9,0.5) node {$P$};
\draw(0.5,0.5) node {$x_\gamma$};
\smalldot{0.772,0.288}; %B pointer.
\smalldot{0.686,0.728}; %A pointer.
\end{scope}
}

\subsubsection{pinwheel type with a shared edge}

Now we specialize this discussion of Section \rif{sec:shared} to
pinwheels.  Pinwheels have a rotational symmetry, so we may assume
without loss of generality that the shared edge is $A\to C$ (meaning,
$A$ and $C$ are the shared pentagons and $A$ points to $C$).  This
choice of shared edge gives $(\bar A,\bar C) = (A,C)$.  The shared
variables are $(\bar x_\beta,\bar\beta)=(x_\beta,\beta)$.  The
nonshared variable is $\bar\alpha=\alpha$.

\subsection{triple contact at a $LJ$-junction type}

We describe a coordinate system on $3C$-triangles of $LJ$-junction
type.  As indicated in Figure~\rif{fig:cord-L}, we use coordinates
$(\alpha,\beta,x_\alpha)$.  The angles $\alpha$ and $\beta$ are each
formed by edges of two pentagons in contact.  Let $x_\alpha$ be the
distance between the pointer vertex of $C$ to $A$ and the pointer
vertex of $B$ to $C$.  The coordinates satisfy relations:
$\alpha,\beta\in [0,2\pi/5]$, $\pi/5\le\alpha+\beta\le 3\pi/5$, and
$0\le x_\alpha\le 2\sigma$.  Define $\gamma$ by
$\alpha+\beta+\gamma=3\pi/5$.  The three acute angles $\alpha'$,
$\beta'$, and $\gamma'$ of the inner $L$-shaped quadrilateral are
given by Equation~\rif{eqn:abc}.  The edge lengths of the $L$-shaped
quadrilateral are easily computed by triangulating the quadrilateral
into two triangles and applying the law of sines.  (Triangulate $L$ by
extending the line through the edge of $A$ containing the pointer
vertex of $C$ into $A$.)  This gives $x_\beta$, the distance between
the pointer vertex of $C$ to $A$ and the inner vertex of $A$.  This
gives $x_\gamma$, the distance between the pointer vertex of $B$ and
the inner vertex of pentagon $A$.  As before, the edges of the
$3C$-triangle have lengths
\[
\bl(x_\alpha,\alpha),\quad \bl(x_\beta,\beta),\quad \bl(x_\gamma,\gamma).
\]

\tikzfig{cord-L}{Coordinates for $LJ$-junction type}{
\begin{scope}[xshift=8cm,scale=1.2]
\threepentnoD{0.00}{0.00}{80.86}
{0.97}{1.58}{232.21}
{1.82}{0.00}{223.24};
\draw (0,0) node {$C$};
\draw (0.97,1.58) node {$B$};
\draw (1.82,0) node {$A$};
\draw (2.0,1.2) node {$\gamma$};
\draw (-0.1,1.2) node {$\alpha$};
\draw (0.85,-0.8) node {$\beta$};
\draw (0.85,0.75) node {$L$};
\draw (0.55,0.4) node {$x_\alpha$};
\smalldot{$(0,0)+(80.86-72:1)$};
\smalldot{$(0.97,1.58)+(232.24:1)$};
%\smalldot{0.94,0.48};
%\smalldot{1.532,0.753};
\end{scope}
}

\subsubsection{$LJ_1$-junction type}

Now we specialize to $LJ$-junctions with a shared edge $C\to A$.
Then $(\bar A,\bar B,\bar C)=(C,B,A)$.

The shared variables are $(\bar \beta,\bar x_\beta)=(\beta,x_\beta)$.
The nonshared variable is $\bar\alpha=\gamma' = 2\pi/5-\gamma\ge0$.
These coordinates are numerically stable.  We compute other angles and
edges by triangulating the $L$ by extending the receptor edge of $A$
and using the law of sines.

\subsubsection{$LJ_2$-junction type}

We specialize to $LJ$-junctions with a shared edge $B\to C$.
In this case, $(\bar A,\bar B,\bar C)=(B,A,C)$.

The shared variables are $(\bar x_\beta,\bar\beta)=(x_\alpha,\alpha)$.
The nonshared variable is $\bar\alpha=\beta$.  These coordinates are
numerically stable.  They are exactly the standard variables given
above for a general $LJ$-junction.

\subsubsection{$LJ_3$-junction type}\libel{sec:one-ljx}

We specialize to $LJ$-junctions with a shared edge $B\to A$.
In this case, $(\bar A,\bar B,\bar C)=(B,C,A)$.

The shared variables are $(\bar x_\beta,\bar\beta)=(x_\gamma,\gamma)$.
The nonshared variable is $\beta$.  If $\beta > 0.9$ a
calculation\footnote{one\_ljx} shows that the triangle is not
subcritical.  We may therefore assume that $\beta\le 0.9$.  Under this
additional assumption, these coordinates are numerically stable.  We
compute other lengths and angles by triangulating the $L$-region by
extending the receptor edge of $A$.

There does not exist an ice-ray triangle with longest edge along the
edge $(A,B)$.

\subsection{triple contact at a $TJ$-junction type}

We describe a coordinate system on $3C$-triangles of $TJ$-junction
type.  As indicated in Figure~\rif{fig:cord-T}, we use coordinates
$(\alpha,\beta,x_\gamma)$.  The angles $\alpha$ and $\beta$ are each
formed by edges of two pentagons in contact.  The length $x_\gamma$ is
the distance between the pointer vertex of $B$ into $A$ and the inner
vertex of $A$.  The coordinates satisfy:
$\alpha,\beta\in[\pi/5,2\pi/5]$, $3\pi/5\le \alpha+\beta\le 4\pi/5$,
$0\le x_\gamma\le 2\sigma$.  Define $\gamma$ by
$\alpha+\beta+\gamma=\pi$.  Three of the angles $\alpha'$, $\beta'$,
and $\gamma'$ of the inner irregular $TJ$-shaped pentagon $P$ are
given by Equation~\rif{eqn:abc}.  The edge lengths of the $TJ$-shaped
pentagon are easily computed by triangulating $P$ into three triangles
and applying the law of sines.  (Triangulate by extending the edge of
$P$ shared with $A$ that ends at the pointer vertex of $A$ into $C$
and by extending the edge of $P$ shared with $C$ that contains pointer
vertex of $B$ into $C$.)  This gives $x_\alpha$, the distance between
the pointer vertex of $B$ to $C$ and the inner vertex of $C$.  This
gives $x_\beta$, the distance between the pointer vertex of $A$ to $C$
and the inner vertex of $C$.  As before, the edges of the
$3C$-triangle have lengths
\[
\bl(x_\alpha,\alpha),\quad \bl(x_\beta,\beta),\quad \bl(x_\gamma,\gamma).
\]

\tikzfig{cord-T}{Coordinates for $TJ$-junction types}{
\begin{scope}[xshift=12cm,scale=1.2]
\threepentnoD{0.00}{0.00}{114.48} %A
{0.90}{1.59}{237.18} %B
{1.66}{0.00}{219.24}; %C
\draw (0,0) node {$A$};
\draw (0.9,1.59) node {$B$};
\draw (1.66,0) node {$C$};
\draw (0.9,0.9) node {$P$};
\draw (0,1.1) node {$\gamma$};
\draw (0.75,-0.9) node {$\beta$};
\draw (1.9,1.2) node {$\alpha$};
\draw (0.45,0.5) node {$x_\gamma$};
\smalldot{0.737,0.675};
\smalldot{0.358,0.75};
\end{scope}
}

\subsubsection{$TJ_1$-junction type}\libel{sec:one-tjx}

We specialize to $TJ$-junctions with a shared edge $A\to C$.  In this
case, $(\bar A,\bar B,\bar C)=(A,B,C)$.

The shared variables are $(\bar x_\beta,\bar\beta)=(x_\beta,\beta)$.
The nonshared variable is $\bar\alpha=\alpha$.  If $\beta < 1.0$ a
calculation\footnote{one\_tjx} shows that the triangle is not
subcritical.  We may therefore assume that $\beta \ge 1.0$.  Under
this additional assumption, these coordinates are numerically stable.
We compute other lengths and angles by triangulating the $TJ$-region
by extending the receptor edge of $A$ and extending the receptor edge
of $C$.

There does not exist an ice-ray triangle with longest edge along the
edge $(A,C)$.

\subsubsection{$TJ_2$-junction type}

We specialize to $TJ$-junctions with a shared edge $B\to A$. In this
case, $(\bar A,\bar B,\bar C)=(B,C,A)$.

The shared variables are $(\bar x_\beta,\bar\beta)=(x_\gamma,\gamma)$.
The nonshared variable is $\bar\alpha =\beta$.  From
$\alpha+\beta+\gamma=\pi$, we obtain $(\alpha,\beta,x_\gamma)$, which
are the standard coordinates described above for $TJ$-junction
triangles.  These coordinates are numerically stable.

There does not exist an ice-ray triangle with longest edge along the
edge $(A,B)$.

\subsubsection{$TJ_3$-junction type}

We specialize to $TJ$-junctions with a shared edge $B\to C$.
In this case, $(\bar A,\bar B,\bar C)=(B,A,C)$.

The shared variables are $(\bar x_\beta,\bar\beta)=(x_\alpha,\alpha)$.
The nonshared variable is $\bar\alpha=\beta'=2\pi/5-\beta$.  These
coordinates are numerically stable.  We compute other lengths and
angles by triangulating the $TJ$-region by extending the receptor edge
of $A$ (of $B\to A$) and extending the receptor edge of $C$ (of $A\to
C$).

\subsection{triple contact at a pin-$T$ junction type}\libel{sec:pint}
We describe a coordinate system on $3C$-triangles of pin-$T$ junction
type.  As indicated in Figure~\rif{fig:cord-pint}, we use coordinates
$\alpha$, $\beta$, and $x_\alpha$.  The angles $\alpha$ and $\beta$
are each formed by edges of two pentagons in contact.  The length
$x_\alpha$ is the distance between the nearly coincident vertices of
$B$ and $C$.  The coordinates satisfy $\pi/5 \le \alpha \le 2\pi/5$,
$\pi/5\le \beta \le 2\pi/5$, and $3\pi/5 \le \alpha+\beta$.
Lemma~\rif{lemma:0605} shows that $0\le x_\alpha\le 0.0605$.  Define
$\gamma$ by $\alpha+\beta+\gamma = \pi$.  Three of the angles
$\alpha'$, $\beta'$, and $\gamma'$ of the inner irregular $T$-shaped
pentagon are given by Equation~\rif{eqn:abc}.  The edge lengths of the
$T$-shaped pentagon $P$ are easily computed by triangulating $P$ into
three triangles and applying the law of sines.  (Triangulate by
extending the two edges of $P$ that meet at the pointer vertex of $C$
into $A$.)  This gives $x_\beta$, the distance between the pointer
vertex of $C$ to $A$ and the inner vertex of $A$.  This gives
$x_\gamma$, the distance between the pointer vertex of $A$ to $B$ and
the pointer vertex of $B$ into $C$.  As before, the edges of the
$3C$-triangle have lengths
\[
\bl(x_\alpha,\alpha),\quad \bl(x_\beta,\beta),\quad \bl(x_\gamma,\gamma).
\]

\tikzfig{cord-pint}{Coordinates for pin-$T$ junction types. Although
  it is difficult to tell from the figure, $A$ points into $B$, $B$
  into $C$, and $C$ into $A$.  The parameter $\beta'\approx 0$
  measures the incidence angle between the nearly horizontal edges of
  $A$ and $C$.  The region bounded by the three pentagons is a
  $T$-shaped pentagon, with stem between the nearly parallel edges of
  $A$ and $C$ and two arms along $B$.  The arm between $B$ and $C$ is
  almost imperceptible.  The distance between the neighboring vertices
  of $B$ and $C$ is $x_\alpha$.  The figure on the right distorts the
  pentagons to make the incidence relations more apparent.}
{
\begin{scope}[xshift=4.5cm,yshift=1.5cm]
\threepentnoD{0.00}{0.00}{90} %C
{1.40}{-1.377}{0} % B
{-0.35}{-1.628}{-18.89}; % A
\draw (0,0) node {$C$};
\draw (1.4,-1.37) node {$B$};
\draw (-0.35,-1.62) node {$A$};
\draw (0.36,-2.4) node {$\gamma$};
\draw (-1.1,-0.5) node {$\beta$};
\draw (1.0,-0.3) node {$\alpha$};
\draw (0.4,-0.49) node {$x_\alpha$};
\draw (1.4 - 0.809,-1.377 - 0.5878) -- +(0,-0.75);
\end{scope}
\begin{scope}[xshift=9.5cm,yshift=1cm]
\draw[red] (0.2,-1.2) -- ++ (90:1.4) -- ++ (10:1); % B
\draw[red] (0.2,-1.2) -- ++ (-10:1);
\draw[red] (-1,0) -- ++(1,0) -- ++ (45:1); %C
\draw[red] (-1,0) -- ++(100:1);
\draw[red] (-1,0) -- + (-10:0.9) -- (0.2,-0.8) -- ++ (-120:1); %A
\draw[red] (-1,0) -- ++(180-10:0.3) -- ++(-110:1);
\draw (0.9,-0.4) node {$B$};
\draw (-0.5,0.7) node {$C$};
\draw (-0.7,-0.7) node {$A$};
\end{scope}
}

\begin{lemma}\libel{lemma:0605}
  Let $T$ be a $3C$ triangle of type pin-$T$.  The coordinates
  $\alpha$, $\beta$, and $x_\alpha$ satisfy the relation
\[
x_\alpha \sin(2\pi/5) \le 2\sigma (\sin(\alpha+\pi/5) - \sin(\beta+\pi/5)).
\]
In particular, $x_\alpha \le 0.0605$.
\end{lemma}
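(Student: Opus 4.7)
The plan is to use the explicit triangulation of the inner $T$-shaped pentagon $P$ described just above the lemma: extending the two edges of $P$ that meet at the pointer vertex $p_{CA}$ (where $C$ points into $A$) cuts $P$ into three triangles. The two extending rays pass through the inner vertices $q_A$ of $A$ and $q_C$ of $C$ and continue into $P$, and each of the three component triangles has at least one side of known length $2\sigma$ lying along a pentagon edge. The interior angles of the three sub-triangles are built from $\alpha, \beta, \gamma$ and their complements $\alpha', \beta', \gamma'$, together with the supplementary angle $\pi - 3\pi/5 = 2\pi/5$ that arises each time a pentagon edge is extended through an inner vertex. I would begin by laying out the three sub-triangles explicitly and verifying the angle bookkeeping using the identities $\alpha + \beta + \gamma = \pi$ and $\alpha' + \beta' + \gamma' = \pi/5$.

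Next I would apply the law of sines in each triangle to express the relevant interior segments as multiples of $2\sigma/\sin(\cdot)$. The parameter $x_\alpha$ enters as a shift along the edge of $C$ near $\v_{BC}$, since it is precisely the distance from the pointer vertex $\v_{BC}$ to the inner vertex $q_C$. The key step is to equate two expressions for a single segment that appears in two of the three sub-triangles (for instance, the side of $P$ along $B$ between $\v_{AB}$ and $\v_{BC}$, computed once via each of the two extending rays). After applying the sum-to-product identity $\sin A - \sin B = 2\cos\tfrac{A+B}{2}\sin\tfrac{A-B}{2}$ and simplifying via $\alpha+\beta+\gamma=\pi$, this yields an equation of the shape
\[
x_\alpha \sin(2\pi/5) = 2\sigma\bigl(\sin(\alpha+\pi/5) - \sin(\beta+\pi/5)\bigr) - \text{(nonneg segment)},
\]
and the lemma's inequality follows by dropping the nonnegative correction.

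For the numerical bound, rewrite the inequality using $\sin(2\pi/5) = 2\sigma\kappa$ as $\kappa\, x_\alpha \le \sin(\alpha+\pi/5) - \sin(\beta+\pi/5)$. Since $\alpha,\beta\in[\pi/5, 2\pi/5]$, both $\alpha+\pi/5$ and $\beta+\pi/5$ lie in $[2\pi/5, 3\pi/5]$, so $\sin(\alpha+\pi/5) \le 1$ and $\sin(\beta+\pi/5) \ge \sin(2\pi/5)$. Hence $\kappa\, x_\alpha \le 1 - \sin(2\pi/5) = 1 - 2\sigma\kappa$, giving $x_\alpha \le (1 - 2\sigma\kappa)/\kappa$, which evaluates numerically to less than $0.0605$. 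The main obstacle is the triangulation bookkeeping: correctly identifying which of the six symbols $\alpha, \alpha', \beta, \beta', \gamma, \gamma'$ appears at each vertex of each sub-triangle, and verifying that the ``nonneg segment'' dropped in the derivation is manifestly nonnegative from the geometry (it is the length of a further subsegment of $P$ cut off by the triangulating rays); once the triangulation is correctly set up, the trigonometric simplifications are routine.
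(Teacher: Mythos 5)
Your approach is genuinely different from the paper's, and it is worth comparing them. The paper's proof does not triangulate the inner $T$-shaped pentagon $P$ at all; it is a short, direct coordinate computation. The triangle is placed with the lower (receptor) edge of $C$ along the $x$-axis, so that $\v_{BC}$ (the pointer of $B$ into $C$) has $y$-coordinate zero. Writing $\v$ for the other endpoint of the edge of $B$ containing $\v_{AB}$, the paper computes explicit $y$-coordinates
\[
y(\v) = x_\alpha \sin(2\pi/5) - 2\sigma\sin(\alpha+\pi/5), \qquad
y(\v_{AB}) = -x_\beta\sin(\beta') - 2\sigma\sin(\beta+\pi/5).
\]
Since $\v_{AB}$ lies on the segment between $\v$ and $\v_{BC}$ and $y(\v_{BC})=0$, one has $y(\v)\le y(\v_{AB})$; dropping the nonnegative term $x_\beta\sin(\beta')$ gives the stated inequality. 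So the inequality comes from a single collinearity observation, not from equating two law-of-sines expressions for a shared side of $P$.

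Your route could in principle be made to work, since an identity of the kind you describe does hold implicitly (the difference between the two sides of the inequality is $x_\beta\sin(\beta') + \bigl(y(\v_{AB})-y(\v)\bigr)$, a sum of two nonnegative quantities, not a single segment). However, the central step in your sketch, that the law-of-sines bookkeeping for the three sub-triangles plus the sum-to-product identity collapses to exactly the displayed form with one nonnegative remainder, is asserted rather than carried out, and you yourself flag it as the main obstacle. As it stands that is a real gap: without the explicit triangulation computation you cannot be sure the remainder has a sign, or even that the displayed left-hand side is correct. The paper's device of parameterizing by $y$-coordinates avoids all of this trigonometric manipulation.

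Your numerical conclusion is correct and equivalent to the paper's: using $\sin(2\pi/5)=2\sigma\kappa$, your bound $x_\alpha\le(1-2\sigma\kappa)/\kappa$ equals the paper's $2\sigma\bigl(1/\sin(2\pi/5)-1\bigr) = 1/\kappa-2\sigma \approx 0.0605$, and both use the same endpoint estimates $\sin(\alpha+\pi/5)\le 1$ and $\sin(\beta+\pi/5)\ge\sin(2\pi/5)$ coming from $\alpha,\beta\in[\pi/5,2\pi/5]$.
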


\begin{proof} Let $\v_{AB}$ be the pointer vertex of $A$ to $B$, and
  let $\v_{BC}$ be the pointer vertex of $B$ to $C$.  Let $\v$ and
  $\v_{BC}$ be the endpoints of the edge of $B$ containing $\v_{AB}$.
  We represent $T$ as in Figure~\rif{fig:cord-pint}, with the lower edge of
  $C$ along the $x$-axis.  Because $\v_{AB}$ lies on the segment between
  $\v$ and $\v_{BC}$, the $y$-coordinate $y(\v_{AB})$ of $\v_{AB}$ is
  nonpositive and lies between the $y$-coordinates $y(\v)$ and
  $y(\v_{BC})$.  We have
\begin{align*}
y(\v) &= x_\alpha \sin(2\pi/5) - 2\sigma\sin(\alpha+\pi/5)\\
y(\v_{AB}) & = -x_\beta \sin(\beta') - 2\sigma\sin(\beta+\pi/5).
\end{align*}
Using $x_\beta \sin(\beta')\ge 0$ and $y(\v) \le y(\v_{AB})$, we
obtain the claimed inequality.

Recall that $\pi/5\le \beta \le 2\pi/5$.  In particular, we have
$\sin(\alpha+\pi/5) \le 1$ and $\sin(\beta+\pi/5)\ge \sin(2\pi/5)$.
This gives
\[
x_\alpha \le 2\sigma(1/\sin(2\pi/5) - 1) < 0.0605.
\]
% checked in calcs.ml 2016/9
\end{proof}

\subsubsection{pin-$T$-junction type}\libel{sec:one-pintx}

A computer calculation\footnote{one\_pintx} shows that the longest
edge in a pint-$T$-junction is always $B\to C$.  We specialize to
$TJ$-junctions with a shared edge $B\to C$.  We use the standard
coordinates, with $(\bar x_\beta,\bar\beta)=(x_\alpha,\alpha)$ shared.
An ice-ray triangle does not have type pin-$T$.

This completes our discussion of coordinates used for computations.

\section{Appendix on Computer Calculations}\libel{appendix:cc}

The code for the computer-assisted proofs is written in Objective
Caml.  There are about five thousand lines of code, available for
download from github \cite{Git}.  The computer calculations for this
article take about 60 hours in total to run on an Intel quad 2.6 GHz
processor with 3.7GB memory, running the Ubuntu operating system.  The
hashtables occupy between  1 and 2 GB of memory.

\subsection{interval arithmetic}

To control rounding errors on the computer, we use an interval
arithmetic package for OCaml by Alliot and Gotteland, which runs on
the Linux operating system and Intel processors \cite{All}.  Intervals
are represented as pairs $(a,b)$ of floating point numbers, giving the
lower $a$ and upper $b$ endpoints of the interval.

\subsection{automatic differentiation}\libel{sec:auto-diff}

Recall that there are several ways to compute derivatives by computer,
such as numerical approximation by a difference quotient
$(f(y)-f(x))/(y-x)$, symbolic differentiation (as in computer algebra
systems), and automatic differentiation.  In this project, we
differentiate functions of a single variable using automatic
differentiation.  The value of a function and its derivative are
represented as a pair $(f,f')$ of intervals (the $1$-jet of the
function at an interval-valued point $x_0$), where $f$ is an interval
bound on the function at $x_0$, and $f'$ is an interval bound on the
derivative of the function at $x_0$.  More complex expressions can be
built from simpler expressions by extending arithmetic operations
$(+)$, $(-)$, $(*)$, $(/)$ to $1$-jets.  For example,
\begin{align*}
(f,f') + (g,g') &= (f+g,f'+g'),\\
(f,f') * (g,g') &= (f g,f g' + f' g),\\
(f,f') / (g,g') &= (f / g, (f' g - f g')/g^2).
\end{align*}
where the component-wise arithmetic operations on the right are
computed by interval arithmetic.  Automatic differentiation extends
standard functions $F$ to functions $F^D$ on $1$-jets.
For example,
\[
\op{sqrt}^D(f,f') = (\sqrt{f}, f'/(2\sqrt{f})),\quad 
\sin^D (f,f') = (\sin(f), \cos(f) f').
\]

Sections~\rif{sec:dimer-pair} and Section~\rif{sec:coords} describe
the proof of the local minimality of the ice-ray dimer.  We review
that argument here with an emphasis on automatic differentiation.
Automatic differentiation allows us to show that the ice-ray dimer is
the unique minimizer of area in an explicit neighborhood of the
ice-ray dimer.  In Section~\rif{sec:dimer-pair}, we give a curve
$\Gamma$ (in the configuration space of dimers) with parameter
$t\in\ring{R}$ that passes through the ice-ray dimer point at $t=0$.

For any point $x_0$ in an explicit neighborhood of the ice-ray dimer
in the dimer configuration space, that section describes a path from
$x_0$ to a point on the curve $\Gamma$.  Using automatic
differentiation algorithms, we show by computer that area decreases as
we move along $P$ from $x_0$ towards $\Gamma$.  Thus, the area
minimizer, lies on $\Gamma$ for some parameter $|t|\le M$.

By symmetry in the underlying geometry, it is clear that the area
function has derivative zero along $\Gamma$ at $t=0$. By taking a
second derivative with automatic differentiation, we find that the
second derivative of the area function along this curve is positive
when $|t|\le M$.  Thus, the ice-ray dimer is the unique area minimizer
on this curve within this explicit neighborhood.

We remark that many of the functions that are used in the proof of the
pentagonal ice-ray conjecture are not differentiable.  Our use of
automatic differentiation is restricted to a small neighborhood of the
ice-ray dimer, where all the relevant functions are analytic.

\subsection{meet-in-the-middle}\libel{sec:mitm}

A common algorithmic technique for reducing the time complexity of an
algorithms through greater space complexity is called {\it
  meet-in-the-middle} (MITM).  This is very closely related to the
{\it linear assembly algorithms} used in the solution to the Kepler
problem~\cite{hales2003some}. (Both methods break the problem into
subproblems of smaller complexity that are later recombined.  MITM
stores the data for recombination in a hashtable.  Linear assembly
encodes the data for recombination as linear programs.  We did not
try to solve the pentagonal ice-ray conjecture with linear programming
techniques. Such an approach might also work.)

Some introductory examples of meet-in-the-middle
algorithms can be found at the blog post \cite{mitm}.
% Cosmin Negruseri, 10 aug 2012. Coding contest trick: Meet in the middle.
% www.infoarena.ro/blog/meet-in-the-middle.
A simple example from there is to find if there are four numbers in a
given finite set $S$ of integers that sum to zero, where repetitions
of integers are allowed.  If we calculate all possible sums of four
integers, testing if each is zero, then there are $n^4$ sums, where
$n$ is the cardinality of $S$.  The MITM solution to the problem
computes and stores (in a hashset) all sums $a+b$ of unordered pairs
of elements from $S$.  We then search the hashset for a collision,
meaning a sum $a+b$ that is the negative of another sum $c+d$: $a+b =
-(c+d)$.  Any such collision gives $a+b+c+d=0$.  The MITM solution
involves the computation of $n^2$ sums $a+b$, rather than $n^4$, for a
substantial reduction in complexity.  MITM techniques have numerous
applications to cryptography, and it is there that we first
encountered the technique.  See for example,
\cite{dinur2014dissection}
% http://www.ma.huji.ac.il/~nkeller/CACM2013.pdf
which applies MITM to a general class of dissection problems,
including the Rubik's cube.

We obtain computational bounds on the area of clusters of Delaunay
triangles (or more accurately, $P$-triangles).  Each of our clusters
will be assume to consist of one triangle (called the {\it central
triangle}), flanked by one, two, or three additional triangles along
its edges.  We call the flanking triangles {\it peripheral triangles}.
The aim of each computation is to give a lower bound on the sum of the
areas of the triangles, subject to a collection of constraints.  Two
types of constraints are allowed: (1) constraints that can be
expressed in terms of a single triangle, and (2) assembly constraints.
An assembly constraint states that the central $P$-triangle fits
together with a flanking triangle.  In more detail, the central
triangle $T_0$ shares an edge and two pentagons $A$, $B$ with a
flanking triangle $T_1$.  Associated with $T_0$ are parameters
$d^0_{AB}$, $\theta^0_{ABC}$, $\theta^0_{BAC}$ giving the edge length
of the common edge with $T_1$, and the inclination angles of the
pentagons $A$ and $B$ with respect to that common edge.  Similarly,
associated with $T_1$ are parameters $d^1_{AB}$, $\theta^1_{ABC}$,
$\theta^1_{BAC}$.  The assembly constraint along the common edge is
\begin{equation}\libel{eqn:assembly}
d^0_{AB} = d^1_{AB},\quad 
\theta^0_{ABC} = -\theta^1_{ABC},\quad \theta^0_{BAC} = -\theta^1_{BAC}.
\end{equation}
The negative sign comes from the opposite orientations of the common
edge with respect to $T_0$ and $T_1$.

A cluster consisting of one central $P$-triangle and $k$ flanking
triangles is a point in a configuration space of dimension $6 + 3k$.
If we cover the configuration space with cubes of edge-length
$\epsilon$, then there are order $(1/\epsilon)^{6+3k}$ cubes.  This is
generally beyond our computational reach when $k>0$.

We can use MITM techniques to reduce to order $(1/\epsilon)^6$ cubes,
and this puts all our computations (barely) within the reach of a
laptop computer.  Specifically, we fix a edge size $\epsilon$ and
cover the configuration space of peripheral triangles by cubes of size
$\epsilon$, calculating area, edge lengths, inclination angles, and
other relevant quantities (using interval arithmetic) over each cube.

The idea of MITM is to place the peripheral triangle data into a hash
table, keyed by the variables $d^1_{AB}$, $\theta^1_{ABC}$,
$\theta^1_{BAC}$ that are shared with the central triangle through
Equation~\rif{eqn:assembly}.  We view Equation~\rif{eqn:assembly} as
the analogue of the collision condition $(a+b) = -(c+d)$ in the simple
example of MITM given above.  Of course, the variables $d^1_{AB}$
etc. are represented as intervals with floating point endpoints and
cannot be used directly as keys to a hashtable.  Instead we discretize
the keys in such a way that a collision of real numbers implies a
collision of the keys.

Once the hash is created, we divide the configuration space of central
triangles into cubes and compute the relevant quantities (edge
lengths, triangle area, and inclination angles) over each cube using
interval arithmetic.  The areas of the peripheral triangles are
recovered from the hash table and combined with the area of the
central triangle to get a lower bound on the sum of the areas of the
triangles in the cluster.

The entire process is iterated for smaller and smaller $\epsilon$
until the desired bound on the areas of the triangles in the cluster
is obtained.  Each time $\epsilon$ is made smaller, only the
peripheral cubes that were involved in a key collision with a central
cube are carried into the next iteration for subdivision into smaller
cubes.  Only the cubes with suitably small triangle area bounds are
carried into the next iteration.  In practice, to achieve our bounds
in Section~\rif{sec:calc}, the smallest $\epsilon$ that was required
was approximately $0.00024$.

\subsection{preparation of the inequalities}

Section~\rif{sec:calc} gives a sequence of inequalities that have been
proved by MITM algorithms.  We organize these calculations to permit a
more or less uniform proof of all of them.  The triangle $T_0$ is  the
central triangle.  The other triangles $T_1,\ldots$ in the cluster are the
peripheral triangles.

We prove each inequality by contradiction. Specifically, we negate the
conclusion and add it to the set of assumptions.  Then we show in each
case that the domain defined by the set of assumptions is empty. In
the computer code, we implement {\it out-of-domain} functions that
return true when the interval input lies entirely outside the given
domain.

We deform the cluster of triangles to make computations easier.  Note
that in every case except for the triangle $T_-$ in
Lemma~\rif{calc:pseudo-area3}, the peripheral triangles are all
subcritical.  We prove the lemmas of Section~\rif{sec:calc} in the
sequential order given in that section.  In particular, we may
assuming the previous lemmas to simplify what is to be proved in those
that follow.

\begin{lemma}  
  We can assume without loss of generality that the triangle $T_-$ in
  Lemma~\rif{calc:pseudo-area3} is $O2C$.
\end{lemma}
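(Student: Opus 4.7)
Let $A$ and $C$ be the two pentagons of $T_-$ shared with $T_0$, and let $E$ be the outer pentagon of $T_-$, at the vertex not shared with $T_0$. The plan is to deform $T_-$ by fixing $A$ and $C$---which leaves both $T_0$ and $T_1$ untouched---and moving only $E$ in a continuous, area-non-increasing way, until $E$ attains double contact with $A$ and $C$. Since such a deformation preserves the hypotheses $(T_1,T_0)\in\PD$ and $T_0\Ra T_-$ and does not increase $\area\{T_0,T_1,T_-\}$, any counterexample to Lemma~\rif{calc:pseudo-area3} can be transformed into one in which $T_-$ is $O2C$, justifying the ``without loss of generality''.

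The edge of $T_-$ opposite $E$ is the egressive edge of $T_0$, which has length less than $\kappa\sqrt8$ by Lemma~\rif{lemma:pd-obtuse}. Assuming first that $T_-$ is nonobtuse, Lemma~\rif{lemma:primary} provides an area-non-increasing deformation of $E$ achieving primary contact with $A$ or $C$. Following the argument in the proof of Lemma~\rif{lemma:2C}, I would then continue by translating $E$ along a slider edge, or rotating $E$ about a midpointer vertex, in the area-decreasing direction while maintaining the existing contact, until a second contact is established and $E$ is $O2C$. For subcritical $T_-$ this is literally Lemma~\rif{lemma:2C}; for the marginally non-subcritical case (a counterexample must satisfy $\area(T_-)<\acrit+2\epsM$, by combining the negation of Lemma~\rif{calc:pseudo-area3} with Lemma~\rif{calc:pseudo-area}), the same argument applies verbatim, since the edge length bound $\kappa\sqrt8$ controlling the deformation persists.

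The main obstacle will be the case in which $T_-$ is obtuse, because Lemmas~\rif{lemma:primary} and~\rif{lemma:2C} both require nonobtuseness, so no straight analogue of the $O2C$ deformation is available. I would handle this branch separately, without invoking $O2C$, by a direct computer calculation over a restricted parameter space. A useful geometric simplification is that the obtuse angle of $T_-$ cannot lie at $E$: if it did, the shared edge (which is opposite $E$) would be the longest edge of $T_-$, but Lemma~\rif{lemma:pd-obtuse} forces the shared edge to have length less than $\kappa\sqrt8$, whereas any obtuse Delaunay triangle has longest edge at least $\kappa\sqrt8$. Consequently the longest edge of $T_-$ is a nonshared edge of length at least $\kappa\sqrt8$, the other nonshared edge has length at least $2\kappa$, and the circumradius is at most $2$; this low-dimensional restricted space can be swept by the MITM interval arithmetic scheme described in Section~\rif{appendix:cc}, establishing $\area\{T_0,T_1,T_-\}>3\acrit+\epsM$ directly for obtuse $T_-$.
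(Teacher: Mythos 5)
Your strategy --- deform the outer pentagon $E$ of $T_-$ to $O2C$ when $T_-$ is nonobtuse, and handle an obtuse $T_-$ separately --- is in the right spirit, but the paper organizes the nonobtuse case differently and its version is both simpler and numerically more robust. You attempt to show that the Lemma~\rif{lemma:2C}-style deformation \emph{always} reaches $O2C$, by bootstrapping the $\kappa\sqrt{8}$ edge bound of Lemma~\rif{lemma:right} from the subcritical range to the marginally larger range $\area(T_-)\le\acrit+2\epsM$. That bootstrap works, but only because $2\kappa^2 > \acrit + 2\epsM$, a margin of roughly $0.003$. The paper instead assumes for contradiction that $O2C$ \emph{cannot} be reached; then translating $E$ (after primary contact via Lemmas~\rif{lemma:primary} and~\rif{lemma:primary2}) must terminate at a right triangle. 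The right angle cannot be at $\c_E$ (the opposite edge is $<\kappa\sqrt{8}$ by Lemma~\rif{lemma:pd-obtuse}), so it is at a shared vertex, where the two legs have lengths $\ge 1.8$ (Lemma~\rif{calc:pseudo2}) and $\ge 2\kappa$. Hence $\area(T_-)\ge 1.8\kappa$ and, using Lemma~\rif{lemma:a0}, $\area\{T_0,T_1,T_-\}>2\ao+1.8\kappa>3\acrit+\epsM$; Lemma~\rif{calc:pseudo-area3} then holds outright and there is nothing to reduce. This disjunctive structure (``either reach $O2C$, or the target inequality is already proved'') is logically weaker than your universal claim and has a far more comfortable numerical margin ($\approx 0.05$).

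Your concern about obtuse $T_-$ is well founded. The paper's proof invokes Lemmas~\rif{lemma:primary} and~\rif{lemma:primary2}, whose hypotheses require nonobtuseness, and says nothing about what to do if $T_-$ is obtuse; moreover the right-angle endgame does not transfer, since the area-decreasing motion of $\c_E$ moves an obtuse shared-vertex angle \emph{away} from $\pi/2$. Your observation that the obtuse angle cannot be at $\c_E$ (shared edge $<\kappa\sqrt{8}$ versus longest edge of an obtuse Delaunay triangle $\ge\kappa\sqrt{8}$) is correct and is not in the paper, and the crude bound $\areta(1.8,2\kappa,2)\approx 1.13$ is too weak to dispose of the case on its own, so some argument --- your proposed restricted MITM sweep or something the paper leaves unstated --- is genuinely required here.
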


\begin{proof} 
  Assume for a contradiction, that we cannot deform into a $O2C$.  Let
  the three pentagons of $T_-$ be $A,B,C$, where $A,C$ are shared with
  the central triangle $T_0$.  By Lemmas~\rif{lemma:primary}
  and~\rif{lemma:primary2}, we can assume that $B$ has primary
  contact.  By translating $B$, we can continue to deform $T_-$,
  decreasing its area, until it is a right triangle (because $O2C$ is
  assumed not to occur).  By Lemma~\rif{calc:pseudo2}, the shared edge
  with $T_0$ has length at least $1.8$, so a right angle gives the
  proof of Lemma~\rif{calc:pseudo-area3} in this case:
\[
\area\{T_0,T_1,T_-\} > 2\ao + \area(T_-) 
\ge 2\ao + 1.8\kappa > 3\acrit + \epsM.
\]
% checked 2016/9 lemma_pseudo_area3_prep in calcs.ml.
\end{proof}

We note that an earlier lemma (Lemma~\rif{lemma:2C}) shows that
subcritical peripheral triangles can be assumed to be $O2C$. Thus, we
will assume in the computations that all peripheral triangles are
$O2C$.

In the rest of this section, we describe how each calculation has been
prepared, to reduce the dimension of the configuration space, prior to
computation.

\subsubsection{calculation~of Lemma~\rif{calc:pseudo1}} 

We repeat Lemma~\rif{calc:pseudo1}.  Let $(T_1,T_0)\in \PD$.  The edge
shared between $T_0$ and $T_1$ has length less than $1.8$.  (The
reference code [NKQNXUN] links this statement to the relevant body of
computer code.)

Negating the conclusion, we may assume that the shared edge between
triangles has length at least $1.8$.  Let $A,B,C$ be the pentagons of
$T_0$, with $A,C$ shared with $T_1$.  We drop the constraint $T_0\nRa
T_1$.  (That is, we no longer assume that the longest edge of $T_0$ is
shared with $T_1$.  Instead, we merely assume that
$\max(\dx{AB},\dx{BC})\ge 1.8$.  We may assume that $B$ has primary
contact. By symmetry, we may assume contact between $A$ and $B$.  As
long as $B$ is not $O2C$, we may continue to move $B$ (decreasing area
as always) until $\max(\dx{AB},\dx{BC})=1.8$.  The calculation reduces
to three subcases:
\begin{enumerate}
\item $T_0$ has $O2C$ contact.
\item $B$ has midpointer contact along $AB$, with $\dx{AB}=1.8$.  (We
  eliminate the case $\dx{BC}=1.8$ because this would give using
  Lemma~\rif{lemma:mid-172},
\[
\area\{T_0,T_1\} > \ao + \area(T_0) \ge \ao + \area(1.8,1.8,1.72) > 2\acrit.
\]
% checked calcs.ml lemma_pseudo_area1_prep 2016/9
\item $B$ has slider contact with $A$ and $\max(\dx{AB},\dx{BC})=1.8$.
\end{enumerate}

\subsubsection{calculation~of Lemma~\rif{calc:pseudo2}} 

We repeat the statement of Lemma~\rif{calc:pseudo2}.
Let $(T_1,T_0)\in \PD$.  The longest edge of $T_0$
has length greater than $1.8$ (reference code [RWWHLQT]).

Negating the conclusion, we may assume that all edges of $T_0$ have
length at most $1.8$.  As in the previous calculation, we let $A,C$ be
the shared pentagons, and we reduce to three cases:
\begin{enumerate}
\item $T_0$ has $O2C$ contact.
\item $B$ has midpointer contact along $AB$.  We continue to deform by
  translating $B$ until $T_0$ is long isosceles.
\item $B$ has slider contact with $A$, and $T_0$ is long isosceles.
\end{enumerate}

\subsubsection{calculation~of Lemma~\rif{calc:pseudo-area}} 

We repeat the statement of Lemma~\rif{calc:pseudo-area}.
Let $(T_1,T_0)\in\PD$.  Then $\area\{T_0,T_1\} \ge 2\acrit - \epsM$
(reference code [BXZBPJW]).

Negating the conclusion, we may assume that $\area\{T_0,T_1\}\le
2\acrit - \epsM$.  Let $A,B,C$ be the pentagons of $T_0$, with $A,C$
shared with $T_1$.  We deform $B$ until primary contact.  If it is not
$O2C$, then we continue to translate $B$.  We never encounter the long
isosceles constraint while translating $B$ because if the longest edge
and shared length have equal lengths, we have $\area\{T_0,T_1\} >
2\acrit$ by Lemmas~\rif{calc:pseudo1} and~\rif{calc:pseudo2}.  Thus,
we always reduce to $O2C$ on $T_0$.  We may continue with a squeeze
transformation (Section~\rif{sec:squeeze}) until $T_1$ is long
isosceles or $3C$.  We consider two long isosceles subcases, depending
on which of the two nonshared edges of $T_1$ has the same length as the
shared edge.

The squeeze transformation may result in $T_0$ and $T_1$ becoming
triple contact.  The calculations for dimer pairs in triple contact
were carried out with assumptions that were sufficiently relaxed to
include this case of pseudo-dimer triple contact.
% see dimer.ml run_group4.

\subsubsection{calculation~of Lemma~\rif{calc:pseudo-area3}}

We repeat the statement of Lemma~\rif{calc:pseudo-area3}.  Let
$(T_1,T_0)\in\PD$. Assume $T_0\Ra T_-$.  Then $\area\{T_0,T_1,T_-\} >
3\acrit + \epsM$ (reference code [JQMRXTH]).

As noted above, we can assume that both peripheral triangles, $T_1$
and $T_-$ are $O2C$.  Negating the conclusion, we assume that
$\area\{T_0,T_1,T_-\}\le 3\acrit+\epsM$.  This implies that
\[
\area(T_-) = \area\{T_0,T_1,T_-\} - \area\{T_0,T_1\} 
\le (3\acrit+\epsM) - (2\acrit - \epsM) = \acrit + 2\epsM.
\]
We let $A,B,C$ be the pentagons in the central triangle $T_0$, where
$A$ is shared with $T_0,T_1,T_-$ and $B$ is shared between $T_0$ and
$T_-$.  While $B$ is not in contact with another pentagon, we may
translate $B$ in a squeeze transformation, moving it along the segment
joining $\c_A$ and $\c_B$.  This decreases the areas of $T_0$ and
$T_-$.  We continue until $B$ contacts $A$ or $C$.

Renaming pentagons of the central triangle, we assume that $\bar A$
and $\bar C$ are in contact, with $\bar A$ pointing to $\bar C$.  We
consider six cases: each permutation on three letters determines a
choice for the edge of $T_0$ shared with $T_1$, and a choice of the
edge of $T_0$ shared with $T_-$.

\subsubsection{calculation~of Lemma~\rif{calc:dimer-isosc}}

We repeat the statement of Lemma~\rif{calc:dimer-isosc}.  Let
$(T_1,T_0)\in DP$.  Then $T_1$ is not both $O2C$ and long isosceles
(reference code [KUGAKIK]).

Negating the conclusion, we may assume that $T_1$ is $O2C$ and long
isosceles and that $T_0$ is $O2C$.  We perform the calculation without
further preparation.

\subsubsection{calculation~of Lemma~\rif{calc:large}} %FHB.

We repeat the statement of Lemma~\rif{calc:large}.  Let $\{T_0,T_1\}$
be given $P$-triangles (not necessarily a pseudo-dimer) such that
$\area(T_1)\le \acrit$ and $T_1\Ra T_0$.  Assume that there is a
nonshared edge $e$ of $T_0$ of length greater than $1.8$ and that the
angle is not large along $(T_0,e)$.  Then $\area\{T_0,T_1\} > 2\acrit
+ \epsM$ (reference code [FHBGHHY]).

Negating the conclusion, we may assume that $\area\{T_0,T_1\}\le
2\acrit + \epsM$.  We assume that $T_0$ has a nonshared edge of length
at least $1.8$ and that the angle is not large along that edge.  We
form five cases:
\begin{enumerate}  
\item $T_0$ and $T_1$ are both $O2C$.
\item In the last four cases, we assume that in triangle $T_0$, some
  pentagon $A$ contacts some other pentagon $C$ with $A$ pointing to
  $C$.  The shared edge is not $(A,C)$. The longest nonshared edge of
  $T_0$ is exactly $1.8$.  The angle is not large along that edge.
  The four cases come by a binary choice of the shared edge $AB$ or
  $BC$ and a binary choice of the longest edge as one of the remaining
  two edges.
\end{enumerate}

We claim that we can always deform to one of these five cases.  To see
this, assume to the contrary that none of these cases hold.  If the
$1.8$ constraint binds, we translate the outer pentagon $\bar B$ of
$T_0$ while maintaining the $1.8$ constraint until it comes into
contact with one of the shared pentagons $\bar A$ or $\bar C$. This
falls into one of the last four cases.  If the $1.8$ constraint does
not bind, then we may translate $\bar B$ until $T_0$ is $O2C$.  Here
$\{\bar A,\bar B,\bar C\}=\{A,B,C\}$ is a relabeling of the pentagons.

\subsubsection{calculation~of Lemma~\rif{calc:pent3}} %HUQ.

We repeat the statement of Lemma~\rif{calc:pent3}.  Let $T_1^i\Ra T_0$
and $\area(T_1^i)\le\acrit$ for $i=0,1$ for distinct $P$-triangles
$T_1^0$ and $T_1^1$.  Then $\area\{T_0,T_1^0,T_1^1\} > 3\acrit +
\epsM$ (reference code [HUQEJAT]).

We assume that $\area\{T_0,T_1^0,T_1^1\}\le 3\acrit + \epsM$.  We
consider four cases:
\begin{enumerate}  
\item There exists a pair $(A,C)$ of pentagons of $T_0$ in contact.
  We assume that $A$ points to $C$.  This becomes three cases
  according to the edge of $T_0$ that is not shared.
\item $T_0$ has no pentagons in contact. Let $B$ be
  the pentagon of $T_0$ that is shared with $T_1^0$ and $T_1^1$.  We
  squeeze $A$ along $(\c_A,\c_B)$ and squeeze $C$ along $(\c_B,\c_C)$.
  This allows us to assume that both $T_1^0$ and $T_1^1$ are long
  isosceles.
\end{enumerate}  

\subsubsection{calculation~of Lemma~\rif{calc:pent4}} 

We repeat the statement of Lemma~\rif{calc:pent4}.  Let $T_1^i\Ra T_0$
and $\area(T_1^i)\le\acrit$ for $i=0,1,2$ for distinct $P$-triangles
$T_1^0$, $T_1^1$, and $T_1^2$.  Then $\area\{T_0,T_1^0,T_1^1,T_1^2\} >
4\acrit$ (reference code [QPJDYDB]).

Each $T_1^i$ is $O2C$.  We carry out the calculation as a single case,
using MITM as usual as described above.  There is an $S_3$-symmetry to
the situation that we can exploit to reduce the search space.

%% END OF FILE
      
      \bibliography{pentagon} 

      \bibliographystyle{plainnat}
%\bibliography{announce_refs}

%% shell:>makeindex index/Index
%% shell:>makeindex index/Notation
%% edit index/Notation.ind to put an \indexspace before Greek entries.

%\printindex{index/Index}{General index}
%\printindex{index/Notation}{Notation index}

%\input{frontmatter}

\end{document}